\pgfplotsset{compat=1.12}
\let\oldtocsection=\tocsection
\let\oldtocsubsection=\tocsubsection
\renewcommand{\tocsection}[2]{\hspace{0em}\oldtocsection{#1}{#2}}
\renewcommand{\tocsubsection}[2]{\hspace{1em}\oldtocsubsection{#1}{#2}}
\theoremstyle{plain}
\newtheorem{theorem}{Theorem}[section]
\newtheorem{proposition}[theorem]{Proposition}
\newtheorem{corollary}[theorem]{Corollary}
\newtheorem{lemma}[theorem]{Lemma}
\theoremstyle{definition}
\newtheorem{definition}[theorem]{Definition}
\newtheorem{example}[theorem]{Example}
\newtheorem{remark}[theorem]{Remark}
\newcommand{\conv}{\operatorname{conv}}
\newcommand{\aconv}{\operatorname{aconv}}
\newcommand{\Lip}{\operatorname{Lip}}
\newcommand{\Id}{\mathrm{Id}}
\newcommand{\eps}{\varepsilon}
\newcommand{\re}{\operatorname{Re}}
\newcommand{\vol}{\operatorname{vol}}
\newcommand{\spn}{\operatorname{span}}
\newcommand{\Face}{\operatorname{F}}
\newcommand{\eins}{\mathbbm{1}}
\newcommand{\FF}{\mathcal{F}}
\newcommand{\N}{\mathbb{N}}
\newcommand{\R}{\mathbb{R}}
\newcommand{\C}{\mathbb{C}}
\newcommand{\T}{\mathbb{T}}
\newcommand{\K}{\mathbb{K}}
\renewcommand{\geq}{\geqslant}
\renewcommand{\leq}{\leqslant}
\DeclareMathOperator{\e}{e}
\numberwithin{equation}{section}
\begin{document}

\title{On the numerical index with respect to an operator}

\author[Kadets, Mart\'{\i}n, Mer\'{\i}, P\'{e}rez, Quero]{Vladimir Kadets, Miguel Mart\'{\i}n, Javier Mer\'{\i}, Antonio P\'{e}rez \and Alicia Quero}

\address[Kadets]{School of Mathematics and Computer Sciences \\
V. N. Karazin Kharkiv National University \\ pl.~Svobody~4 \\
61022~Kharkiv \\ Ukraine}

\email{v.kateds@karazin.ua}

\address[Mart\'{\i}n, Mer\'{\i}, Quero]{Universidad de Granada \\ Facultad de Ciencias \\
Departamento de An\'{a}lisis Matem\'{a}tico \\ E-18071 Granada \\
Spain}
\email{mmartins@ugr.es, jmeri@ugr.es, aliciaquero@ugr.es}

\address[P\'{e}rez]{Instituto de Ciencias Matem\'{a}ticas (CSIC-UAM-UC3M-UCM) \\ Campus Cantoblanco \\ C/ Nicol\'{a}s Cabrera, 13--15 \\ E-28049 Madrid \\ Spain }
 \email{antonio.perez@icmat.es}

\thispagestyle{plain}

\subjclass[2010]{Primary 46B04; Secondary 46B20, 46B25, 46L05, 47A12, 47A30}

\date{May 27th, 2019}

\thanks{The research of the first author is done in frames of Ukrainian Ministry of Science and Education Research Program 0118U002036, and it was partially done during his stay in the University of Granada which was supported by the project MTM2015-65020-P (MINECO/FEDER, UE). Research of second, third, and fifth authors is supported by projects MTM2015-65020-P (MINECO/FEDER, UE),  PGC2018-093794-B-I00 (MCIU/AEI/FEDER, UE), and FQM-185 (Junta de Andaluc\'{\i}a/FEDER, UE). The fourth author acknowledges financial support from the Spanish Ministry of Economy and Competitiveness, through the ``Severo Ochoa Programme for Centres of Excellence in R\&D'' (SEV-2015-0554)}

\begin{abstract}
The aim of this paper is to study the numerical index with respect to an operator between Banach spaces. Given Banach spaces $X$ and $Y$, and a norm-one operator $G\in \mathcal{L}(X,Y)$ (the space of all bounded linear operator from $X$ to $Y$), the numerical index with respect to $G$, $n_G(X,Y)$, is the greatest constant $k\geq 0$ such that
$$
k\|T\|\leq \inf_{\delta>0} \sup\bigl\{|y^\ast(Tx)|\colon y^\ast\in Y^\ast,\,x\in X,\,\|y^\ast\|=\|x\|=1,\,\re y^\ast(Gx)>1-\delta\bigr\}
$$
for every $T\in \mathcal{L}(X,Y)$. Equivalently, $n_G(X,Y)$ is the greatest constant $k\geq 0$ such that
$$
\max_{|w|=1}\|G+wT\|\geq 1 + k \|T\|
$$
for all $T\in \mathcal{L}(X,Y)$. Here, we first provide some tools to study the numerical index with respect to $G$. Next, we present some results on the set $\mathcal{N}(\mathcal{L}(X,Y))$ of the values of the numerical indices with respect to all norm-one operators on $\mathcal{L}(X,Y)$. For instance, we show that $\mathcal{N}(\mathcal{L}(X,Y))=\{0\}$ when $X$ or $Y$ is a real Hilbert space of dimension greater than one and also when $X$ or $Y$ is the space of bounded or compact operators on an infinite-dimensional real Hilbert space. In the real case, we show that for $1<p<\infty$,
$$
\mathcal{N}(\mathcal{L}(X,\ell_p))\subseteq [0,M_p] \quad \text{ and } \quad \mathcal{N}(\mathcal{L}(\ell_p,Y))\subseteq [0,M_p]
$$
for all real Banach spaces $X$ and $Y$, where $M_p=\sup_{t\in[0,1]}\frac{|t^{p-1}-t|}{1+t^p}$. For complex Hilbert spaces $H_1$, $H_2$ of dimension greater than one, we show that $\mathcal{N}(\mathcal{L}(H_1,H_2))\subseteq \{0,1/2\}$ and the value $1/2$ is taken if and only if $H_1$ and $H_2$ are isometrically isomorphic. Besides, $\mathcal{N}(\mathcal{L}(X,H))\subseteq [0,1/2]$ and $\mathcal{N}(\mathcal{L}(H,Y))\subseteq [0,1/2]$ when $H$ is a complex infinite-dimensional Hilbert space and $X$ and $Y$ are arbitrary complex Banach spaces. We also show that $\mathcal{N}(\mathcal{L}(L_1(\mu_1),L_1(\mu_2)))\subseteq \{0,1\}$ and $\mathcal{N}(\mathcal{L}(L_\infty(\mu_1),L_\infty(\mu_2)))\subseteq \{0,1\}$ for arbitrary $\sigma$-finite measures $\mu_1$ and $\mu_2$, in both the real and the complex cases.
Also, we show that the Lipschitz numerical range of Lipschitz maps from a Banach space to itself can be viewed as the numerical range of convenient bounded linear operators with respect to a bounded linear operator. Further, we provide some results which show the behaviour of the value of the numerical index when we apply some Banach space operations, as constructing diagonal operators between $c_0$-, $\ell_1$-, or $\ell_\infty$-sums of Banach spaces, composition operators on some vector-valued function spaces, taking the adjoint to an operator, and composition of operators.
\end{abstract}

\maketitle

\thispagestyle{plain}

\begin{center}
\begin{minipage}[c]{0,9\textwidth}\small
\tableofcontents
\end{minipage}
\end{center}

\section{Introduction}
The study of isometric properties of the space $\mathcal{L}(X,Y)$ of all bounded linear operators between two Banach spaces $X$ and $Y$ and their impact on the domain and range spaces is a traditional subject of Banach space theory, and it remains to be an active area of research. For instance, in the second part of twentieth century there were a number of results  \cite{Blu-Lin-Phe,ExtremeLH, Mor-Phe, Sharir72, Sharir73, Sharir76, Sharir77} on the structure of extreme points of the unit ball of $\mathcal{L}(X,Y)$ (sometimes known as extreme operators or extreme contractions), but the subject attracts researchers till now, see for instance \cite{CabreraSerrano-Mena-Complu2017,CabMena2017, MalSainDeb, Navarro-Navarro,SainRayPaul} and references therein. When $X=Y$, the space $\mathcal{L}(X):=\mathcal{L}(X,X)$ is a Banach algebra with unit $\Id$ (or $\Id_X$ if it is necessary to mention), and there are many deep results in this case (see, for instance, the classical references \cite{Palmer,RussoDye}). The starting point of all these results is a celebrated result of 1955 by Bohnenblust and Karlin \cite{Bohn-Karlin} which related the geometric and the algebraic properties of the unit. To state their result, they introduce and study a numerical range of elements of a unital algebra which generalized the classical Toeplitz numerical range of operators on Hilbert spaces from 1918. Let us state here an extension of this numerical range, which implicitly appeared in Bohnenblust-Karlin paper, and which was introduced in the 1985 paper \cite{MarMenaPayaRod1985}. We refer the reader to the classical books \cite{B-D1,B-D2} by Bonsall and Duncan, and to sections 2.1 and 2.9 of the recent book \cite{Cabrera-Rodriguez} for more information and background. Given a Banach space $Z$, we write $B_Z$ and $S_Z$ to denote the closed unit ball and the unit sphere of $Z$, respectively. If $u\in Z$ is a norm-one element, the \emph{(abstract) numerical range} of $z\in Z$ with respect to $(Z,u)$ is given by
$$
V(Z,u,z):=\{\phi(z)\colon \phi\in \Face(B_{Z^\ast},u)\},
$$
where $Z^\ast$ denotes the topological dual of $Z$ and
$$
\Face(B_{Z^\ast},u):=\{\phi\in S_{Z^\ast}\colon \phi(u)=1\}
$$
is the \emph{face} of $B_{Z^\ast}$ generated by $u\in S_Z$ (also known as the set of \emph{states} of $Z$ relative to $u$). Let us mention that when $Z=A$ is a unital Banach algebra and $u$ is the unit of $A$, then $V(A,u,a)$ is the \emph{algebra numerical range} of the element $a\in A$. The well-known formula
\begin{equation*}
\sup \re V(Z,u,z)=\lim_{\alpha\to 0^+} \frac{\|u+\alpha z\|-1}{\alpha}
\end{equation*}
(see Lemma \ref{lemma:derivadaani}) connects the geometry of the space $Z$ around $u$ with the numerical range with respect to $(Z,u)$.
The \emph{numerical radius} of $z\in Z$ with respect to $(Z,u)$ is
$$
v(Z,u,z):=\sup\{|\lambda|\colon \lambda\in V(Z,u,z)\},
$$
which is obviously a seminorm on $Z$ satisfying that $v(Z,u,z)\leq \|z\|$ for every $z\in Z$. To determine if the numerical radius is actually an equivalent norm on $Z$, it is used the (\emph{abstract}) \emph{numerical index} of $(Z,u)$ (or the \emph{numerical index of $Z$ with respect to $u$}), namely the constant
$$
n(Z,u):=\inf\{v(Z,u,z)\colon z\in S_Z\}=\max\{k\geq 0\colon k\|z\|\leq v(Z,u,z)\ \forall z\in Z\}.
$$
It is clear that $0\leq n(Z,u)\leq 1$ and that $n(Z,u)>0$ if and only if $v(Z,u,\cdot)$ is an equivalent norm on $Z$ (and this is equivalent to the fact that $u$ is a geometrically unitary element of $B_Z$, see the beginning of section \ref{sect:abstractnumericalindex}). When $n(Z,u)=0$, it is possible that $v(Z,u,\cdot)$ is not a norm, or that $v(Z,u,\cdot)$ is a non-equivalent norm on $Z$ (and in this case, $u$ is a vertex of $B_Z$ which is not a geometrically unitary element, see also the beginning of section \ref{sect:abstractnumericalindex}). The value $n(Z,u)=1$ means that the numerical radius with respect to $(Z,u)$ coincides with the given norm of $Z$ (and in this case, we say that $u$ is a \emph{spear} element of $Z$, see Proposition \ref{Prop:CharacterizationsNumericalIndexVector} and the paragraph after it for some equivalent formulations). With this language in mind, the announced result of Bohnenblust and Karlin states that unitary elements of a unital complex algebra $A$ (a purely algebraic concept) are geometrically unitary elements of $A$ (a purely geometric concept), actually one has that $n(A,u)\geq 1/\e$ if $u$ is a unitary element of the complex Banach algebra $A$, see \cite[Corollary 2.1.21]{Cabrera-Rodriguez}. This is no longer true in the real case as, for instance, the identity is not even a vertex of $\mathcal{L}(H)$ when $H$ is any Hilbert space of dimension greater than one. Nevertheless, numerical range arguments show that the unit of a unital real Banach algebra is a strongly extreme point, see \cite[Corollary 2.1.42]{Cabrera-Rodriguez}  and \cite{KaKaMaVi} for a quantitative version. For (complex) $C^*$-algebras, the concepts of unitary element and geometrically unitary element coincide, see \cite[Theorem 2.1.27]{Cabrera-Rodriguez} for the details. Let us also comment that the study  of the algebra numerical range was crucial to state very important results in the theory of Banach algebras as Vidav's characterization of $C^*$-algebras, see \cite{B-D1} or \cite{Cabrera-Rodriguez}. More recently, geometric characterizations of algebraic properties of elements of $C^*$-algebras have been given by Akeman and Weaver \cite{AkemanWeaver}, some of which can be expressed in terms of the numerical ranges, see \cite{Rodriguez-JMAA-unitaries}. Let us observe that geometrically unitary elements (and even vertices) of the unit ball of a Banach space are extreme points of the unit ball (see Lemma \ref{Lemm:unotsmoothnotextreme}, for instance) so, when non-zero, the abstract numerical index measures ``how extreme'' is a point of the unit ball of a Banach space. Finally, let us recall that the concept of numerical range (and so the ones of numerical radius and numerical index) depends on the base field, as for a complex Banach space $Z$ and a norm-one element $u\in Z$, $V(Z_\R,u,z)=\re V(Z,u,z)$, where $Z_\R$ is the real space underlying the space $Z$ and $\re$ represents the real part function.

Let us now return to our aim of studying the geometry of $\mathcal{L}(X,Y)$ around a norm-one operator $G$. For this to be done, we introduce the numerical range with respect to $G$. If $X$ and $Y$ are Banach spaces and $G\in \mathcal{L}(X,Y)$ is a norm-one operator, we consider the \emph{numerical range} of $T\in \mathcal{L}(X,Y)$ \emph{with respect to $G$} which is the set
$$
V(\mathcal{L}(X,Y),G,T)=\{\phi(T)\colon \phi\in \mathcal{L}(X,Y)^\ast,\,\phi(G)=1\}.
$$
Analogously, we may consider the corresponding \emph{numerical radius with respect to $G$}:
$$
v(\mathcal{L}(X,Y),G,T)=\sup\{|\lambda|\colon \lambda \in V(\mathcal{L}(X,Y),G,T)\} \qquad (T\in \mathcal{L}(X,Y)),
$$
and the \emph{numerical index} of $(\mathcal{L}(X,Y),G)$ (or the numerical index of $\mathcal{L}(X,Y)$ with respect to $G$):
$$
n_G(X,Y):=n(\mathcal{L}(X,Y),G)=\inf\{v(\mathcal{L}(X,Y),G,T)\colon T\in \mathcal{L}(X,Y),\,\|T\|=1\}.
$$
This will be the central concept of study in this paper. Let us comment that $n_G(X,Y)$ is the greatest constant $k\geq 0$ such that
$$
\max_{|w|=1}\|G + wT\|\geq 1 + k \|T\|
$$
for every $T\in \mathcal{L}(X,Y)$, see Proposition \ref{Prop:CharacterizationsNumericalIndexOperators}. The case of $k=1$ in the inequality above gives the concept of spear operator, introduced in \cite{Ardalani} and deeply studied in \cite{SpearsBook}.

Usually, when one deals with the geometry of spaces of operators, it is convenient to have tools which allow to describe this geometry in terms of the geometry of the domain and range spaces, allowing us to work on these spaces and not on the whole space of operators and, even more, on its wild dual space. In the case of the numerical range of operators on a Banach space (with respect to the identity operator), this tool is the ``spatial'' version of the numerical range. If $X$ is a Banach space and $T\in \mathcal{L}(X)$, the \emph{spatial numerical range} of $T$ was introduced by Bauer (and in a somehow equivalent reformulation by Lumer) in the 1960's (see \cite{B-D1} for instance) as the set
\begin{equation}\label{eq:spatialnumericalrange}
W(T):=\{x^\ast(Tx)\colon x^\ast\in S_{X^\ast},\,x\in S_X,\,x^\ast(x)=1\},
\end{equation}
which is the direct extension of Toeplitz's numerical range of operators on Hilbert spaces. It is immediate that $W(T)\subseteq V(\mathcal{L}(X),\Id,T)$ and, actually, one has
$$
\overline{\conv}\bigl(W(T)\bigr)=V(\mathcal{L}(X),\Id,T)
$$
for every $T\in \mathcal{L}(X)$ (see \cite[Proposition 2.1.31]{Cabrera-Rodriguez}, for instance). From here, it is clear that the \emph{spatial numerical radius} $v(T)$ of an operator $T\in \mathcal{L}(X)$ coincides with the numerical radius with respect to $\Id$, that is,
$$
v(T):=\sup\{|\lambda|\colon \lambda\in W(T)\}=v(\mathcal{L}(X),\Id,T).
$$
Therefore, the same happens with the corresponding numerical index:
$$
n(X):=\inf\{v(T)\colon T\in \mathcal{L}(X),\, \|T\|=1\}=n(\mathcal{L}(X),\Id).
$$
With this tool it has been possible to construct an example of a Banach space $X$ such that the identity operator is a vertex but not a geometrically unitary element, see \cite[Proposition 2.1.39]{Cabrera-Rodriguez} for instance. For a detailed study of the Banach space numerical index, we refer the reader to the expository paper \cite{KaMaPa} and to subsection 1.1 of the very recent paper \cite{secondni}.

When dealing with a general operator $G\in \mathcal{L}(X,Y)$, the possibility of getting a spatial numerical range with respect to $G$ analogous to the formula \eqref{eq:spatialnumericalrange} does not work. Indeed, for the set
$$
\{(x,y^\ast)\colon x\in S_X,\, y^\ast\in S_{Y^\ast},\,y^\ast(Gx)=1\}
$$
to be non-empty, we need the operator $G$ to attain its norm; but even in the case of $G$ being an inclusion operator, the above set is not always representative (see \cite{MarMerPay}). Nevertheless, there is an ``approximate spatial'' numerical range with respect to an operator recently introduced by Ardalani \cite{Ardalani} which does the job. Given two Banach spaces $X$ and $Y$ and a norm-one operator $G\in \mathcal{L}(X,Y)$, the \emph{approximate spatial numerical range} of $T\in \mathcal{L}(X,Y)$ \emph{with respect to} $G$ is the set
$$
V_G(T):=\bigcap_{\delta>0}\overline{\{y^\ast(Tx)\colon y^\ast\in S_{Y^\ast},\,x\in S_X,\, \re y^\ast(Gx)>1-\delta \}}.
$$
It was shown in \cite{Ardalani}, using the Bishop-Phelps-Bollob\'{a}s theorem, that $V_{\Id}(T)=\overline{W(T)}$ for every $T\in \mathcal{L}(X)$ and every Banach space $X$, so both numerical ranges produce the same associated numerical radii. Moreover, it is shown in \cite[Theorem~2.1]{Mar-numrange-JMAA2016} that
\begin{equation}\label{eq:Ardalani-MartinJMAA2016}
\conv\bigl(V_G(T)\bigr)=V(\mathcal{L}(X,Y),G,T)
\end{equation}
for all Banach spaces $X$, $Y$ and all operators $G,T\in \mathcal{L}(X,Y)$. It then follows that
$$
v_G(T):=\inf_{\delta>0}\sup\{|y^\ast(Tx)|\colon y^\ast\in S_{Y^\ast},\,x\in S_X,\, \re y^\ast(Gx)>1-\delta\}=v(\mathcal{L}(X,Y),G,T),
$$
and that
$$
n_G(X,Y)=\inf\{v_G(T)\colon T\in \mathcal{L}(X,Y),\,\|T\|=1\}=n(\mathcal{L}(X,Y),G).
$$
This provides a ``spatial'' way to deal with the numerical radius and the numerical index with respect to an arbitrary operator which is specially interesting when we work in concrete Banach spaces and when we study the behaviour of these concepts with respect to Banach space operations on the domain and range spaces.

The aim of this paper is to present a number of results on the numerical indices with respect to operators. Let us detail the content of the paper. Fist, we finish this introduction with a short subsection containing the needed terminology and notation. Next, we provide in section \ref{sect:abstractnumericalindex} some basic results on abstract numerical index. Some of the results were previously known, but some others are new. Among the new ones, we may stress the fact that the set $\{u\in S_Z \colon n(Z,u)>0\}$ is countable (i.e.\ finite or infinite and countable) when $Z$ is a finite-dimensional real space, and we provide with some estimations on the sum of the values $n(Z,u)$ moving $u\in S_Z$. On the other hand, for every subset $A\subseteq [0,1]$ containing $0$, we show that there is a (real or complex) Banach space $Z$ such that $\{n(Z,u)\colon u\in S_Z\}=A$. Besides, an extension of the formula \eqref{eq:Ardalani-MartinJMAA2016} is given which provides some useful ways to calculate numerical radii with respect to operators. Next, we particularize these results to numerical indices with respect to operators and also give some more important tools in section \ref{sect:tools_num_index_operators}. Namely, we show that the numerical index with respect to an operator always dominates the numerical index with respect to its adjoint, we calculate the value of the numerical index with respect to a rank-one operator and we show some estimations of the numerical index with respect to an operator in terms of the numerical radii of operators on the domain space or on the range space. In section \ref{sect:setofvalues} we provide results on the set of values of the numerical indices with respect to all norm-one operators between two fixed Banach spaces, that is, on the set
$$
\mathcal{N}(\mathcal{L}(X,Y)):=\bigl\{n_G(X,Y)\colon G\in \mathcal{L}(X,Y),\, \|G\|=1 \bigr\}
$$
for given Banach spaces $X$ and $Y$ (this notation is coherent with the one that we will introduce at the beginning of subsection \ref{subsection:fin-dim-spaces} for the abstract numerical index). We show that $0\in \mathcal{N}(\mathcal{L}(X,Y))$ unless both $X$ and $Y$ are one-dimensional, and that the set $\mathcal{N}(\mathcal{L}(X,Y))$ is countable if $X$ and $Y$ are finite-dimensional real spaces. Besides, for a real Hilbert space $H$ with $\dim(H)\geq 2$ one has
$$
\mathcal{N}(\mathcal{L}(X,H))=\mathcal{N}(\mathcal{L}(H,Y))=\{0\}
$$
for all Banach spaces $X$ and $Y$. We also show that the role of the space $H$ can also be played by some non-Hilbertian real Banach spaces as $\mathcal{L}(H)$  when $H$ is an infinite-dimensional real Hilbert space.  Estimations of the numerical indices with respect to operators which leave from or arrive to the real spaces $\ell_p$ are also given: for $1<p<\infty$,
$$
\mathcal{N}(\mathcal{L}(X,\ell_p))\subseteq [0,M_p] \quad \text{ and } \quad \mathcal{N}(\mathcal{L}(\ell_p,Y))\subseteq [0,M_p]
$$
for all real Banach spaces $X$ and $Y$, where $M_p=\sup_{t\in[0,1]}\frac{|t^{p-1}-t|}{1+t^p}$. For complex Hilbert spaces $H_1$, $H_2$ of dimension greater than one, we show that $\mathcal{N}(\mathcal{L}(H_1,H_2))\subseteq \{0,1/2\}$ and the value $1/2$ is taken if and only if $H_1$ and $H_2$ are isometrically isomorphic. Besides, $\mathcal{N}(\mathcal{L}(X,H))\subseteq [0,1/2]$ and $\mathcal{N}(\mathcal{L}(H,Y))\subseteq [0,1/2]$ when $H$ is a complex infinite-dimensional Hilbert space and $X$ and $Y$ are arbitrary complex Banach spaces. It is also shown that
$$
\mathcal{N}(\mathcal{L}(C(K_1),C(K_2)))=\{0,1\}
$$
for many families of Hausdorff topological compact spaces $K_1$ and $K_2$, both in the real and the complex cases. As a consequence, we show that
$$
\mathcal{N}(\mathcal{L}(L_\infty(\mu_1),L_\infty(\mu_2)))\subseteq \{0,1\} \quad \text{and} \quad \mathcal{N}(\mathcal{L}(L_1(\mu_1),L_1(\mu_2)))\subseteq \{0,1\}
$$
for all $\sigma$-finite positive measures $\mu_1$ and $\mu_2$.

In section \ref{sect:Lipschitz} we use the tools presented in section \ref{sect:tools_num_index_operators} to prove that the concept of Lipschitz numerical range introduced in \cite{Wang2012, Wang-Huang-Tan} for Lipschitz self-maps of a Banach space can be viewed as a particular case of numerical range with respect to a linear operator between two different Banach spaces.

Finally, we collect in section \ref{sect:stability} some results which show the behaviour of the value of the numerical index when we apply some Banach space operations. For instance, we show that the numerical index of a $c_0$-, $\ell_1$- or $\ell_\infty$-sum of Banach spaces with respect to a direct sum of norm-one operators in the corresponding spaces coincides with the infimum of the numerical indices of corresponding summands. As a consequence, we show that there are real and complex Banach spaces $X$ for which $\mathcal{N}\bigl(\mathcal{L}(X)\bigr) =[0,1]$. We also show that a composition operator between spaces of vector-valued continuous, integrable, or essentially bounded functions produces the same numerical index as the original operator. Next, we provide two conditions assuring that the numerical index with respect to an operator and the numerical index with respect to its adjoint coincides: that the range space is $L$-embedded and that the operator is rank-one. Finally, we discuss some results on the value of the numerical index with respect to a composition of two operators and then we provide with ways to extend the domain or the codomain of an operator retaining the numerical index. In particular, the results of the section allow us to solve Problem 9.14 of \cite{SpearsBook}.

\subsection{Notation and terminology}

By $\mathbb{K}$ we denote the scalar field ($\mathbb{R}$ or $\mathbb{C}$), and we use the standard notation $\T:=\{ \lambda \in \mathbb{K} \colon |\lambda|=1 \}$ for its unit sphere. We use the letters $X, Y, Z$ for Banach spaces over $\mathbb{K}$ and by subspace we always mean closed subspace. In some cases, we have to distinguish between the real and the complex case, but for most results this difference is insignificant. We write $J_X:X\longrightarrow X^{\ast\ast}$ to denote the natural isometric inclusion of $X$ into its bidual $X^{\ast\ast}$.

Let $\Gamma$ be a non-empty index set, and $\{X_\gamma \colon \gamma \in \Gamma\}$ be a collection of Banach spaces. We write
$$
\bigl[\bigoplus\nolimits_{\lambda\in \Lambda} X_\lambda\bigr]_{c_0},\qquad
\bigl[\bigoplus\nolimits_{\lambda\in \Lambda} X_\lambda\bigr]_{\ell_1},\qquad \bigl[\bigoplus\nolimits_{\lambda\in \Lambda} X_\lambda\bigr]_{\ell_\infty},
$$
to denote, respectively, the $c_0$-, $\ell_1$-, and $\ell_\infty$-sum of the family. If $E$ is $\R^n$ endowed with an absolute norm $|\cdot|_E$ and $X_1,\ldots,X_n$ are Banach spaces, we write $X=[X_1\oplus\dots\oplus X_n]_E$ to denote the product space $X_1\times \cdots \times X_n$ endowed with the norm
$$
\|(x_1,\ldots,x_n)\|=\bigl|(\|x_1\|,\ldots,\|x_n\|) \bigr|_E
$$
for all $x_i\in X_i$, $i=1,\ldots,n$.

Given $1\leq p \leq \infty$ and a non-empty set $\Gamma$, we write $\ell_p(\Gamma)$ to denote the $L_p$-space associated to the counting measure on $\Gamma$. For $n\in \N$, we just write $\ell_{p}^{n}$ to denote $\ell_p(\{1,\ldots,n\})$. Given a Banach space $X$, a compact Hausdorff topological space $K$, and a $\sigma$-finite measure space $(\Omega,\Sigma,\mu)$, we write $C(K,X)$, $L_1(\mu,X)$, and $L_\infty(\mu,X)$ to denote, respectively, the spaces of continuous functions from $K$ to $X$, Bochner-integrable (classes of) functions from $\Omega$ to $X$, and strongly measurable and essentially bounded (classes of) functions from $\Omega$ to $X$.

\section{Some old and new results on abstract numerical index}\label{sect:abstractnumericalindex}
Our aim here is to recall some basic facts about the abstract numerical range and to provide with some ones which, as far as we know, are new. We start by recalling some related definitions which were already mentioned in the introduction.

\begin{definition}
Let $Z$ be a Banach space and let $u\in S_Z$.
\begin{enumerate}
\item[(a)] We say that $u$ is a \emph{vertex} of $B_Z$ if $\Face(B_{Z^\ast},u)$ separates the points of $Z$ (i.e.\ for every $z\in Z\setminus \{0\}$, there is $\phi \in \Face(B_{Z^\ast},u)$ such that $\phi(z)\neq 0$). This is clearly equivalent to the fact that $v(Z,u,z)=0$ for $z\in Z$ implies that $z=0$.
\item[(b)] We say that $u$ is a \emph{geometrically unitary element} of $B_Z$ if the linear span of $\Face(B_{Z^\ast},u)$ is equal to the whole $Z^\ast$. It is known, see \cite[Theorem 2.1.17]{Cabrera-Rodriguez}, that $u$ is a geometrically unitary element if and only if $n(Z,u)>0$.
\end{enumerate}
\end{definition}

We refer the reader to the already cited book \cite{Cabrera-Rodriguez}, and to the papers \cite{BandJaRao,Godefroy-Indumathi,Godefroy_Rao,Rodriguez-JMAA-unitaries} for more information and background on these concepts.

\subsection{A few known elementary results}

First, we present some known results on abstract numerical index which we will use throughout the paper. They are elementary and come from many sources, but we have decided to refer all of them to the recent monograph \cite{Cabrera-Rodriguez} for convenience of the reader.

The first result allows to relate the numerical range to a directional derivative.

\begin{lemma}\label{lemma:derivadaani}
Let $Z$ be a Banach space and let $u\in S_Z$. Then
$$
\max \re V(Z,u,z)=\lim_{t\to 0^+} \frac{\|u+tz\|-1}{t}
$$
for every $z\in Z$. Therefore,
$$
v(Z,u,z)=\max_{\theta\in \T} \lim_{t\to 0^+} \frac{\|u+t\theta z\|-1}{t}= \lim_{t\to 0^+} \max_{\theta\in \T}\frac{\|u+t\,\theta z\|-1}{t}.
$$
\end{lemma}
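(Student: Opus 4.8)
The plan is to establish the first equality
$$
\max \re V(Z,u,z)=\lim_{t\to 0^+}\frac{\|u+tz\|-1}{t}
$$
and then deduce the numerical radius formula as a corollary by applying the first identity to the rotated elements $\theta z$ for $\theta\in\T$. The core of the argument is the directional-derivative interpretation of the norm, so I would first argue that the limit on the right exists. Setting $g(t)=\|u+tz\|$, convexity of the norm makes $t\mapsto \frac{g(t)-g(0)}{t}=\frac{\|u+tz\|-1}{t}$ a nondecreasing function of $t$ on $(0,\infty)$ (this is the standard fact that difference quotients of a convex function increase in the step size). Being nondecreasing and bounded below (by $-\|z\|$, since $\big|\|u+tz\|-1\big|\le t\|z\|$), the limit as $t\to 0^+$ exists; call it $D$.

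Next I would prove the two inequalities linking $D$ to $\max\re V(Z,u,z)$. For the inequality $D\ge \re\phi(z)$ for each state $\phi\in\Face(B_{Z^\ast},u)$, I use that $\|u+tz\|\ge \re\phi(u+tz)=\re\phi(u)+t\,\re\phi(z)=1+t\,\re\phi(z)$ for $t>0$, so $\frac{\|u+tz\|-1}{t}\ge \re\phi(z)$; letting $t\to0^+$ gives $D\ge\re\phi(z)$, and taking the supremum over states yields $D\ge \max\re V(Z,u,z)$. For the reverse inequality I would use a Hahn--Banach / supporting-functional argument: fix a sequence $t_n\downarrow 0$ and pick $\phi_n\in S_{Z^\ast}$ with $\phi_n(u+t_nz)=\|u+t_nz\|$. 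Then $\re\phi_n(z)=\frac{\|u+t_nz\|-\re\phi_n(u)}{t_n}\ge \frac{\|u+t_nz\|-1}{t_n}$, and since $\re\phi_n(u)\le 1$ while $\phi_n(u+t_nz)=\|u+t_nz\|\to 1$, one checks $\re\phi_n(u)\to 1$. Passing to a weak$^\ast$ cluster point $\phi$ of $(\phi_n)$ (in the unit ball of $Z^\ast$, which is weak$^\ast$ compact) gives $\phi(u)=1$, so $\phi$ is a state, while $\re\phi(z)\ge D$; hence $\max\re V(Z,u,z)\ge D$. Combining the two inequalities gives the first displayed identity.

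The main technical obstacle is the reverse inequality: the norming functionals $\phi_n$ need not converge, only cluster weak$^\ast$, and one must be careful that the cluster point is genuinely a state (i.e. $\phi(u)=1$, not merely $\phi(u)\le1$) and that the real-part inequality survives the limit. The point that $\re\phi_n(u)\to1$ is what forces the cluster point into $\Face(B_{Z^\ast},u)$, and the weak$^\ast$ upper semicontinuity of $\re\phi\mapsto\re\phi(z)$ along the cluster net preserves $\re\phi(z)\ge D$; these two observations are the crux and deserve care.

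Finally, for the numerical-radius formulas I would note that $V(Z,u,\theta z)=\theta\,V(Z,u,z)$ since the defining functionals do not depend on $z$, whence $v(Z,u,z)=\sup_{\theta\in\T}\max\re V(Z,u,\theta z)$ because every complex (or real) number in the set attains its modulus as a real part after a suitable rotation. Applying the first identity to each $\theta z$ converts this into $\max_{\theta\in\T}\lim_{t\to0^+}\frac{\|u+t\theta z\|-1}{t}$. To exchange the maximum and the limit in the last equality, I would invoke the monotonicity in $t$ established at the start together with compactness of $\T$: for each fixed $t$ the inner quantity is continuous in $\theta$ (as $\theta\mapsto\|u+t\theta z\|$ is continuous), so the maximum over the compact set $\T$ is attained, and the monotone convergence in $t$ lets the limit pass through the maximum by a Dini-type argument. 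This interchange is routine given the earlier monotonicity but should be stated explicitly.
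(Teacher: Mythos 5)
Your proposal is correct and follows essentially the same route as the paper: the paper cites the first identity as folklore (your convexity-plus-norming-functionals argument is the standard proof of that cited fact), and the interchange of $\max_{\theta\in\T}$ with $\lim_{t\to 0^+}$ is handled in both cases by compactness of $\T$ together with the uniform Lipschitz estimate $\bigl|\|u+t\theta z\|-\|u+t\theta' z\|\bigr|\leq t|\theta-\theta'|\,\|z\|$ (the paper extracts a convergent subsequence of maximizers $\theta_n$, while you invoke a Dini-type argument). The only point to make explicit in your last step is the continuity of the limit function $\theta\mapsto\lim_{t\to 0^+}\frac{\|u+t\theta z\|-1}{t}$, which Dini's theorem requires; it follows at once from the uniform-in-$t$ Lipschitz bound above, or from the already-proved identity with $\max\re\bigl(\theta V(Z,u,z)\bigr)$.
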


The first part of the above lemma is folklore and can be found in \cite[Proposition 2.1.5]{Cabrera-Rodriguez}. The first equality for the numerical radius is an immediate consequence, and the second equality follows routinely from the compactness of $\T$. Indeed, let $(t_n)$ a sequence of positive scalars converging to $0$ and for each $n\in \N$, take $\theta_n\in \T$ such that
$$
\max_{\theta\in \T}\frac{\|u+t_n\,\theta z\|-1}{t_n}= \frac{\|u+t_n\,\theta_n\,z\|-1}{t_n}.
$$
Extract a subsequence $(\theta_{\sigma(n)})$ which is convergent to, say, $\theta_0\in \T$. Then
$$
\frac{\|u+t_{\sigma(n)}\theta_0 z\|-1}{t_{\sigma(n)}} \geq  \frac{\|u+t_{\sigma(n)}\theta_{\sigma(n)} z\|-1}{t_{\sigma(n)}} - |\theta_{\sigma(n)}-\theta_{0}|\|z\|.
$$
Finally,
\[
v(Z,u,z)\geq \lim_{n\to \infty} \frac{\|u+t_{\sigma(n)}\theta_0 z\|-1}{t_{\sigma(n)}} \geq \lim_{n\to \infty} \max_{\theta\in \T}\frac{\|u+t_{\sigma(n)}\,\theta z\|-1}{t_{\sigma(n)}}
.
\]

The next easy to prove  result relates the numerical index with respect to a point to the geometry at the point. Recall that a norm-one element $z$ of a Banach space $Z$ is said to be a \emph{strongly extreme point} of $B_Z$ if whenever $(x_n)$ and $(y_n)$ are sequences in $B_Z$ such that $\lim (x_n + y_n) = 2u$, then $\lim (x_n - y_n) = 0$. It is clear that strongly extreme points are extreme points, but the reciprocal result is not true, see \cite{KunenRosenthal} for instance.

\begin{lemma}\label{Lemm:unotsmoothnotextreme}
Let $Z$ be a Banach space and $u\in S_Z$.
\begin{enumerate}
  \item[(a)] If $u$ is a vertex of $B_Z$, then $u$ is an extreme point and, if moreover $\dim(Z)\geq 2$, then the norm of $Z$ is not smooth at $u$.
  \item[(b)] If $u$ is a geometrically unitary element of $B_Z$ \emph{(}i.e.\ $n(Z,u)>0$\emph{)}, then $u$ is a strongly extreme point of $B_Z$.
\end{enumerate}
\end{lemma}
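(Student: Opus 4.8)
The plan is to treat the two statements separately, deducing each from the defining separation/spanning property of $\Face(B_{Z^\ast},u)$ together with the elementary fact that a norm-one functional taking a value of real part $1$ at a point of $B_Z$ must take the value $1$.

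For part (a), first I would fix a convex decomposition $u=\tfrac12(x+y)$ with $x,y\in B_Z$ and show $x=y$. Given any $\phi\in\Face(B_{Z^\ast},u)$ one has $1=\phi(u)=\tfrac12\bigl(\phi(x)+\phi(y)\bigr)$; taking real parts and using $\re\phi(x)\leq 1$ and $\re\phi(y)\leq 1$ forces $\re\phi(x)=\re\phi(y)=1$, and since $|\phi(x)|\leq 1$ and $|\phi(y)|\leq 1$ this upgrades to $\phi(x)=\phi(y)=1$. Hence $\phi(x-y)=0$ for every $\phi$ in the face, and because $u$ is a vertex the face separates the points of $Z$, so $x-y=0$. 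This shows $u$ is extreme.

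For the non-smoothness claim (when $\dim(Z)\geq 2$), recall that the norm is smooth at $u$ precisely when the set of supporting functionals $\Face(B_{Z^\ast},u)$ is a singleton. If it were a single functional $\phi$, then the vertex property would say that $\{\phi\}$ separates the points of $Z$; but $\ker\phi$ has codimension one, hence is nonzero when $\dim(Z)\geq 2$, contradicting separation. Therefore $\Face(B_{Z^\ast},u)$ contains at least two elements and the norm is not smooth at $u$.

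For part (b), assume $n(Z,u)>0$, so there is $k>0$ with $v(Z,u,z)=\sup\{|\phi(z)|\colon\phi\in\Face(B_{Z^\ast},u)\}\geq k\|z\|$ for all $z$, and let $(x_n),(y_n)\subseteq B_Z$ satisfy $w_n:=x_n+y_n\to 2u$. Writing $\eps_n:=\|w_n-2u\|\to 0$, it suffices to show $v(Z,u,x_n-y_n)\to 0$, for then $\|x_n-y_n\|\leq k^{-1}v(Z,u,x_n-y_n)\to 0$. The naive estimate for a fixed $\phi\in\Face(B_{Z^\ast},u)$, namely that $\phi(x_n)+\phi(y_n)=\phi(w_n)\to 2$ together with $|\phi(x_n)|,|\phi(y_n)|\leq 1$ forces $\phi(x_n-y_n)\to 0$, only gives convergence pointwise in $\phi$; the main obstacle is to make this uniform over the face. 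I would resolve this with the scalar parallelogram identity: for each $\phi\in\Face(B_{Z^\ast},u)$, setting $a=\phi(x_n)$ and $b=\phi(y_n)$,
\[
|a-b|^2=2|a|^2+2|b|^2-|a+b|^2\leq 4-|\phi(w_n)|^2.
\]
Since $\phi(w_n)=2+\phi(w_n-2u)$ gives $|\phi(w_n)|\geq 2-\eps_n$ uniformly in $\phi\in S_{Z^\ast}$, we obtain $|\phi(x_n-y_n)|^2\leq 4-(2-\eps_n)^2\leq 4\eps_n$, whence $v(Z,u,x_n-y_n)\leq 2\sqrt{\eps_n}\to 0$ and thus $\|x_n-y_n\|\to 0$, proving that $u$ is strongly extreme.
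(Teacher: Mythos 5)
Your proofs are correct. The paper itself does not prove this lemma: it cites \cite[Lemma 2.1.25]{Cabrera-Rodriguez} for the extreme-point claim and \cite[Proposition 2.1.41]{Cabrera-Rodriguez} for part (b), giving only the one-line argument for non-smoothness (a singleton face cannot separate points when $\dim(Z)\geq 2$), which is exactly what you wrote. Your part (a) is the standard face argument and is fine, including the upgrade from $\re\phi(x)=1$ to $\phi(x)=1$ in the complex case. Your part (b) is a clean self-contained replacement for the citation: the scalar parallelogram identity applied to $\phi(x_n)$ and $\phi(y_n)$ gives the bound $|\phi(x_n-y_n)|\leq 2\sqrt{\eps_n}$ \emph{uniformly} over $\phi\in\Face(B_{Z^\ast},u)$, which is precisely what is needed to pass from the numerical radius to the norm via $\|x_n-y_n\|\leq n(Z,u)^{-1}\,v(Z,u,x_n-y_n)$; this also yields a quantitative modulus of strong extremality of order $\sqrt{\eps}$ for free. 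The only cosmetic point is that the inequality $4-(2-\eps_n)^2\leq 4\eps_n$ uses $\eps_n\geq 0$ (it equals $4\eps_n-\eps_n^2$), which is harmless.
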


The extreme point condition appears in \cite[Lemma 2.1.25]{Cabrera-Rodriguez}; if the norm of $Z$ is smooth at $u$, then $\Face(B_{Z^\ast},u)$ is a singleton, so either $\dim(Z)=1$ or $u$ cannot be a vertex. The result in (b) appears in \cite[Proposition 2.1.41]{Cabrera-Rodriguez}. Let us comment that there are vertices which are not strongly extreme points \cite[Example 2.1.43]{Cabrera-Rodriguez}.

The next result, which can be found in \cite[Corollary 2.1.2]{Cabrera-Rodriguez}, is elementary and very useful.

\begin{lemma}\label{Lemm:numericalIndexThroughMaps}
Let $\psi: Z_1 \longrightarrow Z_2$ be a linear operator between Banach spaces $Z_1$ and $Z_2$, let $u \in S_{Z_1}$ be such that  $\| \psi(u)\| = 1$. Then,
\begin{enumerate}
  \item[(a)] If $\|\psi\|=1$, then $v(Z_2, \psi(u), \psi(z)) \leq v(Z_1, u, z)$ for every $z \in Z_1$.
  \item[(b)] If $\psi$ is an isometric embedding, then $v(Z_2, \psi(u), \psi(z)) = v(Z_1, u, z)$ for every $z \in Z_1$; therefore, we have in this case that
      $n(Z_2,\psi(u))\leq n(Z_1,u)$.
\end{enumerate}
\end{lemma}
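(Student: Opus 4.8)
The plan is to work directly with the defining face $\Face(B_{Z^\ast},u)$ and the adjoint operator $\psi^\ast$, transporting states between the two spaces, rather than going through the derivative formula of Lemma~\ref{lemma:derivadaani} (though the latter would give an equally short alternative route).

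First I would prove (a). Fix $z\in Z_1$ and take any $\phi\in\Face(B_{Z_2^\ast},\psi(u))$, so that $\phi\in S_{Z_2^\ast}$ and $\phi(\psi(u))=1$. The key observation is that the composition $\psi^\ast\phi=\phi\circ\psi$ lies in $\Face(B_{Z_1^\ast},u)$: indeed $\|\psi^\ast\phi\|\leq\|\psi\|\,\|\phi\|=1$ because $\|\psi\|=1$, while the identity $(\psi^\ast\phi)(u)=\phi(\psi(u))=1$ forces $\|\psi^\ast\phi\|\geq 1$ as $\|u\|=1$; hence $\|\psi^\ast\phi\|=1$ and $(\psi^\ast\phi)(u)=1$. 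Consequently $\phi(\psi(z))=(\psi^\ast\phi)(z)\in V(Z_1,u,z)$, which yields the inclusion $V(Z_2,\psi(u),\psi(z))\subseteq V(Z_1,u,z)$ and therefore $v(Z_2,\psi(u),\psi(z))\leq v(Z_1,u,z)$.

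For (b) I would establish the reverse inclusion $V(Z_1,u,z)\subseteq V(Z_2,\psi(u),\psi(z))$ by lifting states. Given $\eta\in\Face(B_{Z_1^\ast},u)$, the point is to realise $\eta$ as $\psi^\ast\phi$ for a suitable $\phi$ in the face of $B_{Z_2^\ast}$ at $\psi(u)$. Since $\psi$ is an isometric embedding, the functional $\eta\circ\psi^{-1}$, where $\psi^{-1}$ denotes the inverse of $\psi$ on its image $\psi(Z_1)$, is well defined and of norm one on the subspace $\psi(Z_1)\subseteq Z_2$; by the Hahn--Banach theorem it extends to some $\phi\in S_{Z_2^\ast}$. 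Then $\phi(\psi(u))=\eta(u)=1$, so $\phi\in\Face(B_{Z_2^\ast},\psi(u))$, and $\phi(\psi(z))=\eta(z)$. This supplies the missing inclusion, and combined with (a) we obtain $V(Z_1,u,z)=V(Z_2,\psi(u),\psi(z))$, hence the equality of the associated numerical radii.

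Finally, for the index estimate I would use that an isometric embedding maps $S_{Z_1}$ into $S_{Z_2}$, so the infimum defining $n(Z_2,\psi(u))$ runs over a set containing $\{\psi(z)\colon z\in S_{Z_1}\}$; restricting to this smaller set and applying the equality just proved gives
$$
n(Z_2,\psi(u))\leq \inf_{z\in S_{Z_1}} v(Z_2,\psi(u),\psi(z))=\inf_{z\in S_{Z_1}} v(Z_1,u,z)=n(Z_1,u).
$$
The only delicate step is the state-lifting in (b): it is precisely here that one needs $\psi$ to be an isometry and not merely a norm-one operator, since the Hahn--Banach extension must preserve the norm in order for the lifted functional $\phi$ to remain on the sphere $S_{Z_2^\ast}$ and thus belong to the face at $\psi(u)$.
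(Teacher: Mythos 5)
Your proof is correct. The paper does not prove this lemma itself --- it only cites \cite[Corollary 2.1.2]{Cabrera-Rodriguez} --- and your argument (pushing states forward via $\psi^\ast$ to get the inclusion $V(Z_2,\psi(u),\psi(z))\subseteq V(Z_1,u,z)$ for (a), and lifting states by a norm-preserving Hahn--Banach extension along the isometric embedding for (b)) is precisely the standard one, including the correct identification of where isometry, rather than mere contractivity, is actually needed.
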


We next would like to present a pair of characterizations of the abstract numerical index.
\begin{proposition}\label{Prop:CharacterizationsNumericalIndexVector}
	Let $Z$ be a Banach space, $u\in S_Z$, and $0 < \lambda \leq 1$. Then, the following statements are equivalent:
	\begin{enumerate}
		\item[(i)] $n(Z,u) \geq \lambda$;
		\item[(ii$_\R$)]  In the real case, $\lambda B_{Z^\ast } \subseteq \conv{\bigl( \Face(B_{Z^\ast},u) \cup -\Face(B_{Z^\ast},u)\bigr)}$;
		\item[(ii$_\C$)]  In the complex case, given $\varepsilon > 0$,  $\theta_{1}, \ldots, \theta_{k} \in B_{\mathbb{C}}$ satisfying \mbox{$B_{\mathbb{C}} \subseteq (1 + \varepsilon) \conv{\{ \theta_{1}, \ldots, \theta_{k} \}}$}, it holds that
		\[
		\lambda  B_{Z^\ast } \subseteq (1+ \varepsilon) \, \conv{\left(\bigcup\nolimits_{j=1}^{k} \theta_{j} \Face(B_{Z^\ast},u)\right)} \, .
		\]
    \item[(iii)] $\displaystyle \max_{\theta\in \T}\|u + \theta z\|\geq 1 + \lambda \|z\|$ for every $z\in Z$.
	\end{enumerate}
\end{proposition}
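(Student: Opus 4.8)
The plan is to derive (i)$\Leftrightarrow$(iii) from the directional-derivative description of the numerical radius, and to obtain (i)$\Leftrightarrow$(ii$_\R$) and (i)$\Leftrightarrow$(ii$_\C$) by recasting both $v(Z,u,\cdot)$ and $\lambda\|\cdot\|$ as support functions on $(Z^\ast,w^\ast)$ and separating. For (i)$\Rightarrow$(iii): for each $\theta\in\T$ the map $t\mapsto\|u+t\theta z\|$ is convex with value $1$ at $t=0$, so $t\mapsto\tfrac{\|u+t\theta z\|-1}{t}$ is nondecreasing on $(0,\infty)$; letting $t\to0^+$ and maximizing over $\theta$, Lemma~\ref{lemma:derivadaani} gives $v(Z,u,z)\leq\max_{\theta\in\T}\|u+\theta z\|-1$, and (i) then yields (iii). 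Conversely, applying (iii) to $tz$, dividing by $t$, letting $t\to0^+$, and invoking the second equality in Lemma~\ref{lemma:derivadaani} gives $v(Z,u,z)\geq\lambda\|z\|$ for all $z$, that is, (i).

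For the real case, set $F:=\Face(B_{Z^\ast},u)\cup-\Face(B_{Z^\ast},u)$. Since $|\phi(z)|=\max\{\phi(z),-\phi(z)\}$, one has $v(Z,u,z)=\sup_{\psi\in F}\psi(z)$, so $v(Z,u,\cdot)$ is the support function of $\conv F$ while $\lambda\|\cdot\|$ is the support function of $\lambda B_{Z^\ast}$. Thus $n(Z,u)\geq\lambda$ is equivalent to the pointwise inequality between these two support functions, which by Hahn--Banach separation in $(Z^\ast,w^\ast)$ is equivalent to the inclusion of $\lambda B_{Z^\ast}$ in the weak-$\ast$ closed convex hull of $F$. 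The decisive point is that no closure is needed: $\Face(B_{Z^\ast},u)$ is weak-$\ast$ compact and convex, and the convex hull of a finite union of weak-$\ast$ compact convex sets is again weak-$\ast$ compact, being the image of the product of a simplex with those sets under a continuous map. Hence $\conv F$ is already weak-$\ast$ closed, and the inclusion is exactly (ii$_\R$).

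For the complex case the same scheme applies, starting from the identity $v(Z,u,z)=\sup_{\phi\in\Face(B_{Z^\ast},u)}\max_{\theta\in\T}\re(\theta\phi(z))=\sup_{\psi\in\T\cdot\Face(B_{Z^\ast},u)}\re\psi(z)$. Here $\T\cdot\Face(B_{Z^\ast},u)$ is an infinite union of rotated faces, whose convex hull need not be weak-$\ast$ compact; this is exactly why one approximates $B_\C$ by a polygon. Given $\varepsilon>0$ and $\theta_1,\ldots,\theta_k$ with $B_\C\subseteq(1+\varepsilon)\conv\{\theta_1,\ldots,\theta_k\}$, the set $C:=\conv\bigl(\bigcup_{j=1}^k\theta_j\Face(B_{Z^\ast},u)\bigr)$ is weak-$\ast$ compact by the same finite-union lemma, and one verifies the sandwich
$$
\sup_{\psi\in C}\re\psi(z)\ \leq\ v(Z,u,z)\ \leq\ (1+\varepsilon)\sup_{\psi\in C}\re\psi(z)
$$
for every $z$: the left inequality because $|\theta_j|\leq1$, and the right one because the rotation $\theta\in\T$ achieving $|\phi(z)|=\re(\theta\phi(z))$ lies in $(1+\varepsilon)\conv\{\theta_1,\ldots,\theta_k\}$. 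Feeding this into the support-function/separation argument shows that $n(Z,u)\geq\lambda$ is equivalent to $\lambda B_{Z^\ast}\subseteq(1+\varepsilon)C$ for every such net, which is (ii$_\C$). The main obstacle in both cases is precisely the gap between $\conv$ and its weak-$\ast$ closure: a bare separation argument only places $\lambda B_{Z^\ast}$ inside the closed convex hull, whereas the statement asserts membership in the unclosed convex hull; the finite-union compactness lemma (and, in the complex case, the preliminary polygonal approximation of the disk that reduces the continuum of rotated faces to finitely many) is what closes this gap.
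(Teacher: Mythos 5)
Your proposal is correct. The equivalence (i)$\Leftrightarrow$(iii) is handled essentially as the paper does it: the forward direction by comparing $v(Z,u,z)$ with the difference quotient at $t=1$ (the paper instead invokes Hahn--Banach directly, but both are one-line arguments), and the converse by applying (iii) to $tz$ and using the second equality of Lemma~\ref{lemma:derivadaani}. Where you genuinely diverge is in (i)$\Leftrightarrow$(ii): the paper simply cites \cite[Theorem~2.1.17]{Cabrera-Rodriguez}, whereas you supply a self-contained duality argument --- rewriting $v(Z,u,\cdot)$ and $\lambda\|\cdot\|$ as support functions and separating in $(Z^\ast,w^\ast)$. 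You also correctly isolate the one point that makes the statement delicate, namely that the inclusion is asserted into the \emph{unclosed} convex hull, and you close that gap with the right tool: $\Face(B_{Z^\ast},u)$ is weak-$\ast$ compact convex, and the convex hull of finitely many weak-$\ast$ compact convex sets is weak-$\ast$ compact (which is precisely why the complex version must first replace the continuum $\T\cdot\Face(B_{Z^\ast},u)$ by finitely many rotated copies, at the cost of the factor $1+\varepsilon$). The sandwich $\sup_{\psi\in C}\re\psi(z)\leq v(Z,u,z)\leq(1+\varepsilon)\sup_{\psi\in C}\re\psi(z)$ is verified correctly in both directions. What your route buys is independence from the cited monograph; what it costs is length, and a small wording slip ("for every such net" should read "for every such finite set $\theta_1,\ldots,\theta_k$"), which does not affect correctness.
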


The equivalence between (i) and (ii) is well known and can be found in  \cite[Theorem~2.1.17]{Cabrera-Rodriguez}, for instance. The implication (i)$\Rightarrow$(iii) is immediate from the Hahn-Banach theorem. The reciprocal result follows straightforwardly from the last equality in Lemma \ref{lemma:derivadaani}.

Let us comment that the strongest possibility in Proposition \ref{Prop:CharacterizationsNumericalIndexVector}, that is, $\lambda=1$, gives rise to the concept of spear vector introduced in \cite{SpearsBook}. A norm-one element $u$ of a Banach space $Z$ is a \emph{spear vector} if the equality $$\max_{\theta\in \T}\|u+\theta z\|=1+\|z\|$$ holds for every $z\in Z$. With the help of the previous proposition, this is equivalent to the fact that $n(Z,u)=1$. We refer the reader to chapter 2 of the book \cite{SpearsBook} for more information and background.

Finally, we present a result relating the numerical index of a Banach space with respect to a point to the numerical index of its bidual with respect to the same point which can be found in \cite[Theorem 2.1.17.v]{Cabrera-Rodriguez}.

\begin{lemma}\label{Lemm:bidual-abstract}
Let $Z$ be a Banach space and let $u\in S_Z$. Then, $n(Z^{\ast\ast},J_Z(u))=n(Z,u)$.
\end{lemma}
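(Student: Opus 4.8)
The plan is to prove the two inequalities separately, the first being immediate and the second carrying all the content. For the inequality $n(Z^{\ast\ast},J_Z(u))\leq n(Z,u)$ I would simply invoke Lemma~\ref{Lemm:numericalIndexThroughMaps}(b) with the isometric embedding $\psi=J_Z\colon Z\to Z^{\ast\ast}$, which satisfies $\|J_Z(u)\|=\|u\|=1$; part (b) gives exactly this inequality. The substance of the statement is therefore the reverse inequality $n(Z^{\ast\ast},J_Z(u))\geq n(Z,u)$, and here the difficulty is that $n(Z^{\ast\ast},J_Z(u))$ is an infimum of numerical radii over \emph{all} of $Z^{\ast\ast}$, whereas the embedding lemma only controls the elements of $J_Z(Z)$; I must reach the ``new'' functionals of $Z^{\ast\ast}$ that do not come from $Z$.

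The key structural observation I would use is that the canonical embedding $J_{Z^\ast}\colon Z^\ast\to Z^{\ast\ast\ast}$ carries states to states: if $\phi\in \Face(B_{Z^\ast},u)$, then $\|J_{Z^\ast}(\phi)\|=\|\phi\|=1$ and $J_{Z^\ast}(\phi)(J_Z(u))=(J_Z(u))(\phi)=\phi(u)=1$, so $J_{Z^\ast}(\phi)\in \Face(B_{Z^{\ast\ast\ast}},J_Z(u))$; moreover $J_{Z^\ast}(\phi)(w)=w(\phi)$ for every $w\in Z^{\ast\ast}$. Consequently, for any $w\in Z^{\ast\ast}$,
$$
v(Z^{\ast\ast},J_Z(u),w)\geq \sup\bigl\{|w(\phi)|\colon \phi\in \Face(B_{Z^\ast},u)\bigr\}.
$$

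Set $\lambda:=n(Z,u)$ and fix $w\in Z^{\ast\ast}$ with $\|w\|=1$ together with $\eps>0$. Choose $\phi_0\in B_{Z^\ast}$ with $|w(\phi_0)|>1-\eps$. In the real case I would apply Proposition~\ref{Prop:CharacterizationsNumericalIndexVector}(ii$_\R$) to write $\lambda\phi_0$ as a finite convex combination $\sum_i t_i\psi_i$ with each $\psi_i\in \Face(B_{Z^\ast},u)\cup -\Face(B_{Z^\ast},u)$; evaluating at $w$ gives $\lambda|w(\phi_0)|\leq \max_i |w(\psi_i)|$, so some $\pm\psi_i$ lies in $\Face(B_{Z^\ast},u)$ with $|w(\psi_i)|>\lambda(1-\eps)$. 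Together with the displayed inequality this yields $v(Z^{\ast\ast},J_Z(u),w)>\lambda(1-\eps)$; letting $\eps\to0$ and using homogeneity gives $v(Z^{\ast\ast},J_Z(u),w)\geq \lambda\|w\|$ for all $w$, that is, $n(Z^{\ast\ast},J_Z(u))\geq \lambda$. The complex case is identical in spirit, using Proposition~\ref{Prop:CharacterizationsNumericalIndexVector}(ii$_\C$): I would first fix admissible $\theta_1,\ldots,\theta_k$, expand $\lambda\phi_0\in(1+\eps)\conv\bigl(\bigcup_j \theta_j\Face(B_{Z^\ast},u)\bigr)$, and after evaluating at $w$ and taking absolute values extract a $\psi_i\in \Face(B_{Z^\ast},u)$ with $|w(\psi_i)|>\lambda(1-\eps)/(1+\eps)$, then let $\eps\to0$.

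I expect the only genuinely delicate point to be the one already flagged: recognizing that the index on the bidual cannot be handled by the embedding lemma alone, and that the dual characterization (ii), transported through $J_{Z^\ast}$, is precisely the device that reaches every element of $Z^{\ast\ast}$. Everything else is a routine convexity and absolute-value estimate.
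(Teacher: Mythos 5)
Your argument is correct. Note, however, that the paper does not prove this lemma at all: it simply cites \cite[Theorem 2.1.17.v]{Cabrera-Rodriguez}. What you have done is reconstruct that result from the other facts the paper does quote, namely the embedding inequality of Lemma~\ref{Lemm:numericalIndexThroughMaps}(b) for the easy direction, and the dual characterization of $n(Z,u)\geq\lambda$ in Proposition~\ref{Prop:CharacterizationsNumericalIndexVector}(ii) transported through $J_{Z^\ast}$ for the substantive one. That is a clean and self-contained route: the observation that $J_{Z^\ast}$ maps $\Face(B_{Z^\ast},u)$ into $\Face(B_{Z^{\ast\ast\ast}},J_Z(u))$ is exactly what lets you test an arbitrary $w\in Z^{\ast\ast}$ against functionals you can control, and the convexity extraction (some $\psi_i$ with $|w(\psi_i)|>\lambda(1-\eps)$, resp.\ $>\lambda(1-\eps)/(1+\eps)$ in the complex case) is carried out correctly; since the faces involved are weak$^\ast$ compact and convex, the convex hulls in (ii) are already closed, so no approximation issue arises. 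The only (trivial) omission is the case $n(Z,u)=0$, where Proposition~\ref{Prop:CharacterizationsNumericalIndexVector} is not applicable because it requires $\lambda>0$, but the desired inequality $n(Z^{\ast\ast},J_Z(u))\geq 0$ is vacuous there; you should dispose of that case in one line before fixing $\lambda$.
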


\subsection{On the set of values of the abstract numerical indices with respect to all unit vectors of a given space} \label{subsection:fin-dim-spaces}

For a given Banach space $Z$, denote
$$
\mathcal N(Z) := \{n(Z,u) \colon u \in S_Z \}.
$$
In this subsection we concentrate on the properties of $\mathcal N(Z)$ for various classes of Banach spaces $Z$.

Let us start with a general important observation.

\begin{proposition}\label{Prop0in:N(X)}
Let $Z$ be a Banach space with $\dim(Z) \geq 2$. Then, $0 \in \mathcal N(Z)$.	
\end{proposition}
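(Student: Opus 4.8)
The plan is to produce a single norm-one vector $u\in S_Z$ with $n(Z,u)=0$, and the decisive first move is to pass to a two-dimensional subspace, where the geometry is under control. Since $\dim(Z)\geq 2$, I would fix an arbitrary two-dimensional subspace $W\subseteq Z$ (two complex dimensions in the complex case) and consider the inclusion $\iota\colon W\hookrightarrow Z$, which is an isometric embedding. Applying Lemma \ref{Lemm:numericalIndexThroughMaps}(b) to $\iota$ would then give $n(Z,u)\leq n(W,u)$ for every $u\in S_W$, so the whole problem reduces to exhibiting a point $u\in S_W$ with $n(W,u)=0$.

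For the second step I would work inside the finite-dimensional space $W$. Its norm is a continuous convex function on a finite-dimensional space, hence Gâteaux differentiable on a dense subset of $S_W$ by the classical differentiability of convex functions (Mazur's theorem); in particular $B_W$ admits a smooth point $u$. At such a point $\Face(B_{W^\ast},u)$ is a singleton: in the real case this is immediate, and in the complex case the uniqueness of the real supporting functional forces uniqueness of the complex one, since any $\phi\in W^\ast$ is recovered from its real part via $\phi(w)=\re\phi(w)-i\,\re\phi(iw)$. Because $\dim(W)=2$, the linear span of this singleton cannot exhaust $W^\ast$, so $u$ is not a geometrically unitary element of $B_W$; by the equivalence recorded in the definition of geometrically unitary element, this means exactly $n(W,u)=0$. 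Combining with the first step yields $n(Z,u)=0$, hence $0\in\mathcal{N}(Z)$, and the argument is uniform over the real and complex cases.

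The main obstacle is that one cannot run this argument directly in $Z$: a general infinite-dimensional Banach space need not possess any smooth point, and it may even be strictly convex, so that every point of $S_Z$ is extreme and the easiest candidates for a vanishing numerical index are unavailable. The reduction via Lemma \ref{Lemm:numericalIndexThroughMaps}(b) is precisely what circumvents this, transferring the question to a subspace where finite-dimensionality makes the existence of smooth (hence non-vertex) points automatic. Two points deserve a careful check: that the inequality in Lemma \ref{Lemm:numericalIndexThroughMaps}(b) indeed bounds $n(Z,u)$ \emph{above} by $n(W,u)$, which is the direction I need; and that two-dimensionality over the scalar field is genuinely necessary in the complex case, since $n(\mathbb{C},u)=1$ shows the one-complex-dimensional situation is excluded, in agreement with the hypothesis $\dim(Z)\geq 2$.
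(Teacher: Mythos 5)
Your proposal is correct and follows essentially the same route as the paper: pass to a two-dimensional subspace, pick a smooth point there (which exists by Mazur's theorem, or just by finite-dimensionality), observe that the singleton face forces the numerical index at that point to vanish, and transfer back to $Z$ via Lemma \ref{Lemm:numericalIndexThroughMaps}.b. The only cosmetic difference is that you inline the argument behind Lemma \ref{Lemm:unotsmoothnotextreme}.a (singleton face $\Rightarrow$ not geometrically unitary) where the paper simply cites it; your attention to the complex case and to the direction of the inequality in Lemma \ref{Lemm:numericalIndexThroughMaps}.b is well placed.
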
	

\begin{proof}
Let $Y$ be a two-dimensional subspace of $Z$. Then, there is a smooth point $u \in S_Y$ and we have $n(Y,u)=0$ by Lemma \ref{Lemm:unotsmoothnotextreme}.a. Now, Lemma \ref{Lemm:numericalIndexThroughMaps}.b gives that $n(Z,u) = 0$.
\end{proof}	

For many Banach spaces $Z$, zero is the only element of $\mathcal N(Z)$. Say, this happens for smooth spaces of dimension greater than one, a fact which follows immediately from the above proof. In section \ref{sect:setofvalues} the reader will find many examples of operator spaces $Z = \mathcal{L}(X_1, X_2)$ with the property that $\mathcal N(Z) = \{0\}$. On the other hand, for ``big bad'' spaces $Z$, the corresponding set $\mathcal N(Z)$ can be big. Moreover, it is possible to show that this set can be any subset of $[0, 1]$ that contains $0$.

\begin{proposition}\label{propN(X)=A}
For every subset $A$ of $[0, 1]$ with $0 \in A$, one can find a (real or complex) Banach space $Z$ with $\mathcal N(Z) = A$.	
\end{proposition}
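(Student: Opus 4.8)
\emph{The plan} is to realise $A$ as $\mathcal{N}(Z)$ where $Z=\bigl[\bigoplus\nolimits_{a\in A}X_a\bigr]_{\ell_1}$ is an $\ell_1$-sum of two-dimensional ``gadget'' spaces, one for each value $a\in A$, each satisfying $a\in\mathcal{N}(X_a)\subseteq\{0,a\}$. The realisation rests on two observations about an $\ell_1$-sum $Z=\bigl[\bigoplus\nolimits_{i\in I}X_i\bigr]_{\ell_1}$, whose dual is $Z^\ast=\bigl[\bigoplus\nolimits_{i\in I}X_i^\ast\bigr]_{\ell_\infty}$. First, if $u=(u_i)_i\in S_Z$ has two nonzero coordinates, then $u=t\,a+(1-t)\,b$ is a proper convex combination of two distinct norm-one vectors (split off one coordinate and renormalise), so $u$ is not extreme; since $n(Z,u)>0$ would make $u$ geometrically unitary, hence a vertex, hence extreme by Lemma~\ref{Lemm:unotsmoothnotextreme}.a, we get $n(Z,u)=0$. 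Second, if $u$ is supported on a single summand $X_{i_0}$ (so $u_{i_0}\in S_{X_{i_0}}$), then $n(Z,u)=n(X_{i_0},u_{i_0})$: the inequality ``$\leq$'' comes from Lemma~\ref{Lemm:numericalIndexThroughMaps}.b applied to the isometric inclusion $X_{i_0}\hookrightarrow Z$, while ``$\geq$'' follows because $\Face(B_{Z^\ast},u)$ consists of all $(\phi_i)_i$ with $\phi_{i_0}\in\Face(B_{X_{i_0}^\ast},u_{i_0})$ and $\|\phi_i\|\leq1$ for $i\neq i_0$; choosing the phases to add constructively gives $v(Z,u,z)\geq v(X_{i_0},u_{i_0},z_{i_0})+\sum_{i\neq i_0}\|z_i\|\geq n(X_{i_0},u_{i_0})\,\|z\|_Z$.

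Granting the gadgets, the single-summand unit vectors contribute exactly $\bigcup_{a\in A}\mathcal{N}(X_a)$ to $\mathcal{N}(Z)$, while the remaining unit vectors contribute only $0$; hence $\mathcal{N}(Z)=\{0\}\cup\bigcup_{a\in A}\mathcal{N}(X_a)=A$, using $0\in A$ and $a\in\mathcal{N}(X_a)\subseteq\{0,a\}\subseteq A$. (For $A=\{0\}$ one simply takes $Z=X_0$.) The cases $a=0$ and $a=1$ are covered by the smooth strictly convex plane $\ell_2^2$, with $\mathcal{N}(\ell_2^2)=\{0\}$, and by $\ell_1^2$, with $\mathcal{N}(\ell_1^2)=\{0,1\}$.

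The crux — and the step I expect to be the main obstacle — is producing $X_a$ for $a\in(0,1)$. I would take $X_a=\R^2$ with the norm whose dual unit ball $B_{X_a^\ast}$ is a convex body, symmetric about both axes, contained in $[-1,1]\times[-1,1]$, passing through $(0,\pm1)$, and whose only boundary segments are the two vertical edges $E_\pm=\{\pm1\}\times[-a,a]$ (all other sides rounded off by strictly convex arcs, so that no further segments occur). Setting $u=(1,0)$, the face $\Face(B_{X_a^\ast},u)$ equals $E_+=\{(1,\beta)\colon|\beta|\leq a\}$, whence $V(X_a,u,z)=[z_1-a|z_2|,\,z_1+a|z_2|]$ and $v(X_a,u,z)=|z_1|+a|z_2|$. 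Since $B_{X_a^\ast}\subseteq[-1/a,1/a]\times[-1,1]$ one has $\|z\|_{X_a}\leq |z_1|/a+|z_2|$, i.e.\ $a\|z\|_{X_a}\leq v(X_a,u,z)$ for all $z$, so $n(X_a,u)\geq a$; and since $(0,1)\in S_{X_a}$ with $v(X_a,u,(0,1))=a$, also $n(X_a,u)\leq a$, giving $n(X_a,u)=a$. Finally, because $E_+$ and $E_-$ are the only one-dimensional faces of $B_{X_a^\ast}$, the points $\pm u$ are the only non-smooth points of $S_{X_a}$; every other $w\in S_{X_a}$ is smooth, so $\Face(B_{X_a^\ast},w)=\{\phi_w\}$ is a singleton and $n(X_a,w)=\inf_{\|z\|=1}|\phi_w(z)|=0$. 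Thus $\mathcal{N}(X_a)=\{0,a\}$, completing the scheme.

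This settles the statement over $\R$; the complex case follows by running the identical $\ell_1$-sum argument with a two-dimensional complex gadget in place of $X_a$, the only extra point being to construct the complex analogue of the planar body above (now with $\Face(B_{X_a^\ast},u)$ an arc adapted to the $\T$-action). I regard the gadget construction, rather than the assembly, as the genuinely delicate part, since one must pin down the value $a$ exactly while forcing every other unit vector to have index $0$; the ``two-edges-in-the-dual-ball'' description is precisely what makes both of these controllable at once.
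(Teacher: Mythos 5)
Your proposal is correct and follows essentially the same route as the paper: an $\ell_1$-sum of two-dimensional gadgets $X_a$ with $\mathcal{N}(X_a)=\{0,a\}$, combined with the observation that in an $\ell_1$-sum the multi-supported unit vectors contribute only $0$ and the singly-supported ones contribute the summands' indices. The only (cosmetic) difference is in the gadget: your ``two vertical edges $\{\pm1\}\times[-a,a]$ plus strictly convex arcs'' dual body is realized exactly by the paper's $B_{Z_r^\ast}$ with $r=1-a^2$, and you compute $n(X_a,u)=a$ directly where the paper invokes continuity of $r\mapsto h(r)$ and the intermediate value theorem; your complex case is left at the same level of sketch as the paper's, and is covered by the same formula $\max\bigl\{|x_1|,\sqrt{r|x_1|^2+|x_2|^2}\bigr\}$ over $\C^2$.
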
	

In order to demonstrate this result, we need a little bit of preparatory work.

\begin{example} \label{ex-dim2-two-points}
{\slshape For every $a \in [0, 1]$ there is a two-dimensional (real or complex) space $Z_a$ with $\mathcal N(Z_a) = \{0, a\}$.}

Indeed, for $r \in [0, 1]$ denote by $Z_{r}^\ast $ the two-dimensional space $\K^2$ equipped  with the norm
$$
\|(x_1, x_2)\| = \max\bigl\{ |x_1|, \sqrt{r |x_1|^2 + |x_2|^2}\bigr\}.
$$
Then, the intersections of $B_{Z_{r}^\ast}$ with the lines $\{x_1 = \theta\}$ for $\theta\in \T$ are the only non-trivial faces of $B_{Z_{r}^\ast}$. Therefore, in the predual space $Z_{r}$ the only elements $u$ of $S_{Z_{r}}$ with $n(Z_{r},u) \neq 0$ are $u = (\theta, 0)$ with $\theta\in \T$. As $Z_r$ has the same abstract numerical index with respect to all these elements, $\mathcal N(Z_{r})$ consists of two points: $0$ and some $h(r) \geq 0$. It is straightforward to show that $h(r)$ moves continuously from $1$ to $0$ as $r$ moves from $0$ to $1$ (as $Z_0 = \ell_{\infty}^{2}$ and $Z_1 = \ell_{2}^{2}$).
\end{example}

\begin{figure}[h]
\begin{tikzpicture}[thick, scale=1.8]

\definecolor{light-gray}{gray}{0.83}


\def\r{0.49};
\def\doml{1/0.8};
\def\domr{1/0.8};

\draw[draw=black, fill=light-gray, opacity=0.8]
plot [domain=-1:1] (\x,{sqrt(1-\r*\x*\x)})
-- plot [smooth,domain=1:-1] (\x,{-sqrt(1-\r*\x*\x)})
-- cycle;

\draw plot [domain=-\doml:\domr,samples=100,thin] (\x,{sqrt(1-\r*\x*\x)});

\draw plot [domain=-\doml:\domr,samples=100,thin] (\x,{-sqrt(1-\r*\x*\x)}) ;

\draw [->] (-1.5,0) -- (1.9,0);

\draw [->] (0,-1.5) -- (0,1.5);

\draw (0.25,1.12) node {$(0,1)$};
\draw (0,1) node {$\bullet$};

\draw (1.23,-0.15) node {$(1,0)$};
\draw (1,0) node {$\bullet$};

\draw (-1.2,1.1)  node {$y=\sqrt{1-rx^2}$};

\draw (-1.2,-1.1)  node  {$y=-\sqrt{1-rx^2}$};

\end{tikzpicture}
\label{figure:ballzstar}
\caption{The unit ball of $Z_r^\ast$}
\end{figure}
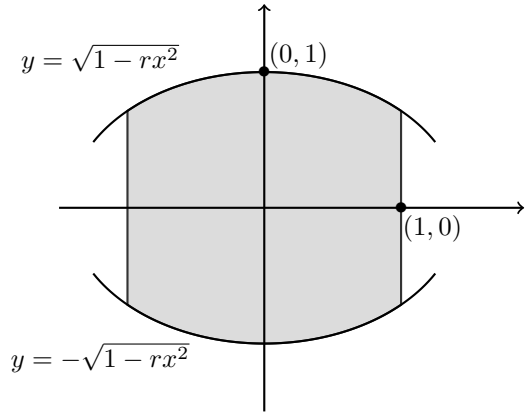

The next result may be known, but we include the easy proof as we have not found any reference to it in the literature.

\begin{lemma}\label{ell_1(Gamma{Xgamma}}
Let $\{Z_\gamma\colon \gamma\in \Gamma\}$ be a family of Banach spaces. Then,
$$
\mathcal{N}\left(\bigl[\bigoplus\nolimits_{\gamma\in \Gamma} Z_\gamma\bigr]_{\ell_1} \right)= \bigcup\nolimits_{\gamma \in \Gamma} \mathcal N(Z_\gamma).
$$
\end{lemma}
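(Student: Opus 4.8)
The plan is to establish the two inclusions separately, the substantive content being an exact formula for the numerical radius of $W:=\bigl[\bigoplus\nolimits_{\gamma\in\Gamma}Z_\gamma\bigr]_{\ell_1}$ at a vector supported on a single coordinate. Fix $\gamma_0\in\Gamma$ and $u\in S_{Z_{\gamma_0}}$, and let $w\in S_W$ denote the vector with $u$ in the coordinate $\gamma_0$ and $0$ elsewhere. For an arbitrary $z=(z_\gamma)_{\gamma\in\Gamma}\in W$, each $\theta\in\T$, and each $t>0$, the very definition of the $\ell_1$-norm together with $\|u\|=1$ gives
$$
\frac{\|w+t\theta z\|-1}{t}=\frac{\|u+t\theta z_{\gamma_0}\|-1}{t}+\sum_{\gamma\neq\gamma_0}\|z_\gamma\|.
$$
Letting $t\to 0^+$, applying Lemma \ref{lemma:derivadaani} in $Z_{\gamma_0}$, and then maximising over $\theta\in\T$ (the series being independent of $\theta$), the second equality in that lemma would yield the key identity
$$
v(W,w,z)=v(Z_{\gamma_0},u,z_{\gamma_0})+\sum_{\gamma\neq\gamma_0}\|z_\gamma\|.
$$

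From this identity I would deduce $n(W,w)=n(Z_{\gamma_0},u)$, which proves $\bigcup_{\gamma}\mathcal N(Z_\gamma)\subseteq\mathcal N(W)$. Indeed, for $z\in S_W$ put $a=\|z_{\gamma_0}\|$, so that $\sum_{\gamma\neq\gamma_0}\|z_\gamma\|=1-a$; using $v(Z_{\gamma_0},u,z_{\gamma_0})\geq n(Z_{\gamma_0},u)\,a$ one obtains
$$
v(W,w,z)\geq 1-a\bigl(1-n(Z_{\gamma_0},u)\bigr)\geq n(Z_{\gamma_0},u),
$$
while choosing $z$ supported on $\gamma_0$ with $v(Z_{\gamma_0},u,z_{\gamma_0})$ close to $n(Z_{\gamma_0},u)$ shows the infimum is exactly $n(Z_{\gamma_0},u)$. (The inequality $n(W,w)\leq n(Z_{\gamma_0},u)$ also follows at once from Lemma \ref{Lemm:numericalIndexThroughMaps}.b, since the coordinate inclusion is an isometric embedding.)

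For the reverse inclusion I would show that any $w\in S_W$ with $n(W,w)>0$ is supported on a single coordinate. If $w=(w_\gamma)\in S_W$ had two nonzero coordinates, then shifting a small amount of mass between them exhibits $w$ as a nontrivial convex combination of two distinct points of $S_W$, so $w$ is not extreme; but by Lemma \ref{Lemm:unotsmoothnotextreme} a vector with $n(W,w)>0$ is geometrically unitary, hence strongly extreme and in particular extreme. Thus $n(W,w)>0$ forces $w$ to be the canonical image in $W$ of some $w_{\gamma_0}\in S_{Z_{\gamma_0}}$, and the radius identity gives $n(W,w)=n(Z_{\gamma_0},w_{\gamma_0})\in\mathcal N(Z_{\gamma_0})$. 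Every remaining value of $\mathcal N(W)$ equals $0$, which belongs to $\bigcup_\gamma\mathcal N(Z_\gamma)$ by Proposition \ref{Prop0in:N(X)} as soon as some summand has dimension at least two.

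The technical core is the radius identity of the first paragraph; once it is available, both inclusions are essentially bookkeeping. I expect the only delicate point to be the accounting of the value $0$ in the second inclusion: all mixed-support vectors contribute the single value $0$, so one must guarantee that $0$ genuinely sits in the union. This is immediate once some summand has dimension at least two (Proposition \ref{Prop0in:N(X)}), and the only genuinely degenerate situation to keep in mind is that of two or more one-dimensional summands, where this bookkeeping must be handled with care.
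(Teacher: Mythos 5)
Your argument follows the same route as the paper's (which disposes of the whole thing in three lines): vectors with more than one non-zero coordinate are not extreme and hence have index $0$, while for a vector supported on a single coordinate the index equals that of the summand — the paper calls this second step ``routine'', and your radius identity
$v(W,w,z)=v(Z_{\gamma_0},u,z_{\gamma_0})+\sum_{\gamma\neq\gamma_0}\|z_\gamma\|$, obtained from Lemma~\ref{lemma:derivadaani}, is a correct and clean way to carry it out. The one point where you are more careful than the paper is precisely the ``bookkeeping of $0$'' you flag at the end, and you are right to be suspicious: in the degenerate case you isolate, the statement of the lemma is actually \emph{false}. Take $\Gamma=\{1,2\}$ and $Z_1=Z_2=\K$; then $\bigcup_\gamma\mathcal{N}(Z_\gamma)=\{1\}$, whereas $W=\ell_1^2$ contains multi-support unit vectors such as $(1/2,1/2)$, which are not extreme and therefore give $0\in\mathcal{N}(\ell_1^2)$, so $\mathcal{N}(W)=\{0,1\}\neq\{1\}$. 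The paper's proof silently assumes that $0$ always lies in the union, which by Proposition~\ref{Prop0in:N(X)} is guaranteed only when some summand has dimension at least two (or when at most one summand is non-zero). This gap is harmless for the intended application (in Proposition~\ref{propN(X)=A} every summand is the two-dimensional space of Example~\ref{ex-dim2-two-points}, so $0$ belongs to each $\mathcal{N}(Z_a)$), but the lemma should carry the hypothesis that some $Z_\gamma$ has dimension at least two, or the conclusion should be weakened to $\mathcal{N}(W)=\{0\}\cup\bigcup_\gamma\mathcal{N}(Z_\gamma)$ when at least two summands are non-trivial. Your proof is correct once you make that correction explicit rather than leaving it as a case ``to be handled with care''.
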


\begin{proof}
If a norm-one element $u = (u_\gamma)_{\gamma \in \Gamma} \in \bigl[\bigoplus\nolimits_{\gamma\in \Gamma} Z_\gamma\bigr]_{\ell_1}$ has more than one non-zero coordinate, then $n\left(\bigl[\bigoplus\nolimits_{\gamma\in \Gamma} Z_\gamma\bigr]_{\ell_1}, u\right) = 0$ as $u$ is then not an extreme point. In the case of $u$ having just one non-zero coordinate $u_\tau$, then we have that $n\left(\bigl[\bigoplus\nolimits_{\gamma\in \Gamma} Z_\gamma\bigr]_{\ell_1}, u\right) = n(X_\tau, u_\tau)$ routinely.
\end{proof}	

We are now ready to provide the pending proof.

\begin{proof}[Proof of Proposition \ref{propN(X)=A}]
For every $a \in A$ select a two-dimensional $Z_a$ such that $\mathcal N(Z_a) = \{0, a\}$ provided by Example \ref{ex-dim2-two-points} and then the desired example is $Z=
\bigl[\bigoplus\nolimits_{a\in A} Z_a\bigr]_{\ell_1}$ by Lemma \ref{ell_1(Gamma{Xgamma}}.
\end{proof}	

Our next goal is to find out which restrictions on $\mathcal N(Z)$ appear in the finite-dimensional case. We start by showing that, in this case, the corresponding $\mathcal N(Z)$ is at most countable.

\begin{proposition}\label{Prop:numberVertexes}
Let $Z$ be a finite-dimensional real Banach space. Then, the set of points $u \in S_{Z}$ satisfying $n(Z,u) >0 $ is countable. As a consequence, $\mathcal{N}(Z)$ is countable.
\end{proposition}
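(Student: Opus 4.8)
The plan is to translate the positivity of the numerical index into a purely facial condition on the dual ball and then to count faces. Recall from the Definition of geometrically unitary element together with \cite[Theorem~2.1.17]{Cabrera-Rodriguez} that $n(Z,u)>0$ holds precisely when $\spn \Face(B_{Z^\ast},u)=Z^\ast$. Set $n=\dim Z=\dim Z^\ast$ and regard $u$ as a linear functional on $Z^\ast$ via $J_Z$ (an onto isometry, since $Z$ is finite-dimensional). Because $\max_{\phi\in B_{Z^\ast}}\phi(u)=\|u\|=1$, the set $\Face(B_{Z^\ast},u)=\{\phi\in B_{Z^\ast}\colon \phi(u)=1\}$ is exactly the exposed face of the convex body $B_{Z^\ast}$ in the direction $u$. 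This face is contained in the affine hyperplane $\{\phi\colon \phi(u)=1\}$, which does not pass through the origin, so its linear span has dimension equal to its affine dimension plus one. Hence $n(Z,u)>0$ if and only if $\Face(B_{Z^\ast},u)$ has affine dimension $n-1$, that is, if and only if it is a \emph{facet} of $B_{Z^\ast}$. (Note that the weaker vertex condition coming from Lemma~\ref{Lemm:unotsmoothnotextreme} only forces the face to be nontrivial, so the exact characterization of geometric unitarity is essential here.)

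The next step is to observe that the assignment $u\longmapsto \Face(B_{Z^\ast},u)$ is injective on the set of geometrically unitary points. Indeed, suppose $F:=\Face(B_{Z^\ast},u_1)=\Face(B_{Z^\ast},u_2)$ is a facet. Then $\operatorname{aff}(F)$ is an $(n-1)$-dimensional affine subspace contained in each of the hyperplanes $\{\phi\colon \phi(u_i)=1\}$, and by dimension count it equals both of them; thus $\{\phi\colon \phi(u_1)=1\}=\{\phi\colon \phi(u_2)=1\}$. Equal affine hyperplanes have equal direction spaces, so $\ker u_1=\ker u_2$ and therefore $u_2=c\,u_1$ for some scalar $c$; evaluating on the common hyperplane gives $c=1$, whence $u_1=u_2$. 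Consequently, it suffices to prove that $B_{Z^\ast}$ has at most countably many facets.

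I would finish with a standard convex-geometric counting argument. The relative interiors of distinct faces of a closed convex set are pairwise disjoint, since each point of the set lies in the relative interior of exactly one face. The relative interior of a facet is a convex set of dimension $n-1$, hence has positive $(n-1)$-dimensional Hausdorff measure $\mathcal{H}^{n-1}$, and all these relative interiors lie inside $\partial B_{Z^\ast}$, whose $\mathcal{H}^{n-1}$-measure is finite (the surface area of a bounded convex body is finite). A family of pairwise disjoint measurable sets of positive measure inside a finite measure space is necessarily countable, so $B_{Z^\ast}$ admits at most countably many facets. Combining this with the injectivity above shows that $\{u\in S_Z\colon n(Z,u)>0\}$ is countable; its image under $n(Z,\cdot)$ is then countable, and adjoining the value $0$ keeps $\mathcal{N}(Z)$ countable, proving the consequence.

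The main obstacle is the last step, namely verifying that a convex body has only countably many facets; everything before it is elementary once the right facial reformulation is in place. The cleanest route is the measure argument just sketched, relying on the disjointness of relative interiors of faces (e.g.\ as in Rockafellar) and on the finiteness of the surface area of a convex body, rather than attempting a direct combinatorial enumeration of the (possibly uncountable) supporting hyperplanes.
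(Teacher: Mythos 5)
Your proof is correct, and its core reduction is the same as the paper's: positivity of $n(Z,u)$ forces $\Face(B_{Z^\ast},u)$ to span $Z^\ast$, hence to be a top-dimensional (facet) face, and a pairwise disjoint family of such faces in $\partial B_{Z^\ast}$ must be countable. The differences are only in the finishing touches: the paper establishes disjointness of the relative interiors directly from the inclusion $\Face(B_{Z^\ast},u_1)\cap\Face(B_{Z^\ast},u_2)\subseteq\ker(u_1-u_2)$ and then counts by separability of $S_{Z^\ast}$, whereas you establish injectivity of $u\mapsto\Face(B_{Z^\ast},u)$ via affine hulls, invoke the general fact that distinct faces have disjoint relative interiors, and count by comparing positive $\mathcal{H}^{n-1}$-measure against the finite surface area of $B_{Z^\ast}$. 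Both routes are valid; your measure-theoretic count is in fact the qualitative shadow of the quantitative estimate the paper proves next in Theorem~\ref{Theo:sum-n(X,u)}, where the surface-area bound is used in earnest.
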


\begin{proof}
Let $u\in S_Z$ be such that $n(Z,u)>0$. By Proposition \ref{Prop:CharacterizationsNumericalIndexVector}, we get that $\conv\bigl(\Face(B_{Z^\ast},u) \cup -\Face(B_{Z^\ast},u)\bigr)$ has non-empty interior so, being $Z^\ast $ finite-dimensional, $\Face(B_{Z^\ast},u)$ has non-empty interior relative to $S_{Z^\ast }$ (indeed, otherwise $\Face(B_{Z^\ast},u)$ has affine dimension at most $\dim(Z^\ast )-2$, so its linear span has dimension at most $\dim(Z^\ast )-1$, and so $\conv\bigl(\Face(B_{Z^\ast},u) \cup -\Face(B_{Z^\ast},u)\bigr)$ has empty interior, a contradiction). Besides, taken $u_{1},u_2\in S_Z$, as
\begin{equation}\label{eq:prop-numberVertexes}
\Face(B_{Z^\ast},u_1) \cap \Face(B_{Z^\ast},u_2) \subseteq \ker(u_{1} - u_{2})\,,
\end{equation}
the  relative interiors of $\Face(B_{X^\ast},u_1)$ and $\Face(B_{X^\ast},u_2)$ are disjoint if $u_{1} \neq u_{2}$. Hence, by separability, the set of those $u \in S_{Z}$ satisfying $n(Z,u) >0 $ has to be countable and, a fortiori, so is  $\mathcal{N}(Z)$.
\end{proof}

We do not know if the above corollary remains valid for ``small'' infinite-dimensional spaces, such as Banach spaces with separable dual. We also do not know whether $\mathcal{N}(Z)$ is countable for every finite-dimensional complex Banach space $Z$.

Our next aim is to give a strengthening of Proposition~\ref{Prop:numberVertexes} for real finite-dimensional spaces, where some techniques from combinatorial geometry are applicable. Let us comment that neither Theorem \ref{Theo:sum-n(X,u)} nor Proposition \ref{Prop:lower-bound-sum-n(X,u)} below are needed in the rest of the paper. We introduce some notation. For a convex body $K \subseteq \mathbb{R}^{n}$ let us denote its \emph{inradius} by
$$
r(K) := \sup\{ r > 0 \colon \exists {x \in K} \text{ such that } x + r B_{\ell_{2}^{n}} \subseteq K \}\,.
$$
Remark that in the case of $K = -K$, the above formula simplifies to $r(K) = \sup\{ r > 0 \colon r B_{\ell_{2}^{n}} \subseteq K \}$.
We denote by $\vol_n[K]$ and $S(K)$ the volume and the surface area of $K$, respectively.

\begin{theorem}\label{Theo:sum-n(X,u)}
Let $Z$ be a real space with $\dim(Z)=m \geq 2$. Then,
$$
\sum_{u \in S_Z}{n(Z,u)^{m-1}} < \infty.
$$	
\end{theorem}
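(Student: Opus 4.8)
The plan is to reduce the statement to a packing estimate for the facets $\Face(B_{Z^\ast},u)$ on the sphere $S_{Z^\ast}$, with the size of each facet controlled from below by $n(Z,u)$ via the Rogers--Shephard inequality. Fix a Euclidean structure on $Z^\ast\cong\mathbb{R}^m$, write $B_2$ for the associated Euclidean unit ball, and pick $\rho>0$ with $\rho B_2\subseteq B_{Z^\ast}$. By Proposition \ref{Prop:numberVertexes} the sum runs over a countable set, and for each $u$ with $n(Z,u)>0$ the face $F_u:=\Face(B_{Z^\ast},u)$ is a genuine facet, i.e.\ an $(m-1)$-dimensional flat piece of $S_{Z^\ast}$ with nonempty relative interior, these relative interiors being pairwise disjoint. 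Writing $\vol_{m-1}$ for $(m-1)$-dimensional Euclidean measure, this disjointness forces $\sum_{u}\vol_{m-1}[F_u]\leq S(B_{Z^\ast})<\infty$. So it suffices to bound $n(Z,u)^{m-1}$ by a fixed multiple of $\vol_{m-1}[F_u]$.

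To obtain such a bound I would use the real characterization (ii$_\R$) of Proposition \ref{Prop:CharacterizationsNumericalIndexVector}: by definition of $n(Z,u)$,
\[
n(Z,u)\,B_{Z^\ast}\subseteq C_u:=\conv\bigl(F_u\cup -F_u\bigr),
\]
whence $C_u\supseteq rB_2$ with $r:=\rho\,n(Z,u)$. Since $F_u$ lies in the hyperplane $\{\phi:\phi(u)=1\}$ and $-F_u$ in $\{\phi:\phi(u)=-1\}$, the cross-section of $C_u$ by the central hyperplane $\{\phi:\phi(u)=0\}$ is exactly the Minkowski average $\tfrac12(F_u-F_u)$; as this section contains the Euclidean $(m-1)$-ball $rB_2\cap\{\phi(u)=0\}$ of radius $r$, we get
\[
\vol_{m-1}\bigl[\tfrac12(F_u-F_u)\bigr]\geq \omega_{m-1}\,r^{m-1},
\]
where $\omega_{m-1}$ is the volume of the Euclidean unit $(m-1)$-ball.

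The inequality tying this back to $F_u$ itself is Rogers--Shephard, $\vol_{m-1}[F_u-F_u]\leq\binom{2m-2}{m-1}\vol_{m-1}[F_u]$, applied to the $(m-1)$-dimensional convex body $F_u$. Combining the last two displays yields
\[
\omega_{m-1}\,\rho^{m-1}\,n(Z,u)^{m-1}\leq \frac{1}{2^{m-1}}\binom{2m-2}{m-1}\vol_{m-1}[F_u],
\]
so $n(Z,u)^{m-1}\leq C\,\vol_{m-1}[F_u]$ for a constant $C$ depending only on $m$ and $\rho$. Summing over the (countably many) $u$ with $n(Z,u)>0$ and invoking $\sum_u\vol_{m-1}[F_u]\leq S(B_{Z^\ast})$ completes the argument.

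I expect the main obstacle to be the convex-geometric comparison at the heart of the proof: identifying the central cross-section of $C_u$ as $\tfrac12(F_u-F_u)$ and, crucially, bounding its volume from above by that of $F_u$. The naive inclusions run the wrong way here, so Rogers--Shephard is genuinely required rather than cosmetic. A secondary point needing care is the justification that the relevant faces $F_u$ are full $(m-1)$-dimensional facets with pairwise disjoint relative interiors, so that their areas sum to at most $S(B_{Z^\ast})$; both facts are already contained in the analysis used to prove Proposition \ref{Prop:numberVertexes}.
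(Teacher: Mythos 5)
Your argument is correct and follows essentially the same route as the paper's proof: the disjointness of the relative interiors of the facets gives the surface-area bound, the inclusion $n(Z,u)B_{Z^\ast}\subseteq\conv\bigl(\Face(B_{Z^\ast},u)\cup-\Face(B_{Z^\ast},u)\bigr)$ combined with the identification of the central section as $\tfrac12\bigl(\Face(B_{Z^\ast},u)-\Face(B_{Z^\ast},u)\bigr)$ gives the lower volume bound, and Rogers--Shephard converts this into a bound by $\vol_{m-1}\left[\Face(B_{Z^\ast},u)\right]$. The only (cosmetic) difference is that the paper sums over arbitrary finite subsets of $S_Z$ and takes a supremum rather than invoking countability up front.
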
	

\begin{proof}
Let us identify, as usual, $Z$ with $ (\R^m, \|\cdot\|)$, $Z^\ast $ with $(\R^m,  \|\cdot\|^\ast )$ and $B_{Z^\ast }$ with the polar body of $B_Z$. Fixed a finite set of points $F$ in $S_{Z}$, we evidently have by (\ref{eq:prop-numberVertexes}) that
\begin{equation}\label{eq:thm-sum-n(X,u)}
\sum_{u \in F}{\vol_{m-1}\left[\Face(B_{Z^\ast},u)\right]}  \leq S(B_{Z^{\ast}}).
\end{equation}
Using Proposition \ref{Prop:CharacterizationsNumericalIndexVector}, for every $u\in F$, we have that
$$
n(Z,u)r(B_{Z^\ast })B_{\ell_2^m}\subseteq n(Z,u)B_{Z^\ast }\subseteq \conv\bigl(\Face(B_{Z^\ast},u)\cup -\Face(B_{Z^\ast},u)\bigr)
$$
and so,
$$
n(Z,u)r(B_{Z^\ast })B_{\ell_2^m}\cap\ker(u)\subseteq \left[\conv\bigl(\Face(B_{Z^\ast},u)\cup -\Face(B_{Z^\ast},u)\bigr)\right]  \cap \ker(u).
$$
For an arbitrary $z^\ast \in \Face(B_{Z^\ast},u)$, the latter set can be rewritten as
$$
\frac{1}{2}\bigl[\Face(B_{Z^\ast},u) - \Face(B_{Z^\ast},u)\bigr] = \frac{1}{2}\bigl[(\Face(B_{Z^\ast},u) -z^\ast) - (\Face(B_{Z^\ast},u) -z^\ast)\bigr].
$$
According to the Rogers--Shephard theorem \cite[Theorem 1]{RogersSheph}, for every convex body $K$ in an $n$-dimensional space one has that
$$
\vol_{n}[K - K] \leq {{2n}\choose{n}} \vol_{n}(K).
$$
Applying this to the subset $(\Face(B_{Z^\ast},u) -z^\ast)$ of the $(m-1)$-dimensional space $\ker(u)$, we obtain the inequality
$$
\vol_{m-1} \left[n(Z,u)r(B_{Z^\ast })B_{\ell_2^m}\cap\ker(u)\right] \leq  \frac{1}{2^{m-1}} {{2(m-1)}\choose{m-1}} \vol_{m-1}\left[\Face(B_{Z^\ast},u)\right].
$$
Therefore, we can write
\begin{align*}
n(Z,u)^{m-1}r(B_{Z^\ast })^{m-1}\vol_{m-1}\left[B_{\ell_2^{m-1}}\right]&=\vol_{m-1} \left[n(Z,u)r(B_{Z^\ast })B_{\ell_2^m}\cap\ker(u)\right]\\
&\leq \frac{1}{2^{m-1}} {{2(m-1)}\choose{m-1}} \vol_{m-1}\left[\Face(B_{Z^\ast},u)\right]
\end{align*}
which, combined with \eqref{eq:thm-sum-n(X,u)}, gives
\begin{equation} \label{eq-sum-above}
\sum_{u \in F}{n(Z,u)^{m-1}} \leq  \frac{1}{2^{m-1}} {{2(m-1)}\choose{m-1}}\frac{S(B_{Z^{\ast}})}{\vol_{m-1}\left[B_{\ell_{2}^{m-1}}\right] \cdot r(B_{Z^{\ast}})^{m-1}}\,.
\end{equation}
As $F$ was arbitrary, we get the desired result.
\end{proof}

For a finite-dimensional polyhedral space (i.e.\ finite-dimensional real space whose unit ball has finitely many faces), we can give a lower bound for the sum of numerical indices of the elements of the unit sphere.

\begin{proposition}\label{Prop:lower-bound-sum-n(X,u)}
Let $Z$ be $\R^m$ endowed with a polyhedral norm such that $B_{Z^\ast } \subseteq B_{\ell_2^m}$. Then,
\begin{equation} \label{eq-sum-below}
\sum_{u \in S_Z}{n(Z,u)}  \geq r(B_{Z^{\ast}})\,.
\end{equation}
\end{proposition}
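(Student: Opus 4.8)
The plan is to pass to the polar picture and reduce the whole statement to a one–dimensional estimate on the facets of $B_{Z^\ast}$. Identify $Z=(\R^m,\|\cdot\|)$ and $Z^\ast=(\R^m,\|\cdot\|^\ast)$ so that $B_{Z^\ast}$ is the polar polytope of $B_Z$ and, by hypothesis, $B_{Z^\ast}\subseteq B_{\ell_2^m}$. Since the norm is polyhedral, $B_{Z^\ast}$ has finitely many facets, and by polar duality these are in bijection with the vertices $u$ of $B_Z$; by the argument in Proposition \ref{Prop:numberVertexes} these vertices are precisely the $u\in S_Z$ with $n(Z,u)>0$ (as $n(Z,u)>0$ forces $\Face(B_{Z^\ast},u)$ to be a facet). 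So the sum is finite and runs over the facets $F_u:=\Face(B_{Z^\ast},u)$. Finally, as $B_{Z^\ast}=-B_{Z^\ast}$, its largest inscribed Euclidean ball is centred at the origin, whence $r(B_{Z^\ast})=\min_u d_u$, where $d_u:=\dist(0,H_u)$ and $H_u$ is the hyperplane supporting $F_u$.

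By Proposition \ref{Prop:CharacterizationsNumericalIndexVector}, $n(Z,u)\geq\lambda$ is equivalent to $\lambda B_{Z^\ast}\subseteq\conv(F_u\cup -F_u)=:D_u$. Because $B_{Z^\ast}\subseteq B_{\ell_2^m}$, it is enough for this that $\lambda B_{\ell_2^m}\subseteq D_u$; hence $n(Z,u)\geq\rho_u$, where $\rho_u$ is the Euclidean inradius of the centrally symmetric body $D_u$, i.e. $\rho_u=\min_{|e|_2=1}h_{D_u}(e)$ (half its minimal width). Thus the statement follows once one proves the scale–invariant inequality $\sum_u\rho_u\geq r(B_{Z^\ast})$.

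For $m=2$ this can be carried out completely. Each $D_u$ is a parallelogram whose pairs of parallel edges are $\pm F_u$ and the two lateral edges; fitting $\lambda B_{Z^\ast}$ inside it is governed by these two edge directions, and the cap direction gives only $\lambda\le 1$. The equatorial chord $D_u\cap\ker(u)=\tfrac12(F_u-F_u)$ is a segment of half-length $\tfrac12\vol_1(F_u)$, while $B_{Z^\ast}\cap\ker(u)$ has half-length $\le 1$ since $B_{Z^\ast}\subseteq B_{\ell_2^2}$; combined with $\vol_1(F_u)\le 2$ (a chord of $B_{\ell_2^2}$) this gives $n(Z,u)\ge\tfrac12\vol_1(F_u)$. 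Summing, and using monotonicity of perimeter under inclusion of convex bodies, with $r=r(B_{Z^\ast})$,
\[
\sum_u n(Z,u)\ \geq\ \tfrac12\sum_u\vol_1(F_u)=\tfrac12\,\mathrm{length}(\partial B_{Z^\ast})\ \geq\ \tfrac12\,\mathrm{length}\bigl(\partial(rB_{\ell_2^2})\bigr)=\pi r\ \geq\ r .
\]

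The hard part is the general $m\geq 3$, and I expect the obstacle to be exactly the inequality $\sum_u\rho_u\geq r$. The naive extension of the planar argument fails: a bound of the form $n(Z,u)\geq c\,\vol_{m-1}(F_u)$ summed against $S(B_{Z^\ast})\geq S(rB_{\ell_2^m})=m\kappa_m r^{m-1}$ (with $\kappa_m=\vol_m(B_{\ell_2^m})$) only yields $\sum_u n(Z,u)\gtrsim r^{m-1}$, which is weaker than $r$ for small $r$; moreover no such $c$ can exist independently of the body, since $n(Z,u)\le 1$ whereas facet areas can approach $\kappa_{m-1}>1$. The reason $m=2$ works is that there the exponent $m-1$ equals $1$. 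So the required estimate must be genuinely one–dimensional. I would try to bound $\rho_u=\min_{|e|_2=1}h_{D_u}(e)$ from below by the widths of $F_u$ in directions transverse to $\nu_u$, and then sum these against the solid–angle measure on the sphere — the facet–cones partition $\R^m$, so the corresponding solid angles fill the whole sphere — using $d_u\geq r$; checking that the resulting constant is $\geq 1$ is the crux. A plausible simplification is to invoke the optimality conditions for the inscribed ball: the active facets (those with $d_u=r$) have outer normals whose convex hull contains the origin, and it may suffice to prove $\sum_{\text{active }u}n(Z,u)\geq r$.
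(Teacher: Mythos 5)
Your reduction coincides with the paper's up to the decisive step: via Proposition \ref{Prop:CharacterizationsNumericalIndexVector} and the hypothesis $B_{Z^\ast}\subseteq B_{\ell_2^m}$ one gets $n(Z,u)\geq r\bigl(D_u\bigr)$ for $D_u:=\conv\bigl(\Face(B_{Z^\ast},u)\cup-\Face(B_{Z^\ast},u)\bigr)$, and everything hinges on the purely geometric inequality $\sum_u r(D_u)\geq r(B_{Z^\ast})$. You prove this only for $m=2$ and explicitly leave $m\geq 3$ open, so the proposal has a genuine gap exactly where you predicted it would. The missing ingredient is a covering theorem: the finitely many convex bodies $D_u$ cover $B_{Z^\ast}$ (they cover $S_{Z^\ast}$ by polyhedrality and each contains $0$, hence by star-shapedness they cover the ball), and by \cite[Theorem~2.1]{Kadets05} any finite cover of a convex body $K$ by convex bodies $K_1,\dots,K_n$ satisfies $\sum_i r(K_i)\geq r(K)$, in every dimension. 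This is precisely the ``genuinely one-dimensional'' estimate you suspected must exist; no solid-angle bookkeeping or analysis of the active facets is needed.

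A secondary problem concerns your planar case. The deduction of $n(Z,u)\geq\tfrac12\vol_1(F_u)$ from the equatorial chords is not justified as written: whether $\lambda B_{Z^\ast}$ fits inside the lateral slab of the parallelogram $D_u$ is governed by $\max_{z^\ast\in B_{Z^\ast}}|s(z^\ast)|$, where $s\in Z$ is the functional cutting out that slab, and this maximum is attained at extreme points of $B_{Z^\ast}$ that need not lie on $\ker(u)$; comparing only the chords $D_u\cap\ker(u)$ and $B_{Z^\ast}\cap\ker(u)$ therefore does not control it (when the facet is not perpendicular to the radius through its midpoint, $|s|_2>1$ and the conclusion requires an additional argument). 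Since the covering theorem handles $m=2$ as well, the cleanest repair is to drop the perimeter argument altogether and invoke \cite[Theorem~2.1]{Kadets05} uniformly, as the paper does.
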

\begin{proof}
Since $Z^{\ast}$ is also polyhedral, $S_{Z^{\ast}}$ is a finite union of sets of the form $\Face(B_{Z^\ast},u) \cup -\Face(B_{Z^\ast},u)$ for some $u \in S_{Z}$. Let us denote by $F$ the set of corresponding $u \in S_{Z}$. Then, we obviously have the following inclusion
$$
B_{Z^{\ast}} \subseteq \bigcup\nolimits_{u \in F}{\conv{\big( \Face(B_{Z^\ast},u) \cup -\Face(B_{Z^\ast},u)\big)}}\,.
$$
Since
\begin{align*}
\conv{\big( \Face(B_{Z^\ast},u) \cup -\Face(B_{Z^\ast},u)\big)} &\supset r\left(\conv{\big( \Face(B_{Z^\ast},u) \cup -\Face(B_{Z^\ast},u)\big)}\right) B_{\ell_2^m} \\ &\supset r\left(\conv{\big( \Face(B_{Z^\ast},u) \cup - \Face(B_{Z^\ast},u)\big)}\right) B_{Z^\ast },
\end{align*}
Proposition \ref{Prop:CharacterizationsNumericalIndexVector} implies that $n(Z,u) \geq r\left(\conv{\big( \Face(B_{Z^\ast},u) \cup -\Face(B_{Z^\ast},u)\big)}\right)$.
As the convex body $B_{Z^\ast}$ is covered by a finite number of convex bodies,  we can use
\cite[Theorem~2.1]{Kadets05} to get that
$$
\sum_{u \in F}{n(Z,u)} \geq \sum_{u \in F} r\big( \conv \big(\Face(B_{Z^\ast},u) \cup -\Face(B_{Z^\ast},u)\big)\big) \geq r(B_{Z^{\ast}}),
$$
which finishes the proof.
\end{proof}

Let us remark that the estimates in \eqref{eq-sum-above} and \eqref{eq-sum-below} depend on the particular chosen representation of $Z$ as $\R^n$, and they do not pretend to be optimal. It would be interesting to find the sharp estimates in both inequalities.

\subsection{A new result on abstract numerical ranges}

Our goal here is to present a very general result about numerical range spaces which extends and generalizes the results of \cite{Mar-numrange-JMAA2016} and which will be useful to study the behaviour by some Banach space operations on the domain and range spaces of the numerical ranges with respect to operators (see section \ref{sect:stability}) and also to study Lipschitz numerical ranges (see section \ref{sect:Lipschitz}).

\begin{proposition}\label{AProp:num-ranges}
	Let $Z$ be a Banach space, let $u\in S_Z$, and let $C\subseteq B_{Z^\ast}$ be such that $B_{Z^\ast}=\overline{\conv}^{w^\ast}(C)$. Then
	$$
	V(Z,u,z)=\conv\,\bigcap\nolimits_{\delta>0} \overline{\bigl\{z^\ast(z)\colon z^\ast\in C,\, \re z^\ast(u)>1-\delta\bigr\}}
	$$
	for every $z\in Z$. Consequently,
	$$
	v(Z,u, z) = \inf_{\delta>0}\sup \bigl\{|z^\ast(z)|\colon z^\ast\in C,\, \re z^\ast(u)>1-\delta\bigr\}
	$$
	for every $z\in Z$.
\end{proposition}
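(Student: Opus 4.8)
The plan is to set $S_\delta := \{z^\ast(z)\colon z^\ast\in C,\ \re z^\ast(u)>1-\delta\}$ and $A := \bigcap_{\delta>0}\overline{S_\delta}$, prove $V(Z,u,z)=\conv(A)$ by a double inclusion, and then deduce the numerical-radius formula. Two basic facts will be used repeatedly. Since $B_{Z^\ast}=\overline{\conv}^{\weakstar}(C)$ and $w^\ast\mapsto \re w^\ast(w)$ is $\weakstar$-continuous and affine, one has $\|w\|=\sup_{z^\ast\in C}\re z^\ast(w)$ for every $w\in Z$; in particular $\sup_{z^\ast\in C}\re z^\ast(u)=\|u\|=1$, so each $S_\delta$ is non-empty. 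As each $\overline{S_\delta}$ is a closed and bounded (by $\|z\|$) subset of $\K$, hence compact, and the family is nested, $A$ is a non-empty compact set and $\conv(A)$ is compact and convex. Note also that $V(Z,u,z)$ is convex, being the image of the convex set $\Face(B_{Z^\ast},u)$ under the affine map $\phi\mapsto\phi(z)$.

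For the inclusion $\conv(A)\subseteq V(Z,u,z)$, by convexity of $V(Z,u,z)$ it suffices to check $A\subseteq V(Z,u,z)$. Given $\lambda\in A$, for each $n$ I would pick $w_n^\ast\in C$ with $\re w_n^\ast(u)>1-1/n$ and $|w_n^\ast(z)-\lambda|<1/n$; a $\weakstar$-cluster point $\phi$ of $(w_n^\ast)$ in the $\weakstar$-compact ball $B_{Z^\ast}$ then satisfies $\re\phi(u)\ge 1$, which together with $\|\phi\|\le 1=\|u\|$ forces $\phi(u)=1$, i.e.\ $\phi\in\Face(B_{Z^\ast},u)$, and also $\phi(z)=\lambda$. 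Hence $\lambda\in V(Z,u,z)$.

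The hard part is the reverse inclusion $V(Z,u,z)\subseteq\conv(A)$, and I would argue by contradiction and separation. Suppose some $\phi\in\Face(B_{Z^\ast},u)$ has $\mu:=\phi(z)\notin\conv(A)$; since $\conv(A)$ is compact and convex in $\K$, there are $\theta\in\T$ and $c\in\R$ with $\re(\theta\mu)>c\ge\re(\theta w)$ for all $w\in A$. Writing $g(\delta):=\sup\{\re(\theta z^\ast(z))\colon z^\ast\in C,\ \re z^\ast(u)>1-\delta\}$, a nested-compactness argument (pick near-maximizers $w_\delta\in\overline{S_\delta}$ and extract a cluster point in $A$) gives $\inf_{\delta>0}g(\delta)=\sup_{w\in A}\re(\theta w)\le c$. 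The crux is a two-regime estimate on the norm: starting from $\|u+t\theta z\|=\sup_{z^\ast\in C}\bigl[\re z^\ast(u)+t\,\re(\theta z^\ast(z))\bigr]$ and splitting $C$ according to whether $\re z^\ast(u)>1-\delta$ or not, I would obtain $\|u+t\theta z\|\le\max\{1+tg(\delta),\,1-\delta+t\|z\|\}$. Combining this with the lower bound $\|u+t\theta z\|\ge\re\phi(u+t\theta z)=1+t\,\re(\theta\mu)$ coming from $\phi\in\Face(B_{Z^\ast},u)$, and choosing $t>0$ small enough (relative to the fixed $\delta$) that the first term in the maximum dominates the second, yields $\re(\theta\mu)\le g(\delta)$ for every $\delta>0$, whence $\re(\theta\mu)\le\inf_\delta g(\delta)\le c$, contradicting $\re(\theta\mu)>c$.

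Finally, for the numerical-radius formula I would use $v(Z,u,z)=\sup\{|\lambda|\colon\lambda\in V(Z,u,z)\}=\sup_{\lambda\in\conv(A)}|\lambda|$; since $|\cdot|$ is convex, its supremum over $\conv(A)$ equals its supremum over $A$, and the same nested-compactness argument as above shows $\sup_{w\in A}|w|=\inf_{\delta>0}\sup\{|z^\ast(z)|\colon z^\ast\in C,\ \re z^\ast(u)>1-\delta\}$. The only genuine obstacle is the reverse inclusion, and inside it the norm splitting estimate together with the identity $\inf_\delta g(\delta)=\sup_A\re(\theta\,\cdot\,)$; everything else is routine $\weakstar$-compactness bookkeeping.
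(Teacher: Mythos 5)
Your proof is correct, but the crucial step goes by a genuinely different route than ours. Both arguments share the same skeleton: the easy inclusion via $w^\ast$-cluster points in $B_{Z^\ast}$, and the hard inclusion reduced (via separation, equivalently via comparing support functions in every direction $\theta\in\T$) to showing $\re\bigl(\theta\phi(z)\bigr)\leq\inf_{\delta>0}g(\delta)$ for every state $\phi\in\Face(B_{Z^\ast},u)$, with your nested-compactness identity $\inf_{\delta>0}g(\delta)=\sup_{w\in A}\re(\theta w)$ playing exactly the role of our Lemma \ref{Alemma:nested-family}. The difference lies in how that bound is obtained. We prove a dual approximation result (Lemma \ref{Alemma:mainlemma-numericalranges}): writing $\phi$ as a near-convex combination of elements of $C$ and running a pigeonhole argument on the weights, we produce $z^\ast\in C$ with $\re z^\ast(u)>1-\delta$ and $\re z^\ast(z)>\re\phi(z)-\delta$, which yields the support-function bound up to an additive $\delta$ that is then removed by letting $\delta\to 0$. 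You instead work in the predual: the identity $\|w\|=\sup_{z^\ast\in C}\re z^\ast(w)$ gives the two-regime estimate $\|u+t\theta z\|\leq\max\{1+t\,g(\delta),\,1-\delta+t\|z\|\}$, and comparing with the lower bound $1+t\,\re\bigl(\theta\phi(z)\bigr)$ for $t$ small relative to the fixed $\delta$ gives $\re\bigl(\theta\phi(z)\bigr)\leq g(\delta)$ with no error term. Your estimate is essentially the directional-derivative computation behind Lemma \ref{lemma:derivadaani} and is arguably cleaner; our dual lemma isolates a reusable statement about approximating states by elements of $C$ in the spirit of \cite{Mar-numrange-JMAA2016}. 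Either way, the deduction of the numerical-radius formula is the same routine convexity and compactness bookkeeping.
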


Let us first observe that the inclusion ``$\supseteq$'' is a straightforward application of the Banach-Alaoglu theorem. Indeed, given $\lambda_0\in \bigcap\nolimits_{\delta>0} \overline{\bigl\{z^\ast(z)\colon z^\ast\in C,\, \re z^\ast(u)>1-\delta\bigr\}}$, for every $n\in \N$ there is $z_n^\ast\in C$ such that
$$
\re z_n^\ast(u)>1-1/n \qquad \text{and} \qquad \bigl|\lambda_0 - z_n^\ast(z)\bigr|<1/n.
$$
If $z_0^\ast\in B_{Z^\ast}$ is a limiting point of the sequence $\{z_n^\ast\}_{n\in \N}$, we have that $z_0^\ast(u)=1$ and $z_0^\ast(z)=\lambda_0$, so $\lambda_0\in V(Z,u,z)$. As this latter set is convex, the inclusion follows.

To prove the more intriguing reverse inequality, we need a couple of preliminary results. The first one is a general version of  \cite[Lemma~2.5]{Mar-numrange-JMAA2016}.

\begin{lemma}\label{Alemma:mainlemma-numericalranges}
	Let $Z$ be a Banach space, let $C\subseteq B_{Z^\ast}$ be such that $B_{Z^\ast}=\overline{\conv}^{w^\ast}(C)$, and let $u\in S_Z$ and $z\in Z$. Then, for every $z_0^\ast\in S_{Z^\ast}$ with $z_0^\ast(u)=1$ and every $\delta>0$, there is $z^\ast\in C$ such that
	$$
	\re z^\ast(u) > 1-\delta \qquad \text{and} \qquad \re z^\ast(z) > \re z_0^\ast(z) - \delta.
	$$
\end{lemma}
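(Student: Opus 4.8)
We must show that for a fixed $z_0^\ast \in S_{Z^\ast}$ with $z_0^\ast(u)=1$ and a fixed $\delta>0$, there exists $z^\ast \in C$ satisfying both $\re z^\ast(u) > 1-\delta$ and $\re z^\ast(z) > \re z_0^\ast(z)-\delta$. The natural strategy is to find a point of $C$ that is close enough to $z_0^\ast$ in the two relevant functionals $u$ and $z$ simultaneously. Since $C$ is only assumed to be $w^\ast$-dense in $B_{Z^\ast}$ after taking closed convex hull, we cannot directly approximate $z_0^\ast$ by elements of $C$; instead we must exploit convexity together with the extreme/boundary position of $z_0^\ast$ relative to $u$.

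**Plan via separation.** The plan is to set up a separation argument in the space $\R^2$ obtained by pairing each functional $\varphi\in B_{Z^\ast}$ with the pair $(\re\varphi(u),\re\varphi(z))$. Consider the real-linear map $P\colon Z^\ast \to \R^2$ given by $P(\varphi)=(\re\varphi(u),\re\varphi(z))$, and let $K:=\overline{P(C)}\subseteq\R^2$ (closure in $\R^2$). Because $B_{Z^\ast}=\overline{\conv}^{w^\ast}(C)$ and $P$ is $w^\ast$-continuous and real-affine, one checks that $\conv(K)\supseteq P(B_{Z^\ast})$; in particular $P(z_0^\ast)=(1,\re z_0^\ast(z))$ lies in $\overline{\conv}(K)$. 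The point $(1,\re z_0^\ast(z))$ sits on the boundary line $\{(s,t):s=1\}$ which supports $P(B_{Z^\ast})$ from the right (since $\re\varphi(u)\le\|\varphi\|\le 1$ for all $\varphi\in B_{Z^\ast}$). The key observation is that a boundary point of a compact convex set lying on a supporting line must be approximable by points of $K$ whose first coordinate is close to $1$ and whose second coordinate is at least $\re z_0^\ast(z)-\delta$; otherwise a short separation/supporting-line argument in $\R^2$ would push $(1,\re z_0^\ast(z))$ strictly inside the complement of $\overline{\conv}(K)$, contradicting its membership.

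**Carrying it out.** Concretely, suppose toward a contradiction that no $z^\ast\in C$ satisfies both inequalities. Then $K$ is disjoint from the open quadrant $Q:=\{(s,t):s>1-\delta,\ t>\re z_0^\ast(z)-\delta\}$. Since $K$ lies in the closed half-plane $\{s\le 1\}$ and avoids $Q$, its convex hull also avoids the smaller open region obtained by intersecting $Q$ with $\{s\le 1\}$, i.e.\ the set $\{(s,t):1-\delta<s\le 1,\ t>\re z_0^\ast(z)-\delta\}$. But $(1,\re z_0^\ast(z))$ is a limit of points of $\overline{\conv}(K)$, and I would show these approximating points must eventually enter that forbidden region: using that the first coordinate is maximized (value $1$) exactly at $P(z_0^\ast)$ along the supporting direction $u$, any sequence in $\conv(K)$ converging to $(1,\re z_0^\ast(z))$ has second coordinates converging to $\re z_0^\ast(z)$ while first coordinates converge to $1$, forcing eventual membership in $Q$. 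This contradiction yields the desired $z^\ast$.

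**The main obstacle.** The delicate point is the passage from $C$ to its $w^\ast$-closed convex hull: membership $P(z_0^\ast)\in\overline{\conv}(K)$ must be derived carefully, since $P$ does not commute with closures in an obvious way. The cleanest route is probably to avoid $P$-closure subtleties by arguing directly in $Z^\ast$: apply the Hahn--Banach/separation theorem in the weak-$^\ast$ topology to the convex $w^\ast$-compact set $\overline{\conv}^{w^\ast}(\{z^\ast\in C:\re z^\ast(z)\le \re z_0^\ast(z)-\delta\})$, together with the half-space $\{\re z^\ast(u)\le 1-\delta\}$, and show that if their union covered $C$ then $z_0^\ast$ could not lie in $\overline{\conv}^{w^\ast}(C)=B_{Z^\ast}$. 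The subtlety is ensuring the separating functional can be taken to be a combination of (the $w^\ast$-continuous functionals) $u$ and $z$, so that it genuinely detects the two target inequalities; this is exactly where the two-dimensional reduction via $P$ does the work, and where the proof of \cite[Lemma~2.5]{Mar-numrange-JMAA2016} must be generalized from the special case there to an arbitrary norming set $C$.
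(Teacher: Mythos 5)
There is a genuine gap at the heart of your separation argument, in the step ``Since $K$ lies in the closed half-plane $\{s\le 1\}$ and avoids $Q$, its convex hull also avoids the smaller open region obtained by intersecting $Q$ with $\{s\le 1\}$.'' This is false: the complement of $Q$ is the union of the two half-planes $\{s\le 1-\delta\}$ and $\{t\le \re z_0^\ast(z)-\delta\}$, which is not convex, so $K\cap Q=\emptyset$ does not imply $\conv(K)\cap Q=\emptyset$. Concretely, if $K$ contains a point $(1,\re z_0^\ast(z)-\delta)$ and a point $(1-2\delta, M)$ with $M$ large, both lie outside $Q$, yet for small $\lambda>0$ their convex combination has first coordinate $1-2\lambda\delta>1-\delta$ and second coordinate exceeding $\re z_0^\ast(z)-\delta$, hence lies in $Q$ (and indeed in your ``forbidden region'' $\{1-\delta<s\le 1\}\cap\{t>\re z_0^\ast(z)-\delta\}$). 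So the contradiction you aim for is not established. The same defect afflicts your proposed ``cleanest route'' at the end: covering $C$ by the union of a $w^\ast$-compact convex set and a half-space does not prevent $z_0^\ast$ from lying in $\overline{\conv}^{w^\ast}(C)$, again because a union of two convex sets need not be convex.

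What repairs the argument is precisely the quantitative averaging that the paper uses, and which your write-up omits: the second coordinate is uniformly bounded by $\|z\|$ on $B_{Z^\ast}$, so in a convex combination $\sum_k\alpha_k z_k^\ast$ of elements of $C$ with $\sum_k\alpha_k\re z_k^\ast(u)>1-(\delta')^2$, a Markov-type estimate shows that the indices with $\re z_k^\ast(u)\le 1-\delta'$ carry total weight less than $\delta'$; choosing $2\|z\|\delta'<\delta$, their contribution to $\sum_k\alpha_k\re z_k^\ast(z)$ is negligible, and a convexity argument over the remaining ``good'' indices produces the desired $z^\ast$. Your picture in $\R^2$ is a reasonable way to visualize this, but without the uniform bound on $|\re\varphi(z)|$ and the resulting weight estimate, the two-dimensional separation alone cannot close the proof.
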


\begin{proof}
	As $B_{Z^\ast}=\overline{\conv}^{w^\ast}(C)$, for $\delta'>0$ satisfying $2\|z\|\delta'<\delta$, we may find $n\in \N$, $z_1^\ast,\ldots,z_n^\ast\in C$, $\alpha_1,\ldots,\alpha_n\in [0,1]$ with $\sum\nolimits_{k=1}^n \alpha_k=1$ such that
	$$
	\sum_{k=1}^n \alpha_k \re z_k^\ast(u) > 1 - (\delta')^2 \quad \text{and} \quad \sum_{k=1}^n \alpha_k \re z_k^\ast(z) > \re z_0^\ast(z) - \delta/2.
	$$
	Now, consider
	$$
	J=\bigl\{k\in \{1,\ldots,n\}\colon \re z^\ast_k(u)>1-\delta'\bigr\}
	$$
	and let $L=\{1,\ldots,n\}\setminus J$. We have that
	$$
	1-(\delta')^2 < \sum_{k=1}^n \alpha_k \re z_k^\ast(u) \leq \sum_{k\in J} \alpha_k + \sum_{k\in L} \alpha_k (1-\delta') = 1 - \delta'\sum_{k\in L}\alpha_k,
	$$
	from which we deduce that
	$$
	\sum_{k\in L}\alpha_k < \delta'.
	$$
	Now, we have that
	\begin{align*}
	\re z_0^\ast(z) - \delta/2 & < \sum_{k=1}^n \alpha_k \re z_k^\ast(z) \\ & \leq \sum_{k\in J} \alpha_k\re z_k^\ast(z) + \|z\|\sum_{k\in L} \alpha_k < \sum_{k\in J} \alpha_k\re z_k^\ast(z) + \delta/2.
	\end{align*}
	Therefore,
	$$
	\sum_{k\in J} \alpha_k\re z_k^\ast(z) > \re z_0^\ast(z) - \delta,
	$$
	and an obvious convexity argument provides the existence of $k\in J$ such that
	$$
	\re z_k^\ast(z)> \re z_0^\ast(z) - \delta.
	$$
	On the other hand, $\re z_k^\ast(u)>1-\delta$ as $k\in J$, so the proof is finished.
\end{proof}

The next preliminary result follows straightforwardly from \cite[Lemma 2.4]{Mar-numrange-JMAA2016}.

\begin{lemma}\label{Alemma:nested-family}
	Let $\{W_\delta\}_{\delta>0}$ be a monotone family of compact subsets of $\K$ (i.e.\ $W_{\delta_1} \subseteq W_{\delta_2}$ when $\delta_1 < \delta_2$). Then
	$$
	\sup \re \bigcap\nolimits_{\delta>0} W_\delta \, = \, \inf_{\delta>0} \sup \re W_\delta.
	$$
\end{lemma}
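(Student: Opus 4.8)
The plan is to establish the two inequalities separately, with the nontrivial direction resting on the compactness of the $W_\delta$. Throughout I would write $W := \bigcap_{\delta>0} W_\delta$ and $f(\delta) := \sup \re W_\delta$, adopting the convention $\sup \emptyset = -\infty$ so that the degenerate case in which some $W_\delta$ is empty is dispatched at once (both sides then equal $-\infty$). Since $\re\colon \K \to \R$ is continuous and each $W_\delta$ is compact, the image $\re W_\delta$ is a compact subset of $\R$, so the supremum $f(\delta)$ is actually \emph{attained}. Moreover, the monotonicity $W_{\delta_1} \subseteq W_{\delta_2}$ for $\delta_1 < \delta_2$ makes $f$ non-decreasing in $\delta$, whence $\inf_{\delta>0} f(\delta) = \lim_{\delta \to 0^+} f(\delta) =: M$.

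The easy inequality ``$\leq$'' follows directly from the inclusions: for each fixed $\delta > 0$ we have $W \subseteq W_\delta$, hence $\re W \subseteq \re W_\delta$ and $\sup \re W \leq f(\delta)$; taking the infimum over $\delta$ gives $\sup \re W \leq M$.

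For the reverse inequality ``$\geq$'', I would construct a single point of $W$ whose real part equals $M$. Choose a sequence $\delta_n \downarrow 0$ and, for each $n$, a point $w_n \in W_{\delta_n}$ with $\re w_n = f(\delta_n) \geq M$ (possible by the attainment noted above). Fixing any $\delta_0 > 0$, all but finitely many $w_n$ lie in the compact set $W_{\delta_0}$, so after passing to a subsequence I may assume $w_n \to w_\ast$ for some $w_\ast \in \K$. The crux is to verify that $w_\ast \in W$: given an arbitrary $\delta > 0$, for all large $n$ one has $\delta_n < \delta$ and therefore $w_n \in W_{\delta_n} \subseteq W_\delta$; since $W_\delta$ is closed, the limit $w_\ast$ lies in $W_\delta$ too, and as $\delta$ was arbitrary, $w_\ast \in \bigcap_{\delta>0} W_\delta = W$. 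Finally, continuity of $\re$ together with the monotone convergence $f(\delta_n) \to M$ yields $\re w_\ast = \lim_n \re w_n = \lim_n f(\delta_n) = M$, so $\sup \re W \geq \re w_\ast = M$.

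The main obstacle is exactly the extraction of the limit point $w_\ast$ and the check that it belongs to \emph{every} $W_\delta$ simultaneously; this is where both the compactness (to secure a convergent subsequence inside a fixed $W_{\delta_0}$) and the closedness of the individual $W_\delta$ are indispensable, and it is what upgrades the pointwise estimates $\re w_n \geq M$ on the shrinking sets into the existence of a single witness in the intersection. Combining the two inequalities gives $\sup \re W = M$, as desired.
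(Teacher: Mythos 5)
Your proof is correct. Note that the paper does not actually prove this lemma: it simply asserts that it ``follows straightforwardly from [Mar\'{\i}n, \emph{J. Math. Anal. Appl.} 2016, Lemma 2.4]'', so there is no internal argument to compare against. Your self-contained compactness argument --- attainment of $\sup\re W_\delta$ on each compact set, extraction of a convergent subsequence of maximizers inside a fixed $W_{\delta_0}$, and the verification that the limit point lies in every $W_\delta$ by closedness --- is exactly the standard reasoning one would expect behind the cited lemma, and all the steps check out, including the dispatch of the degenerate case where some $W_\delta$ is empty (which is what licenses choosing $w_n\in W_{\delta_n}$ in the main case) and the identification $\lim_n f(\delta_n)=\inf_{\delta>0}f(\delta)$ via the monotonicity of $f$.
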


\begin{proof}[Proof of the main part of Proposition \ref{AProp:num-ranges}]
	For $z\in Z$, write $$W_\delta(z):= \overline{\bigl\{z^\ast(z)\colon z^\ast\in C,\, \re z^\ast(u)>1-\delta\bigr\}} \quad \text{ and } \quad W(z):=\bigcap\nolimits_{\delta>0} W_\delta(z).$$ To get the desired inclusion $V(Z,u,z)\subseteq \conv\,W(z)$ for every $z\in Z$, it is enough to prove that for every $\delta>0$ and every $z\in Z$,
	\begin{equation}\label{A-eq:what-to-prove}
	\sup \re V(Z,u,z) \leq \sup \re W_\delta(z) + \delta.
	\end{equation}
	Indeed, it then follows from Lemma \ref{Alemma:nested-family} that
	$\sup \re V(Z,u,z) \leq \sup \re W(z)$ for every $z\in Z$.
	Now, as for every $\theta\in \T$, we have
	$$
	V(Z,u,\theta z)=\theta V(z,u,z) \quad \text{ and } \quad W(\theta z)=\theta W(z),
	$$
	the desired inclusion follows easily.
	
	So let us prove that inequality \eqref{A-eq:what-to-prove} holds. Fix $z\in Z$ and $\delta>0$. Given $z_0^\ast\in \Face(B_{Z^\ast},u)$, we may use Lemma \ref{Alemma:mainlemma-numericalranges} to get $z^\ast\in C$ such that
	$$
	\re z^\ast(u)>1-\delta \qquad \text{and} \qquad \re z_0^\ast(z) < \re z^\ast(z) + \delta.
	$$
	So, $\re z_0^\ast(z)\leq \sup \re W_\delta(z) + \delta$. Moving $z_0^\ast\in \Face(B_{Z^\ast},u)$, we get that $$\sup \re V(Z,u,z) \leq \sup \re W_\delta(z) + \delta,$$ as desired.
\end{proof}

\section{Tools to study the numerical index with respect to an operator}\label{sect:tools_num_index_operators}
Our aim in this section is to provide with some tools to calculate or, at least estimate,  the numerical indices with respect to operators. Some of the results are just direct translation to the operator spaces setting of the abstract results of the previous section, but other ones rely on specifics of the operator case.

We need some notation. Let $X$, $Y$ be Banach spaces. For a norm-one operator $G\in \mathcal{L}(X,Y)$ and $\delta>0$, we write
$$
v_{G,\delta} (T):=\sup\big\{|y^\ast (Tx)| \colon y^\ast \in S_{Y^\ast }, \ x\in S_X, \ \re y^\ast (Gx)>1-\delta \big\}
$$
for every $T\in \mathcal{L}(X,Y)$. It then follows from \cite{Mar-numrange-JMAA2016} (or from Proposition \ref{AProp:num-ranges}) that
$$
v(\mathcal{L}(X,Y),G,T)=v_G(T)=\inf_{\delta>0} v_{G,\delta}(T)
$$
for every $T\in \mathcal{L}(X,Y)$, a result which we will use without any further mention (see Lemma \ref{Lemm-radio-G-A-B} for details).

We include first some results which directly follow from those of section \ref{sect:abstractnumericalindex}. The first one is the translation of Lemma \ref{Lemm:unotsmoothnotextreme} to the setting of the spaces of operators. For a simpler notation, let us say that a norm-one operator $G\in \mathcal{L}(X,Y)$ is an \emph{extreme operator} (or \emph{extreme contraction}) if $G$ is an extreme point of the unit ball of $\mathcal{L}(X,Y)$.

\begin{lemma}\label{lemma:operator-smooth-extreme}
Let $X$, $Y$ be Banach spaces and let $G\in \mathcal{L}(X,Y)$ be a norm-one operator with $n_G(X,Y)>0$. Then, $G$ is a strongly extreme point of $B_{\mathcal{L}(X,Y)}$, in particular, $G$ is an extreme operator. Moreover, if $\dim(X)\geq 2$ or $\dim(Y)\geq 2$, then the norm of $\mathcal{L}(X,Y)$ is not smooth at $G$.
\end{lemma}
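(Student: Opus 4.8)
The plan is to read the statement directly off Lemma~\ref{Lemm:unotsmoothnotextreme}, applied to the Banach space $Z=\mathcal{L}(X,Y)$ and the norm-one element $u=G$. By definition $n_G(X,Y)=n(\mathcal{L}(X,Y),G)$, so the hypothesis $n_G(X,Y)>0$ says exactly that $G$ is a geometrically unitary element of $B_{\mathcal{L}(X,Y)}$ (recall from the Definition opening this section that the condition $n(Z,u)>0$ characterizes geometrically unitary elements). Part~(b) of Lemma~\ref{Lemm:unotsmoothnotextreme} then yields at once that $G$ is a strongly extreme point of $B_{\mathcal{L}(X,Y)}$; and since strongly extreme points are extreme points, $G$ is in particular an extreme operator. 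This disposes of the first two assertions with essentially no work beyond the translation of notation.

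For the non-smoothness claim I would first record the elementary fact that a geometrically unitary element is in particular a vertex: if the linear span of $\Face(B_{\mathcal{L}(X,Y)^\ast},G)$ is all of $\mathcal{L}(X,Y)^\ast$, then this face necessarily separates the points of $\mathcal{L}(X,Y)$, since any $T$ annihilated by every member of the face is annihilated by the whole dual and hence is zero. Thus $G$ is a vertex of $B_{\mathcal{L}(X,Y)}$, and part~(a) of Lemma~\ref{Lemm:unotsmoothnotextreme} applies, giving non-smoothness of the norm of $\mathcal{L}(X,Y)$ at $G$ provided $\dim(\mathcal{L}(X,Y))\geq 2$.

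The only step that is not pure bookkeeping — and hence the single mild obstacle — is checking that the hypothesis $\dim(X)\geq 2$ or $\dim(Y)\geq 2$ forces $\dim(\mathcal{L}(X,Y))\geq 2$, so that the dimension assumption of Lemma~\ref{Lemm:unotsmoothnotextreme}.a is met. This is immediate from the presence of rank-one operators $z\mapsto x^\ast(z)\,y$ with $x^\ast\in X^\ast$ and $y\in Y$: as $\|G\|=1$ both $X$ and $Y$ are nonzero, so choosing two linearly independent functionals in $X^\ast$ (possible when $\dim(X)\geq 2$) or two linearly independent vectors in $Y$ (possible when $\dim(Y)\geq 2$) and pairing each with a fixed nonzero vector on the other side produces two linearly independent elements of $\mathcal{L}(X,Y)$. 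With this remark in hand, the invocation of Lemma~\ref{Lemm:unotsmoothnotextreme}.a completes the proof.
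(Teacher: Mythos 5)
Your proposal is correct and follows exactly the route the paper intends: the lemma is stated there as a direct translation of Lemma~\ref{Lemm:unotsmoothnotextreme} to $Z=\mathcal{L}(X,Y)$, $u=G$, with part~(b) giving the strongly extreme point claim and part~(a) (via the observation that a geometrically unitary element is a vertex) giving non-smoothness. Your explicit check that $\dim(X)\geq 2$ or $\dim(Y)\geq 2$ forces $\dim(\mathcal{L}(X,Y))\geq 2$ via rank-one operators is a detail the paper leaves implicit, but it is exactly the right verification.
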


Next, we particularize Lemma \ref{lemma:derivadaani} to our setting.

\begin{lemma}\label{Lemma:num-radius-with-right-derivative}
	Let $X$, $Y$ be Banach spaces and let $G\in \mathcal{L}(X,Y)$ be a norm-one operator. Then,
	$$
	v_G(T)= \max_{\theta\in \T} \lim_{\alpha\to 0^+} \frac{\|G+\alpha \theta T\|-1}{\alpha}=\lim_{\alpha\to 0^+} \max_{\theta\in \T}\frac{\|G+\alpha \theta T\|-1}{\alpha}
	$$
	for every $T\in \mathcal{L}(X,Y)$.
\end{lemma}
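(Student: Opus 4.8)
The plan is to observe that this lemma is nothing more than the specialization of the abstract Lemma \ref{lemma:derivadaani} to the concrete Banach space $Z=\mathcal{L}(X,Y)$, once we have identified the relevant numerical radii. Concretely, I would take $Z=\mathcal{L}(X,Y)$ endowed with the operator norm, take $u=G$ (which is a norm-one element of $Z$ by hypothesis), and take $z=T$ for an arbitrary $T\in \mathcal{L}(X,Y)$. The only non-formal ingredient is the identification
$$
v_G(T)=v\bigl(\mathcal{L}(X,Y),G,T\bigr),
$$
which has already been recorded in the introduction as a consequence of formula \eqref{eq:Ardalani-MartinJMAA2016} (and which is used throughout without further mention, cf.\ the remark preceding Lemma \ref{Lemm-radio-G-A-B}). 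Once this identification is in place, the statement transfers verbatim.

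With these substitutions, Lemma \ref{lemma:derivadaani} applied to $(Z,u)=(\mathcal{L}(X,Y),G)$ yields
$$
v\bigl(\mathcal{L}(X,Y),G,T\bigr)=\max_{\theta\in \T}\lim_{\alpha\to 0^+}\frac{\|G+\alpha\theta T\|-1}{\alpha}=\lim_{\alpha\to 0^+}\max_{\theta\in \T}\frac{\|G+\alpha\theta T\|-1}{\alpha},
$$
where I have simply renamed the parameter $t$ of Lemma \ref{lemma:derivadaani} as $\alpha$ to match the present notation. Combining this chain of equalities with $v_G(T)=v(\mathcal{L}(X,Y),G,T)$ gives exactly the asserted formula.

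There is essentially no genuine obstacle here: the two substantive facts, namely the directional-derivative description of the abstract numerical radius and the commutation of $\max_{\theta\in\T}$ with the one-sided limit, are both already contained in Lemma \ref{lemma:derivadaani} (the latter being the part proved via a compactness argument on $\T$ in the discussion following that lemma). Thus the proof reduces to citing Lemma \ref{lemma:derivadaani} for the space $\mathcal{L}(X,Y)$ at the point $G$, together with the identification of $v_G$ with the abstract numerical radius. The whole argument is therefore a one-line specialization, and I would present it as such, the only point worth making explicit being the equality $v_G(T)=v(\mathcal{L}(X,Y),G,T)$ that licenses the passage from the abstract to the spatial notation.
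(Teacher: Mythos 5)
Your proposal is correct and is exactly what the paper does: Lemma \ref{Lemma:num-radius-with-right-derivative} is presented there as the direct particularization of Lemma \ref{lemma:derivadaani} to $Z=\mathcal{L}(X,Y)$, $u=G$, $z=T$, using the identification $v_G(T)=v(\mathcal{L}(X,Y),G,T)$ recorded earlier via \eqref{eq:Ardalani-MartinJMAA2016} and Lemma \ref{Lemm-radio-G-A-B}. Nothing further is needed.
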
	

We now include a part of Proposition \ref{Prop:CharacterizationsNumericalIndexVector}, particularized to spaces of operators, which allows to characterize the numerical index in terms of the norm of the space of operators.

\begin{proposition}\label{Prop:CharacterizationsNumericalIndexOperators}
	Let $X$, $Y$ be Banach spaces, let $G\in \mathcal{L}(X,Y)$ be a norm-one operator, and $0 < \lambda \leq 1$. Then, the following statements are equivalent:
	\begin{enumerate}
		\item[(i)] $n_G(X,Y) \geq \lambda$;
		\item[(ii)] $\displaystyle \max_{\theta\in \T}\|G + \theta\,T\|\geq 1 + \lambda \|T\|$ for every $T\in \mathcal{L}(X,Y)$.
	\end{enumerate}
\end{proposition}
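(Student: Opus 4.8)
The plan is to recognize this proposition as nothing more than the operator-space instance of Proposition \ref{Prop:CharacterizationsNumericalIndexVector}. By definition, $n_G(X,Y) = n(\mathcal{L}(X,Y), G)$, so I would set $Z = \mathcal{L}(X,Y)$ and $u = G$. The hypothesis that $G$ is a norm-one operator says precisely that $u \in S_Z$, so the standing assumptions of Proposition \ref{Prop:CharacterizationsNumericalIndexVector} are met, and I may invoke it directly for this choice of $(Z,u)$.

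With this identification, condition (i) of the present statement, namely $n_G(X,Y) \geq \lambda$, is literally condition (i) of Proposition \ref{Prop:CharacterizationsNumericalIndexVector}. Moreover, specializing condition (iii) of that proposition,
$$
\max_{\theta\in \T}\|u + \theta z\|\geq 1 + \lambda \|z\| \qquad (z\in Z),
$$
to $u = G$ and to an arbitrary element $z = T \in \mathcal{L}(X,Y) = Z$ yields exactly condition (ii) of the present statement. Hence the equivalence (i)$\Leftrightarrow$(iii) established in Proposition \ref{Prop:CharacterizationsNumericalIndexVector} is, word for word, the equivalence (i)$\Leftrightarrow$(ii) that I wish to prove.

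There is essentially no obstacle here: the only thing to check is that the translation of the abstract notation into the operator setting is faithful, which amounts to the already-recorded identity $n_G(X,Y) = n(\mathcal{L}(X,Y),G)$ and the observation that the supremum over $z \in Z$ in (iii) is the same as the supremum over all operators $T$ in (ii). Consequently I would simply write that the statement follows by applying Proposition \ref{Prop:CharacterizationsNumericalIndexVector} to $Z = \mathcal{L}(X,Y)$ and $u = G$, and reading off the equivalence of its conditions (i) and (iii). If one prefers a self-contained argument rather than a citation, the implication (i)$\Rightarrow$(ii) follows from the Hahn--Banach theorem together with the definition of $v_G$, while (ii)$\Rightarrow$(i) follows from the directional-derivative formula of Lemma \ref{Lemma:num-radius-with-right-derivative}, exactly as indicated after Proposition \ref{Prop:CharacterizationsNumericalIndexVector}.
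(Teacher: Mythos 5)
Your proposal is correct and matches the paper exactly: the paper presents this proposition as Proposition \ref{Prop:CharacterizationsNumericalIndexVector} ``particularized to spaces of operators,'' i.e.\ applied to $Z=\mathcal{L}(X,Y)$ and $u=G$ together with the identity $n_G(X,Y)=n(\mathcal{L}(X,Y),G)$, which is precisely your argument. Your closing remark on how to make it self-contained also mirrors the paper's own comments following Proposition \ref{Prop:CharacterizationsNumericalIndexVector}.
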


The case $\lambda=1$ in the previous result gives us the concept of spear operator. A norm-one operator $G\in \mathcal{L}(X,Y)$ is said to be a \emph{spear operator} if
$$
\max_{\theta\in \T}\|G + \theta\,T\|=1+\|T\|
$$
for every $T\in \mathcal{L}(X,Y)$. This concept was introduced in \cite{Ardalani} and deeply studied in the book \cite{SpearsBook}, where we refer for more information and background. Observe that Proposition \ref{Prop:CharacterizationsNumericalIndexOperators} says, in particular, that $G$ is a spear operator if and only if $n_G(X,Y)=1$.

The next result is a direct consequence of Proposition \ref{AProp:num-ranges} and will be very useful later on.

\begin{lemma}\label{Lemm-radio-G-A-B}
	Let $X, Y$ be Banach spaces. Suppose that $A\subseteq B_X$ and $B\subseteq B_{Y^\ast }$ satisfy $\overline{\conv}(A)=B_X$ and $\overline{\conv}^{w^\ast }(B) =B_{Y^\ast }$. Then, given $G\in \mathcal{L}(X,Y)$ with $\|G\|=1$, we have
	$$
	V(\mathcal{L}(X,Y),G,T)=\conv \bigcap_{\delta>0} \overline{\left\{y^\ast (Tx) \colon y^\ast \in B, \ x\in A, \ \re y^\ast (Gx)>1-\delta \right\}}
	$$
	for every $T\in \mathcal{L}(X,Y)$. Accordingly,
	$$
	v_G(T)=\inf_{\delta>0} \sup \big\{|y^\ast (Tx)| \colon y^\ast \in B, \ x\in A, \ \re y^\ast (Gx)>1-\delta \big\}.
	$$
\end{lemma}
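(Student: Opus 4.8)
The plan is to deduce this lemma directly from Proposition~\ref{AProp:num-ranges}, applied to the Banach space $Z=\mathcal{L}(X,Y)$ with distinguished norm-one element $u=G$. The point is to exhibit a subset $C$ of $B_{\mathcal{L}(X,Y)^\ast}$ built out of $A$ and $B$ whose $w^\ast$-closed convex hull is the whole dual ball, and for which the formula provided by Proposition~\ref{AProp:num-ranges} specializes to the asserted one.

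Concretely, for $x\in X$ and $y^\ast\in Y^\ast$ I would consider the evaluation functional $\Lambda_{x,y^\ast}\in \mathcal{L}(X,Y)^\ast$ given by $\Lambda_{x,y^\ast}(T)=y^\ast(Tx)$, and set $C:=\{\Lambda_{x,y^\ast}\colon x\in A,\ y^\ast\in B\}$. Since $\|x\|\leq 1$ and $\|y^\ast\|\leq 1$ give $|y^\ast(Tx)|\leq \|T\|$, we have $C\subseteq B_{\mathcal{L}(X,Y)^\ast}$. With this choice, $\Lambda_{x,y^\ast}(G)=y^\ast(Gx)$ and $\Lambda_{x,y^\ast}(T)=y^\ast(Tx)$, so the set $\{z^\ast(z)\colon z^\ast\in C,\ \re z^\ast(u)>1-\delta\}$ appearing in Proposition~\ref{AProp:num-ranges} becomes exactly $\{y^\ast(Tx)\colon x\in A,\ y^\ast\in B,\ \re y^\ast(Gx)>1-\delta\}$. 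Hence, once the hypothesis $B_{\mathcal{L}(X,Y)^\ast}=\overline{\conv}^{w^\ast}(C)$ of that proposition is verified, both displayed identities follow immediately, using $v_G(T)=v(\mathcal{L}(X,Y),G,T)$ to rewrite the numerical radius.

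The heart of the argument, and the step I expect to be the main obstacle, is therefore to prove $B_{\mathcal{L}(X,Y)^\ast}=\overline{\conv}^{w^\ast}(C)$. The inclusion ``$\subseteq$'' is trivial because $B_{\mathcal{L}(X,Y)^\ast}$ is $w^\ast$-closed, convex, and contains $C$. For the reverse inclusion I would compute the support function of $C$: for fixed $T\in \mathcal{L}(X,Y)$ and fixed $x\in A$, the hypothesis $\overline{\conv}^{w^\ast}(B)=B_{Y^\ast}$ and the $w^\ast$-continuity of $y^\ast\mapsto \re y^\ast(Tx)$ yield $\sup_{y^\ast\in B}\re y^\ast(Tx)=\|Tx\|$; then, since $x\mapsto \|Tx\|$ is convex and continuous and $\overline{\conv}(A)=B_X$, its supremum over $A$ coincides with its supremum over $B_X$, namely $\|T\|$. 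Combining the two, $\sup_{\Lambda\in C}\re \Lambda(T)=\|T\|$ for every $T$. A Hahn--Banach separation in the duality $\langle \mathcal{L}(X,Y),\mathcal{L}(X,Y)^\ast\rangle$ then closes the gap: if some $\psi\in B_{\mathcal{L}(X,Y)^\ast}$ failed to lie in the $w^\ast$-closed convex set $\overline{\conv}^{w^\ast}(C)$, a $w^\ast$-continuous functional, i.e.\ an element $T\in \mathcal{L}(X,Y)$, would strictly separate it, forcing $\re\psi(T)>\sup_{\Lambda\in C}\re\Lambda(T)=\|T\|$, in contradiction with $\re\psi(T)\leq \|\psi\|\,\|T\|\leq \|T\|$.

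With $\overline{\conv}^{w^\ast}(C)=B_{\mathcal{L}(X,Y)^\ast}$ established, Proposition~\ref{AProp:num-ranges} applies verbatim and, after the substitution $\Lambda_{x,y^\ast}(T)=y^\ast(Tx)$, $\Lambda_{x,y^\ast}(G)=y^\ast(Gx)$ described above, delivers both the expression for $V(\mathcal{L}(X,Y),G,T)$ and the accompanying formula for $v_G(T)$. No estimates beyond the two elementary support-function computations are needed, so the only genuine content lies in the norming property of $C$.
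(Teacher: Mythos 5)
Your proposal is correct and follows essentially the same route as the paper: the paper also sets $C=A\otimes B$, verifies $\sup_{x\in A,\,y^\ast\in B}\re y^\ast(Tx)=\|T\|$ for every $T\in\mathcal{L}(X,Y)$ by exchanging suprema using the two norming hypotheses, concludes $B_{\mathcal{L}(X,Y)^\ast}=\overline{\conv}^{w^\ast}(A\otimes B)$, and then invokes Proposition~\ref{AProp:num-ranges}. The only difference is that you spell out the Hahn--Banach separation step that the paper leaves implicit.
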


\begin{proof}
The result follows from Proposition \ref{AProp:num-ranges} as the hypotheses give that $B_{\mathcal{L}(X,Y)^\ast}=\overline{\conv}^{w^\ast}(A\otimes B)$. Indeed, for every $G\in \mathcal{L}(X,Y)$, we have that
\begin{align*}
\sup_{x\in A,\,y^\ast\in B} \re y^\ast(Gx) & = \sup_{y^\ast\in B}\sup_{x\in A} \re y^\ast(Gx) = \sup_{y^\ast\in B}\sup_{x\in B_X} \re y^\ast(Gx) \\
& = \sup_{x\in B_X}\sup_{y^\ast\in B} \re y^\ast(Gx) = \sup_{x\in B_X}\sup_{y^\ast\in B_{Y^\ast}} \re y^\ast(Gx)=\|G\|.\qedhere
\end{align*}
\end{proof}

We also may relate the numerical index with respect to an operator to the numerical index with respect to its adjoint.

\begin{lemma}\label{lemma:adjoint-inequality}
Let $X, Y$ be Banach spaces. Then,
$$
n_{G^\ast}(Y^\ast,X^\ast)\leq n_G(X,Y)
$$
for every norm-one $G\in L(X,Y)$.
\end{lemma}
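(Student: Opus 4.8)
The plan is to deduce this inequality from the monotonicity of the abstract numerical index under isometric embeddings established in Lemma~\ref{Lemm:numericalIndexThroughMaps}.b. The key idea is that passing to adjoints is an isometric embedding of one space of operators into another which carries $G$ to $G^\ast$, so that the estimate for $\mathcal{L}(Y^\ast,X^\ast)$ at $G^\ast$ transfers back to $\mathcal{L}(X,Y)$ at $G$.

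Concretely, I would introduce the adjoint map
$$
\psi\colon \mathcal{L}(X,Y)\longrightarrow \mathcal{L}(Y^\ast,X^\ast),\qquad \psi(T)=T^\ast,
$$
and verify the three hypotheses of Lemma~\ref{Lemm:numericalIndexThroughMaps}. First, $\psi$ is linear: here $T^\ast$ is the Banach-space adjoint given by $T^\ast(y^\ast)=y^\ast\circ T$, for which $(S+T)^\ast=S^\ast+T^\ast$ and $(\alpha T)^\ast=\alpha T^\ast$ hold for all scalars $\alpha$. Second, $\psi$ is an isometry, since $\|T^\ast\|=\|T\|$ for every bounded operator $T$; in particular it is an isometric embedding. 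Third, $\psi(G)=G^\ast$ and $\|\psi(G)\|=\|G^\ast\|=\|G\|=1$, so $G^\ast\in S_{\mathcal{L}(Y^\ast,X^\ast)}$.

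With these verifications in hand, Lemma~\ref{Lemm:numericalIndexThroughMaps}.b applied to $\psi$ with $u=G$ yields at once
$$
n\bigl(\mathcal{L}(Y^\ast,X^\ast),\,\psi(G)\bigr)\leq n\bigl(\mathcal{L}(X,Y),\,G\bigr),
$$
which is precisely $n_{G^\ast}(Y^\ast,X^\ast)\leq n_G(X,Y)$.

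There is essentially no serious obstacle here; the argument is a one-line application of Lemma~\ref{Lemm:numericalIndexThroughMaps}.b once the correct map is identified. The only point requiring a moment of care is the $\mathbb{K}$-linearity of $\psi$ in the complex case, i.e.\ the fact that the Banach-space adjoint (not the Hilbert-space Hermitian adjoint) is in play, so that scalars pass through without conjugation. As an alternative to invoking Lemma~\ref{Lemm:numericalIndexThroughMaps}, one could argue directly from Proposition~\ref{Prop:CharacterizationsNumericalIndexOperators}: assuming $n_{G^\ast}(Y^\ast,X^\ast)\geq\lambda$ and fixing $T\in\mathcal{L}(X,Y)$, apply the inequality $\max_{\theta\in\T}\|G^\ast+\theta S\|\geq 1+\lambda\|S\|$ to $S=T^\ast$ and use $(G+\theta T)^\ast=G^\ast+\theta T^\ast$ together with $\|R^\ast\|=\|R\|$ to recover $\max_{\theta\in\T}\|G+\theta T\|\geq 1+\lambda\|T\|$, whence $n_G(X,Y)\geq\lambda$ by the reverse implication in Proposition~\ref{Prop:CharacterizationsNumericalIndexOperators}.
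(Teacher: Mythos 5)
Your argument is correct and coincides with the proof given in the paper, which likewise observes that $T\mapsto T^\ast$ is an isometric embedding of $\mathcal{L}(X,Y)$ into $\mathcal{L}(Y^\ast,X^\ast)$ carrying $G$ to $G^\ast$ and invokes Lemma~\ref{Lemm:numericalIndexThroughMaps}. Your alternative via Proposition~\ref{Prop:CharacterizationsNumericalIndexOperators} is also essentially the paper's other stated route (the derivative formula of Lemma~\ref{Lemma:num-radius-with-right-derivative} combined with $\|R^\ast\|=\|R\|$).
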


\begin{proof}
The result follows immediately from Lemma \ref{Lemma:num-radius-with-right-derivative} and the fact that the norm of an operator and the norm of its adjoint coincide. Alternatively, it also follows from Lemma \ref{Lemm:numericalIndexThroughMaps} as the operator $\Psi:\mathcal{L}(X,Y) \longrightarrow \mathcal{L}(Y^\ast,X^\ast)$ given by $T\longmapsto T^\ast$ is an isometric embedding.
\end{proof}

In the case of rank-one operators, we may provide a formula for the numerical index with respect to it.

\begin{proposition}\label{prop:num-index-rankone-operators}
Let $X$, $Y$ be Banach spaces, $x_0^\ast \in S_{X^\ast }$, and $y_0\in S_Y$. Then, the rank-one operator $G=x_0^\ast \otimes y_0$ satisfies that
$$
n_G(X,Y)=n(X^\ast ,x_0^\ast )\,n(Y,y_0).
$$
\end{proposition}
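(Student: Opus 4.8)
The plan is to use the spatial description of the numerical radius given in Lemma~\ref{Lemm-radio-G-A-B}, taking $A=S_X$ and $B=S_{Y^\ast}$. The key elementary observation is that, since $G=x_0^\ast\otimes y_0$, one has $y^\ast(Gx)=x_0^\ast(x)\,y^\ast(y_0)$ for all $x\in X$ and $y^\ast\in Y^\ast$, so the defining constraint $\re y^\ast(Gx)>1-\delta$ becomes $\re\bigl[x_0^\ast(x)\,y^\ast(y_0)\bigr]>1-\delta$.

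The main point, and the step I expect to require the most care, is to \emph{decouple} this constraint, that is, to prove
$$
v_G(T)=\inf_{\delta>0}\sup\bigl\{|y^\ast(Tx)|\colon x\in S_X,\ y^\ast\in S_{Y^\ast},\ \re x_0^\ast(x)>1-\delta,\ \re y^\ast(y_0)>1-\delta\bigr\}.
$$
One inclusion is easy: if $\re x_0^\ast(x)>1-\delta$ and $\re y^\ast(y_0)>1-\delta$, then $|x_0^\ast(x)|,|y^\ast(y_0)|\leq 1$ force small imaginary parts and hence $\re[x_0^\ast(x)y^\ast(y_0)]>1-4\delta$. For the reverse I would note that $\re[x_0^\ast(x)y^\ast(y_0)]>1-\delta$ forces $|x_0^\ast(x)|,|y^\ast(y_0)|>1-\delta$, and then replace $x$ by $\theta_1 x$ and $y^\ast$ by $\theta_2 y^\ast$ with $\theta_1,\theta_2\in\T$ chosen so that $x_0^\ast(\theta_1 x)=|x_0^\ast(x)|$ and $(\theta_2 y^\ast)(y_0)=|y^\ast(y_0)|$ become real and $>1-\delta$; crucially $|(\theta_2 y^\ast)(T(\theta_1 x))|=|y^\ast(Tx)|$, so this rotation changes neither the quantity being maximized nor (up to the harmless reparametrization $\delta\mapsto 4\delta$) the infimum.

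Writing $A_\delta=\{x\in S_X:\re x_0^\ast(x)>1-\delta\}$ and $B_\delta=\{y^\ast\in S_{Y^\ast}:\re y^\ast(y_0)>1-\delta\}$, the constraint set is now the \emph{product} $A_\delta\times B_\delta$. Proposition~\ref{AProp:num-ranges} gives $v(Y,y_0,y)=\inf_\delta\sup\{|y^\ast(y)|:y^\ast\in B_\delta\}$ and, applied to $Z=X^\ast$, $u=x_0^\ast$, $C=J_X(B_X)$ (so $\overline{\conv}^{w^\ast}(C)=B_{X^{\ast\ast}}$ by Goldstine), $v(X^\ast,x_0^\ast,\xi)=\inf_\delta\sup\{|\xi(x)|:x\in A_\delta\}$ (here using $B_X$ or $S_X$ is immaterial). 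Since these suprema decrease as $\delta\downarrow 0$, for \emph{every} $\delta$ we get $\sup_{y^\ast\in B_\delta}|y^\ast(y)|\geq n(Y,y_0)\|y\|$ and $\sup_{x\in A_\delta}|\xi(x)|\geq n(X^\ast,x_0^\ast)\|\xi\|$. For the lower bound I then fix $T$ and $\delta$ and estimate
$$
\sup_{x\in A_\delta,\,y^\ast\in B_\delta}|y^\ast(Tx)|\geq n(Y,y_0)\sup_{x\in A_\delta}\|Tx\|=n(Y,y_0)\sup_{y^\ast\in S_{Y^\ast}}\sup_{x\in A_\delta}|(T^\ast y^\ast)(x)|\geq n(Y,y_0)\,n(X^\ast,x_0^\ast)\|T\|,
$$
and taking $\inf_\delta$ yields $v_G(T)\geq n(X^\ast,x_0^\ast)\,n(Y,y_0)\|T\|$, hence $n_G(X,Y)\geq n(X^\ast,x_0^\ast)\,n(Y,y_0)$.

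For the matching upper bound I would test with rank-one operators: given $\xi_0\in S_{X^\ast}$ and $y_1\in S_Y$, set $T=\xi_0\otimes y_1$, so $\|T\|=1$ and $|y^\ast(Tx)|=|\xi_0(x)|\,|y^\ast(y_1)|$. Because the constraint set is the product $A_\delta\times B_\delta$, the supremum factorizes as $\sup_{x\in A_\delta}|\xi_0(x)|\cdot\sup_{y^\ast\in B_\delta}|y^\ast(y_1)|$, and since both factors are nonnegative and decrease as $\delta\downarrow 0$, the infimum of their product equals the product of the infima; thus $v_G(\xi_0\otimes y_1)=v(X^\ast,x_0^\ast,\xi_0)\,v(Y,y_0,y_1)$. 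Consequently $n_G(X,Y)\leq v(X^\ast,x_0^\ast,\xi_0)\,v(Y,y_0,y_1)$ for all such $\xi_0,y_1$, and taking the infimum separately over $\xi_0\in S_{X^\ast}$ and $y_1\in S_Y$ gives $n_G(X,Y)\leq n(X^\ast,x_0^\ast)\,n(Y,y_0)$, which together with the previous step proves the claimed equality.
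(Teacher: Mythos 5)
Your proof is correct and follows essentially the same route as the paper's: both arguments decouple the constraint $\re\bigl[x_0^\ast(x)\,y^\ast(y_0)\bigr]>1-\delta$ into separate conditions on $x$ and $y^\ast$, obtain the upper bound by testing against rank-one operators $\xi_0\otimes y_1$ (for which the supremum over the product constraint set factorizes, using Goldstine to pass from $S_{X^{\ast\ast}}$ to $S_X$), and obtain the lower bound via $\sup_{x\in A_\delta}\|Tx\|=\sup_{y^\ast}\sup_{x\in A_\delta}|(T^\ast y^\ast)(x)|\geq n(X^\ast,x_0^\ast)\|T\|$. If anything, your handling of the decoupling is more careful than the paper's: you make explicit the rotation by unimodular scalars and the harmless $\delta\mapsto 4\delta$ reparametrization, whereas the paper's displayed set inclusions, read literally, are only valid in the real case and need precisely your rotation argument in the complex case.
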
	

We need to introduce some notation, just for this proof. Given a Banach space $Z$, $u\in S_Z$, and $\delta\in (0,1)$, we write
$$
v_\delta(Z,u,z):=\sup \{|z^\ast (z)| \colon  z^\ast\in S_{Z^\ast},\, \re z^\ast (u)>1-\delta\}
$$
and observe (use Proposition \ref{AProp:num-ranges}, for instance) that $v(Z,u,z)=\inf\limits_{\delta>0}v_\delta(Z,u,z)$.

\begin{proof}
Fixed $x^\ast \in S_{X^\ast }$ and $y\in S_Y$, we consider the norm-one operator $T=x^\ast \otimes y$ and show that
$$
v_{G,\delta}(T)\leq v_\delta(X^\ast ,x_0^\ast ,x^\ast ) v_\delta(Y,y_0,y)
$$
for every $\delta>0$. To do so, we first observe that
$$v_\delta(X^\ast ,x_0^\ast ,x^\ast )=\sup \{|x^\ast (x)| \colon  x\in S_X, \re x_0^\ast (x)>1-\delta\}$$
as $J_X(B_X)$ is weak$^\ast$ dense in $B_{X^{\ast \ast }}$. Therefore, we can write
\begin{align*}
v_{G,\delta}(T)&=\sup \{|y^\ast (Tx)| \colon y^\ast \in S_{Y^\ast },\, x\in S_X,\, \re (y^\ast (y_0)x_0^\ast (x))>1-\delta\}\\
&\leq \sup \{|y^\ast (y)||x^\ast (x)| \colon y^\ast \in S_{Y^\ast },\, x\in S_X,\, \re y^\ast (y_0)>1-\delta,\, \re x_0^\ast (x)>1-\delta\}\\
&\leq \sup \{|x^\ast (x)| \colon  x\in S_X,\, \re x_0^\ast (x)>1-\delta\}\ \sup \{|y^\ast (y)| \colon y^\ast \in S_{Y^\ast },\, \re y^\ast (y_0)>1-\delta\}\\
&=v_\delta(X^\ast ,x_0^\ast ,x^\ast )\, v_\delta(Y,y_0,y).
\end{align*}
This clearly gives that $n_G(X,Y)\leq n(X^\ast ,x_0^\ast )n(Y,y_0)$. To prove the reverse inequality, fixed $T\in {\mathcal{L}(X,Y)}$ with $\|T\|=1$ and $\delta>0$, observe that
\begin{align*}
\sup \{\|Tx\| \colon x\in S_X,\,\re x_0^\ast (x)>1-\delta\}&= \sup \{|z^\ast (Tx)| \colon z^\ast \in S_{Y^\ast },\, x\in S_X,\, \re x_0^\ast (x)>1-\delta\}\\
&=\sup \{|[T^\ast z^\ast ](x)| \colon z^\ast \in S_{Y^\ast },\, x\in S_X,\, \re x_0^\ast (x)>1-\delta\}\\
&=\sup \{v_\delta(X^\ast ,x_0^\ast ,T^\ast z^\ast ) \colon z^\ast \in S_{Y^\ast }\}\\
&\geq \sup \{n(X^\ast ,x_0^\ast )\|T^\ast z^\ast \| \colon z^\ast \in S_{Y^\ast }\}\\
&=n(X^\ast ,x_0^\ast )\|T^\ast \|=n(X^\ast ,x_0^\ast ).
\end{align*} 	
Therefore, we can write
\begin{align*}
v_{G,2\delta}(T)&=\sup \{|y^\ast (Tx)| \colon y^\ast \in S_{Y^\ast },\, x\in S_X,\, \re (y^\ast (y_0)x_0^\ast (x))>1-2\delta\}\\
&\geq\sup \{|y^\ast (Tx)| \colon y^\ast \in S_{Y^\ast },\, x\in S_X,\, \re y^\ast (y_0)>1-\delta,\, \re x_0^\ast (x)>1-\delta\}\\
&\geq\sup \{n(Y,y_0)\|Tx\| \colon x\in S_X,\, \re x_0^\ast (x)>1-\delta\}\geq n(Y,y_0)\,n(X^\ast ,x_0^\ast ),
\end{align*}
which gives the desired inequality $n_G(X,Y)\geq n(X^\ast ,x_0^\ast )n(Y,y_0)$. 	
\end{proof}

To finish the section, we would like to present some results which allow to control the numerical index with respect to operators in terms of the numerical radius of the operators on the domain space or on the range space, which we will profusely use in section \ref{sect:setofvalues}. They all follow from this easy key lemma.

\begin{lemma}\label{Lemma:boundNumIndexCoro1}
Let $X, Y$ be Banach spaces and let $G \in \mathcal{L}(X,Y)$ with $\| G\| = 1$. Then,
\begin{enumerate}
\item[(a)] $v_{G}(G \circ T) \leq v(T)$ for every $T \in \mathcal{L}(X)$\,,
\item[(b)] $v_{G}(T \circ G) \leq v(T)$ for every $T \in \mathcal{L}(Y)$\,.
\end{enumerate}
\end{lemma}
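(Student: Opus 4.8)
The plan is to express both sides through the ``spatial'' descriptions of the numerical radii and then to transfer the defining $\delta$-supremum from $\mathcal{L}(X,Y)$ to $\mathcal{L}(X)$ (resp.\ $\mathcal{L}(Y)$) by means of the adjoint of $G$. The essential input is that, applied to the identity operator, the formula recalled after \eqref{eq:Ardalani-MartinJMAA2016} reads
$$
v(T)=v_{\Id}(T)=\inf_{\delta>0}\sup\bigl\{|x^\ast(Tx)|\colon x^\ast\in S_{X^\ast},\,x\in S_X,\,\re x^\ast(x)>1-\delta\bigr\}
$$
for every $T\in\mathcal{L}(X)$, with the analogous statement on $Y$. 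Thus it suffices to dominate, for each fixed $\delta>0$, the quantity $v_{G,\delta}(G\circ T)$ (resp.\ $v_{G,\delta}(T\circ G)$) by the corresponding $\delta$-supremum computed on the domain (resp.\ range) space, and then to let $\delta\to 0^+$.

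For part (a) I would start from the identity $y^\ast(GTx)=(G^\ast y^\ast)(Tx)$ and set $x^\ast:=G^\ast y^\ast$. Since $\|G^\ast\|=\|G\|=1$ we have $\|x^\ast\|\leq 1$, while the admissibility constraint $\re y^\ast(Gx)>1-\delta$ becomes exactly $\re x^\ast(x)>1-\delta$. The one subtlety is that $x^\ast$ need not have norm one; however $\re x^\ast(x)>1-\delta$ forces $\|x^\ast\|>1-\delta>0$, so the normalisation $\tilde x^\ast:=x^\ast/\|x^\ast\|\in S_{X^\ast}$ makes sense. Because $\|x^\ast\|\leq 1$, dividing by $\|x^\ast\|$ can only increase the real part, giving $\re\tilde x^\ast(x)\geq\re x^\ast(x)>1-\delta$, and it can only increase the modulus of the evaluation, giving $|x^\ast(Tx)|\leq|\tilde x^\ast(Tx)|$. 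Hence every admissible term for $v_{G,\delta}(G\circ T)$ is bounded by an admissible term of the domain $\delta$-supremum, which yields $v_{G,\delta}(G\circ T)\leq\sup\{|x^\ast(Tx)|\colon x^\ast\in S_{X^\ast},\,x\in S_X,\,\re x^\ast(x)>1-\delta\}$; letting $\delta\to 0^+$ produces $v_G(G\circ T)\leq v(T)$.

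For part (b) I would run the dual version of the same argument, now normalising a vector rather than a functional. From $y^\ast(TGx)=(T^\ast y^\ast)(Gx)$ I put $y:=Gx$, so that $\|y\|\leq\|G\|=1$ and the constraint reads $\re y^\ast(y)>1-\delta$, which again forces $\|y\|>1-\delta>0$. Setting $\tilde y:=y/\|y\|\in S_Y$, the same monotonicity (division by a number $\leq 1$) gives $\re y^\ast(\tilde y)\geq\re y^\ast(y)>1-\delta$ and $|y^\ast(Ty)|\leq|y^\ast(T\tilde y)|$. Therefore every admissible term for $v_{G,\delta}(T\circ G)$ is dominated by an admissible term of the range $\delta$-supremum, and letting $\delta\to 0^+$ gives $v_G(T\circ G)\leq v(T)$.

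I expect the only genuinely delicate point to be the normalisation step common to both parts: after composing with $G$ (via $G^\ast$ in (a), via the image vector $Gx$ in (b)) one only controls a contraction, not an element of the sphere, and one must verify that rescaling to the sphere preserves the open state-condition $\re(\cdot)>1-\delta$ while not decreasing the modulus being estimated. The observation that makes this automatic is that both effects of dividing by a positive number $\leq 1$ point in the favourable direction. Everything else is a routine manipulation of the spatial numerical-radius formula together with the identity $\|G^\ast\|=\|G\|=1$.
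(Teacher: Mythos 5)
Your argument is correct, but it takes a different route from the paper. The paper's proof is a two-line application of the abstract transfer result (Lemma~\ref{Lemm:numericalIndexThroughMaps}.a): the maps $T\longmapsto G\circ T$ from $\mathcal{L}(X)$ to $\mathcal{L}(X,Y)$ and $T\longmapsto T\circ G$ from $\mathcal{L}(Y)$ to $\mathcal{L}(X,Y)$ are norm-one linear operators sending $\Id$ to $G$, so they can only decrease abstract numerical radii computed at the respective units. You instead work at the ``spatial'' level, pushing states forward by $G^\ast$ in (a) and image vectors by $G$ in (b), and your normalisation step is handled correctly: from $\re x^\ast(x)>1-\delta$ with $\|x^\ast\|\leq 1$ you rightly conclude $\|x^\ast\|>1-\delta$, and dividing by a positive number at most $1$ both preserves the open condition $\re(\cdot)>1-\delta$ and increases the modulus being estimated, so each admissible pair for $v_{G,\delta}$ is dominated by an admissible pair for the $\delta$-supremum on the domain (resp.\ range) space. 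The one ingredient you use implicitly that deserves acknowledgment is the identification $v(T)=\inf_{\delta>0}v_{\Id,\delta}(T)$: the inequality $\inf_{\delta}v_{\Id,\delta}(T)\leq v(T)$ is the nontrivial direction of Ardalani's result $V_{\Id}(T)=\overline{W(T)}$ (equivalently, it follows from Proposition~\ref{AProp:num-ranges}), which rests on the Bishop--Phelps--Bollob\'as theorem; the paper records this fact and licenses its free use, so this is not a gap, but it means your ``elementary'' computation is secretly no more elementary than the abstract one. What your approach buys is a concrete picture of how states travel under composition with $G$; what the paper's approach buys is brevity and the observation that the lemma is a formal consequence of functoriality of the abstract numerical range under contractions preserving the distinguished unit.
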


\begin{proof}
Both statements are consequences of Lemma \ref{Lemm:numericalIndexThroughMaps} considering the operator $\mathcal{L}(X) \longrightarrow \mathcal{L}(X,Y)$ given by $T \longmapsto G \circ T$ for (a), and the operator $\mathcal{L}(Y) \longrightarrow \mathcal{L}(X,Y)$ given by $T \longmapsto T \circ G$ for (b).
\end{proof}

As a consequence of this result, we have the following chain of inequalities:
\begin{equation*}
\begin{aligned}
n_{G}(X,Y) & \, \leq \, \inf{\left\{ \frac{v(T)}{\| G \circ T \|} \colon T \in \mathcal{L}(X),\, G \circ T \neq 0 \right\}} \\[2mm]
& \, \leq \sup_{\varepsilon > 0}\, \inf{\left\{ v(T)\colon T \in \mathcal{L}(X),\, \|G \circ T\| > 1- \varepsilon \right\}}
\end{aligned}
\end{equation*}
and, analogously,
\begin{equation*}
\begin{aligned}
n_{G}(X,Y) & \, \leq \, \inf{\left\{ \frac{v(T)}{\| T \circ G \|} \colon T \in \mathcal{L}(Y),\, T \circ G \neq 0 \right\}} \\[2mm]
& \, \leq \sup_{\varepsilon > 0}\, \inf{\left\{ v(T)\colon T \in \mathcal{L}(Y),\, \|T \circ G\| > 1- \varepsilon \right\}}.
\end{aligned}
\end{equation*}

These inequalities immediately imply the following result.

\begin{lemma}\label{Lemm:superlemma1y2}
Let $X$, $Y$ be Banach spaces, $G\in \mathcal{L}(X,Y)$ with $\|G\|=1$, and $0\leq \alpha\leq 1$. Then, $n_G(X,Y)\leq \alpha$ provided one of the following statements is satisfied:
\begin{enumerate}
\item[(a)] For every $\eps>0$ there exists $T_\eps\in \mathcal{L}(X)$ such that $v(T_\eps)\leq \alpha$ and $\|G\circ T_\eps\|>1-\eps$;
\item[(b)] For every $\eps>0$ there exists $S_\eps\in \mathcal{L}(Y)$ such that $v(S_\eps)\leq \alpha$ and $\|S_\eps \circ G\|>1-\eps$.
\end{enumerate}
\end{lemma}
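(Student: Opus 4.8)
The plan is to prove Lemma \ref{Lemm:superlemma1y2} as an immediate consequence of the two displayed chains of inequalities preceding it, which in turn rest on Lemma \ref{Lemma:boundNumIndexCoro1}. The two statements (a) and (b) are symmetric (one uses composition on the domain side, the other on the range side), so I would prove (a) in detail and remark that (b) follows by the same argument using part (b) of Lemma \ref{Lemma:boundNumIndexCoro1} in place of part (a).

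First I would recall the definition $n_G(X,Y)=\inf\{v_G(T)\colon T\in\mathcal{L}(X,Y),\,\|T\|=1\}$ together with the fact that $v_G$ is homogeneous, so that $n_G(X,Y)\|S\|\leq v_G(S)$ for every $S\in\mathcal{L}(X,Y)$. The heart of the argument for (a) is to apply this to the operators $S=G\circ T_\eps$. By Lemma \ref{Lemma:boundNumIndexCoro1}.a we have $v_G(G\circ T_\eps)\leq v(T_\eps)\leq\alpha$, while the homogeneity of $v_G$ gives $n_G(X,Y)\,\|G\circ T_\eps\|\leq v_G(G\circ T_\eps)$. Combining these with the hypothesis $\|G\circ T_\eps\|>1-\eps$ yields
\begin{equation*}
n_G(X,Y)(1-\eps)<n_G(X,Y)\,\|G\circ T_\eps\|\leq v_G(G\circ T_\eps)\leq v(T_\eps)\leq\alpha.
\end{equation*}
Since this holds for every $\eps>0$, letting $\eps\to 0^+$ gives $n_G(X,Y)\leq\alpha$, as desired. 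I should note that one must rule out the degenerate case: if no such chain is meaningful because $G\circ T_\eps=0$, this cannot happen for $\eps$ small since $\|G\circ T_\eps\|>1-\eps>0$.

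For part (b) the argument is verbatim the same, replacing $G\circ T_\eps$ by $S_\eps\circ G$ and invoking Lemma \ref{Lemma:boundNumIndexCoro1}.b, which gives $v_G(S_\eps\circ G)\leq v(S_\eps)\leq\alpha$; the same estimate $n_G(X,Y)(1-\eps)<n_G(X,Y)\,\|S_\eps\circ G\|\leq v_G(S_\eps\circ G)\leq v(S_\eps)\leq\alpha$ then forces $n_G(X,Y)\leq\alpha$. In fact, as the author already observes, both conclusions are packaged inside the displayed two-line inequality chains: the second line of each chain is exactly $n_G(X,Y)\leq\sup_{\eps>0}\inf\{v(T)\colon\|G\circ T\|>1-\eps\}$ (resp.\ with $T\circ G$), and hypothesis (a) (resp.\ (b)) says this supremum-infimum is at most $\alpha$. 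So strictly speaking nothing new needs to be proved beyond unwinding those chains, and the write-up can simply record that observation. I do not anticipate any genuine obstacle here; the only point requiring the mildest care is the passage to the limit in $\eps$, which is routine given the strict inequality $\|G\circ T_\eps\|>1-\eps$ and the homogeneity of the numerical radius $v_G$.
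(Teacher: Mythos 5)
Your proof is correct and follows exactly the route the paper intends: the lemma is stated there as an immediate consequence of the displayed inequality chains built on Lemma \ref{Lemma:boundNumIndexCoro1}, and your argument simply unwinds those chains via $n_G(X,Y)\|G\circ T_\eps\|\leq v_G(G\circ T_\eps)\leq v(T_\eps)\leq\alpha$ together with $\|G\circ T_\eps\|>1-\eps$. Nothing further is needed.
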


The previous result gives some important consequences.

\begin{proposition}\label{Prop:boundNumIndexCoro2}
Let $X$, $Y$ be Banach spaces and let $0\leq \alpha\leq 1$.
\begin{enumerate}
  \item[(a)] Let $\mathcal{A}(\alpha)=\bigl\{T\in \mathcal{L}(X)\colon \|T\|=1,\, v(T)\leq \alpha\bigr\}$. If
      $$
      B_X=\overline{\aconv}\bigcup_{T\in \mathcal{A}(\alpha)} T(B_X)\,,
      $$
      then $n_G(X,Y)\leq \alpha$ for every norm-one operator $G\in \mathcal{L}(X,Y)$.
  \item[(b)] Let $\mathcal{B}(\alpha)=\bigl\{T\in \mathcal{L}(Y)\colon \|T\|=1,\, v(T)\leq \alpha \bigr\}$. If for every $\eps>0$, the set
      $$
      \bigcup_{T\in \mathcal{B}(\alpha)} \bigl\{y\in S_Y\colon \|Ty\|>1-\eps\bigr\}
      $$
      is dense in $S_Y$, then $n_G(X,Y)\leq \alpha$ for every norm-one operator $G\in \mathcal{L}(X,Y)$.
  \item[(c)] In particular, if there exists a surjective isometry $T\in \mathcal{L}(X)$ with $v(T)\leq \alpha$ or there exists a surjective isometry $S\in \mathcal{L}(Y)$ with $v(S)\leq \alpha$, then $n_G(X,Y)\leq \alpha$ for every norm-one operator $G\in \mathcal{L}(X,Y)$.
\end{enumerate}
\end{proposition}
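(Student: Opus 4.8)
The plan is to deduce all three parts from Lemma \ref{Lemm:superlemma1y2}, whose two hypotheses are precisely tailored to this purpose: for part (a) it suffices to produce, for each $\eps>0$, an operator $T\in\mathcal{L}(X)$ with $v(T)\leq\alpha$ and $\|G\circ T\|>1-\eps$, and for part (b) an operator $S\in\mathcal{L}(Y)$ with $v(S)\leq\alpha$ and $\|S\circ G\|>1-\eps$. Part (c) will then appear as a degenerate instance of (a) and (b).

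For part (a) I would fix $\eta>0$ and use $\|G\|=1$ to pick $x_0\in B_X$ with $\|Gx_0\|>1-\eta/2$. Since $x_0$ lies in $B_X=\overline{\aconv}\bigcup_{T\in\mathcal{A}(\alpha)}T(B_X)$, I approximate it within $\eta/2$ by a finite absolutely convex combination $w=\sum_j\lambda_j\,T_j x_j$ with $T_j\in\mathcal{A}(\alpha)$, $x_j\in B_X$ and $\sum_j|\lambda_j|\leq1$. The key point is the elementary estimate
$$
\|Gw\|\leq\sum_j|\lambda_j|\,\|GT_j x_j\|\leq\Bigl(\max_j\|GT_j\|\Bigr)\sum_j|\lambda_j|\leq\max_j\|GT_j\|,
$$
which together with $\|Gw\|\geq\|Gx_0\|-\|x_0-w\|>1-\eta$ yields an index $j_0$ with $\|G\circ T_{j_0}\|>1-\eta$, while $v(T_{j_0})\leq\alpha$ by the definition of $\mathcal{A}(\alpha)$. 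Setting $T_\eta=T_{j_0}$ and letting $\eta$ vary verifies hypothesis (a) of Lemma \ref{Lemm:superlemma1y2}, giving $n_G(X,Y)\leq\alpha$.

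For part (b) I would again fix $\eps>0$, choose $x_0\in S_X$ with $\|Gx_0\|$ close to $1$, and normalize $\tilde y_0:=Gx_0/\|Gx_0\|\in S_Y$. By the density hypothesis there are $y_1\in S_Y$ close to $\tilde y_0$ and $T\in\mathcal{B}(\alpha)$ with $\|Ty_1\|$ close to $1$; since $\|T\|=1$, the estimate $\|T\tilde y_0\|\geq\|Ty_1\|-\|\tilde y_0-y_1\|$ shows that $\|TGx_0\|=\|Gx_0\|\,\|T\tilde y_0\|>1-\eps$ for suitably small choices, whence $\|T\circ G\|\geq\|TGx_0\|>1-\eps$ while $v(T)\leq\alpha$. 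This is exactly hypothesis (b) of Lemma \ref{Lemm:superlemma1y2}, so $n_G(X,Y)\leq\alpha$.

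Finally, part (c) follows as a degenerate case of the previous two: a surjective isometry $T\in\mathcal{L}(X)$ with $v(T)\leq\alpha$ satisfies $T(B_X)=B_X$, so the spanning hypothesis of (a) holds trivially; dually, a surjective isometry $S\in\mathcal{L}(Y)$ with $v(S)\leq\alpha$ has $\|Sy\|=1$ for every $y\in S_Y$, so the density hypothesis of (b) holds with the union equal to all of $S_Y$. I expect the only genuinely delicate point to be the convexity reduction in part (a)---passing from the whole absolutely convex combination $w$ to a single summand $T_{j_0}$---which hinges on $\sum_j|\lambda_j|\leq1$ forcing $\max_j\|GT_j\|\geq\|Gw\|$; the remaining $\eps$-bookkeeping in both (a) and (b) is routine.
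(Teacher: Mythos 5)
Your proposal is correct and follows essentially the same route as the paper: all three parts are reduced to Lemma \ref{Lemm:superlemma1y2}, with (a) handled by extracting a single good summand from an absolutely convex combination and (b) by a straightforward perturbation estimate. The only difference is that you spell out the convexity reduction in (a) (using $\sum_j|\lambda_j|\leq 1$ to get $\max_j\|G\circ T_j\|\geq\|Gw\|$), which the paper leaves implicit; your bookkeeping there and in (b) is sound.
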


\begin{proof} Fix $G\in \mathcal{L}(X,Y)$ with $\|G\|=1$.

(a).  For every $\eps>0$, we may use the hypothesis to find $T_\eps\in \mathcal{L}(X)$ with $\|T_\eps\|=1$ and $v(T_\eps)\leq \alpha$ such that $\|G(T_\eps(x))\|>1-\eps$ for some $x\in B_X$. Therefore, $\|G\circ T_\eps\|>1-\eps$ and Lemma \ref{Lemm:superlemma1y2} gives the result.

(b). For every $\eps>0$, we take $x\in S_X$ such that $\|Gx\|>1-\eps/3$. Now, we may use the hypothesis to find $S_\eps\in \mathcal{L}(Y)$ with $\|S_\eps\|=1$ and $v(S_\eps)\leq \alpha$, and $y\in S_Y$ such that $\|S_\eps y\|>1-\eps/3$ and $\bigl\|y- Gx/\|Gx\|\bigr\|<\eps/3$. Now, $\|y-Gx\|<2\eps/3$, and so
$$
\|S_\eps(Gx)\|\geq \|S_\eps y\|-\|S_\eps(y-Gx)\|>1-\eps/3 - 2\eps/3=1-\eps.
$$
It follows that $\|S_\eps\circ G\|>1-\eps$ and Lemma \ref{Lemm:superlemma1y2} gives the result.

Finally, item (c) clearly follows from (a) and (b).
\end{proof}

For the special case of $\alpha=0$, the above result can be improved as we do not have to pay attention to the norm of the operators.

\begin{proposition}\label{Prop:boundNumIndexZERO} Let $X$, $Y$ be Banach spaces.
\begin{enumerate}
  \item[(a)] Let $G\in \mathcal{L}(X,Y)$ with $\|G\|=1$.
  \begin{enumerate}
    \item[(a.1)] If there exists $T\in \mathcal{L}(X)$ with $v(T)=0$ and $G\circ T\neq 0$,
      then $n_G(X,Y)=0$.
    \item[(a.2)] If there exists $T\in \mathcal{L}(Y)$ with $v(T)=0$ and $T\circ G\neq 0$, then $n_G(X,Y)=0$.
  \end{enumerate}
  \item[(b)] If $$\bigcap_{T\in \mathcal{L}(Y),\, v(T)=0} \ker T = \{0\},$$
  then $n_G(X,Y)=0$ for every norm-one operator $G\in \mathcal{L}(X,Y)$.
  \item[(c)] If $$\bigcup_{T\in \mathcal{L}(X),\, v(T)=0} T(X)$$ is dense in $X$, then
  $n_G(X,Y)=0$ for every norm-one operator $G\in \mathcal{L}(X,Y)$.
  \end{enumerate}
\end{proposition}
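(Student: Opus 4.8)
The plan is to reduce everything to parts (a.1) and (a.2), which are themselves immediate consequences of Lemma~\ref{Lemma:boundNumIndexCoro1} together with the fact that $v_G(\cdot)=v(\mathcal{L}(X,Y),G,\cdot)$ is a seminorm (hence positively homogeneous). Once (a.1) and (a.2) are in place, statements (b) and (c) follow by producing, for an \emph{arbitrary} norm-one $G$, a suitable operator of zero numerical radius that composes nontrivially with $G$.

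For (a.1) I would argue as follows. Given $T\in\mathcal{L}(X)$ with $v(T)=0$, Lemma~\ref{Lemma:boundNumIndexCoro1}.(a) yields $v_G(G\circ T)\leq v(T)=0$, so $v_G(G\circ T)=0$. Since $G\circ T\neq 0$, the normalized operator $S:=(G\circ T)/\|G\circ T\|$ has norm one, and by homogeneity of the numerical radius $v_G(S)=v_G(G\circ T)/\|G\circ T\|=0$. Therefore $n_G(X,Y)=\inf\{v_G(R)\colon \|R\|=1\}\leq v_G(S)=0$, that is, $n_G(X,Y)=0$. The proof of (a.2) is identical, using Lemma~\ref{Lemma:boundNumIndexCoro1}.(b) and the hypothesis $T\circ G\neq 0$.

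To deduce (b), fix a norm-one $G\in\mathcal{L}(X,Y)$. Since $\|G\|=1$ we may pick $x_0\in X$ with $y_0:=Gx_0\neq 0$. As $y_0\neq 0$, the hypothesis $\bigcap_{v(T)=0}\ker T=\{0\}$ guarantees some $T\in\mathcal{L}(Y)$ with $v(T)=0$ and $Ty_0\neq 0$; equivalently $(T\circ G)(x_0)\neq 0$, so $T\circ G\neq 0$, and (a.2) gives $n_G(X,Y)=0$. For (c), again fix a norm-one $G$ and note that $\ker G$ is a proper closed subspace of $X$ (as $G\neq 0$). If every $T\in\mathcal{L}(X)$ with $v(T)=0$ satisfied $T(X)\subseteq\ker G$, then the union $\bigcup_{v(T)=0}T(X)$ would lie in the closed set $\ker G$, and its density in $X$ would force $X=\ker G$, a contradiction. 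Hence there is $T\in\mathcal{L}(X)$ with $v(T)=0$ and $T(X)\not\subseteq\ker G$, i.e.\ $G\circ T\neq 0$, so (a.1) applies.

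The argument is essentially mechanical once Lemma~\ref{Lemma:boundNumIndexCoro1} is in hand; the only points requiring a moment's care are the homogeneity of $v_G$ used to normalize in (a.1)/(a.2), and, for (b) and (c), the elementary observation that a set dense in $X$ cannot be contained in a proper closed subspace. I do not anticipate any genuine obstacle here.
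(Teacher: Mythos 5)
Your proposal is correct and follows exactly the route the paper intends: the paper states this proposition without proof as the $\alpha=0$ improvement of Lemma~\ref{Lemm:superlemma1y2}, the point being precisely that homogeneity of the seminorm $v_G$ lets one replace the norm condition $\|G\circ T\|>1-\eps$ by the mere requirement $G\circ T\neq 0$, after which (b) and (c) reduce to (a.2) and (a.1) just as you argue. No gaps.
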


We emphasize the next immediate consequence of the previous result which will be useful in the sequel.

\begin{corollary}\label{Coro:spaces-with-onto-isometry-of-radius-zero}
	Let $W$ be a Banach space such that there is an onto isometry $J\in \mathcal{L}(W)$ with $v(J)=0$. Then
	\begin{enumerate}
		\item[(a)] $n_G(X,W)=0$ for every Banach space $X$ and every norm-one operator $G\in \mathcal{L}(X,W)$.
		\item[(b)] $n_G(W,Y)=0$ for every Banach space $Y$ and every norm-one operator $G\in \mathcal{L}(W,Y)$.
	\end{enumerate}
\end{corollary}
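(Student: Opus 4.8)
The plan is to derive Corollary \ref{Coro:spaces-with-onto-isometry-of-radius-zero} directly from Proposition \ref{Prop:boundNumIndexZERO}, using the onto isometry $J$ as the required numerical-radius-zero operator in each of the two settings. The whole point of the hypothesis is that $J$ lives on $W$ and has $v(J)=0$, so we simply have to feed $J$ into the appropriate item of the Proposition depending on whether $W$ plays the role of the range space (part (a)) or the domain space (part (b)).

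For part (a), where $W$ is the range and $G\in \mathcal{L}(X,W)$ is norm-one, I would invoke Proposition \ref{Prop:boundNumIndexZERO}.(a.2). That item asserts that if there is some $T\in \mathcal{L}(W)$ with $v(T)=0$ and $T\circ G\neq 0$, then $n_G(X,W)=0$. Taking $T=J$, we have $v(J)=0$ by hypothesis, and since $J$ is an onto isometry it is injective with $\|J\|=1$; as $G\neq 0$ (being norm-one), we get $J\circ G\neq 0$. Hence $n_G(X,W)=0$, and since $G$ was an arbitrary norm-one operator, this proves (a).

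For part (b), where $W$ is the domain and $G\in \mathcal{L}(W,Y)$ is norm-one, I would use Proposition \ref{Prop:boundNumIndexZERO}.(a.1), which says that if there is $T\in \mathcal{L}(W)$ with $v(T)=0$ and $G\circ T\neq 0$, then $n_G(W,Y)=0$. Again take $T=J$: then $v(J)=0$, and since $J$ is surjective and $G\neq 0$, the composition $G\circ J$ is nonzero (indeed $G\circ J$ has the same range as $G$). Therefore $n_G(W,Y)=0$ for every norm-one $G$, which is (b).

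There is essentially no obstacle here; the corollary is a clean specialization. The only point requiring a word of care is verifying the nondegeneracy conditions $T\circ G\neq 0$ and $G\circ T\neq 0$, but these follow immediately from the injectivity and surjectivity of the isometry $J$ together with $G\neq 0$. Thus the proof is a one-line application of the preceding proposition in each case, and I would simply write it out as such.
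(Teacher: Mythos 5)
Your proof is correct and is precisely the argument the paper intends: the corollary is stated there as an immediate consequence of Proposition \ref{Prop:boundNumIndexZERO}, applying item (a.2) with $T=J$ for part (a) and item (a.1) with $T=J$ for part (b), the nondegeneracy conditions following from the injectivity, respectively surjectivity, of $J$ together with $G\neq 0$. Nothing further is needed.
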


\section{Set of values of the numerical indices with respect to all operators between two given Banach spaces}\label{sect:setofvalues}

We start by showing some general results which can be deduced from the tools implemented in the previous sections. The first result shows that $0$ is always a possible value of the numerical index with respect to operators (unless we are in the trivial case of both spaces being one-dimensional). It is a direct consequence of Proposition \ref{Prop0in:N(X)}.

\begin{proposition}\label{prop:ifzeronotinsidedimensiononeOperators}
Let $X$, $Y$ be Banach spaces. If $\dim(X)\geq 2$ or $\dim(Y)\geq 2$, then $0\in \mathcal{N}(\mathcal{L}(X,Y))$.
\end{proposition}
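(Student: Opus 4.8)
The plan is to recognize this as a literal instance of Proposition \ref{Prop0in:N(X)}. Indeed, by the coherence of notation explained in the introduction, $\mathcal{N}(\mathcal{L}(X,Y))$ is nothing but $\mathcal{N}(Z)$ for the choice $Z=\mathcal{L}(X,Y)$, since $n_G(X,Y)=n(\mathcal{L}(X,Y),G)$ and $G$ runs over $S_{\mathcal{L}(X,Y)}$. Hence the whole statement reduces to checking the single hypothesis of Proposition \ref{Prop0in:N(X)}, namely that $\dim(\mathcal{L}(X,Y))\geq 2$; once this is verified, that proposition yields $0\in\mathcal{N}(Z)=\mathcal{N}(\mathcal{L}(X,Y))$ directly.

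To confirm the dimension count, I would produce two linearly independent operators via rank-one elements. Note first that both $X$ and $Y$ are nonzero, since otherwise $\mathcal{L}(X,Y)$ would have no norm-one elements at all. If $\dim(X)\geq 2$, then $\dim(X^\ast)\geq 2$ by Hahn--Banach, so I can pick linearly independent $x_1^\ast,x_2^\ast\in X^\ast$; fixing any $y_0\in S_Y$, the operators $x_1^\ast\otimes y_0$ and $x_2^\ast\otimes y_0$ are linearly independent in $\mathcal{L}(X,Y)$. Symmetrically, if $\dim(Y)\geq 2$, I fix $x_0^\ast\in S_{X^\ast}$ and pick linearly independent $y_1,y_2\in Y$, so that $x_0^\ast\otimes y_1$ and $x_0^\ast\otimes y_2$ are linearly independent. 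In either case $\dim(\mathcal{L}(X,Y))\geq 2$, as needed.

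There is essentially no obstacle here: the argument is a one-line reduction followed by an elementary dimension computation, and it does not even reprove Proposition \ref{Prop0in:N(X)} (which itself only uses the existence of a smooth point on a two-dimensional subspace together with the behaviour of $n(\cdot,\cdot)$ under isometric embeddings, Lemma \ref{Lemm:numericalIndexThroughMaps}.b). The only point requiring a word of care is the implicit nondegeneracy assumption that both spaces are nonzero, which is automatic as soon as one speaks of norm-one operators between them.
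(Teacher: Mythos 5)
Your proof is correct and is exactly the paper's route: the authors state that the proposition "is a direct consequence of Proposition \ref{Prop0in:N(X)}" and give no further argument, so your reduction via the coherence of the two notations $\mathcal{N}(\cdot)$, together with the elementary check that $\dim(\mathcal{L}(X,Y))\geq 2$ using rank-one operators, fills in precisely the details the paper leaves implicit. The remark about both spaces being nonzero is a sensible (and correct) observation about the implicit nondegeneracy convention.
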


The result above is actually an equivalence, as the following result is immediate.

\begin{example}
{\slshape $\mathcal{N}(\mathcal{L}(\K,\K))=\{1\}$.}
\end{example}

Next, we particularize Proposition \ref{Prop:numberVertexes} to the case of spaces of operators.

\begin{proposition}
	Let $X$, $Y$ be finite-dimensional real Banach spaces. Then, the set of those norm-one $G\in \mathcal{L}(X,Y)$ with $n_G(X,Y)>0$ is countable. In particular,
	$\mathcal{N}(\mathcal{L}(X,Y))$ is countable.
\end{proposition}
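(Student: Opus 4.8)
The plan is to deduce this statement directly from Proposition \ref{Prop:numberVertexes} by applying it to the finite-dimensional space $Z=\mathcal{L}(X,Y)$. The key observation is that when $X$ and $Y$ are finite-dimensional, the space $\mathcal{L}(X,Y)$ is itself a finite-dimensional real Banach space, of dimension $\dim(X)\cdot\dim(Y)$. Moreover, the numerical index $n_G(X,Y)$ with respect to a norm-one operator $G$ is, by definition, exactly the abstract numerical index $n(\mathcal{L}(X,Y),G)$ of the space $Z=\mathcal{L}(X,Y)$ at the point $u=G\in S_Z$. These are not merely analogous quantities; they coincide identically, as recorded in the introduction where $n_G(X,Y):=n(\mathcal{L}(X,Y),G)$.

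With this identification in hand, the proof is essentially a single line. First I would set $Z=\mathcal{L}(X,Y)$ and note that $Z$ is a finite-dimensional real Banach space, so Proposition \ref{Prop:numberVertexes} applies to it. That proposition tells us that the set $\{u\in S_Z : n(Z,u)>0\}$ is countable. Translating back through the identification $S_Z=\{G\in\mathcal{L}(X,Y):\|G\|=1\}$ and $n(Z,G)=n_G(X,Y)$, this is precisely the assertion that the set of norm-one operators $G$ with $n_G(X,Y)>0$ is countable. For the ``in particular'' clause, since
$$
\mathcal{N}(\mathcal{L}(X,Y))=\{n_G(X,Y):G\in S_{\mathcal{L}(X,Y)}\}=\mathcal{N}(Z),
$$
the second sentence of Proposition \ref{Prop:numberVertexes} gives directly that $\mathcal{N}(\mathcal{L}(X,Y))$ is countable. (One should only be mildly careful that $\mathcal{N}(\mathcal{L}(X,Y))$ as defined in the introduction and $\mathcal{N}(Z)$ as defined in subsection \ref{subsection:fin-dim-spaces} are the same set, which the authors themselves flag as the intended coherence of notation.)

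Since this is a direct specialization rather than a genuinely new argument, I do not anticipate any serious obstacle. The only point requiring a word of care is confirming that the hypotheses transfer cleanly: one needs $\mathcal{L}(X,Y)$ to be a \emph{real} space (which holds whenever $X,Y$ are real) and finite-dimensional (which holds whenever both $X$ and $Y$ are, with dimension exactly $\dim(X)\dim(Y)$). Both are immediate. Thus the whole proof amounts to citing Proposition \ref{Prop:numberVertexes} applied to $\mathcal{L}(X,Y)$ together with the defining identity $n_G(X,Y)=n(\mathcal{L}(X,Y),G)$, and the result follows at once.
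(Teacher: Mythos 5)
Your proposal is correct and is exactly the paper's argument: the authors state this proposition immediately after the remark ``we particularize Proposition \ref{Prop:numberVertexes} to the case of spaces of operators'' and give no further proof, precisely because the result follows by applying that proposition to the finite-dimensional real space $Z=\mathcal{L}(X,Y)$ together with the defining identity $n_G(X,Y)=n(\mathcal{L}(X,Y),G)$. Nothing is missing.
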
	

Our next result shows that all values of the numerical index are valid for operators between Banach spaces. In the real case, this is clear as the numerical indices of all two-dimensional norms do the job (and they are the numerical index with respect to the corresponding identities). But in the complex case, the values of the numerical indices with respect to the identity are not enough (as they are always greater than or equal to $1/\e$, see \cite[Corollary 2.1.19]{Cabrera-Rodriguez}, for instance).

A first simple way of getting arbitrary values of the numerical indices with respect to operators is given in the following result which follows immediately from Proposition \ref{propN(X)=A}.

\begin{example}
{\slshape For every subset $A\subseteq [0,1]$ containing $0$, there is a Banach space $X$ such that $\mathcal{N}(\mathcal{L}(X,\K))=A$.}\ Indeed, just consider as $X$ the predual of the space $Z$ provided in Proposition \ref{propN(X)=A} (which is a dual Banach space as it is the $\ell_1$-sum of finite-dimensional spaces).
\end{example}

Let us also observe that if $X$ is a Banach space of dimension at least two whose dual space is smooth, it follows from Lemma \ref{Lemm:unotsmoothnotextreme} that $\mathcal{N}(\mathcal{L}(X,\K))=\{0\}$. This result contrasts with the already cited fact that $n(X)\geq 1/\e$ for every complex Banach space $X$, so $\mathcal{N}(\mathcal{L}(X,X))$ cannot reduce to $0$ when $X$ is a complex Banach space. Therefore, it looks more interesting to perform the study of the set of values of the numerical indices with respect to all operators from a Banach space to itself, that is, the set
$$
\{n_G(X,X)\colon X \text{ (real or complex) Banach space},\, G\in \mathcal{L}(X),\,\|G\|=1\}.
$$
In the real case it is immediate that this set covers $[0,1]$, just using identity operators \cite[Theorem 3.6]{D-Mc-P-W}. In the complex case, using identity operators one can only cover the interval $[1/\e,1]$. The result will be stated in Example \ref{example:foreverygamma-nG=gamma}. Even more, we will show that there are Banach spaces $X$ such that $\mathcal{N}(\mathcal{L}(X))=[0,1]$, both in the real and in the complex case, see Theorem \ref{theorem-allvaluesinX}.

For real Banach spaces, the Banach space numerical index may be zero, so there is no obstacle for the set $\mathcal{N}(\mathcal{L}(X))$ to be equal to $\{0\}$. We are going to prove that this happens when $X$ is a real Hilbert space of dimension greater than one. Actually, we show that zero is the only possible value of the numerical index with respect to operators, when either the domain space or the range space is a real Hilbert space of dimension at least two.

\begin{theorem}\label{theorem:Real-Hilbert-one-value}
Let $H$ be a real Hilbert space of dimension at least two. Then
$$
\mathcal{N}(\mathcal{L}(X,H))=\mathcal{N}(\mathcal{L}(H,Y))=\{0\}
$$
for all real Banach spaces $X$ and $Y$. In particular, $\mathcal{N}(\mathcal{L}(H))=\{0\}$.		
\end{theorem}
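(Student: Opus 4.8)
The plan is to reduce everything to the two ``smallness'' criteria of Proposition~\ref{Prop:boundNumIndexZERO}, using the fact that a real Hilbert space carries an abundance of operators of numerical radius zero. Identifying $H^\ast$ with $H$ via Riesz, the spatial numerical range of $T\in\mathcal{L}(H)$ is $W(T)=\{\langle Tx,x\rangle\colon \|x\|=1\}$, so $v(T)=0$ if and only if $\langle Tx,x\rangle=0$ for all $x$, i.e.\ $T$ is skew-symmetric ($T^\ast=-T$). I would note at the outset that the tempting route through Corollary~\ref{Coro:spaces-with-onto-isometry-of-radius-zero} would require an \emph{onto} isometry $J$ of $H$ with $v(J)=0$, i.e.\ a skew-symmetric orthogonal map, forcing $J^2=-\Id$; this exists only when $\dim(H)$ is even or infinite and fails for odd finite dimension (a determinant obstruction). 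The point of using parts (b) and (c) of Proposition~\ref{Prop:boundNumIndexZERO} instead is that they only demand \emph{non-invertible} skew-symmetric operators, which exist regardless of the parity of the dimension; this is the one genuinely delicate point to get right.

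The building blocks are rank-two skew-symmetric operators. For any orthonormal pair $v,w\in H$ (available since $\dim(H)\geq 2$) I would set
$$
T_{v,w}(z):=\langle z,w\rangle\, v-\langle z,v\rangle\, w \qquad (z\in H).
$$
A one-line computation gives $\langle T_{v,w}z,z\rangle=0$ for every $z$, so $v(T_{v,w})=0$, while $T_{v,w}v=-w$ and $T_{v,w}w=v$. Thus $T_{v,w}$ is a numerical-radius-zero operator that is nonzero on $v$ and whose range contains $v$.

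To finish, I would treat the two cases separately. For the range case $\mathcal{N}(\mathcal{L}(X,H))$, I verify the hypothesis of Proposition~\ref{Prop:boundNumIndexZERO}(b): given any nonzero $z\in H$, normalise it to a unit vector $v$, pick a unit $w\perp v$, and observe $T_{v,w}v=-w\neq 0$, so $z\notin\ker T_{v,w}$; hence $\bigcap_{v(T)=0}\ker T=\{0\}$ and $n_G(X,H)=0$ for every real Banach space $X$ and every norm-one $G\in\mathcal{L}(X,H)$. For the domain case $\mathcal{N}(\mathcal{L}(H,Y))$, I verify the hypothesis of Proposition~\ref{Prop:boundNumIndexZERO}(c): given a unit $v\in H$, pick a unit $w\perp v$, and observe $T_{v,w}w=v$, so $v\in T_{v,w}(H)$; hence $\bigcup_{v(T)=0}T(H)=H$ is dense and $n_G(H,Y)=0$ for every real Banach space $Y$ and every norm-one $G\in\mathcal{L}(H,Y)$.

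Finally, since there do exist norm-one operators in each of $\mathcal{L}(X,H)$ and $\mathcal{L}(H,Y)$, both sets of values are nonempty and, by the two paragraphs above, reduce to $\{0\}$; the consistency with $0\in\mathcal{N}$ is guaranteed by Proposition~\ref{prop:ifzeronotinsidedimensiononeOperators}. Taking $X=H$ (or $Y=H$) yields $\mathcal{N}(\mathcal{L}(H))=\{0\}$ as the stated particular case. The main obstacle throughout is purely the odd-dimensional case, and the whole argument is arranged precisely so as to sidestep it by never requiring the auxiliary operators to be invertible.
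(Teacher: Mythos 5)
Your proposal is correct and follows essentially the same route as the paper: the paper's proof uses exactly the rank-two skew-symmetric operators $T(z)=\langle z,x\rangle y-\langle z,y\rangle x$ for orthonormal $x,y$ to verify that $\bigcap_{v(T)=0}\ker T=\{0\}$ and that $\bigcup_{v(T)=0}T(H)$ is dense, and then invokes Proposition~\ref{Prop:boundNumIndexZERO}. Your additional remark explaining why the onto-isometry route via Corollary~\ref{Coro:spaces-with-onto-isometry-of-radius-zero} fails in odd finite dimension is accurate but does not change the argument.
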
	

\begin{proof} Observe that for every pair of points $x,y\in S_H$ with $\langle x,  y\rangle=0$, the operator $T\in S_H$ given by $T(z)=\langle z,x\rangle y-\langle z,y \rangle x$ for $z\in H$ satisfies that $v(T)=0$. So, it is clear that
$$\bigcup_{T\in \mathcal{L}(H),\, v(T)=0} T(H)\ \text{ is dense in $H$} \qquad \text{and} \qquad \bigcap_{T\in \mathcal{L}(H),\, v(T)=0} \ker(T)=\{0\}.$$
Now, both assertions are immediate consequences of Proposition \ref{Prop:boundNumIndexZERO}.
\end{proof}	

For every complex Banach space $W$, its underlying real Banach space $W_\R$ also has trivial set of values of the numerical indices with respect to operators. This is an immediate consequence of Corollary~\ref{Coro:spaces-with-onto-isometry-of-radius-zero} as the multiplication by $i$ is an onto isometry which has numerical radius zero when viewed in $\mathcal{L}(W_\R)$.

\begin{proposition}
	Let $W_\R$ the real Banach space underlying a complex Banach space $W$. Then
	$$
	\mathcal{N}(\mathcal{L}(X,W_\R))=\mathcal{N}(\mathcal{L}(W_\R,Y))=\{0\}
	$$
	for all real Banach spaces $X$ and $Y$. In particular, $\mathcal{N}(\mathcal{L}(W_\R))=\{0\}$.
\end{proposition}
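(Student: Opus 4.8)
The plan is to derive everything from Corollary~\ref{Coro:spaces-with-onto-isometry-of-radius-zero}: it suffices to exhibit a single onto isometry $J\in\mathcal{L}(W_\R)$ whose numerical radius $v(J)$ vanishes, and then both displayed equalities follow at once by applying parts (a) and (b) of that corollary with the space $W$ there replaced by $W_\R$. The natural candidate is the operator $J$ given by multiplication by $i$, that is, $J(w)=iw$ for $w\in W$, viewed as a map on the underlying real space $W_\R$.

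First I would check that $J$ is indeed an onto isometry of $W_\R$. It is $\R$-linear, since $i(\alpha w_1+\beta w_2)=\alpha(iw_1)+\beta(iw_2)$ for all real scalars $\alpha,\beta$; it is isometric because $\|iw\|=\|w\|$ for every $w\in W$; and it is bijective with inverse $w\mapsto -iw$. Hence $J$ is a surjective isometry with $\|J\|=1$.

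The only point requiring an argument is the equality $v(J)=0$, where $v(J)=v(\mathcal{L}(W_\R),\Id,J)$ denotes the numerical radius of $J$ with respect to the identity. Here I would invoke Lemma~\ref{lemma:derivadaani}. Since we are in the real setting we have $\T=\{-1,1\}$, and for $\theta\in\T$ and $t>0$ the operator $\Id+t\theta J$ acts on $W$ as multiplication by the complex scalar $1+t\theta i$; therefore
$$
\|\Id+t\theta J\|_{\mathcal{L}(W_\R)}=\sup_{w\in S_W}\|(1+t\theta i)w\|=|1+t\theta i|=\sqrt{1+t^2}.
$$
Consequently,
$$
v(J)=\max_{\theta\in\T}\lim_{t\to 0^+}\frac{\|\Id+t\theta J\|-1}{t}=\lim_{t\to 0^+}\frac{\sqrt{1+t^2}-1}{t}=0.
$$

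With the isometry $J$ and the equality $v(J)=0$ established, Corollary~\ref{Coro:spaces-with-onto-isometry-of-radius-zero}.(a) yields $n_G(X,W_\R)=0$ for every real Banach space $X$ and every norm-one $G\in\mathcal{L}(X,W_\R)$, while part (b) yields $n_G(W_\R,Y)=0$ for every real $Y$ and every norm-one $G\in\mathcal{L}(W_\R,Y)$. As in each case there is at least one norm-one operator, this gives $\mathcal{N}(\mathcal{L}(X,W_\R))=\mathcal{N}(\mathcal{L}(W_\R,Y))=\{0\}$, the final assertion being the case $X=Y=W_\R$. I do not expect any genuine obstacle here: the whole argument rests on the single observation that multiplication by $i$, although a surjective isometry of $W_\R$, has vanishing numerical radius, because $\Id+t\theta J$ is a complex-scalar multiple of the identity and hence its norm $\sqrt{1+t^2}$ grows only to second order in $t$.
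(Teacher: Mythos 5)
Your proof is correct and follows exactly the paper's argument: the paper also deduces the result from Corollary~\ref{Coro:spaces-with-onto-isometry-of-radius-zero} by observing that multiplication by $i$ is an onto isometry of $W_\R$ with numerical radius zero. Your explicit verification that $\|\Id+t\theta J\|=\sqrt{1+t^2}$, hence $v(J)=0$ via Lemma~\ref{lemma:derivadaani}, merely fills in a detail the paper leaves implicit.
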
	

Other kind of spaces having trivial set of values of the numerical indices with respect to operators are $\mathcal{L}(H)$ and also $\mathcal{K}(H)$, the space of compact linear operators from $H$ to $H$.

\begin{theorem}\label{theorem:L(H)K(H)}
Let $H$ be a real Hilbert space of dimension at least two. Then
$$
\mathcal{N}\bigl(\mathcal{L}(X,\mathcal{L}(H))\bigr)=\mathcal{N}\bigl(\mathcal{L}(X,\mathcal{K}(H))\bigr)= \{0\}
$$
for every Banach space $X$. In particular,
$$
\mathcal{N}\bigl(\mathcal{L}(\mathcal{L}(H))\bigr)=\mathcal{N}\bigl(\mathcal{L}(\mathcal{K}(H))\bigr)=\{0\}.
$$
Moreover, if $H$ is infinite-dimensional or has even dimension, then
$$
\mathcal{N}\bigl(\mathcal{L}(\mathcal{L}(H),Y)\bigr)=\mathcal{N}\bigl(\mathcal{L}(\mathcal{K}(H),Y)\bigr)=\{0\}
$$
for every Banach space $Y$.	
\end{theorem}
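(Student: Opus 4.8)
The plan is to feed the two criteria for a vanishing numerical index, namely Proposition \ref{Prop:boundNumIndexZERO} and Corollary \ref{Coro:spaces-with-onto-isometry-of-radius-zero}, with numerical‑radius‑zero operators on $\mathcal{L}(H)$ and $\mathcal{K}(H)$ manufactured from numerical‑radius‑zero operators on $H$. The crucial link is the following computation: for $B\in\mathcal{L}(H)$, write $L_B$ for the left‑multiplication operator $S\mapsto BS$, which is bounded both on $W:=\mathcal{L}(H)$ and on $W:=\mathcal{K}(H)$. Since $\Id_W+t\theta L_B=L_{\Id_H+t\theta B}$ and $\|L_C\|_{\mathcal{L}(W)}=\|C\|_{\mathcal{L}(H)}$ for every $C\in\mathcal{L}(H)$ (test against $\Id_H$ when $W=\mathcal{L}(H)$, and against norm‑one rank‑one operators when $W=\mathcal{K}(H)$), Lemma \ref{lemma:derivadaani} applied to $(\mathcal{L}(W),\Id_W)$ gives
$$
v(L_B)=\max_{\theta\in\T}\lim_{t\to0^+}\frac{\|\Id_H+t\theta B\|-1}{t}=v(B).
$$
Thus every $B\in\mathcal{L}(H)$ with $v(B)=0$ produces an operator $L_B$ on $W$ with $v(L_B)=0$.

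For the first assertion I would invoke Proposition \ref{Prop:boundNumIndexZERO}(b) with range space $W=\mathcal{L}(H)$ (resp. $W=\mathcal{K}(H)$), so it suffices to check that the kernels of the numerical‑radius‑zero operators on $W$ meet only at $0$. Recall from the proof of Theorem \ref{theorem:Real-Hilbert-one-value} that for orthonormal $x,y\in S_H$ the operator $T_{x,y}(z)=\langle z,x\rangle y-\langle z,y\rangle x$ satisfies $v(T_{x,y})=0$, hence $v(L_{T_{x,y}})=0$ by the computation above. Given any nonzero $S\in W$, I would pick $z_0$ with $Sz_0\neq0$, set $x:=Sz_0/\|Sz_0\|$ and choose a unit vector $y\perp x$ (possible since $\dim H\geq2$); then $(L_{T_{x,y}}S)(z_0)=T_{x,y}(Sz_0)=\|Sz_0\|\,y-\langle Sz_0,y\rangle x\neq0$, its component along $y$ being nonzero. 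Hence $S\notin\ker L_{T_{x,y}}$, the intersection of kernels is $\{0\}$, and Proposition \ref{Prop:boundNumIndexZERO}(b) gives $\mathcal{N}(\mathcal{L}(X,W))=\{0\}$ for every Banach space $X$. Taking $X=W$ yields the two ``in particular'' equalities.

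For the ``moreover'' part I would instead use Corollary \ref{Coro:spaces-with-onto-isometry-of-radius-zero}(b), which requires only one onto isometry of $W$ with numerical radius zero. When $\dim H$ is even or infinite, $H$ admits a complex structure, i.e. a skew‑symmetric orthogonal operator $A$ (so $A^\ast=-A$ and $A^2=-\Id_H$); skew‑symmetry forces $\langle Az,z\rangle=0$ for all $z$, whence $v(A)=0$, and orthogonality makes $A$ a surjective isometry of $H$. Then $L_A$ is a surjective isometry of $W$ (with inverse $L_{-A}$) and $v(L_A)=v(A)=0$ by the computation above, so Corollary \ref{Coro:spaces-with-onto-isometry-of-radius-zero}(b) yields $\mathcal{N}(\mathcal{L}(W,Y))=\{0\}$ for every Banach space $Y$.

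The main obstacle, and the only genuinely new ingredient, is the identity $v(L_B)=v(B)$: it reduces the geometry of the (very large) operator space $\mathcal{L}(W)$ around $\Id_W$ to the tractable geometry of $\mathcal{L}(H)$ around $\Id_H$, and it is what transplants the abundant supply of numerical‑radius‑zero operators on the real Hilbert space $H$ into $W$. Care must be taken that $L_B$ genuinely maps $\mathcal{K}(H)$ into itself and that $\|L_C\|$ is still computed correctly there (rank‑one test operators replacing $\Id_H$, which is no longer compact), and the even/infinite‑dimensional hypothesis in the last part is precisely what is needed for the complex structure $A$ to exist.
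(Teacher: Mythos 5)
Your proposal is correct and follows essentially the same route as the paper: the paper also works with the left-multiplication operators $\Phi_J(T)=J\circ T$ on $\mathcal{L}(H)$ and $\mathcal{K}(H)$, establishes $v(\Phi_J)=v(J)$ (via Lemma \ref{Lemm:numericalIndexThroughMaps}.b rather than your equivalent derivative computation from Lemma \ref{lemma:derivadaani}), uses the same rotation $T_{x,y}$ to show the kernels of numerical-radius-zero operators intersect trivially, and for the moreover part builds the same skew-symmetric orthogonal $A$ (as a block rotation on $H=[\oplus\ell_2^2]_{\ell_2}$) to invoke Corollary \ref{Coro:spaces-with-onto-isometry-of-radius-zero}.
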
	

\begin{proof}
Let us start with the case of $\mathcal{L}(H)$. For $J\in S_{\mathcal{L}(H)}$ we define the operator $\Phi_J: \mathcal{L}(H)\longrightarrow \mathcal{L}(H)$ by $\Phi_J(T)=J\circ T$ for every $T\in \mathcal{L}(H)$. It is immediate that  $\|\Phi_J\|=\|J\|=1$ and that $\Phi_{\Id_H}=\Id_{\mathcal{L}(H)}$.
Therefore, by Lemma \ref{Lemm:numericalIndexThroughMaps}.b, it follows that
$$
v(\Phi_J)=v(\mathcal{L}(\mathcal{L}(H)),\Id_{\mathcal{L}(H)},\Phi_J)= v(\mathcal{L}(H),\Id_H,J)=v(J).
$$
Let us write
$$
\mathcal{B}=\{\Phi_J \colon J\in \mathcal{L}(H),\, \|J\|=1,\,v(J)=0\}
$$
and observe that the result will follow from Proposition~\ref{Prop:boundNumIndexZERO}.b if we show that
$$
\bigcap_{\Phi\in\mathcal{B}}\ker \Phi =\{0\}.
$$
To do so, fixed $T_0\in S_{\mathcal{L}(H)}$, let $x\in S_H$ be such that $\|T_0x\|>\frac12$. Now, define $e_1=\frac{T_0x}{\|T_0x\|}$ and take $e_2\in S_H$ satisfying $\langle e_1, e_2\rangle=0$. We define the operator $J\in \mathcal{L}(H)$ given by
$Jh=\langle h, e_2 \rangle e_1 - \langle h, e_1\rangle e_2$ for $h\in H$, which satisfies $\|J\|=1$ and $v(J)=0$, so $\Phi_J\in \mathcal{B}$. Besides, we can write
$$
\|\Phi_J(T_0)\|=\|J\circ T_0\|\geq \|J(T_0x)\|=\big\|-\|T_0x\|e_2\big\|=\|T_0x\|>\frac12.
$$
Therefore, $T_0\notin \ker \Phi_J$ and thus $\bigcap_{\Phi\in\mathcal{B}}\ker \Phi =\{0\}$, which finishes the proof for $\mathcal{L}(H)$. For $\mathcal{K}(H)$, it suffices to observe that the same argument is valid since $\Phi_J(\mathcal{K}(H))\subseteq \mathcal{K}(H)$ and we may repeat the argument considering $\Phi_J:\mathcal{K}(H)\longrightarrow \mathcal{K}(H)$ and getting that
$$
v(\Phi_J)=v(\mathcal{L}(\mathcal{K}(H)),\Id_{\mathcal{K}(H)},\Phi_J)= v(\mathcal{L}(H),\Id_H,J)=v(J).
$$
The rest of the proof is identical.
	
To prove the moreover part, observe that when $H$ is infinite-dimensional or has even dimension, then there is an onto isometry $J\in \mathcal{L}(H)$ with $v(J)=0$. Indeed, in this case we may write $H=\bigl[\oplus_{\lambda\in \Lambda} \ell_2^2\bigr]_{\ell_2}$ for suitable index set $\Lambda$ and, defining $A\in \mathcal{L}(\ell_2^2)$ by $A(x,y)=(y,-x)$, the surjective isometry with numerical index zero is given by
$$ J\bigl[(x_\lambda)_{\lambda\in \Lambda}\bigr]=(Ax_\lambda)_{\lambda\in \Lambda} \qquad \bigl((x_\lambda)_{\lambda\in \Lambda}\in H \bigr). $$
Now, the operator $\Phi_J$ is an onto isometry on $\mathcal{L}(H)$ or $\mathcal{K}(H)$ ($\Phi_{J^{-1}}$ is clearly the inverse of $\Phi_J$) satisfying that $v(\Phi_J)=0$. Then, Corollary \ref{Coro:spaces-with-onto-isometry-of-radius-zero} gives the result.
\end{proof}

When $H$ has odd dimension, we do not know if the equality $n_G(\mathcal{L}(H),Y)=0$ holds for every Banach space $Y$ and every operator $G\in \mathcal{L}\big(\mathcal{L}(H), Y\big)$.

Another result of the same kind tells us that there are many other spaces of operators having trivial set of values of the numerical indices with respect to an operator.

\begin{proposition}\label{prop:spaces-with-big-Z(X)}
Let $W_1,\ldots, W_n$ be real Banach spaces, let $E$ be $\R^n$ endowed with an absolute norm, and let $W=[W_1\oplus\dots\oplus W_n]_E$.  Then, the following statements hold:
\begin{enumerate}
\item[(a)] If $S_E$ is smooth at points whose first coordinate is zero and $\bigcap\bigl\{\ker(S_1)\colon S_1\in \mathcal{L}(W_1),\,v(S_1)=0\bigr\}=\{0\}$, then $\mathcal{N}(\mathcal{L}(X,W))=\{0\}$ for every Banach space $X$.
\item[(b)] If $S_E$ is rotund in the direction of the first coordinate, that is, $S_E$ does not contain line segments parallel to $(1, 0, \ldots, 0)$, and $\bigcup\bigl\{S_1(W_1)\colon S_1\in \mathcal{L}(W_1),\,v(S_1)=0\bigr\}$ is dense in $W_1$, then $\mathcal{N}(\mathcal{L}(W,Y))=\{0\}$ for every Banach space $Y$.
\end{enumerate}
Consequently, if (a) or (b) holds, $\mathcal{N}(\mathcal{L}(W))=\{0\}$.		
\end{proposition}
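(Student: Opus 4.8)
The plan is to deduce both (a) and (b) from Proposition~\ref{Prop:boundNumIndexZERO}, but only after isolating the role of the distinguished first coordinate; the consequence $\mathcal{N}(\mathcal{L}(W))=\{0\}$ is then immediate by taking $X=W$ in (a) or $Y=W$ in (b). Throughout I would use two structural facts about $W=[W_1\oplus\cdots\oplus W_n]_E$. First, $W^\ast=[W_1^\ast\oplus\cdots\oplus W_n^\ast]_{E^\ast}$ isometrically with coordinatewise duality, so any norming pair $(w,w^\ast)$ with $w^\ast(w)=\|w\|=\|w^\ast\|=1$ is \emph{aligned} coordinate by coordinate (that is, $w_i^\ast(w_i)=\|w_i^\ast\|\,\|w_i\|\ge 0$), and its coordinate-norm vectors $s=(\|w_i\|)_i\in S_E$, $t=(\|w_i^\ast\|)_i\in S_{E^\ast}$ form a dual pair in $(E,E^\ast)$. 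Second, the \emph{ampliation} $T_0=\iota_1 S_1 P_1$ of any $S_1\in\mathcal{L}(W_1)$ with $v(S_1)=0$ (where $P_1,\iota_1$ are the first-coordinate projection and inclusion) again satisfies $v(T_0)=0$, because alignment gives $w^\ast(T_0 w)=w_1^\ast(S_1 w_1)$, which is controlled by $v(S_1)=0$.

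For (a) I would fix a norm-one $G\in\mathcal{L}(X,W)$, write $G_1=P_1 G$, and split into two cases. If $G_1\neq 0$, the hypothesis $\bigcap\{\ker S_1:\, v(S_1)=0\}=\{0\}$ yields some $S_1$ with $v(S_1)=0$ and $S_1 G_1\neq 0$; then its ampliation $T_0$ has $v(T_0)=0$ and $T_0 G=\iota_1 S_1 G_1\neq 0$, so $n_G(X,W)=0$ by Proposition~\ref{Prop:boundNumIndexZERO}(a.2). If $G_1=0$, i.e. $G$ maps into the tail $N=\{0\}\oplus W_2\oplus\cdots\oplus W_n$, I would instead produce directly a norm-one $T=\iota_1\Lambda$ \emph{into the first coordinate} (with $\Lambda\colon X\to W_1$) and show $v_G(T)=0$, whence $n_G\le v_G(T)=0$. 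Using the spatial formula for $v_G$ (Lemma~\ref{Lemm-radio-G-A-B}), for a pair $(x,w^\ast)$ with $\re w^\ast(Gx)>1-\delta$ one has $|w^\ast(Tx)|=|w_1^\ast(\Lambda x)|\le t_1$, so it suffices that $t_1\to 0$ as $\delta\to 0$. This is where smoothness enters: since $Gx$ lies in $N$, its coordinate-norm vector has vanishing first entry, and a compactness argument in the finite-dimensional $E$ forces any limiting dual pair $(t,g)$ to satisfy $\langle t,g\rangle=1$ with $g\in S_E$, $g_1=0$; smoothness of $S_E$ at such $g$ makes the supporting functional unique and equal to $\nabla\|\cdot\|_E(g)$, whose first coordinate is $0$ because the absolute norm is even and minimal in the first variable there. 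Hence $t_1\to 0$ and $v_G(T)=0$.

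Part (b) is the dual statement, treated symmetrically via Proposition~\ref{Prop:boundNumIndexZERO}(a.1) with domain $W$. Fixing a norm-one $G\in\mathcal{L}(W,Y)$ and splitting on whether $G$ kills the first coordinate: if $G\iota_1\neq 0$, density of $\bigcup\{S_1(W_1):\, v(S_1)=0\}$ in $W_1$ provides an ampliation $T_0$ with $v(T_0)=0$ and $GT_0\neq 0$, giving $n_G=0$; if $G\iota_1=0$, I would take $T=\mu P_1$ \emph{supported on the first coordinate of the domain} ($\mu\colon W_1\to Y$) and show $v_G(T)=0$. For a pair $(w,y^\ast)$ with $\re y^\ast(Gw)>1-\delta$ one estimates $|y^\ast(Tw)|\le\|\mu\|\,s_1$, and since $G$ ignores $w_1$ the vector $\bar w=(0,w_2,\dots,w_n)$ satisfies $G\bar w=Gw$, so $\|G\bar w\|\to 1$ while $\|\bar w\|\le\|w\|=1$; compactness then yields limiting $s,\bar s\in S_E$ differing only in the first coordinate, and rotundity of $S_E$ in the direction $(1,0,\dots,0)$ forces $s=\bar s$, hence $s_1\to 0$ and $v_G(T)=0$.

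The main obstacle, and the reason the hypotheses are phrased as they are, is exactly the tail case: one \emph{cannot} simply verify the uniform criteria Proposition~\ref{Prop:boundNumIndexZERO}(b),(c), because for many absolute norms $E$ (for instance $\ell_p$ with $1<p<\infty$) every numerical-radius-zero operator on $W$ is forced to be block-diagonal with numerical-radius-zero diagonal blocks, so such operators cannot detect the tail directions when, say, some $W_j=\R$; thus neither $\bigcap_{v(T)=0}\ker T$ nor $\overline{\bigcup_{v(T)=0}T(W)}$ need be trivial. The content of (a) and (b) is therefore carried by the \emph{per-$G$} direct computation $v_G(T)=0$ above, in which smoothness (respectively rotundity) of $E$ at first-coordinate-zero points is precisely what makes the relevant first coordinate of the dual (respectively primal) norming vectors vanish in the limit. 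The only delicate points are the compactness/limit passage and the identification of the unique supporting functional, both of which rely on the finite-dimensionality of $E$.
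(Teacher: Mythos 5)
Your proposal is correct and follows essentially the same route as the paper: the same case split on whether $G$ interacts with the first coordinate, the same ampliations $I_1\circ S_1\circ P_1$ of numerical-radius-zero operators fed into Proposition~\ref{Prop:boundNumIndexZERO}, the same first-coordinate-supported test operator in the degenerate case, and the same finite-dimensional limiting argument in $E$ where smoothness (uniqueness of the supporting functional at points with vanishing first coordinate) and rotundity force the relevant first coordinates of the norming vectors to vanish. The one step you state tersely, that rotundity forces $s=\bar s$, does require the reflected point $(-s_1,s_2,\ldots,s_n)$ to produce a genuine segment contained in $S_E$, which is exactly how the paper argues and is available from the absoluteness of the norm that you already invoked.
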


\begin{proof}
(a). Given a Banach space $X$, a norm-one operator $G\in \mathcal{L}(X,W)$ can be seen as $G=(G_1,\ldots,G_n)$ where $G_k\in \mathcal{L}(X,W_k)$ for $k=1,\ldots,n$. We claim that $n_G(X,W)=0$ if $G_1\neq 0$. Indeed, let $P_1\in \mathcal{L}(W,W_1)$ denote the natural projection on $W_1$ and let $I_1\in \mathcal{L}(W_1,W)$ be the natural inclusion, so $G_1=P_1\circ G$. Observe now that for every $S_1\in \mathcal{L}(W_1)$ with $v(S_1)=0$, the operator $S\in \mathcal{L}(W)$ given by $S=I_1\circ S_1\circ P_1$ clearly satisfies $\|S\|=\|S_1\|$ and $v(S)=0$. Since
$$
P_1\circ G\neq 0\qquad  \text{and}\qquad  \bigcap_{S_1\in \mathcal{L}(W_1),\,v(S_1)=0}\ker(S_1)=\{0\},
$$
we can find $S_1\in \mathcal{L}(W_1)$ with $v(S_1)=0$ such that $S_1\circ P_1\circ G\neq 0$ and so $I_1\circ S_1\circ P_1\circ G\neq 0$. As $v(I_1\circ S_1\circ P_1)=0$, we get $n_G(X,W)=0$ from (a.2) of Proposition \ref{Prop:boundNumIndexZERO}. Therefore, we may and do assume from now on that $G_1=0$. Next we fix $w_0\in S_{W_1}$ and $x^\ast \in S_{X^\ast }$, we consider the norm-one operator $T=x^\ast \otimes (w_0,0,\ldots,0) \in \mathcal{L}(X,W)$, and we shall prove that $v_G(T)=0$. To this end, as
$$
v_G(T)=\inf_{\delta>0} \sup \big\{|w^\ast (Tx)| \colon w^\ast \in S_{W^\ast }, \ x\in S_{X}, \ \re w^\ast (Gx)>1-\delta \big\}
$$
for every $k\in \N$ we can take $w_k^\ast = (w_{k, 1}^\ast, \ldots, w_{k, n}^\ast) \in S_{W^\ast }$ and $x_k\in S_X$ satisfying
\begin{equation*}
\lim_{k}\re w_k^\ast (Gx_k)=1\qquad \text{and} \qquad \lim_{k}\big|w_k^\ast (Tx_k)\big|=v_G(T).
\end{equation*}
For each $k\in \N$ define
$$
e_k^\ast =(\|w_{k,1}^\ast\|,\ldots,\|w_{k,n}^\ast\|)\in S_{E^\ast } \quad \text{and} \quad e_k=(\|G_1x_k\|,\ldots,\|G_nx_k\|)\in B_E
$$
which satisfy $1=\lim_{k}\re w_k^\ast (Gx_k)\leq \lim_{k}\langle e_k^\ast , e_k \rangle\leq 1$, and thus $\lim_{k}\langle e_k^\ast , e_k \rangle=1$. Now, by passing to a subsequence, we may find $y^\ast = (y_1^\ast, \ldots, y_n^\ast) \in S_{E^\ast }$ and $y = (y_1, \ldots, y_n) \in S_E$ such that $\lim_{k \to \infty}e_k^\ast = y^\ast $ and $\lim_{k \to \infty} e_k = y$. Then it follows that
$$
\langle y^\ast , y \rangle=\lim_{k}\langle e_k^\ast , e_k \rangle=1
$$
and $y^\ast $ is a supporting functional of $y$. Moreover, we have that $y_1=0$ as the first coordinate of $e_k$ is equal to $\|G_1x_k\| = 0$ for every $k$, so
\begin{equation*}\label{eq:prop-trivial-set-of-values-absolute-sum}
1=\langle y^\ast , y \rangle =\sum_{j=1}^n y_{j}^\ast(y_j)=\sum_{j=2}^n y_{j}^\ast(y_j)
\end{equation*}
and the element $\widetilde{y}^\ast =(0,y_2^\ast,\ldots,y_n^\ast)\in B_{E^\ast}$ is also a supporting functional of $y$. Therefore, we get that $\widetilde{y}^\ast =y^\ast$ by the smoothness of $S_E$ at point $y$ and so $y_1^\ast = 0$. Finally, we can write
\begin{align*}
v_G(T)&=\lim_{k}\big|w_k^\ast (Tx_k)\big|=\lim_{k}\big|w_k^\ast (w_0,0,\ldots,0)\big||x^\ast (x_k)|\\
&\leq \lim_{k}\|w_{k,1}^\ast\|\|w_0\|\leq \lim_{k}\|w_{k,1}^\ast\|=y_1^\ast=0
\end{align*}
which gives $v_G(T)=0$ and finishes the proof of $(1)$.

To prove (b) we start observing that we can suppose that $G\circ I_1\circ S_1\circ P_1=0$ for every $S_1\in \mathcal{L}(W_1)$ with $v(S_1)=0$. Indeed, if there is $S_1\in \mathcal{L}(W_1)$ with $v(S_1)=0$ such that $G\circ I_1\circ S_1\circ P_1\neq 0$, then $S=I_1\circ S_1\circ P_1$ satisfies $v(S)=0$ and $G\circ S\neq 0$. So Proposition \ref{Prop:boundNumIndexZERO} gives $n_G(W,Y)=0$. Then, we have $G\circ I_1\circ S_1\circ P_1=0$ for every $S_1\in \mathcal{L}(W_1)$ with $v(S_1)=0$. This, together with the fact that the set
$$
\displaystyle{\bigcup_{S_1\in \mathcal{L}(W_1),\,v(S_1)=0}S_1(W_1)}
$$
is dense in $W_1$, implies that $G\circ I_1=0$. Next, we fix $y_0\in S_Y$, $w_0^\ast \in S_{W_1^\ast }$, we define $w^\ast =(w^\ast _0,0,\ldots,0)\in S_{W^\ast }$ and the rank-one operator $T=w^\ast \otimes y_0 \in S_{\mathcal{L}(W,Y)}$, and we shall prove that $v_G(T)=0$. To do so, since
$$
v_G(T)=\inf_{\delta>0} \sup \big\{|y^\ast (T w)| \colon y^\ast \in S_{Y^\ast }, \ w\in S_{W}, \ \re y^\ast (Gw)>1-\delta \big\}
$$
for every $k\in \N$ we can take $w_k = (w_{k, 1}, \ldots, w_{k, n}) \in S_{W }$ and $y_k^\ast \in S_{Y^\ast }$ satisfying
\begin{equation*}
\lim_{k}\re y_k^\ast (G w_k)=1\qquad \text{and} \qquad \lim_{k}\big|y_k^\ast (T w_k)\big|=v_G(T).
\end{equation*}
By passing to a subsequence, we may and do assume that $\{\|w_{k,j}\|\}_{k}$ is convergent for every $j=1,\ldots,n$. So, using that the norm in $E$ is absolute, we can define elements
$$ e^+ =\big(\lim_k \|w_{k,1}\|, \lim_k \|w_{k,2}\|,\ldots, \lim_k \|w_{k,n}\|\big), e^- =\big(-\lim_k \|w_{k,1}\|, \lim_k \|w_{k,2}\|,\ldots, \lim_k \|w_{k,n}\|\big)
$$
and $$\widetilde{e}=\big(0,\lim_k \|w_{k,2}\|, \ldots, \lim_k \|w_{k,n}\|\big) = \frac{1}{2}(e^+ + e^-)$$
which clearly satisfy $\|\widetilde{e}\|\leq \|e^+\| = \|e^-\|  \leq 1$. Using now that $G\circ I_1=0$ we can estimate as follows
\begin{align*}
1 &= \lim_{k}\re y_k^\ast (G w_k)= \lim_{k}\re y_k^\ast \big(G (0,w_{k,2},\ldots, w_{k,n})\big)\\
&\leq \lim_{k}\big\|(0,w_{k,2},\ldots, w_{k,n})\big\|\leq \lim_{k}\big\|(0,\|w_{k,2}\|,\ldots, \|w_{k,n}\|)\big\|_E=\|\widetilde{e}\|\leq 1
\end{align*}
which gives $\widetilde{e}\in S_E$ and thus $e^{\pm}\in S_E$. So, we deduce that $\lim_k \|w_{k,1}\|=0$ since $S_E$ is rotund in the direction of the first coordinate. To finish the proof, observe that
$$
v_G(T)=\lim_{k}\big|y_k^\ast (T w_k)\big|= \lim_{k} |y_k^\ast (y_0)||w^\ast (w_k)|\leq \lim_{k} \|w_0^\ast \|\|w_{k,1}\|=0.
$$
Therefore, we get $v_G(T)=0$ and $n_G(W,Y)=0$.
\end{proof}	

\begin{remark}\label{remark:smoothnessrotundityneeded_E-sum}
{\slshape The smoothness and rotundity hypotheses in Proposition \ref{prop:spaces-with-big-Z(X)} cannot be omitted.\ } Indeed, on the one hand, the rank-one operator $G\in \mathcal{L}(\ell_2^2\oplus_\infty \R,\R)$ given by $G=(0,0,1)\otimes \eins$ is a spear operator by Proposition \ref{prop:num-index-rankone-operators} as $\eins$ is a spear vector in $\R$ and $(0,0,1)$ is a spear vector in $(\ell_2^2\oplus_\infty \R)^\ast =\ell_2^2\oplus_1 \R$. Thus, the assumption of smoothness in item (a) of Proposition~\ref{prop:spaces-with-big-Z(X)} is essential. On the other hand, the operator $G^\ast \in \mathcal{L}(\R,\ell_2^2\oplus_1\R)$ is also a spear operator by the same argument, showing that we cannot omit the rotundity in item (b) of Proposition~\ref{prop:spaces-with-big-Z(X)}. 	
\end{remark}	

The next example is even more surprising.

\begin{example}\label{exam:n-zero-nG-one}
{\slshape There exists a Banach space $X$ with $n(X)=0$ such that there is a spear operator $G\in \mathcal{L}(X)$.} Indeed, consider $X=(\ell_2^2\oplus_\infty\R)\oplus_1\R$, which clearly satisfies $n(X)=0$, and $G\in \mathcal{L}\big((\ell_2^2\oplus_\infty\R)\oplus_1\R\big)$ given by $G=(0,0,0,1)\otimes(0,0,0,1)$, which is a spear operator by Proposition \ref{prop:num-index-rankone-operators}.
\end{example}	

Our next result estimates the numerical indices with respect to operators whose domain or range is an $\ell_p$-space.

\begin{proposition}\label{Prop:ell-p-Space}
	Let $1<p<\infty$, $1<q<\infty$ with $\frac1p+\frac1q=1$, let $M_p=\sup_{t\in[0,1]}\frac{|t^{p-1}-t|}{1+t^p}$, and let $\Gamma$ be either an infinite set or a finite set with an even number of elements. Then, in the real case,
	$$
	\mathcal{N}\bigl(\mathcal{L}(X,\ell_p(\Gamma))\bigr)\subseteq [0,M_p] \quad \text{and}\quad  \mathcal{N}\bigl(\mathcal{L}(\ell_p(\Gamma),Y)\bigr)\subseteq [0,M_p]
	$$
for all Banach spaces $X$ and $Y$.
\end{proposition}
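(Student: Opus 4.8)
The plan is to exhibit a single surjective isometry $J$ of $\ell_p(\Gamma)$ whose spatial numerical radius satisfies $v(J)\leq M_p$, and then to invoke Proposition~\ref{Prop:boundNumIndexCoro2}.(c). That result handles both desired inclusions simultaneously: the existence of a surjective isometry of the \emph{range} space with numerical radius $\leq M_p$ gives $\mathcal{N}(\mathcal{L}(X,\ell_p(\Gamma)))\subseteq[0,M_p]$, while the existence of a surjective isometry of the \emph{domain} space with numerical radius $\leq M_p$ gives $\mathcal{N}(\mathcal{L}(\ell_p(\Gamma),Y))\subseteq[0,M_p]$. Since $J$ is an isometry of $\ell_p(\Gamma)$ regardless of whether this space plays the role of domain or range, the same $J$ settles both statements at once, for all $X$ and $Y$.

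First I would treat the two-dimensional block. On the real space $\ell_p^2$, consider $A(x_1,x_2)=(-x_2,x_1)$, which is a surjective isometry since $\|A(x_1,x_2)\|_p=\|(x_1,x_2)\|_p$. As $\ell_p^2$ is smooth for $1<p<\infty$, every norm-one vector has a unique supporting functional, so $v(A)=\sup\{|x^\ast(Ax)|\colon x\in S_{\ell_p^2}\}$. Parametrising (up to sign and coordinate symmetry) the norm-one vectors by $x=(1,t)/(1+t^p)^{1/p}$ with $t\in[0,1]$, the supporting functional is $x^\ast=(1,t^{p-1})/(1+t^p)^{1/q}$, and since $\frac1p+\frac1q=1$ a direct computation gives
\[
x^\ast(Ax)=\frac{t^{p-1}-t}{1+t^p}.
\]
Taking the supremum of the absolute value over $t\in[0,1]$ yields exactly $v(A)=M_p$. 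As a sanity check, $p=2$ gives $M_2=0$, in accordance with Theorem~\ref{theorem:Real-Hilbert-one-value}.

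Next I would lift $A$ to $\ell_p(\Gamma)$. When $\Gamma$ is infinite or finite with an even number of elements, it admits a partition into two-element subsets, which produces an isometric identification $\ell_p(\Gamma)=\bigl[\bigoplus\nolimits_{\lambda\in\Lambda}\ell_p^2\bigr]_{\ell_p}$. Let $J$ act as $A$ on each block; then $J$ is a surjective isometry of $\ell_p(\Gamma)$. To bound its numerical radius, take $x=(x_\lambda)_\lambda\in S_{\ell_p(\Gamma)}$ with its (unique) supporting functional $x^\ast=(x_\lambda^\ast)_\lambda$, where $x_\lambda^\ast=\|x_\lambda\|^{p-1}\,\widehat{x}_\lambda^\ast$ and $\widehat{x}_\lambda^\ast$ is the norm-one supporting functional of $x_\lambda/\|x_\lambda\|$ (with the convention that the term vanishes when $x_\lambda=0$). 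Using that $|\widehat{x}_\lambda^\ast(A(x_\lambda/\|x_\lambda\|))|\leq v(A)=M_p$ in each block, one estimates
\[
|x^\ast(Jx)|\leq\sum_{\lambda}\|x_\lambda\|^{p-1}\,\|x_\lambda\|\,M_p=M_p\sum_\lambda\|x_\lambda\|^p=M_p,
\]
so $v(J)\leq M_p$, and Proposition~\ref{Prop:boundNumIndexCoro2}.(c) finishes the proof.

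The routine but delicate points are the two explicit computations: identifying the supporting functional on $\ell_p^2$ and carrying out the optimisation that produces $M_p$, and verifying that the supporting functional of an $\ell_p$-sum splits as stated so that the per-block bound adds up correctly. I expect the genuinely structural step to be the pairing: the even/infinite hypothesis on $\Gamma$ is precisely what allows the decomposition into $\ell_p^2$ blocks and is essential to the construction, since a leftover coordinate would have to be fixed by $J$ and would contribute a block of numerical radius $1$, destroying the bound. This is presumably why the odd finite-dimensional case is left outside the statement.
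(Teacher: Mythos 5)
Your proposal is correct and follows essentially the same route as the paper: decompose $\ell_p(\Gamma)$ as an $\ell_p$-sum of $\ell_p^2$-blocks, act by the rotation $A$ on each block to get a surjective isometry $J$ with $v(J)\leq v(A)=M_p$, and conclude via Proposition~\ref{Prop:boundNumIndexCoro2}.(c). The only differences are that you spell out the computation of $v(A)=M_p$ and the block-splitting of the supporting functional (the paper cites a reference and asserts the lifting), and that your appeal to Proposition~\ref{Prop:boundNumIndexCoro2}.(c) is in fact the appropriate citation, since the corollary the paper invokes is stated only for isometries of numerical radius zero.
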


\begin{proof} The argument is very similar to the one given at the end of the proof of Theorem \ref{theorem:L(H)K(H)}. By the assumption on the set $\Gamma$ we may write $\ell_p(\Gamma)=\bigl[\oplus_{\lambda\in \Lambda} \ell_p^2\bigr]_{\ell_p}$ for suitable index set $\Lambda$ and, defining $A\in \mathcal{L}(\ell_p^2)$ by $A(x,y)=(y,-x)$, then the operator given by
$$
J\bigl[(x_\lambda)_{\lambda\in \Lambda}\bigr]=(Ax_\lambda)_{\lambda\in \Lambda} \quad \bigl((x_\lambda)_{\lambda\in \Lambda}\in \ell_p(\Gamma) \bigr)
$$
is a surjective isometry. As $v(A)=M_p$ (see the comments after \cite[Theorem 1]{KaMaPa}), we get that $v(J)\leq M_p$. Now, Corollary \ref{Coro:spaces-with-onto-isometry-of-radius-zero} gives the result.
\end{proof}

We now pass to study some results for complex spaces. As a first result, we may calculate the set of values of the numerical indices with respect to operators between two Hilbert spaces.

\begin{proposition}\label{Prop:complexHilbertSpace1}
Let $H_{1}, H_{2}$ be complex Hilbert spaces with dimension greater than one. Then, we have that $\mathcal{N}\bigl(\mathcal{L}(H_1,H_2)\bigr)=\{0,1/2\}$ if $H_1$ and $H_2$ are isometrically isomorphic and $\mathcal{N}\bigl(\mathcal{L}(H_1,H_2)\bigr)=\{0\}$ in other case.
\end{proposition}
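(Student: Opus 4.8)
The plan is to reduce everything to a concrete spatial computation together with the classification of extreme contractions between Hilbert spaces. Identifying $H_2^\ast$ with $H_2$ via the Riesz map, Lemma \ref{Lemm-radio-G-A-B} (with $A=S_{H_1}$ and $B=S_{H_2}$) expresses the numerical radius as
$$
v_G(T)=\inf_{\delta>0}\sup\bigl\{|\langle Tx,y\rangle|\colon x\in S_{H_1},\,y\in S_{H_2},\,\re\langle Gx,y\rangle>1-\delta\bigr\}.
$$
The first thing I would record is that the constraint is rigid: if $\|x\|=\|y\|=1$ and $\re\langle Gx,y\rangle>1-\delta$, then $\|Gx-y\|^2=\|Gx\|^2-2\re\langle Gx,y\rangle+1\leq 2\delta$, so $y$ lies within $\sqrt{2\delta}$ of $Gx$ and $\|Gx\|>1-\delta$. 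This inequality replaces the Bishop–Phelps–Bollob\'as input used elsewhere and lets me pass from $y$ to $Gx$ with a controlled error in every estimate below.

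Next I would set up the skeleton. Since $\dim H_i\geq 2$, Proposition \ref{prop:ifzeronotinsidedimensiononeOperators} gives $0\in\mathcal N(\mathcal L(H_1,H_2))$ for free, so the whole content is to show that the only other possible value is $1/2$, and that it occurs exactly when $H_1$ and $H_2$ are isometrically isomorphic. The engine is Lemma \ref{lemma:operator-smooth-extreme}: if $n_G(H_1,H_2)>0$, then $G$ is an extreme point of $B_{\mathcal L(H_1,H_2)}$. I would then invoke the classical description of extreme contractions between Hilbert spaces (Kadison's theorem): such a $G$ is either an isometry ($G^\ast G=\Id_{H_1}$) or a co-isometry ($GG^\ast=\Id_{H_2}$). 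Thus it suffices to analyse these two families, since any non-extreme $G$ automatically has $n_G=0$.

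The heart of the argument is then two short, essentially dual estimates showing that a \emph{non-unitary} extreme operator forces $n_G=0$. If $G$ is an isometry that is not onto, pick a unit $y_0\in(\operatorname{range}G)^\perp=\ker G^\ast$ and set $T=x^\ast\otimes y_0$ for any $x^\ast\in S_{H_1^\ast}$; then for admissible $(x,y)$ one has $|\langle Tx,y\rangle|=|x^\ast(x)|\,|\langle y_0,y\rangle|=|x^\ast(x)|\,|\langle y_0,y-Gx\rangle|\leq\sqrt{2\delta}$, so $v_G(T)=0$ while $\|T\|=1$, whence $n_G=0$. Dually, if $G$ is a co-isometry that is not injective, pick a unit $x_0\in\ker G$ and a unit $y_1\in H_2$ and set $T=x_0^\ast\otimes y_1$; writing $x=x'+x''$ along $(\ker G)^\perp\oplus\ker G$, the rigidity forces $\|x''\|<\sqrt{2\delta}$, hence $|\langle Tx,y\rangle|=|\langle x'',x_0\rangle|\,|\langle y_1,y\rangle|\leq\sqrt{2\delta}$ and again $v_G(T)=0$, so $n_G=0$. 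Consequently $n_G>0$ can occur only when $G$ is simultaneously an isometry and a co-isometry, i.e.\ a surjective isometry $U$. For such $U$ the map $S\mapsto U S$ is a surjective linear isometry from $\mathcal L(H_1)$ onto $\mathcal L(H_1,H_2)$ sending $\Id_{H_1}$ to $U$, so applying Lemma \ref{Lemm:numericalIndexThroughMaps} to it and to its inverse yields $n_U(H_1,H_2)=n(\mathcal L(H_1),\Id)=n(H_1)=1/2$, using the classical value of the numerical index of a complex Hilbert space of dimension at least two \cite{Cabrera-Rodriguez}. Putting the pieces together, every $G$ gives $n_G\in\{0,1/2\}$, the value $1/2$ being attained precisely when a surjective isometry $H_1\to H_2$ exists, which for Hilbert spaces means that $H_1$ and $H_2$ are isometrically isomorphic.

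The step I expect to be the most delicate is recognising that the real-case machinery is unavailable: in a complex Hilbert space $v(T)=0$ forces $T=0$, so Proposition \ref{Prop:boundNumIndexZERO} and Corollary \ref{Coro:spaces-with-onto-isometry-of-radius-zero} cannot be used, and one genuinely needs the extreme-point reduction together with the above hands-on estimates. The only external ingredient is Kadison's classification of extreme contractions; everything else is the rigidity inequality $\|Gx-y\|^2\leq 2\delta$ applied to well-chosen rank-one operators $T$.
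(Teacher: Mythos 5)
Your proof is correct, but it follows a genuinely different route from the paper's. The paper treats $\mathcal{L}(H_1,H_2)$ as a non-abelian $JB^*$-triple and invokes \cite[Theorem 4.2.24]{Cabrera-Rodriguez}, which immediately yields the dichotomy $n_G\in\{0,1/2\}$ and identifies the geometrically unitary elements with the triple unitaries $U$ (i.e.\ $UU^\ast=\Id_{H_2}$, $U^\ast U=\Id_{H_1}$), from which the isometric-isomorphism criterion follows. You instead combine Lemma \ref{lemma:operator-smooth-extreme} with the Kadison-type classification of extreme contractions of $B(H_1,H_2)$ (isometries or co-isometries), kill the non-unitary extreme cases by exhibiting explicit rank-one operators $T$ with $v_G(T)=0$ via the rigidity inequality $\|Gx-y\|^2\leq 2\delta$, and compute $n_U=1/2$ for unitary $U$ by transporting the problem to $(\mathcal{L}(H_1),\Id)$ through the surjective isometry $S\mapsto US$ together with $n(H_1)=1/2$. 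Your version is more elementary and self-contained modulo the extreme-contraction classification (itself an easy defect-operator argument for $B(H,K)$), and it makes visible \emph{why} non-unitary extreme operators fail; the paper's version is shorter and slots the result into a general structure theory that also covers other $JB^*$-triples. Your closing observation that the real-case tools (Proposition \ref{Prop:boundNumIndexZERO}, Corollary \ref{Coro:spaces-with-onto-isometry-of-radius-zero}) are unavailable here because $v(T)=0$ forces $T=0$ on a complex Hilbert space is accurate and shows you identified the right obstruction. All steps check out: the rigidity estimate correctly gives $\|Gx\|>1-\delta$ and $\|y-Gx\|\leq\sqrt{2\delta}$, the orthogonality $y_0\perp Gx$ (resp.\ the decomposition $x=x'+x''$ with $\|x''\|<\sqrt{2\delta}$) does yield $v_{G,\delta}(T)\leq\sqrt{2\delta}$, and the two-sided application of Lemma \ref{Lemm:numericalIndexThroughMaps} legitimately gives $n_U(H_1,H_2)=n(H_1)$.
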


\begin{proof}
Observe that $\mathcal{L}(H_1,H_2)$ is a $JB^*$-triple (see \cite[\S 2.2.27, \S 4.1.39]{Cabrera-Rodriguez} for the definition) under the triple product
$$
\{xyz\}=\frac{1}{2}\bigl(xy^\ast z + z y^\ast x\bigr) \qquad \big(x,y,z\in \mathcal{L}(H_1,H_2)\big),
$$
as it is a closed subtriple of the $C^*$-algebra $\mathcal{L}(H_1\oplus_2 H_2)$ (we may use \cite[Facts 4.1.40 and 4.1.41]{Cabrera-Rodriguez}). Now, as $\mathcal{L}(H_1,H_2)$ is not abelian since $\dim(H_1)\geq 2$ and $\dim(H_2)\geq 2$, see \cite[\S 4.1.47]{Cabrera-Rodriguez}, it follows from \cite[Theorem 4.2.24]{Cabrera-Rodriguez} that $n_G(H_1,H_2)$ is equal to $0$ or $1/2$ for every norm-one operator $G\in \mathcal{L}(H_1,H_2)$.

Next, we take into account that, by \cite[Theorem 4.2.24]{Cabrera-Rodriguez}, $J=\mathcal{L}(H_1,H_2)$ contains a geometrically unitary element if and only if $J$ contains a unitary element as Jordan $*$-triple, that is, if there is $U\in J$ such that $\{UUT\}=T$ for every $T\in J$ (see \cite[Definition 4.1.53]{Cabrera-Rodriguez}). It is known to experts that this implies that $H_1$ and $H_2$ are isometrically isomorphic, but we may give an easy argument. Taking into account the formula of the product in $J$, we get that
$$
UU^\ast T + T U^\ast U = 2 T
$$
for every $T\in \mathcal{L}(H_1,H_2)$. Just considering rank-one operators $T\in \mathcal{L}(H_1,H_2)$, it readily follows that $$UU^\ast=\Id_{H_2} \qquad \text{and} \qquad U^\ast U=\Id_{H_1},$$ giving the desired result.
\end{proof}

Following an argument similar to the one given in Theorem \ref{theorem:Real-Hilbert-one-value}, we can establish the next result.

\begin{proposition}\label{Prop:complexHilbertSpace2}
Let $H$ be a complex Hilbert space with $\dim(H) \geq 2$. Then,
$$
\mathcal{N}\bigl(\mathcal{L}(X,H)\bigr)\subseteq [0,1/2] \quad \text{and}\quad  \mathcal{N}\bigl(\mathcal{L}(H,Y)\bigr)\subseteq [0,1/2]
$$
for all complex Banach spaces $X$ and $Y$.
\end{proposition}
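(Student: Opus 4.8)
The plan is to exhibit inside $\mathcal{L}(H)$ a large supply of operators of numerical radius exactly $1/2$ and then to feed them into Proposition~\ref{Prop:boundNumIndexCoro2} with $\alpha=1/2$, in exact analogy with the way operators of numerical radius $0$ were fed into Proposition~\ref{Prop:boundNumIndexZERO} in the proof of Theorem~\ref{theorem:Real-Hilbert-one-value}. For each pair of orthonormal vectors $x,y\in H$ I would consider the rank-one nilpotent $T_{x,y}\in\mathcal{L}(H)$ given by $T_{x,y}(z)=\langle z,x\rangle\, y$. Plainly $\|T_{x,y}\|=1$, the norm being attained at $z=x$, and $T_{x,y}(B_H)$ equals the closed complex disk $\{\lambda y\colon |\lambda|\leq 1\}$.

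The key computation is that $v(T_{x,y})=1/2$. Since $H$ is a complex Hilbert space, the spatial numerical range is the Toeplitz numerical range, so writing $z=\alpha x+\beta y+w$ with $w\perp \spn\{x,y\}$ and $\|z\|=1$ gives $\langle T_{x,y}z,z\rangle=\alpha\overline{\beta}$; maximizing $|\alpha\overline{\beta}|$ subject to $|\alpha|^2+|\beta|^2+\|w\|^2=1$ yields the value $1/2$, attained at $|\alpha|=|\beta|=1/\sqrt{2}$ and $w=0$. This is precisely the step where the complex scalars matter, and the point I expect to be the crux: the real skew-symmetric operator used in Theorem~\ref{theorem:Real-Hilbert-one-value} has numerical radius $0$, whereas here the cheapest available radius is $1/2$, matching the complex numerical index of a Hilbert space.

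For $\mathcal{N}\bigl(\mathcal{L}(H,Y)\bigr)$ I would apply part~(a) of Proposition~\ref{Prop:boundNumIndexCoro2} with $\mathcal{A}(1/2)$. As $\dim(H)\geq 2$, every unit vector $y$ admits a unit vector $x$ with $x\perp y$, so $T_{x,y}\in\mathcal{A}(1/2)$ and $y\in T_{x,y}(B_H)$; letting $x,y$ vary shows that $\bigcup_{T\in\mathcal{A}(1/2)}T(B_H)$ already contains every unit vector, hence equals $B_H$, so the hypothesis of~(a) holds and $n_G(H,Y)\leq 1/2$ for every norm-one $G$. For $\mathcal{N}\bigl(\mathcal{L}(X,H)\bigr)$ I would apply part~(b): given $y_0\in S_H$, choose $x\in S_H$ with $x\perp y_0$ and note that $T_{y_0,x}\in\mathcal{B}(1/2)$ satisfies $\|T_{y_0,x}\,y_0\|=1$, whence $y_0$ lies in $\{y\in S_H\colon \|T_{y_0,x}\,y\|>1-\eps\}$ for every $\eps>0$; thus the relevant union is all of $S_H$ and~(b) gives $n_G(X,H)\leq 1/2$.

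In summary, the only genuine work is the numerical radius computation of the second paragraph, and everything else is a routine verification of the covering hypotheses of Proposition~\ref{Prop:boundNumIndexCoro2}. The one point to keep in view is that the construction of $T_{x,y}$ relies essentially on the existence of an orthogonal companion vector, i.e.\ on the standing hypothesis $\dim(H)\geq 2$.
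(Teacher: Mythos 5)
Your proposal is correct and follows essentially the same route as the paper: the paper's proof also takes the rank-one operators $T(z)=\langle z,v\rangle\,u$ with $\langle u,v\rangle=0$, notes that their numerical radius is at most $1/2$, and invokes Proposition~\ref{Prop:boundNumIndexCoro2}. You merely spell out the exact-value computation $v(T_{x,y})=1/2$ (only the upper bound is needed) and the verification of the covering hypotheses in parts (a) and (b), all of which is accurate.
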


\begin{proof}
For each $u \in S_{H}$, let $v \in S_{H}$ with $\langle u, v\rangle = 0$ we have that the operator
\[ T: H \longrightarrow H\,, \quad T(x) = \langle x,v \rangle \, u  \]
satisfies $v(T) \leq 1/2$. An application of Proposition \ref{Prop:boundNumIndexCoro2} gives the result.
\end{proof}

Our next aim is to study the set $\mathcal{N}\bigl(\mathcal{L}(C(K_1),C(K_2))\bigr)$, where $K_{1}$ and $K_{2}$ are compact Hausdorff topological spaces. Recall that, by Lemma \ref{lemma:operator-smooth-extreme}, if $n_G(C(K_1),C(K_2))>0$ for some $G\in \mathcal{L}(C(K_1),C(K_2))$, then $G$ is an extreme operator. There is a well studied special kind of extreme operators between $C(K)$ spaces, the nice operators. A norm-one operator $G\in \mathcal{L}(C(K_1),C(K_2))$ is said to be \emph{nice} if $$G^\ast(\delta_t)\in \T\{\delta_s\colon s\in K_1\}$$
for every $t\in K_2$ (that is, $G^\ast$ carries extreme points of $B_{C(K_2)^\ast}$ to extreme points of $B_{C(K_1)^\ast}$). It is immediate that nice operators are extreme, but the converse result is not always true (see Remark \ref{remark-extreme-not-nice} below). We claim that a nice operator $G$ satisfies that $n_G((C(K_1),C(K_2))=1$. Indeed, this is easy to show by hand using the properties of the $\delta$-functions in the dual of a $C(K)$ space, but also follows directly from \cite[Proposition 4.2]{SpearsBook} and \cite[Example 2.12.a]{SpearsBook}. Therefore, if for a pair of compact Hausdorff topological spaces $(K_1,K_2)$ it is known that every extreme operator in $\mathcal{L}(C(K_1),C(K_2))$ is nice, then the only possible values of the numerical index of operators in  $\mathcal{L}(C(K_1),C(K_2))$ is $0$ or $1$. This idea leads to a couple of results, one for the real case and another one for the complex case.

\begin{theorem}\label{theorem:CK-nice-real}
Let $K_1$, $K_2$ be compact Hausdorff topological spaces such that at least one of them has more than one point. Then, in the real case, one has
$$
\mathcal{N}\bigl(\mathcal{L}(C(K_1),C(K_2))\bigr)=\{0,1\}
$$
provided at least one of the following assumptions holds:
\begin{enumerate}
\item[(1)]  $K_{1}$ is metrizable,
\item[(2)]  $K_{1}$ is Eberlein compact and $K_{2}$ is metrizable,
\item[(3)]  $K_{2}$ is extremally disconnected,
\item[(4)]  $K_{1}$ is scattered.
\end{enumerate}
\end{theorem}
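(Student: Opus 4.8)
The plan is to reduce the whole statement to the classical dichotomy \emph{extreme operator $\Rightarrow$ nice operator} for the relevant pairs $(K_1,K_2)$, and then to combine it with the two facts already at hand in this section: that $n_G(C(K_1),C(K_2))>0$ forces $G$ to be an extreme operator (Lemma \ref{lemma:operator-smooth-extreme}), and that every nice operator $G$ satisfies $n_G(C(K_1),C(K_2))=1$. Granting, for a given pair $(K_1,K_2)$, that every extreme operator in $\mathcal{L}(C(K_1),C(K_2))$ is nice, any norm-one $G$ with $n_G>0$ is then extreme, hence nice, hence $n_G=1$; this yields the inclusion $\mathcal{N}(\mathcal{L}(C(K_1),C(K_2)))\subseteq\{0,1\}$. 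So the real task is to show that each of the four topological hypotheses forces extreme operators to be nice.

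For this step I would invoke the classical theory of the extremal structure of $\mathcal{L}(C(K_1),C(K_2))$. Writing a norm-one $T$ through its weak$^\ast$-continuous representing family $t\mapsto T^\ast(\delta_t)\in B_{C(K_1)^\ast}$, niceness of $T$ is exactly the requirement that each $T^\ast(\delta_t)$ be an extreme point of $B_{C(K_1)^\ast}$, i.e.\ of the form $\pm\delta_s$ in the real case. The papers of Blumenthal--Lindenstrauss--Phelps \cite{Blu-Lin-Phe}, Morris--Phelps \cite{Mor-Phe}, and Sharir \cite{Sharir72,Sharir73,Sharir76,Sharir77} supply precisely the statement that extreme operators are nice under the respective assumptions: $K_1$ metrizable for (1), $K_1$ Eberlein compact together with $K_2$ metrizable for (2), $K_2$ extremally disconnected for (3), and $K_1$ scattered for (4). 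Each of these, fed into the reduction above, gives $\mathcal{N}(\mathcal{L}(C(K_1),C(K_2)))\subseteq\{0,1\}$. I expect the genuine difficulty here to be more bibliographic than conceptual: the main care needed is to match each of the four hypotheses to the correct source, to verify that it is stated (or transfers) to the real scalar field and to the present orientation of domain versus range, and possibly to reprove the one or two implications for which a directly quotable reference is hardest to isolate.

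Finally I would check the reverse inclusion $\{0,1\}\subseteq\mathcal{N}(\mathcal{L}(C(K_1),C(K_2)))$, which is elementary. Since at least one of $K_1,K_2$ has more than one point, at least one of $C(K_1),C(K_2)$ has dimension at least two, so Proposition \ref{prop:ifzeronotinsidedimensiononeOperators} gives $0\in\mathcal{N}(\mathcal{L}(C(K_1),C(K_2)))$. For the value $1$, fixing any $s_0\in K_1$ and taking the rank-one operator $G=\delta_{s_0}\otimes\eins_{K_2}$ (so that $Gf=f(s_0)\,\eins_{K_2}$) produces a norm-one operator, since $G(\eins_{K_1})=\eins_{K_2}$, with $G^\ast(\delta_t)=\delta_{s_0}$ for every $t\in K_2$; hence $G$ is nice and $n_G(C(K_1),C(K_2))=1$. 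Combining both inclusions gives the claimed equality $\mathcal{N}(\mathcal{L}(C(K_1),C(K_2)))=\{0,1\}$.
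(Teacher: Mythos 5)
Your proposal is correct and follows essentially the same route as the paper: establish $0\in\mathcal{N}$ via Proposition \ref{prop:ifzeronotinsidedimensiononeOperators}, get $1\in\mathcal{N}$ from the rank-one operator $\delta_{s_0}\otimes\eins$, and reduce the inclusion $\mathcal{N}\subseteq\{0,1\}$ to the cited ``extreme $\Rightarrow$ nice'' results. The only caveat is bibliographic, exactly as you anticipate: case (2) rests on Amir--Lindenstrauss rather than on the Blumenthal--Lindenstrauss--Phelps/Morris--Phelps/Sharir papers you list, while (1) is Blumenthal--Lindenstrauss--Phelps and (3), (4) are Sharir.
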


\begin{proof}
First, as at least one of the spaces $C(K_1)$ and $C(K_2)$ has dimension greater than one, Proposition \ref{prop:ifzeronotinsidedimensiononeOperators} gives that $0\in \mathcal{N}\bigl(\mathcal{L}(C(K_1),C(K_2))\bigr)$. By considering the rank-one operator $G=\delta_t\otimes \boldsymbol{1}$, it is immediate that $1\in \mathcal{N}\bigl(\mathcal{L}(C(K_1),C(K_2))\bigr)$ by Proposition \ref{prop:num-index-rankone-operators}.
Finally, to get the reverse inclusion, by the comments before the statement of the theorem, we just have to check that under the given conditions, every extreme operator in $\mathcal{L}(C(K_1),C(K_2))$ is actually nice. For (1), this is shown in \cite[Theorem~1]{Blu-Lin-Phe};  for (2) in  \cite[Theorem~7]{Ami-Lin};  \cite[Theorem~4]{Sharir72} gives (3); finally, (4) follows from \cite[Theorem~5]{Sharir72}.
\end{proof}

For the complex case, we have a similar result.

\begin{theorem}\label{theorem:CK-nice-complex}
Let $K_1$, $K_2$ be compact Hausdorff topological spaces such that at least one of them has more than one point. Then, in the complex case, one has
$$
\mathcal{N}\bigl(\mathcal{L}(C(K_1),C(K_2))\bigr)=\{0,1\}
$$
provided at least one of the following assumptions holds:
\begin{enumerate}
\item[(1)]  $K_{2}$ is extremally disconnected,
\item[(2)]  $K_{1}$ is metrizable and  $K_{2}$ is basically disconnected (i.e.\ the closure of every $F_{\sigma}$-open is open),
\item[(3)]  $K_{1}$ scattered.
\end{enumerate}
\end{theorem}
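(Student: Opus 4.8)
The plan is to run exactly the three-step scheme of the real case (Theorem~\ref{theorem:CK-nice-real}): produce the two boundary values $0$ and $1$, and then close the gap by showing that a positive numerical index forces the operator to be extreme, hence (under the hypotheses) nice, hence of index $1$.

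First I would check the inclusion $\{0,1\}\subseteq\mathcal{N}\bigl(\mathcal{L}(C(K_1),C(K_2))\bigr)$, and this part is insensitive to the scalar field. Since at least one of $K_1,K_2$ has more than one point, at least one of $C(K_1),C(K_2)$ has dimension greater than one, so $0\in\mathcal{N}$ by Proposition~\ref{prop:ifzeronotinsidedimensiononeOperators}. For the value $1$, I would fix $t\in K_1$ and take the rank-one operator $G=\delta_t\otimes\boldsymbol{1}$; as $\delta_t$ is a spear vector of $C(K_1)^\ast$ and $\boldsymbol{1}$ is a spear vector of $C(K_2)$, Proposition~\ref{prop:num-index-rankone-operators} yields $n_G\bigl(C(K_1),C(K_2)\bigr)=n(C(K_1)^\ast,\delta_t)\,n(C(K_2),\boldsymbol{1})=1$.

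The substance of the proof is the reverse inclusion, and here I would use the chain of implications set out just before the statement. If $G$ is a norm-one operator with $n_G\bigl(C(K_1),C(K_2)\bigr)>0$, then $G$ is an extreme operator by Lemma~\ref{lemma:operator-smooth-extreme}; if, under the hypothesis in force, every extreme operator of $\mathcal{L}(C(K_1),C(K_2))$ is nice, then $G$ is nice and so $n_G=1$. Thus everything reduces to proving, in the complex setting and separately under each of (1)--(3), that extreme operators are nice.

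That last reduction is the genuine obstacle, and it explains why the admissible hypotheses differ from the real case. In the complex setting, metrizability of $K_1$ by itself no longer forces niceness, which is why the real condition ``$K_1$ metrizable'' is here replaced by the strictly stronger requirement (2) that $K_1$ be metrizable \emph{and} $K_2$ be basically disconnected. To finish, I would invoke the complex analyses of extreme contractions between $C(K)$-spaces due to Sharir \cite{Sharir73,Sharir76,Sharir77}: these supply the implication ``extreme $\Rightarrow$ nice'' for $K_2$ extremally disconnected (giving (1)), for $K_1$ metrizable together with $K_2$ basically disconnected (giving (2)), and for $K_1$ scattered (giving (3)). Feeding each of these into the chain above closes the theorem.
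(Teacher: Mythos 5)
Your proposal follows exactly the paper's route: the values $0$ and $1$ are obtained from Proposition~\ref{prop:ifzeronotinsidedimensiononeOperators} and from the rank-one operator $\delta_t\otimes\boldsymbol{1}$ via Proposition~\ref{prop:num-index-rankone-operators}, and the reverse inclusion is reduced, through Lemma~\ref{lemma:operator-smooth-extreme} and the ``nice $\Rightarrow$ spear'' discussion, to the literature fact that extreme operators are nice under each of (1)--(3). The one point to correct is the attribution of that literature fact, which is the only real content left at that stage: the paper cites \cite[Theorem~4]{Sharir72} for (1), \cite[Theorem~1.4]{Gendler} for (2), and \cite[Theorem~5]{Sharir72} for (3); in particular (2) is due to Gendler, not Sharir, and the papers \cite{Sharir76,Sharir77} you invoke are the \emph{counterexample} papers (producing extreme operators that are not nice), so they cannot supply the positive implication you need.
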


\begin{proof}
We just have to follow the lines of the proof of Theorem \ref{theorem:CK-nice-real}, but here we have to provide references for the fact that, in the complex case, every extreme operator in $\mathcal{L}(C(K_1),C(K_2))$ is actually nice under the presented conditions. For (1), this is shown in \cite[Theorem~4]{Sharir72}; (2) is proved in \cite[Theorem~1.4]{Gendler}; finally, (3) follows from \cite[Theorem~5]{Sharir72}.
\end{proof}

\begin{remark}\label{remark-extreme-not-nice}
There are examples showing that it is not true in general that all extreme operators between spaces of continuous functions are nice \cite{Sharir76,Sharir77}. The underlying idea in these examples is to consider for an arbitrary compact Hausdorff space $K$ the canonical inclusion $G$ given by
\[ G: C(K) \longrightarrow C(B_{C(K)^\ast}, w^{\ast}) \]
which satisfies that
\[ G^{\ast}(\delta_{\mu}) = \mu \]
for every $\mu \in B_{C(K)}$ and so, it is not nice. Additional hypothesis on the compact space $K$ (e.g.\ $K$ perfect in the complex case, see \cite[Theorem~2.5]{Sharir76}) ensure, however, that $G$ is an extreme point.
We do not know whether the numerical index with respect to operators $G$ defined as above has to be always $0$ or $1$.
\end{remark}

\begin{remark}
Let us also comment that, in the real case, examples as the ones in the previous remark cannot be compact: for arbitrary compact Hausdorff topological spaces $K_1$ and $K_2$, every compact extreme operator $G\in \mathcal{L}(C(K_1),C(K_2))$ is nice \cite[Theorem~4.5]{Mor-Phe}. Moreover, if $K_2$ is separable, every weakly compact extreme operator $G\in \mathcal{L}(C(K_1),C(K_2))$ is nice \cite[Proposition~2.8]{Cor-Lin}.
\end{remark}

As a consequence of Theorems \ref{theorem:CK-nice-real} and \ref{theorem:CK-nice-complex}, we get the following particular case.

\begin{corollary}
Let $K_1$ be a compact Hausdorff topological space and let $(\Omega, \Sigma, \mu)$ be a $\sigma$-finite measure space such that at least one of the spaces $C(K_1)$ or $L_\infty(\mu)$ has dimension at least two. Then,
$$
\mathcal{N}\bigl(\mathcal{L}(C(K_1),L_\infty(\mu))\bigr)=\{0,1\}
$$
in both the real and the complex case.
\end{corollary}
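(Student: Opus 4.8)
The plan is to recognise $L_\infty(\mu)$ as a $C(K)$ space over an extremally disconnected compact space and then to feed this into Theorems \ref{theorem:CK-nice-real} and \ref{theorem:CK-nice-complex}, so that the corollary becomes a special case of the $C(K_1),C(K_2)$ results already established.

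First I would record the structural input. Since $(\Omega,\Sigma,\mu)$ is $\sigma$-finite, $L_\infty(\mu)$ is a commutative unital von Neumann algebra; in fact it is the dual of $L_1(\mu)$. By the Gelfand representation (and its real-linear analogue in the real case) it is isometrically isomorphic, as a Banach space, to $C(K_2)$, where $K_2$ is its spectrum, a compact Hausdorff space. Moreover, because $L_\infty(\mu)$ is a von Neumann algebra its lattice of projections is order-complete, which classically forces $K_2$ to be extremally disconnected (Stonean). I would treat these as classical facts; the only point needing a comment is that the real case reduces to the complex one, since the real space $L_\infty(\mu)$ is the self-adjoint part of the complex von Neumann algebra built on the same measure, so the underlying compact space $K_2$, and hence its extremal disconnectedness, is unchanged.

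Next I would transfer the numerical index along this identification. Fix a surjective linear isometry $\Phi\colon L_\infty(\mu)\longrightarrow C(K_2)$. Then $\Psi\colon \mathcal{L}(C(K_1),L_\infty(\mu))\longrightarrow \mathcal{L}(C(K_1),C(K_2))$ given by $\Psi(T)=\Phi\circ T$ is again a surjective linear isometry. For a norm-one operator $G$ we have $\|\Psi(G)\|=1$, so Lemma \ref{Lemm:numericalIndexThroughMaps}.b gives
$$
v\bigl(\mathcal{L}(C(K_1),C(K_2)),\Psi(G),\Psi(T)\bigr)=v\bigl(\mathcal{L}(C(K_1),L_\infty(\mu)),G,T\bigr)
$$
for every $T$. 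Since $\Psi$ maps the unit sphere bijectively onto the unit sphere, taking infima yields $n_{\Psi(G)}(C(K_1),C(K_2))=n_G(C(K_1),L_\infty(\mu))$, and as $G$ runs over all norm-one operators this produces the set equality
$$
\mathcal{N}\bigl(\mathcal{L}(C(K_1),L_\infty(\mu))\bigr)=\mathcal{N}\bigl(\mathcal{L}(C(K_1),C(K_2))\bigr).
$$

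Finally I would invoke the previous theorems. The hypothesis that $C(K_1)$ or $L_\infty(\mu)$ has dimension at least two means, after the identification $L_\infty(\mu)\cong C(K_2)$, that $C(K_1)$ or $C(K_2)$ has dimension at least two, equivalently that at least one of $K_1,K_2$ has more than one point. As $K_2$ is extremally disconnected, condition (3) of Theorem \ref{theorem:CK-nice-real} in the real case and condition (1) of Theorem \ref{theorem:CK-nice-complex} in the complex case give $\mathcal{N}\bigl(\mathcal{L}(C(K_1),C(K_2))\bigr)=\{0,1\}$, which completes the argument. The only genuinely delicate point is the structural input of the second paragraph, namely that $L_\infty(\mu)$ is isometric to $C(K_2)$ with $K_2$ extremally disconnected; everything afterwards is a mechanical transfer of the $C(K)$ results already proved.
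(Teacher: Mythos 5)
Your proposal is correct and follows exactly the paper's route: the paper also proves this corollary by identifying $L_\infty(\mu)$ with a $C(K_\mu)$ space over an extremally disconnected compact space and then invoking condition (3) of Theorem \ref{theorem:CK-nice-real} in the real case and condition (1) of Theorem \ref{theorem:CK-nice-complex} in the complex case. The only difference is that you spell out the transfer of the numerical index along the surjective isometry via Lemma \ref{Lemm:numericalIndexThroughMaps}, which the paper leaves implicit.
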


Indeed, this is just consequence of the fact that every $L_\infty(\mu)$ space can be identified with a $C(K_{\mu})$-space where $K_{\mu}$ is extremally disconnected. With this in mind, the following particular case also holds.

\begin{corollary}
Let $(\Omega_{i}, \Sigma_{i}, \mu_{i})$ ($i=1,2$) be $\sigma$-finite measure spaces such that at least one of the spaces $L_\infty(\mu_i)$, $i=1,2$, has dimension at least two. Then,
$$
\mathcal{N}\bigl(\mathcal{L}(L_\infty(\mu_1),L_\infty(\mu_2))\bigr)=\{0,1\}
$$
in both the real and the complex case.
\end{corollary}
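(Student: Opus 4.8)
The plan is to reduce this statement to the previous corollary by representing both $L_\infty$-spaces as spaces of continuous functions. Recall the classical fact, already invoked in the paragraph preceding the corollary, that for any $\sigma$-finite measure space $(\Omega,\Sigma,\mu)$ the space $L_\infty(\mu)$ is isometrically isomorphic to $C(K_\mu)$ for a suitable compact Hausdorff space $K_\mu$ (indeed, $L_\infty(\mu)$ is a commutative von Neumann algebra, so its character space $K_\mu$ is extremally disconnected). On the domain side we will only need the bare identification $L_\infty(\mu_1)\equiv C(K_1)$, where extremal disconnectedness plays no role; the extremal disconnectedness of $K_{\mu_2}$ that makes the range behave well is already absorbed into the previous corollary.

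First I would fix a surjective linear isometry $\Phi\colon L_\infty(\mu_1)\longrightarrow C(K_1)$, writing $K_1:=K_{\mu_1}$. The crucial point is that the whole set $\mathcal{N}(\cdot)$ is an isometric invariant of the operator space together with its distinguished norm-one elements. Concretely, $\Phi$ induces the map $\Theta\colon\mathcal{L}(L_\infty(\mu_1),L_\infty(\mu_2))\longrightarrow \mathcal{L}(C(K_1),L_\infty(\mu_2))$ given by $\Theta(T)=T\circ\Phi^{-1}$, which is again a surjective linear isometry. Applying Lemma \ref{Lemm:numericalIndexThroughMaps}.b to $\Theta$ yields $v(\mathcal{L}(C(K_1),L_\infty(\mu_2)),\Theta(G),\Theta(T))=v(\mathcal{L}(L_\infty(\mu_1),L_\infty(\mu_2)),G,T)$ for all $G,T$; since $\Theta$ is an isometry it sends norm-one operators to norm-one operators, so taking infima over norm-one $T$ gives $n_{\Theta(G)}=n_G$, and surjectivity of $\Theta$ ensures that $\Theta(G)$ ranges over all norm-one operators of the target space as $G$ does on the source. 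Hence
$$
\mathcal{N}\bigl(\mathcal{L}(L_\infty(\mu_1),L_\infty(\mu_2))\bigr)=\mathcal{N}\bigl(\mathcal{L}(C(K_1),L_\infty(\mu_2))\bigr).
$$

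Next I would transport the dimension hypothesis: as $\Phi$ is an isometric isomorphism, $\dim C(K_1)=\dim L_\infty(\mu_1)$, so the assumption that at least one of $L_\infty(\mu_1),L_\infty(\mu_2)$ has dimension at least two is exactly the assumption that at least one of $C(K_1),L_\infty(\mu_2)$ has dimension at least two. This is precisely the hypothesis of the previous corollary, which therefore applies with this $K_1$ and $\mu=\mu_2$ and gives $\mathcal{N}(\mathcal{L}(C(K_1),L_\infty(\mu_2)))=\{0,1\}$ in both the real and the complex cases. Combining this with the displayed equality finishes the proof.

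There is no genuine obstacle here. The only nontrivial input is the Banach-space identification $L_\infty(\mu)\equiv C(K_\mu)$, which is standard, together with the routine verification that the set of numerical-index values is preserved under a surjective isometry of the operator spaces, which is immediate from Lemma \ref{Lemm:numericalIndexThroughMaps}.b. The one point to keep straight is the asymmetry between domain and range: extremal disconnectedness of the character space is needed only for the range space $L_\infty(\mu_2)$, and this requirement is already built into the previous corollary, whereas for the domain any compact Hausdorff representation of $L_\infty(\mu_1)$ suffices.
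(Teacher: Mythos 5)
Your proposal is correct and follows essentially the same route as the paper: the paper justifies this corollary precisely by identifying each $L_\infty(\mu)$ with a $C(K_\mu)$ space for $K_\mu$ extremally disconnected and then invoking the $C(K_1)$-into-$C(K_2)$ theorems (of which the preceding corollary is the relevant instance). Your extra care in checking that $\mathcal{N}$ is invariant under the surjective isometry induced on the operator space, and that only the range space needs the extremal disconnectedness, is exactly the (unstated) content of the paper's one-line argument.
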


We may get an analogous result for $L_1(\mu)$ spaces.

\begin{corollary}
Let $(\Omega_{i}, \Sigma_{i}, \mu_{i})$ ($i=1,2$) be $\sigma$-finite measure spaces. Then,
$$
\mathcal{N}\bigl(\mathcal{L}(L_1(\mu_1),L_1(\mu_2))\bigr)\subseteq\{0,1\}
$$
in both the real and the complex case.
\end{corollary}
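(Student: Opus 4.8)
The plan is to transfer the problem to the already-settled $L_\infty$-case by duality. Recall that $L_1(\mu)^\ast = L_\infty(\mu)$ isometrically, so for a norm-one operator $G\in \mathcal{L}(L_1(\mu_1),L_1(\mu_2))$ the adjoint is a norm-one operator $G^\ast\in \mathcal{L}(L_\infty(\mu_2),L_\infty(\mu_1))$. The preceding corollary tells us that $n_{G^\ast}(L_\infty(\mu_2),L_\infty(\mu_1))\in\{0,1\}$ whenever one of the factors has dimension at least two; and when both $L_1(\mu_i)$ (equivalently, both $L_\infty(\mu_i)$) are one-dimensional, the whole space of operators reduces to $\mathcal{L}(\K,\K)$, whose only numerical index is $1$. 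On the other hand, Lemma \ref{lemma:adjoint-inequality} always provides the inequality $n_{G^\ast}(L_\infty(\mu_2),L_\infty(\mu_1))\leq n_G(L_1(\mu_1),L_1(\mu_2))$. So half of the job is for free: if $n_{G^\ast}=1$ then $n_G=1$.

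The decisive step is therefore the reverse inequality $n_G(L_1(\mu_1),L_1(\mu_2))\leq n_{G^\ast}(L_\infty(\mu_2),L_\infty(\mu_1))$, which is what excludes intermediate values: combined with the inequality above it yields the equality $n_G=n_{G^\ast}$, and then membership in $\{0,1\}$ follows from the $L_\infty$-case. This reverse inequality is exactly where the special structure of $L_1$ enters: $L_1(\mu_2)$ is an $L$-embedded space, i.e.\ $L_1(\mu_2)^{\ast\ast}=L_1(\mu_2)\oplus_1 L_1(\mu_2)_s$ with the canonical projection onto $L_1(\mu_2)$ being an $L$-projection. I would invoke the general principle that, when the range space is $L$-embedded, the numerical index with respect to $G$ agrees with that with respect to $G^\ast$; applied to the range $L_1(\mu_2)$ this gives precisely $n_G=n_{G^\ast}$ and closes the argument.

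I expect the hard part to be justifying this reverse inequality rather than merely citing it. The obstruction is that a general norm-one $S\in \mathcal{L}(L_\infty(\mu_2),L_\infty(\mu_1))$ need not be the adjoint of any operator on the $L_1$-spaces (only the weak$^\ast$-to-weak$^\ast$ continuous ones are), so one cannot naively pull back the extremal $S$ witnessing $n_{G^\ast}$. The way around this is to work at the bidual level and exploit the $L$-projection: given $S$ with $v_{G^\ast}(S)$ small, one produces, via weak$^\ast$-density of $J_{L_1(\mu_1)}(B_{L_1(\mu_1)})$ and the additivity of the norm across the $L$-decomposition of $L_1(\mu_2)^{\ast\ast}$, an operator $T$ on the $L_1$-spaces with $\|T\|=1$ and $v_G(T)\leq v_{G^\ast}(S)$. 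Carrying out this transfer carefully—controlling the defining suprema $v_{G,\delta}$ and $v_{G^\ast,\delta}$ simultaneously through the decomposition—is the technical heart of the proof; everything else is bookkeeping together with the invocation of the $L_\infty$-result.
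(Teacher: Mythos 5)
Your proof is correct, but it takes a genuinely different route from the paper's. The paper argues by a dichotomy on $G$ itself: if $G$ is not an extreme point of $B_{\mathcal{L}(L_1(\mu_1),L_1(\mu_2))}$, then $n_G=0$ by Lemma \ref{lemma:operator-smooth-extreme}; if $G$ is extreme, a theorem of Sharir (\cite[Corollary 2.4]{Sharir73}) says that $G^\ast$ is a \emph{nice} operator between the $L_\infty$-spaces, hence $n_{G^\ast}=1$, and then the cheap inequality $n_{G^\ast}\leq n_G$ of Lemma \ref{lemma:adjoint-inequality} already forces $n_G=1$. In particular the paper never needs the reverse inequality $n_G\leq n_{G^\ast}$ that you identify as the decisive step. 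Your argument instead obtains the full equality $n_G=n_{G^\ast}$ from the $L$-embeddedness of the range $L_1(\mu_2)$ and then quotes the $L_\infty$-corollary wholesale; this is legitimate because the $L$-embedded proposition is proved in the paper (in the later subsection on adjoint operators, by exactly the bidual/$L$-projection transfer you sketch) and its proof does not depend on the present corollary, so there is no circularity --- only a forward reference relative to the paper's ordering. What each approach buys: the paper's is shorter modulo the literature, resting on an $L_1$-specific structural fact (extreme $\Rightarrow$ adjoint nice) plus only the trivial direction of the adjoint inequality; yours is more modular, reducing the $L_1$-case to the $L_\infty$-case by a general duality principle valid for any $L$-embedded range, at the cost of carrying out (or citing) the heavier Goldstine-plus-$L$-projection argument. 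Your handling of the degenerate case where both spaces are one-dimensional is also correct.
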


\begin{proof}
Fix a norm-one operator $G\in \mathcal{L}(L_1(\mu_1),L_1(\mu_2))$. If $G$ is not extreme,  $n_G(L_1(\mu_1),L_1(\mu_2))=0$ by Lemma \ref{lemma:operator-smooth-extreme}. If, otherwise, $G$ is an extreme operator, $G^\ast\in \mathcal{L}(L_\infty(\mu_2),L_\infty(\mu_1))$ is nice by \cite[Corollary 2.4]{Sharir73}, so $n_{G^\ast}(L_\infty(\mu_2),L_\infty(\mu_1))=1$ from the discussion preceding Theorems \ref{theorem:CK-nice-real} and \ref{theorem:CK-nice-complex}. But then, $n_G(L_1(\mu_1),L_1(\mu_2))=1$ by Lemma \ref{lemma:adjoint-inequality}. This shows that $\mathcal{N}\bigl(\mathcal{L}(L_1(\mu_1),L_1(\mu_2))\bigr)\subseteq \{0,1\}$.
\end{proof}

Let us show that the set $\mathcal{N}\bigl(\mathcal{L}(L_1(\mu_1),L_1(\mu_2))\bigr)$ does not always contain the value $1$.

\begin{example}
{\slshape $\mathcal{N}\bigl(\mathcal{L}(\ell_1,L_1[0,1])\bigr)=\{0\}$.}

Indeed,  by \cite[Proposition 3.3]{SpearsBook} any operator $G\in \mathcal{L}(\ell_1,L_1[0,1])$ satisfying $n_G(\ell_1,L_1[0,1])=1$ would carry the elements of the basis of $\ell_1$ to spear vectors of $L_1[0,1]$ and thus, to extreme points of the unit ball of $L_1[0,1]$ \cite[Proposition 2.11.b]{SpearsBook}, so there are no such operators. On the other hand, $0\in \mathcal{N}\bigl(\mathcal{L}(\ell_1,L_1[0,1])\bigr)$ by Proposition \ref{prop:ifzeronotinsidedimensiononeOperators}.
\end{example}	

\section{Lipschitz numerical range}\label{sect:Lipschitz}

We would like to deal now with the Lipschitz numerical range introduced in \cite{Wang2012, Wang-Huang-Tan} and show that it can be viewed as a particular case of the numerical range with respect to a linear operator. We need some notation. Let $X$, $Y$ be Banach spaces. We denote by $\Lip_{0}{(X, Y)}$ the set of all Lipschitz maps $F: X \longrightarrow Y$ such that $F(0)=0$. This is a Banach space when endowed with the norm
\[ \| F\|_L = \sup \left\{ \frac{\|F(x) - F(y)\|}{\| x - y\|}\colon x,y\in X,\, x \neq y \right\}.
\]
Following \cite{Wang2012, Wang-Huang-Tan}, the \emph{Lipschitz numerical range} of $F\in \Lip_0(X,X)$  is
$$
W_L(F):=\left\{\frac{\xi^\ast\bigl(F(x)-F(y)\bigr)}{\|x-y\|}\colon \xi^\ast\in S_{X^\ast},\, x,y\in X,\, x\neq y,\, \xi^\ast(x-y)=\|x-y\|\right\},
$$
the \emph{Lipschitz numerical radius} of $F$ is just $w_L(F):=\sup\bigl\{|\lambda|\colon \lambda\in W_L(F)\bigr\}$, and the \emph{Lipschitz numerical index} of $X$ is
$$
n_L(X):=\inf\bigl\{w_L(F)\colon F\in \Lip_0(X,X),\, \|F\|_L=1\bigr\} = \max\bigl\{k\geq 0\colon k\|F\|_L \leq w_L(F)\,\forall F\in \Lip_0(X,X)\bigr\}.
$$
We would like to show that the closed convex hull of the Lipschitz numerical range is equal to the numerical range with respect to a linear operator. To do so, we need to recall the concept of Lipschitz free space. First, observe that we can associate to each $x \in X$ an element $\delta_{x} \in \Lip_{0}{(X,\K)}^{\ast}$ which is just the evaluation map $\delta_{x}(F) = F(x)$ for every $F\in \Lip_0(X,\K)$. The \emph{Lipschitz free space} over $X$ is defined as
\[
\FF(X):= \overline{\spn}^{\| \cdot \|}{\{ \delta_{x}\colon x \in X\}} \subseteq \Lip_{0}{(X, \K)}^{\ast}.
\]
It turns out that $\FF(X)$ is an isometric predual of $\Lip_{0}{(X, \K)}$. Moreover, the map $\delta: x \leadsto \delta_{x}$ establishes an isometric (non-linear) embedding $X \hookrightarrow \FF(X)$ since $\|\delta_{x}-\delta_{y}\|_{\FF(X)}=\|x-y\|_X$ for all $x,y\in X$. The name of Lipschitz free space comes from \cite{GodKalt}, but the concept was studied much earlier and it is also known as the Arens-Ells space of $X$. We refer the reader to the paper \cite{Godefroy-surveyFF} and the book \cite{Weaver} for more information and background. The main features of the Lipschitz free space that we are going to use here are contained in the following result which is nowadays considered folklore in the theory of Lipschitz maps and can be found in the cited references \cite{Godefroy-surveyFF,GodKalt,Weaver}.

\begin{lemma}\label{Lemm:elementarypropertiesFF(X)}
Let $X$, $Y$ be Banach spaces.
\begin{enumerate}
\item[(a)] For every $F\in \Lip_0(X,Y)$ there exists a unique linear operator $T_{F}: \FF(X) \longrightarrow Y$ such that $T_{F}\circ \delta = F$ and $\|T_{F}\| = \|F\|_L$. Moreover, $\Lip_0(X,Y)$ is isometrically isomorphic to $\mathcal{L}(\FF(X),Y)$. In particular, $\Lip_0(X,\K)=\FF(X)^\ast$.
\item[(b)] When the above is applied to $\Id\in \Lip_0(X,X)$, we get the operator $\mathcal{G}_X: \FF(X) \longrightarrow X$ given by
\[
\mathcal{G}_X\left(\sum_{x \in X}{a_{x} \delta_{x}}\right) = \sum_{x \in X}{a_{x}  x}
\]
which has norm one and satisfies that $\mathcal{G}_X\circ \delta=\Id_X$.
\item[(c)] The set
\begin{equation*}
 \mathcal{B}_X := \left\{ \frac{\delta_{x} - \delta_{y}}{\| x - y\|}\colon x,y\in X,\, x \neq y\right\}\subseteq \FF(X)
\end{equation*}
is norming for $\FF(X)^\ast=\Lip_0(X,\K)$, i.e.\ $B_{\FF(X)} = \overline{\aconv}{(\mathcal{B}_X)}$.
\end{enumerate}
\end{lemma}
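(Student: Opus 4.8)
The plan is to prove the linearization property (a) from scratch and then read off (b) and (c) as consequences. Throughout I will use the two facts recorded just before the statement: that $\delta_0=0$ in $\FF(X)$ (because $\delta_0(g)=g(0)=0$ for every $g\in\Lip_0(X,\K)$) and the isometry $\|\delta_x-\delta_y\|_{\FF(X)}=\|x-y\|_X$. For (a), fix $F\in\Lip_0(X,Y)$ and define a linear map $T_F$ on the dense subspace $\spn\{\delta_x\colon x\in X\}$ by $T_F\bigl(\sum_i a_i\delta_{x_i}\bigr)=\sum_i a_i F(x_i)$. The first thing to check is that this is well defined, i.e.\ that $\sum_i a_i\delta_{x_i}=0$ in $\FF(X)$ forces $\sum_i a_i F(x_i)=0$ in $Y$. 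The device here is to test against dual functionals: for each $y^\ast\in Y^\ast$ the composition $y^\ast\circ F$ belongs to $\Lip_0(X,\K)$, so the hypothesis that $\sum_i a_i g(x_i)=0$ for all $g\in\Lip_0(X,\K)$, applied to $g=y^\ast\circ F$, yields $y^\ast\bigl(\sum_i a_i F(x_i)\bigr)=0$; as $y^\ast$ is arbitrary, the claim follows.

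Next I would estimate the norm. For $\mu=\sum_i a_i\delta_{x_i}$, regarded also as a functional on $\Lip_0(X,\K)$ of norm $\|\mu\|_{\FF(X)}$,
$$\|T_F(\mu)\|_Y=\sup_{y^\ast\in B_{Y^\ast}}\Bigl|\sum_i a_i\,(y^\ast\circ F)(x_i)\Bigr|=\sup_{y^\ast\in B_{Y^\ast}}|\mu(y^\ast\circ F)|\leq \|\mu\|_{\FF(X)}\,\|F\|_L,$$
using $\|y^\ast\circ F\|_L\leq\|F\|_L$. Hence $T_F$ extends to a bounded operator on $\FF(X)$ with $\|T_F\|\leq\|F\|_L$, and $T_F\circ\delta=F$ by construction. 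The reverse estimate is immediate from the isometry of $\delta$, since $\|F\|_L=\sup_{x\neq y}\frac{\|T_F(\delta_x-\delta_y)\|}{\|x-y\|}\leq\|T_F\|$. Uniqueness holds because any such operator is determined on the dense set $\spn\{\delta_x\}$. Thus $F\mapsto T_F$ is a linear isometry $\Lip_0(X,Y)\to\mathcal{L}(\FF(X),Y)$, and it is onto because every $T\in\mathcal{L}(\FF(X),Y)$ equals $T_F$ for $F:=T\circ\delta\in\Lip_0(X,Y)$. Taking $Y=\K$ recovers $\Lip_0(X,\K)=\FF(X)^\ast$.

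Part (b) is then the application of (a) to $F=\Id_X$, which has $\|\Id_X\|_L=1$: this produces $\mathcal{G}_X:=T_{\Id_X}$ with the stated formula $\mathcal{G}_X\bigl(\sum_x a_x\delta_x\bigr)=\sum_x a_x x$, norm one, and $\mathcal{G}_X\circ\delta=\Id_X$. For (c), each $\frac{\delta_x-\delta_y}{\|x-y\|}$ has norm one by the isometry, so $K:=\overline{\aconv}(\mathcal{B}_X)\subseteq B_{\FF(X)}$; for the reverse inclusion I would argue by Hahn--Banach separation. If some $\mu\in B_{\FF(X)}$ lay outside the closed absolutely convex set $K$, there would be $\varphi\in\FF(X)^\ast$ with $\re\varphi(\mu)>\sup_{\nu\in K}\re\varphi(\nu)\geq\sup_{u\in\mathcal{B}_X}|\varphi(u)|$, the last step using that $K$ is absolutely convex. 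Identifying $\varphi$ with $f\in\Lip_0(X,\K)$ through part (a), the right-hand side equals $\sup_{x\neq y}\frac{|f(x)-f(y)|}{\|x-y\|}=\|f\|_L=\|\varphi\|$, so $\re\varphi(\mu)>\|\varphi\|$, contradicting $\|\mu\|\leq1$. Hence $K=B_{\FF(X)}$, which is exactly the assertion that $\mathcal{B}_X$ is norming for $\FF(X)^\ast$.

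The only genuinely delicate point is the well-definedness and isometric character of the linearization in (a); everything else is bookkeeping. The key idea that makes it work is the reduction to scalar-valued Lipschitz functions by composing with $y^\ast\in Y^\ast$, which transfers the defining duality between $\FF(X)$ and $\Lip_0(X,\K)$ to the vector-valued setting. Once (a) is established, (b) is immediate and (c) is a routine separation argument built on the same identification.
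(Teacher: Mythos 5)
Your proof is correct, but note that the paper itself does not prove this lemma at all: it is stated as folklore and referred to the cited sources (Godefroy's survey, Godefroy--Kalton, and Weaver's book), so there is no in-paper argument to compare against. What you have written is essentially the standard linearization proof from that literature, and all the steps check out: the well-definedness of $T_F$ on $\spn\{\delta_x\colon x\in X\}$ via testing against $y^\ast\circ F\in\Lip_0(X,\K)$, the inequality $\|T_F(\mu)\|\leq\|\mu\|_{\FF(X)}\|F\|_L$ obtained from $\|y^\ast\circ F\|_L\leq\|F\|_L$, the reverse inequality from the isometry $\|\delta_x-\delta_y\|=\|x-y\|$, surjectivity via $F:=T\circ\delta$ (using $\delta_0=0$ so that $F(0)=0$), and the Hahn--Banach separation in (c), where the identification $\sup_{u\in\mathcal{B}_X}|\varphi(u)|=\|f\|_L=\|\varphi\|$ is exactly the point that makes $\mathcal{B}_X$ norming. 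The only thing you take as given that itself needs a one-line argument is the isometric embedding $\|\delta_x-\delta_y\|_{\FF(X)}=\|x-y\|_X$ (the lower bound requires exhibiting a $1$-Lipschitz witness such as a norming functional for $x-y$), but the paper records this fact immediately before the lemma, so relying on it is legitimate. In short: a complete, self-contained proof of a statement the paper delegates to references.
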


Our result for Lipschitz numerical ranges is the following.

\begin{theorem}\label{Theo:Lip-numerical-range-Id}
Let $X$ be a Banach space. Then,
$$
\overline{\conv} \bigl(W_L(F)\bigr)=V(\mathcal{L}(\FF(X),X),\mathcal{G}_X,T_F)=V(\Lip_0(X,X),\Id,F)
$$
for every $F\in \Lip_0(X,X)$.
\end{theorem}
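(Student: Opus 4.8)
The plan is to split the double equality into the two separate identities and handle each with its own tool. The identity $V(\mathcal{L}(\FF(X),X),\mathcal{G}_X,T_F)=V(\Lip_0(X,X),\Id,F)$ is the soft one: by Lemma~\ref{Lemm:elementarypropertiesFF(X)}.a the correspondence $F\mapsto T_F$ is a surjective linear isometry $\Phi$ from $\Lip_0(X,X)$ onto $\mathcal{L}(\FF(X),X)$, and by part (b) of the same lemma it sends $\Id$ to $\mathcal{G}_X$. A surjective linear isometry induces a weak$^\ast$-homeomorphic surjective isometry $\Phi^\ast$ of the duals which carries $\Face(B_{\mathcal{L}(\FF(X),X)^\ast},\mathcal{G}_X)$ bijectively onto $\Face(B_{\Lip_0(X,X)^\ast},\Id)$ and satisfies $\psi(\Phi z)=(\Phi^\ast\psi)(z)$; hence the two abstract numerical ranges coincide termwise, giving the desired equality for every $F$. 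So the whole content lies in matching $\overline{\conv}(W_L(F))$ with $V(\mathcal{L}(\FF(X),X),\mathcal{G}_X,T_F)$.

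For that I would invoke the spatial description in Lemma~\ref{Lemm-radio-G-A-B}, applied to $\mathcal{L}(\FF(X),X)$ with $G=\mathcal{G}_X$, taking $A=\T\,\mathcal{B}_X\subseteq B_{\FF(X)}$ (whose closed convex hull is $B_{\FF(X)}$, since $\overline{\aconv}(\mathcal{B}_X)=B_{\FF(X)}$ by Lemma~\ref{Lemm:elementarypropertiesFF(X)}.c) and $B=S_{X^\ast}$ (whose weak$^\ast$-closed convex hull is $B_{X^\ast}$ by Krein--Milman). This yields $V(\mathcal{L}(\FF(X),X),\mathcal{G}_X,T_F)=\conv\bigl(\bigcap_{\delta>0}\overline{S_\delta}\bigr)$, where $S_\delta$ consists of the scalars $\xi^\ast(T_F\mu)$ with $\xi^\ast\in S_{X^\ast}$, $\mu\in\T\,\mathcal{B}_X$ and $\re\xi^\ast(\mathcal{G}_X\mu)>1-\delta$. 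Absorbing the unimodular factor of $\mu=\theta\,\mu_0$ into the functional (replacing $\xi^\ast$ by $\theta\xi^\ast$) lets me rewrite $S_\delta$ using only $\mu_0=\frac{\delta_x-\delta_y}{\|x-y\|}\in\mathcal{B}_X$; and since $T_F(\delta_x-\delta_y)=F(x)-F(y)$ while $\mathcal{G}_X(\delta_x-\delta_y)=x-y$, the value $\xi^\ast(T_F\mu_0)$ is exactly $\frac{\xi^\ast(F(x)-F(y))}{\|x-y\|}$ and the constraint becomes $\re\xi^\ast\bigl(\frac{x-y}{\|x-y\|}\bigr)>1-\delta$. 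Because $W_L(F)$ arises from the very same expression under the stronger exact condition $\xi^\ast(x-y)=\|x-y\|$, I immediately get $W_L(F)\subseteq\bigcap_{\delta>0}\overline{S_\delta}\subseteq V(\mathcal{L}(\FF(X),X),\mathcal{G}_X,T_F)$, and hence $\overline{\conv}(W_L(F))\subseteq V(\mathcal{L}(\FF(X),X),\mathcal{G}_X,T_F)$ since the latter set is compact and convex.

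The hard part will be the reverse inclusion $\bigcap_{\delta>0}\overline{S_\delta}\subseteq\overline{W_L(F)}$, that is, upgrading the approximate constraint $\re\xi^\ast(\mathcal{G}_X\mu)>1-\delta$ to an exact support condition. Here I would apply the Bishop--Phelps--Bollob\'as theorem: given $z:=\frac{x-y}{\|x-y\|}$ and $\xi^\ast\in S_{X^\ast}$ with $\re\xi^\ast(z)>1-\delta$, it produces $w\in S_X$ and $\eta^\ast\in S_{X^\ast}$ with $\eta^\ast(w)=1$ and $\|z-w\|,\|\xi^\ast-\eta^\ast\|<\eps(\delta)$, where $\eps(\delta)\to0$ as $\delta\to0$. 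The decisive trick is to realize the perturbed unit vector $w$ as a normalized difference while freezing one endpoint: setting $y_0=y$ and $x_0=y+\|x-y\|\,w$ gives $x_0\neq y_0$, $\|x_0-y_0\|=\|x-y\|$ and $\frac{x_0-y_0}{\|x_0-y_0\|}=w$, so that $\frac{\eta^\ast(F(x_0)-F(y_0))}{\|x_0-y_0\|}\in W_L(F)$. A direct Lipschitz estimate then controls the gap to $\xi^\ast(T_F\mu_0)$ by a constant multiple of $\|F\|_L\,\eps(\delta)$, using $\|F(x_0)-F(x)\|\leq\|F\|_L\|x_0-x\|=\|F\|_L\|x-y\|\,\|w-z\|$ for the change of point and $\|\xi^\ast-\eta^\ast\|$ for the change of functional; the point is that normalising by $\|x_0-y_0\|=\|x-y\|$ cancels the potentially large factor $\|x-y\|$ that appears in the endpoint perturbation. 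Letting $\delta\to0$ places every element of $\bigcap_{\delta>0}\overline{S_\delta}$ in $\overline{W_L(F)}$, so $V(\mathcal{L}(\FF(X),X),\mathcal{G}_X,T_F)=\conv\bigl(\bigcap_{\delta>0}\overline{S_\delta}\bigr)\subseteq\overline{\conv}(W_L(F))$, and combined with the previous paragraph this closes the equality. I expect this approximation step, and in particular the fixed-endpoint realization of $w$, to be the only genuinely delicate point of the argument.
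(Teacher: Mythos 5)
Your proposal is correct and follows essentially the same route as the paper: the paper's Lemma~\ref{Lemma:lemma-1-Lip-num-ranges} is exactly your application of Proposition~\ref{AProp:num-ranges}/Lemma~\ref{Lemm-radio-G-A-B} with the norming set $\mathcal{B}_X$ (together with the isometric identification $\Lip_0(X,X)\cong\mathcal{L}(\FF(X),X)$ sending $\Id$ to $\mathcal{G}_X$), and the paper's Lemma~\ref{Lemma:lemma-2-consequenceofBPB} is your Bishop--Phelps--Bollob\'as step, including the same fixed-endpoint realization $x_0=y+\|x-y\|\,w$, $y_0=y$ and the same cancellation of the factor $\|x-y\|$ in the Lipschitz estimate.
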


The result will be consequence of two lemmas. The first one follows directly from Proposition \ref{AProp:num-ranges}, as the set
$$
C=\left\{x^\ast \otimes \frac{\delta_x - \delta_y}{\|x-y\|}\colon x,y\in X,\, x\neq y,\, x^\ast\in S_{X^\ast}\right\}\subseteq \mathcal{L}(\mathcal{F}(X),X)^\ast
$$
satisfies that $B_{\mathcal{L}(\mathcal{F}(X),X)^\ast }= \overline{\conv}^{w^\ast}(C)$ by Lemma \ref{Lemm:elementarypropertiesFF(X)}.c.

\begin{lemma}\label{Lemma:lemma-1-Lip-num-ranges}
Let $X$ be a Banach space. Then
\begin{multline*}
V(\mathcal{L}(\FF(X),X),\mathcal{G}_X,T)\\ =\conv \bigcap_{\delta>0}
\overline{\left\{\frac{x^\ast(T(\delta_x - \delta_y))}{\|x-y\|}\colon x,y\in X,\, x\neq y,\, x^\ast\in S_{X^\ast},\,\re \frac{x^\ast(\mathcal{G}_X(\delta_x - \delta_y))}{\|x-y\|}>1-\delta \right\}}
\end{multline*}
for every $T\in \mathcal{L}(\FF(X),X)$. Equivalently,
\begin{multline*}
V(\Lip_0(X,X),\Id_X,F)\\ =\conv \bigcap_{\delta>0}
\overline{\left\{\frac{x^\ast\bigl(F(x) - F(y)\bigr)}{\|x-y\|}\colon x,y\in X,\, x\neq y,\, x^\ast\in S_{X^\ast},\,\re \frac{x^\ast(x - y)}{\|x-y\|}>1-\delta \right\}}
\end{multline*}
for every $F\in \Lip_0(X,X)$.
\end{lemma}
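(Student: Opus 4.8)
The plan is to obtain the first identity as a direct application of Proposition~\ref{AProp:num-ranges} to the Banach space $Z=\mathcal{L}(\FF(X),X)$, the norm-one element $u=\mathcal{G}_X$ (which has norm one by Lemma~\ref{Lemm:elementarypropertiesFF(X)}.b), and the prescribed set $C\subseteq \mathcal{L}(\FF(X),X)^\ast$. First I would record that $C\subseteq B_{Z^\ast}$: each functional $x^\ast\otimes \frac{\delta_x-\delta_y}{\|x-y\|}$ acts on $T$ by $T\mapsto x^\ast\bigl(T\tfrac{\delta_x-\delta_y}{\|x-y\|}\bigr)$ and has norm $\|x^\ast\|\,\bigl\|\tfrac{\delta_x-\delta_y}{\|x-y\|}\bigr\|=1$, where the second factor equals one because $\|\delta_x-\delta_y\|_{\FF(X)}=\|x-y\|_X$.

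The key hypothesis to verify is $B_{Z^\ast}=\overline{\conv}^{w^\ast}(C)$, and this is the main point. I would argue that $C$ is a $\T$-symmetric norming subset of $B_{Z^\ast}$. For the norming property, note that for every $T\in Z$ one has $\|T\|=\sup_{\mu\in B_{\FF(X)}}\|T\mu\|$; since $\mu\mapsto\|T\mu\|$ is a continuous seminorm and $B_{\FF(X)}=\overline{\aconv}(\mathcal{B}_X)$ by Lemma~\ref{Lemm:elementarypropertiesFF(X)}.c, this supremum is already attained over $\mathcal{B}_X$, and writing $\|T\mu\|=\sup_{x^\ast\in S_{X^\ast}}|x^\ast(T\mu)|$ yields $\sup_{z^\ast\in C}|z^\ast(T)|=\|T\|$. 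For the symmetry, observe that for $\theta\in\T$ one has $\theta\bigl(x^\ast\otimes\mu\bigr)=(\theta x^\ast)\otimes \mu\in C$, so $\T C=C$ and hence $\overline{\conv}^{w^\ast}(C)$ is automatically absolutely convex. A $\T$-symmetric norming subset of the ball has weak$^\ast$-closed convex hull equal to the whole ball by the bipolar theorem (the polar of $C$ is exactly $B_Z$ by the norming identity, and its bipolar $\overline{\aconv}^{w^\ast}(C)$ coincides with $\overline{\conv}^{w^\ast}(C)$ by $\T$-symmetry); this is essentially the computation in the proof of Lemma~\ref{Lemm-radio-G-A-B} carried out with $A=\mathcal{B}_X$ and $B=S_{X^\ast}$.

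With the hypothesis in hand, Proposition~\ref{AProp:num-ranges} applied to $(Z,u,T)$ gives the first displayed formula verbatim, since for $z^\ast=x^\ast\otimes\frac{\delta_x-\delta_y}{\|x-y\|}\in C$ the quantities $z^\ast(T)$ and the condition $\re z^\ast(u)>1-\delta$ read precisely as $\frac{x^\ast(T(\delta_x-\delta_y))}{\|x-y\|}$ and $\re\frac{x^\ast(\mathcal{G}_X(\delta_x-\delta_y))}{\|x-y\|}>1-\delta$. For the ``equivalently'' reformulation I would transport everything through the isometric isomorphism $\Lip_0(X,X)\cong\mathcal{L}(\FF(X),X)$, $F\mapsto T_F$, of Lemma~\ref{Lemm:elementarypropertiesFF(X)}.a. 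This isomorphism carries $\Id_X$ to $\mathcal{G}_X$, because both $T_{\Id_X}$ and $\mathcal{G}_X$ compose with $\delta$ to give $\Id_X$ and hence coincide by the uniqueness in Lemma~\ref{Lemm:elementarypropertiesFF(X)}.a--b; consequently it preserves the abstract numerical range, i.e.\ $V(\mathcal{L}(\FF(X),X),\mathcal{G}_X,T_F)=V(\Lip_0(X,X),\Id_X,F)$. Substituting the identities $T_F(\delta_x-\delta_y)=F(x)-F(y)$ and $\mathcal{G}_X(\delta_x-\delta_y)=x-y$ (which follow from $T_F\circ\delta=F$ and $\mathcal{G}_X\circ\delta=\Id_X$) into the first formula then produces the second.

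I expect the only genuine obstacle to be the verification that $C$ is norming together with the $\T$-symmetry bookkeeping needed to pass from ``norming'' to $\overline{\conv}^{w^\ast}(C)=B_{Z^\ast}$; everything else is an unwinding of definitions feeding into the already-established Proposition~\ref{AProp:num-ranges}. In the real case the symmetry is automatic from $\mathcal{B}_X=-\mathcal{B}_X$, whereas in the complex case one genuinely needs the full sphere $S_{X^\ast}$ (rather than a smaller norming set of functionals) in the definition of $C$, since this is what supplies the $\T$-invariance that $\mathcal{B}_X$ alone does not provide.
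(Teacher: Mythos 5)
Your proposal is correct and follows essentially the same route as the paper, which likewise derives the first identity by applying Proposition~\ref{AProp:num-ranges} to the set $C$ (justifying $B_{\mathcal{L}(\FF(X),X)^\ast}=\overline{\conv}^{w^\ast}(C)$ via Lemma~\ref{Lemm:elementarypropertiesFF(X)}.c) and obtains the second by transporting through the isometry $F\mapsto T_F$. The only difference is that you spell out the norming-plus-$\T$-symmetry bipolar argument that the paper leaves implicit, and that verification is accurate.
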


The second needed preliminary result is the following one which follows from the Bishop-Phelps-Bollob\'{a}s theorem.

\begin{lemma}\label{Lemma:lemma-2-consequenceofBPB}
Let $X$ be a Banach space. Then
$$
\overline{W_L(F)}= \bigcap_{\delta>0} \overline{\left\{\frac{x^{\ast} \bigl(F(x) - F(y)\bigr)}{\| x - y\|}\colon x,y\in X,\, x \neq y,\, x^{\ast} \in S_{X^{\ast}},\, \re \frac{x^{\ast}(x - y)}{\| x - y\|} > 1 - \delta\right\} }
$$
for every $F\in \Lip_0(X,X)$.
\end{lemma}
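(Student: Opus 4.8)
The plan is to prove the two inclusions separately. Throughout, abbreviate the set inside the closure on the right-hand side by
$$
W_\delta(F):=\left\{\frac{x^{\ast} \bigl(F(x) - F(y)\bigr)}{\| x - y\|}\colon x,y\in X,\, x \neq y,\, x^{\ast} \in S_{X^{\ast}},\, \re \frac{x^{\ast}(x - y)}{\| x - y\|} > 1 - \delta\right\},
$$
so that the right-hand side of the statement is $\bigcap_{\delta>0}\overline{W_\delta(F)}$, and the task is to show $\overline{W_L(F)}=\bigcap_{\delta>0}\overline{W_\delta(F)}$.

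The inclusion $\overline{W_L(F)}\subseteq\bigcap_{\delta>0}\overline{W_\delta(F)}$ is immediate and requires no deep input: every element of $W_L(F)$ arises from a triple $(x,y,\xi^\ast)$ with $\xi^\ast(x-y)=\|x-y\|$, hence with $\re\frac{\xi^\ast(x-y)}{\|x-y\|}=1>1-\delta$ for every $\delta>0$. Thus $W_L(F)\subseteq W_\delta(F)$ for all $\delta>0$, and taking closures and intersecting over $\delta$ gives the claim.

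The reverse inclusion is where the Bishop--Phelps--Bollob\'as theorem enters, and it carries essentially all the work. Fix $\lambda\in\bigcap_{\delta>0}\overline{W_\delta(F)}$ and $\eps\in(0,1)$. First I would choose $\delta>0$ small enough (depending only on $\eps$) that BPB guarantees the following: whenever $u\in S_X$ and $x^\ast\in S_{X^\ast}$ satisfy $\re x^\ast(u)>1-\delta$, there exist $v\in S_X$ and $\xi^\ast\in S_{X^\ast}$ with $\xi^\ast(v)=1$, $\|v-u\|\le\eps$ and $\|\xi^\ast-x^\ast\|\le\eps$. Since $\lambda\in\overline{W_\delta(F)}$, I can then pick $x\neq y$ in $X$ and $x^\ast\in S_{X^\ast}$ with $\re\frac{x^\ast(x-y)}{\|x-y\|}>1-\delta$ and $\bigl|\lambda-\frac{x^\ast(F(x)-F(y))}{\|x-y\|}\bigr|<\eps$. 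Applying BPB to $u:=\frac{x-y}{\|x-y\|}\in S_X$ and $x^\ast$ produces the promised $v$ and $\xi^\ast$. The crucial move is now to replace the pair $(x,y)$ by $y':=y$ and $x':=y+\|x-y\|\,v$: then $x'-y'=\|x-y\|\,v$, so $\|x'-y'\|=\|x-y\|$ and $\xi^\ast(x'-y')=\|x-y\|\,\xi^\ast(v)=\|x'-y'\|$, which means $\frac{\xi^\ast(F(x')-F(y'))}{\|x'-y'\|}$ genuinely belongs to $W_L(F)$.

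It then remains to control the change in the numerical-range value. Using $\|x'-x\|=\|x-y\|\,\|v-u\|\le\eps\|x-y\|$, the splitting
$$
\xi^\ast\bigl(F(x')-F(y)\bigr)-x^\ast\bigl(F(x)-F(y)\bigr)=\xi^\ast\bigl(F(x')-F(x)\bigr)+(\xi^\ast-x^\ast)\bigl(F(x)-F(y)\bigr),
$$
together with $\|F(a)-F(b)\|\le\|F\|_L\|a-b\|$, bounds each summand by $\|F\|_L\|x-y\|\eps$; dividing by $\|x-y\|=\|x'-y'\|$ gives $\bigl|\frac{\xi^\ast(F(x')-F(y'))}{\|x'-y'\|}-\frac{x^\ast(F(x)-F(y))}{\|x-y\|}\bigr|\le 2\|F\|_L\eps$, whence
$$
\Bigl|\lambda-\frac{\xi^\ast(F(x')-F(y'))}{\|x'-y'\|}\Bigr|<(1+2\|F\|_L)\eps.
$$
Since $\eps>0$ is arbitrary and $\|F\|_L$ is fixed, this shows $\lambda\in\overline{W_L(F)}$. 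The main obstacle is precisely the passage from the approximate condition $\re\frac{x^\ast(x-y)}{\|x-y\|}>1-\delta$ to the exact condition $\xi^\ast(x'-y')=\|x'-y'\|$: BPB supplies the small simultaneous perturbation of both the direction and the functional, and the choice $x'=y+\|x-y\|\,v$ is engineered to leave the denominator $\|x-y\|$ untouched while moving the numerator only by an amount that the Lipschitz constant of $F$ keeps under control.
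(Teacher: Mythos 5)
Your proposal is correct and follows essentially the same route as the paper: the easy inclusion is the trivial one, and the reverse inclusion is obtained by applying the Bishop--Phelps--Bollob\'as theorem to the pair $\bigl(\tfrac{x-y}{\|x-y\|},x^\ast\bigr)$, replacing $x$ by a perturbation that keeps $\|x'-y'\|=\|x-y\|$ while making the new functional attain its norm at $\tfrac{x'-y'}{\|x'-y'\|}$, and controlling the error by $\|F\|_L$ times the sizes of the two perturbations. The only cosmetic difference is that the paper invokes a quantitative version of BPB (with explicit $\sqrt{2\delta}$ bounds) and phrases the conclusion as $W_\delta\subseteq W_L(F)+\eps B_\K$, whereas you use the qualitative statement pointwise; both yield the same estimate.
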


\begin{proof}
The inclusion ``$\subseteq$'' is obvious, so let us prove the reverse one. Fix $F\in \Lip_0(X,X)$. For every $\delta>0$, write
$$
W_\delta:=\left\{\frac{x^{\ast} \bigl(F(x) - F(y)\bigr)}{\| x - y\|}\colon x,y\in X,\, x \neq y,\, x^{\ast} \in S_{X^{\ast}},\, \re \frac{x^{\ast}(x - y)}{\| x - y\|} > 1 - \delta\right\}.
$$
It is enough to show that for every $\eps>0$, there is $\delta>0$ such that $W_\delta\subseteq W_L(F)+\eps B_\K$. So let us fix $0<\eps<1$ and consider $\delta>0$ such that $2\|F\|_L\sqrt{2\delta}<\eps$. Given $x,y \in X$ with $x \neq y$ and $x^{\ast} \in S_{X^{\ast}}$ satisfying
$$
\re x^{\ast}\left(\frac{x - y}{\|x-y\|}\right) > 1 - \delta,
$$
we can use the Bishop-Phelps-Bollob\'{a}s theorem (see \cite[Corollary~2.4]{ChicaKadetsMartinMorenoRambla-BPBp-modulus} for this version) to find $u \in S_{X}$, $0<\rho<\sqrt{2\delta}$ and $z^{\ast} \in S_{X^{\ast}}$ such that
\[
z^{\ast} \left( \frac{x - y}{\| x - y\|} + \rho u \right) = \left\| \frac{x - y}{\| x - y\|} + \rho u \right\| = 1 \qquad \text{and} \qquad \bigl\| x^{\ast} - z^{\ast}\| < \sqrt{2\delta}.
\]
Write $x' := x + \rho \| x-y\| u$ and $y':= y$, and observe that
$$
\|x'-y'\|=\bigl\|x-y+ \rho\|x-y\|u\bigr\|=\|x-y\|\left\|\frac{x-y}{\|x-y\|}+ \rho u\right\|=\|x-y\|
$$
and so
\[
z^{\ast}\left(\frac{x' - y'}{\| x' - y'\|}\right) = z^\ast \left(\frac{x - y}{\| x - y\|}+\rho u\right)=1.
\]
Therefore, $\frac{z^*\left(F(x')-F(y')\right)}{\|x'-y'\|}\in W_L(F)$. Moreover,
\begin{align*}
\left\|\frac{z^{\ast}\bigl(F(x') - F(y')\bigr)}{\| x' - y'\|} - \frac{x^{\ast}\bigl(F(x) - F(y)\bigr)}{\| x - y\|}\right\| & \leq \left\| \frac{F(x') - F(y')}{\| x' - y'\|} - \frac{F(x)-F(y)}{\| x - y\|} \right\| + \left\|[z^\ast-x^\ast]\frac{F(x) - F(y)}{\| x - y\|}\right\|\\
& \leq  \left\| \frac{F(x') - F(y')-F(x) + F(y)}{\| x - y\|}\right\| + \|F\|_L\,\|z^\ast-x^\ast\|\\
& < \|F\|_L\,\left\| \frac{x' - x}{\| x - y\|}\right\| + \|F\|_L\,\sqrt{2\delta} \\ & = \|F\|_L\bigl(\rho + \sqrt{2\delta}\bigr)< 2\|F\|_L\sqrt{2\delta}<\eps.
\end{align*}
We have shown that $\frac{x^*\left(F(x)-F(y)\right)}{\|x-y\|}\in W_L(F)+\eps B_\K$, so $W_\delta\subseteq W_L(F)+\eps B_\K$ as desired.
\end{proof}

\section{Some stability results}\label{sect:stability}
We collect in this section some results which show the behaviour of the value of the numerical index when we apply some Banach space operations on the operators. We have divided the section in several subsections.

\subsection{Diagonal operators}
The next result allows to calculate the numerical index with respect to a diagonal operator between $c_0$-, $\ell_1$- and $\ell_\infty$-sums of Banach spaces.

\begin{proposition}\label{sumas}
	Let $\{X_\lambda\colon \lambda\in
	\Lambda\}$, $\{Y_\lambda\colon \lambda\in
	\Lambda\}$ be two families of Banach spaces and let $G_\lambda\in \mathcal{L}(X_\lambda,Y_\lambda)$ be a norm-one operator for every $\lambda\in\Lambda$. Let $E$ be one of the Banach spaces $c_0$, $\ell_\infty$, or $\ell_1$, let $X=\left[\bigoplus_{\lambda\in\Lambda} X_\lambda\right]_E$ and $Y=\left[\bigoplus_{\lambda\in\Lambda} Y_\lambda\right]_E$, and define the operator $G\colon X\longrightarrow Y$ by
	$$ G\left[(x_\lambda)_{\lambda\in\Lambda}\right]=(G_\lambda x_\lambda)_{\lambda\in\Lambda} $$
	for every $(x_\lambda)_{\lambda\in\Lambda}\in \left[\bigoplus_{\lambda\in\Lambda} X_\lambda\right]_E$. Then
	$$n_G(X,Y)=\inf_{\lambda} \, n_{G_\lambda}(X_\lambda,Y_\lambda).$$
\end{proposition}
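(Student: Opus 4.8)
The plan is to establish the two inequalities $n_G(X,Y)\le k$ and $n_G(X,Y)\ge k$ separately, where $k:=\inf_\lambda n_{G_\lambda}(X_\lambda,Y_\lambda)$, working throughout with the numerical radius $v_G$ and with the elementary fact that for each of $E=c_0,\ell_\infty,\ell_1$ a diagonal operator $D=(D_\lambda)_\lambda$ between $E$-sums satisfies $\|D\|=\sup_\lambda\|D_\lambda\|$. I write $P_\mu^X,P_\mu^Y$ for the coordinate projections, $I_\mu^X,I_\mu^Y$ for the (isometric) coordinate inclusions, and $T_\mu:=P_\mu^Y T$ for $T\in\mathcal L(X,Y)$.

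For the inequality $n_G\le k$, I would fix $\mu$ and an arbitrary $R\in\mathcal L(X_\mu,Y_\mu)$ with $\|R\|=1$, and lift it to the single-coordinate operator $T:=I_\mu^Y R\,P_\mu^X\in\mathcal L(X,Y)$, which has $\|T\|=\|R\|=1$. Since $G+\alpha\theta T$ is then diagonal with $\mu$-component $G_\mu+\alpha\theta R$ and all other components $G_\lambda$, one gets $\|G+\alpha\theta T\|=\max\{1,\|G_\mu+\alpha\theta R\|\}$ for all $\alpha>0$, $\theta\in\T$. Plugging this into the derivative formula of Lemma \ref{Lemma:num-radius-with-right-derivative} and using $v_{G_\mu}(R)\ge 0$ yields $v_G(T)=v_{G_\mu}(R)$; taking the infimum over $R$ and then over $\mu$ gives $n_G(X,Y)\le\inf_\mu n_{G_\mu}(X_\mu,Y_\mu)=k$.

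For the reverse inequality I must show $v_G(T)\ge k\|T\|$ for every $T$. The crucial reduction is spatial and costs nothing: since each single-coordinate functional $I_\mu^{Y^\ast}\eta$ (with $\eta\in S_{Y_\mu^\ast}$) has norm one on $Y$ for every one of the three sum types, the pairs $(x,I_\mu^{Y^\ast}\eta)$ are admissible in the definition of $v_{G,\delta}(T)$ and satisfy $\re\,(I_\mu^{Y^\ast}\eta)(Gx)=\re\,\eta(G_\mu x_\mu)$ and $(I_\mu^{Y^\ast}\eta)(Tx)=\eta(T_\mu x)$. Writing $\widehat G_\mu:=G_\mu P_\mu^X\in\mathcal L(X,Y_\mu)$, this gives $v_G(T)\ge v_{\widehat G_\mu}(T_\mu)$ for every $\mu$. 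In the $c_0$- and $\ell_\infty$-cases one has $\|T\|=\sup_\mu\|T_\mu\|$, so choosing $\mu$ with $\|T_\mu\|$ close to $\|T\|$ reduces the whole statement to the claim
$$
(\star)\qquad n_{\widehat G_\mu}(X,Y_\mu)\ \ge\ n_{G_\mu}(X_\mu,Y_\mu),
$$
i.e.\ that precomposing $G_\mu$ with the norm-one coordinate projection $P_\mu^X$ — equivalently, enlarging the domain of $G_\mu$ by the $\ell_\infty$-summand $X'$ on which $\widehat G_\mu$ vanishes, using the isometric splitting $X=X_\mu\oplus_\infty X'$ — does not decrease the numerical index.

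The hard part will be $(\star)$, and the obstacle it encodes is precisely the off-diagonal action of $T$: an input living in the coordinates $\lambda\ne\mu$ can produce a large output in coordinate $\mu$, a phenomenon that the summand index $n_{G_\mu}(X_\mu,Y_\mu)$ does not see directly. To prove $(\star)$ I would take $S\in\mathcal L(X,Y_\mu)$ with $\|S\|=1$, write $S(\xi,w)=S_1\xi+S_2 w$ with $S_1\in\mathcal L(X_\mu,Y_\mu)$ and $S_2\in\mathcal L(X',Y_\mu)$, and exploit that in the $\oplus_\infty$-unit ball the $X'$-component $w$ is free of norm constraint; maximising its contribution against an admissible state $\eta\in S_{Y_\mu^\ast}$ (with $\|G_\mu^\ast\eta\|>1-\delta$) produces the additive term $\|S_2^\ast\eta\|$, while the $S_1$-contribution is governed by the $G_\mu$-numerical index through Proposition \ref{Prop:CharacterizationsNumericalIndexVector} (realising the relevant functionals, up to the factor $k$, as convex combinations of elements of $\Face(B_{\mathcal L(X_\mu,Y_\mu)^\ast},G_\mu)$). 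Balancing these two contributions against the identity $\|S\|=\sup_{\eta}\bigl(\|S_1^\ast\eta\|+\|S_2^\ast\eta\|\bigr)$ is the technical crux. Finally, the $\ell_1$-case is the mirror image: there $\|T\|=\sup_\mu\|T I_\mu^X\|$, so one uses single-coordinate functionals on the \emph{domain} side and the splitting $X=X_\mu\oplus_1 X'$, and the reduction leads to the codomain-extension statement $n_{I_\mu^Y G_\mu}(X_\mu,Y)\ge n_{G_\mu}(X_\mu,Y_\mu)$, which I would treat symmetrically or deduce by passing to adjoints via Lemma \ref{lemma:adjoint-inequality} together with Lemma \ref{Lemm:bidual-abstract}.
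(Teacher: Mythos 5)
Your first inequality is correct and in fact slicker than the paper's: lifting $R$ to the single\--coordinate operator $I_\mu^Y R\,P_\mu^X$, noting that $G+\alpha\theta T$ is diagonal so that $\|G+\alpha\theta T\|=\max\{1,\|G_\mu+\alpha\theta R\|\}$, and feeding this into Lemma \ref{Lemma:num-radius-with-right-derivative} does give $v_G(T)=v_{G_\mu}(R)$. The reduction of the reverse inequality to the domain\--extension statement $(\star)$ (and, for $\ell_1$, to its codomain analogue) is also sound, and these two statements are indeed true. But your argument stops exactly where the real work begins: $(\star)$ is not proved, only labelled ``the technical crux'', and the balancing strategy you sketch does not obviously close. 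After splitting $S(\xi,w)=S_1\xi+S_2w$, the functional $\eta_0\in S_{Y_\mu^\ast}$ that almost norms $\|S\|=\sup_\eta\bigl(\|S_1^\ast\eta\|+\|S_2^\ast\eta\|\bigr)$ need not satisfy $\|G_\mu^\ast\eta_0\|>1-\delta$, so there is no a priori admissible state against which to ``maximise the contribution of $w$''; already the case $S_1=0$ requires an argument. The missing idea --- the one the paper uses both inside its proof of Proposition \ref{sumas} and later for the domain\--extension proposition --- is a gluing construction: choose $(x_0,w_0)$ with $\|S(x_0,w_0)\|>\|S\|-\eps$ and $x_0^\ast\in S_{X_\mu^\ast}$ with $x_0^\ast(x_0)=1$, and pass to the single\--summand operator $S_0\xi:=S\bigl(\xi,x_0^\ast(\xi)w_0\bigr)$, which satisfies $\|S_0\|\geq\|S_0x_0\|>\|S\|-\eps$ and $v_{G_\mu,\delta}(S_0)\leq v_{\widehat{G}_\mu,\delta}(S)$ because $\widehat{G}_\mu\bigl(\xi,x_0^\ast(\xi)w_0\bigr)=G_\mu\xi$; one then applies $n_{G_\mu}$ to $S_0$ as a whole rather than to $S_1$ and $S_2$ separately. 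Without this (or an equivalent device) the proof of the hard inequality is not there.

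Moreover, your fallback for the $\ell_1$ case via adjoints is actually wrong, not just incomplete. Lemma \ref{lemma:adjoint-inequality} only gives $n_{\widetilde{G}}\geq n_{\widetilde{G}^\ast}$, and for the codomain extension $\widetilde{G}=I_\mu^Y\circ G_\mu$ the adjoint $\widetilde{G}^\ast=G_\mu^\ast\circ P$ is a domain extension of $G_\mu^\ast$ by an $\ell_\infty$\--summand; so this route yields at best $n_{\widetilde{G}}\geq n_{G_\mu^\ast}(Y_\mu^\ast,X_\mu^\ast)$, which can be strictly smaller than $n_{G_\mu}(X_\mu,Y_\mu)$ --- Example \ref{exam:adjoint} gives an operator with $n_G=1$ and $n_{G^\ast}=0$. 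Lemma \ref{Lemm:bidual-abstract} does not repair this, since $\mathcal{L}(X,Y)^{\ast\ast}$ is not a space of operators between biduals. The codomain case has to be handled directly by the mirror gluing, as in the paper: write $T_\kappa=(A,B)$ with values in $Y_\kappa\oplus_1 Z$, pick $a_0\in S_{Y_\kappa}$ and $z^\ast\in S_{Z^\ast}$ norming $Ax_0$ and $Bx_0$ respectively, and set $S\xi:=A\xi+\bigl[z^\ast(B\xi)\bigr]a_0$.
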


\begin{proof}
We follow the lines of \cite[Proposition~1]{M-P}.
	Fixed $\kappa\in\Lambda$, we have to show that $n_G(X,Y)\leq n_{G_\kappa}(X_\kappa,Y_\kappa)$. Observe that calling $W=\left[\bigoplus_{\lambda\neq\kappa}
	X_\lambda\right]_E$ and $Z=\left[\bigoplus_{\lambda\neq\kappa}
	Y_\lambda\right]_E$, we can write $X=X_\kappa\oplus_\infty W$ and $Y=Y_\kappa\oplus_\infty Z$ when $E$ is $\ell_\infty$ or $c_0$ and $X=X_\kappa\oplus_1 W$ and $Y=Y_\kappa\oplus_1 Z$ when $E$ is $\ell_1$. Given $S\in \mathcal{L}(X_\kappa,Y_\kappa)$, define $T\in \mathcal{L}(X,Y)$ by
	$$T(x_\kappa,w)=(Sx_\kappa,0) \qquad (x_\kappa\in X_\kappa,\ w\in W)$$
	which obviously satisfies $\|T\|=\|S\|$. We claim that $v_G(T)=v_{G_\kappa}(S)$. Indeed, given $\delta>0$, we may suppose that $v_{G,\delta}(T)>0$. We prove that $v_{G,\delta}(T)\leq v_{G_\kappa,\hat{\delta}}(S)$ where $\hat{\delta}=\frac{2\delta}{v_{G,\delta}(T)}$.  For every $0<\eps<\frac{v_{G,\delta}(T)}{2}$, we may find $x=(x_\kappa,w)\in S_X$ and $y^\ast =(y_\kappa^\ast ,z^\ast )\in S_{Y^\ast }$ such that $|y^\ast (Tx)|>v_{G,\delta}(T)-\eps>\frac{v_{G,\delta}(T)}{2}$ and
	\begin{align*}
	1-\delta<\re y^\ast (Gx)\leq \re y_\kappa^\ast (G_\kappa x_\kappa)+\|z^\ast \| \, \|w\|.
	\end{align*}
	Moreover, observe that
	$$\|y_\kappa^\ast \|\,\|x_\kappa\|+\|z^\ast \|\,\|w\| \leq
	\|y^\ast \|\,\|x\|=1,$$
	consequently,
	$$ \|y_\kappa^\ast \|\,\|x_\kappa\|+\|z^\ast \|\,\|w\|-\delta \leq 1-\delta < \re y_\kappa^\ast (G_\kappa x_\kappa)+\|z^\ast \| \, \|w\| $$
	and so $\re y_\kappa^\ast (G_\kappa x_\kappa)>\|y_\kappa^\ast \| \, \|x_\kappa\|-\delta$.
	\\
	Since $$\frac{v_{G,\delta}(T)}{2}< |y^\ast (Tx)|=|y_\kappa^\ast (Sx_{\kappa})|\leq\|y_\kappa^\ast \| \, \|	x_\kappa\|,$$
	we deduce that
	$$ \re \dfrac{y_\kappa^\ast }{\|y_\kappa^\ast \|}\left(G_1\dfrac{x_\kappa}{\|x_\kappa\|}\right)>1-\frac{\delta}{\|y_\kappa^\ast \|\,\|x_\kappa\|}>1-\hat{\delta}.$$
	Then,
	\begin{align*}
	v_{G,\delta}(T)-\eps<\left|y^\ast (Tx)\right|=\left|y_\kappa^\ast (Sx_\kappa)\right|\leq \left|\frac{y_\kappa^\ast }{\|y_\kappa\|}\left(S\frac{x_\kappa}{\|x_\kappa\|} \right)\right| \leq v_{G_{\kappa},\hat{\delta}}(S),
	\end{align*}
	hence $ v_{G}(T)\leq v_{G_\kappa}(S).$
	
	To prove the reverse inequality, we fix $\delta>0$ and $x_\kappa \in S_{X_\kappa}$, $y_\kappa^\ast \in S_{Y_\kappa^\ast }$ with $\re y_\kappa^\ast (G_\kappa x_\kappa)>1-\delta$, and define $x=(x_\kappa,0)\in S_X$ and $y^\ast =(y_\kappa^\ast ,0)\in S_{Y^\ast }$. We clearly have that
	$$
\re y^\ast (Gx)>1-\delta \quad \text{and} \quad |y_{\kappa}^\ast (Sx_\kappa)|=|y^\ast (Tx)|\leq v_{G,\delta}(T).
$$
	Consequently, $v_{G_\kappa,\delta}(S)\leq v_{G,\delta}(T)$ and the claim follows moving $\delta\downarrow 0$.
	
	To sum up, we have obtained that given $S\in \mathcal{L}(X_\kappa,Y_\kappa)$ there is $T\in \mathcal{L}(X,Y)$ with $\|T\|=\|S\|$ and $v_G(T)=v_{G_\kappa}(S)$, so it follows that
	$$n_G(X,Y)\|S\|=n_G(X,Y) \|T\| \leq v_G(T) =v_{G_\kappa}(S)$$ and the arbitrariness of $S\in \mathcal{L}(X_\kappa,Y_\kappa)$ gives that $n_G(X,Y)\leq n_{G_\kappa}(X_\kappa,Y_\kappa)$.
	
	We prove now the reverse inequalities when $E$ is $c_0$ or
	$\ell_\infty$. In both cases, an operator $T\in \mathcal{L}(X,Y)$ can be seen as a family $(T_\lambda)_{\lambda\in\Lambda}$, where
	$T_\lambda\in \mathcal{L}(X,Y_\lambda)$ for every $\lambda$, and $\|T\|=\sup \{\|T_\lambda\|\colon \lambda\in \Lambda\}$. Given $\eps>0$, we may find $\kappa\in \Lambda$ such that
	$\|T_\kappa\|>\|T\|-\eps$, and write $X=X_\kappa\oplus_\infty W$ where $W=\left[\bigoplus_{\lambda\neq\kappa} X_\lambda\right]_E$.
	Since $B_X$ is the convex hull of $S_{X_\kappa} \times S_W$, we may find $x_0\in S_{X_\kappa}$ and $w_0\in S_W$ such that
	$$\|T_\kappa(x_0,w_0)\| > \|T\|-\eps.$$
	Now, fix $x_0^\ast \in S_{X_\kappa^\ast }$ with $x_0^\ast (x_0)=1$ and define the operator  $S\in \mathcal{L}(X_\kappa,Y_\kappa)$ by
	$$ S x = T_\kappa(x,0) +
	x_0^\ast (x)\,T_\kappa(0,w_0)=T_\kappa\big(x,x_0^\ast (x) w_0\big) \qquad
	(x\in X_\kappa)$$
	which satisfies
	$$
	\|S\|\geq \|S x_0\| =\big\|T_\kappa\big(x_0, x_0^\ast (x_0) w_0\big)\big\|= \|T_\kappa(x_0, w_0 )\|> \|T\|-\eps.
	$$
	Given $\delta>0$, we claim that $v_{G_\kappa,\delta}(S)\leq v_{G,\delta}(T)$. Indeed, we may find $u\in S_{X_\kappa}$ and $v^\ast \in S_{Y_\kappa^\ast }$ with  $\re v^\ast (G_{\lambda_0}u)>1-\delta$. Now, we write
	$$
	x=\big(u, x_0^\ast (u)w_0\big)\in S_X, \qquad y^\ast =(v^\ast ,0) \in S_{Y^\ast }
	$$
	which satisfy $\re y^\ast (Gx)=\re v^\ast (G_\kappa u)>1-\delta$, hence
	\begin{align*}
	|v^\ast (Su)|=
	\left|v^\ast \left[T_\kappa\big(u,x_0^\ast (u)w_0\big)\right]\right|=|y^\ast (Tx)|\leq v_{G,\delta}(T).
	\end{align*}
	Then, we deduce that $v_{G_\kappa, \delta}(S)\leq v_{G,\delta}(T)$. From this, we get that
	$$
	v_G(T)\geq v_{G_\kappa}(S)\geq n_{G_\kappa}(X_\kappa,Y_\kappa) \|S\| \geq
	n_{G_\kappa}(X_\kappa,Y_\kappa)\big[\|T\|-\eps\big].
	$$
	Therefore, it  follows that
	$$
	v_G(T)\geq \inf_\lambda
	n_{G_\lambda}(X_\lambda,Y_\lambda)\|T\|
	$$
	and so $n_G(X,Y)\geq \inf_\lambda
	n_{G_\lambda}(X_\lambda,Y_\lambda)$ as required.
	
	Suppose now that $E=\ell_1$. In this case, we can write every operator $T\in \mathcal{L}(X,Y)$ as a family $(T_\lambda)_{\lambda\in \Lambda}$ of operators where $T_\lambda\in \mathcal{L}(X_\lambda,Y)$ for every $\lambda\in\Lambda$, and
	$\|T\|=\sup \{ \|T_\lambda\|\colon \lambda \in \Lambda\}$. Given
	$\eps>0$, find $\kappa\in \Lambda$ such that
	$\|T_\kappa\|>\|T\|-\eps$, and write $X= X_\kappa \oplus_1 W$, $Y=Y_\kappa\oplus_1 Z$, and $T_\kappa=(A,B)$ where $W=\left[\bigoplus_{\lambda\neq\kappa} X_\lambda\right]_{\ell_1}$, $Z=\left[\bigoplus_{\lambda\neq\kappa} Y_\lambda\right]_{\ell_1}$, $A\in \mathcal{L}(X_\kappa,Y_\kappa)$ and $B\in \mathcal{L}(X_\kappa,Z)$. Now, we choose $x_0\in S_{X_\kappa}$ such that
	$$\|T_\kappa x_0\|= \|A x_0\| + \|B x_0\| > \|T\| -\eps,$$
	we find $a_0\in S_{Y_\kappa^\ast }$ and $z^\ast \in S_{Z^\ast }$ satisfying
	$$A x_0 = \|Ax_0\| a_0  \qquad
	\text{and} \qquad z^\ast (Bx_0)=\|Bx_0\|,$$
	and define an operator
	$S\in \mathcal{L}(X_\kappa,Y_\kappa)$ by
	$$
S x = Ax + \left[z^\ast (B x)\right]a_0 \qquad (x\in X_\kappa).
$$
	Then
	$$\|S\|\geq \|S x_0\| = \Bigl\|Ax_0 +
	z^\ast (Bx_0) a_0\Bigr\| = \|Ax_0\|+ \|Bx_0\|> \|T\|-\eps.
$$
	Given $\delta>0$, we prove that $v_{G_\kappa,\delta}(S)\leq v_{G,\delta}(T)$. To do so, fixed $u\in S_{X_\kappa}$ and $v^\ast \in S_{Y_\kappa^\ast }$ with  $\re v^\ast (G_\kappa u)>1-\delta$, we define
	$$
	x=\big(u, x_0^\ast (u)w_0\big)\in S_X \qquad \text{and} \qquad y^\ast =(v^\ast ,0) \in S_{Y^\ast }.
	$$
	Since $\re y^\ast (Gx)=\re v^\ast (G_\kappa u)>1-\delta$, we get that
	$$
	|v^\ast (Su)|=|v^\ast (Au) + v^\ast (a_0)z^\ast (Bu)|=|y^\ast (T_\kappa u)|=|y^\ast (Tx)|\leq v_{G,\delta}(T)
	$$
	which gives $v_{G_\kappa,\delta}(S)\leq v_{G,\delta}(T)$ thanks to the arbitrariness of $u$ and $v^\ast $. Finally, we can write
	$$
	v_G(T)\geq v_{G_\kappa}(S)\geq n_{G_\kappa}(X_\kappa,Y_\kappa) \|S\| \geq
	n_{G_\kappa}(X_\kappa,Y_\kappa)\big[\|T\|-\eps\big]
	$$
	and so we deduce that $ v_G(T)\geq \inf_\lambda	n_{G_\lambda}(X_\lambda,Y_\lambda)\|T\|$, from which the desired inequality $n_G(X,Y)\geq \inf_\lambda
	n_{G_\lambda}(X_\lambda,Y_\lambda)$ follows.
\end{proof}

Let us observe that the first part of the above proof is valid for general absolute sums.

\begin{proposition}
Let $X_1$, $X_2$, $Y_1$, $Y_2$ be Banach spaces and let $E$ be $\R^2$ endowed with an absolute norm. Given norm-one operators $G_i\in \mathcal{L}(X_i,Y_i)$ for $i=1,2$, we define $G\in \mathcal{L}\bigl(X_1\oplus_E X_2,Y_1\oplus_E Y_2\bigr)$ by $$G(x_1,x_2)=(G_1x_1,G_2x_2)\in Y_1\oplus_E Y_2$$
for every $(x_1,x_2)\in X_1\oplus_E X_2$. Then,
$$
n_G(X_1\oplus_E X_2,Y_1\oplus_E Y_2)\leq \min \bigl\{ n_{G_1}(X_1,Y_1), n_{G_2}(X_2,Y_2) \bigr\} .
$$
\end{proposition}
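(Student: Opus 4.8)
The plan is to transcribe the first part of the proof of Proposition~\ref{sumas}, namely the part establishing $n_G(X,Y)\le n_{G_\kappa}(X_\kappa,Y_\kappa)$, observing that that argument used nothing about $E$ beyond two standard facts on absolute norms (monotonicity and duality). Writing $X=X_1\oplus_E X_2$ and $Y=Y_1\oplus_E Y_2$, it suffices to prove $n_G(X,Y)\le n_{G_\kappa}(X_\kappa,Y_\kappa)$ separately for $\kappa=1$ and $\kappa=2$ and then take the minimum.

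Fix $\kappa\in\{1,2\}$ and let $S\in\mathcal{L}(X_\kappa,Y_\kappa)$. Exactly as in Proposition~\ref{sumas}, I would lift $S$ to the operator $T\in\mathcal{L}(X,Y)$ placing $Sx_\kappa$ in the $\kappa$-th coordinate and $0$ in the other. First I would check $\|T\|=\|S\|$ and then establish the central claim $v_G(T)=v_{G_\kappa}(S)$. Granting this, the conclusion is immediate: from
$$
n_G(X,Y)\,\|S\|=n_G(X,Y)\,\|T\|\le v_G(T)=v_{G_\kappa}(S)
$$
and the arbitrariness of $S$ (restrict to $\|S\|=1$ and pass to the infimum) one obtains $n_G(X,Y)\le n_{G_\kappa}(X_\kappa,Y_\kappa)$.

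To prove $v_G(T)=v_{G_\kappa}(S)$ I would reproduce both inequalities. The easy one, $v_{G_\kappa,\delta}(S)\le v_{G,\delta}(T)$, comes from taking $x_\kappa\in S_{X_\kappa}$, $y_\kappa^\ast\in S_{Y_\kappa^\ast}$ with $\re y_\kappa^\ast(G_\kappa x_\kappa)>1-\delta$ and setting $x$, $y^\ast$ to be these vectors on the $\kappa$-th coordinate and $0$ elsewhere; normalization of the absolute norm gives $\|x\|=\|y^\ast\|=1$, while $\re y^\ast(Gx)=\re y_\kappa^\ast(G_\kappa x_\kappa)$ and $|y^\ast(Tx)|=|y_\kappa^\ast(Sx_\kappa)|\le v_{G,\delta}(T)$. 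For the reverse one, with $\hat\delta=2\delta/v_{G,\delta}(T)$, I would pick near-optimal $x=(x_1,x_2)\in S_X$ and $y^\ast=(y_1^\ast,y_2^\ast)\in S_{Y^\ast}$ for $v_{G,\delta}(T)$ and run the same chain of estimates as in Proposition~\ref{sumas}, using $\re y^\ast(Gx)\le\re y_\kappa^\ast(G_\kappa x_\kappa)+\|y_{\kappa'}^\ast\|\,\|x_{\kappa'}\|$ (here $\kappa'$ denotes the other index) to conclude that $x_\kappa/\|x_\kappa\|$ and $y_\kappa^\ast/\|y_\kappa^\ast\|$ witness $v_{G_\kappa,\hat\delta}(S)$; letting $\delta\to0^+$ then yields $v_G(T)\le v_{G_\kappa}(S)$ (the case $v_G(T)=0$ being trivial).

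The one substantive point that really uses the absolute-norm hypothesis, replacing the explicit $\ell_1$/$\ell_\infty$ duality of Proposition~\ref{sumas}, is the pairing inequality
$$
\|y_\kappa^\ast\|\,\|x_\kappa\|+\|y_{\kappa'}^\ast\|\,\|x_{\kappa'}\|\le\|y^\ast\|\,\|x\|=1,
$$
and this is where I expect the only real work to lie. I would derive it from the fact that the dual of $(\R^2,|\cdot|_E)$ is again an absolute norm $(\R^2,|\cdot|_{E^\ast})$ with $[X_1\oplus_E X_2]^\ast=[X_1^\ast\oplus_{E^\ast}X_2^\ast]$, together with the H\"older-type bound $\langle a,b\rangle\le|a|_E\,|b|_{E^\ast}$ for coordinatewise nonnegative $a,b\in\R^2$, applied to $a=(\|x_1\|,\|x_2\|)$ and $b=(\|y_1^\ast\|,\|y_2^\ast\|)$. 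Monotonicity of absolute norms likewise gives $\|x_\kappa\|\le\|x\|$, which yields $\|T\|=\|S\|$. Once these elementary facts are in hand, the remainder is a verbatim transcription of the first part of Proposition~\ref{sumas} and requires no new idea.
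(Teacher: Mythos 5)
Your proposal is correct and is essentially identical to the paper's own argument: the paper proves this proposition precisely by observing that the first part of the proof of Proposition~\ref{sumas} (the lifting of $S$ to $T$, the claim $v_G(T)=v_{G_\kappa}(S)$ via the $\hat\delta$ trick, and the pairing inequality $\|y_1^\ast\|\,\|x_1\|+\|y_2^\ast\|\,\|x_2\|\leq\|y^\ast\|\,\|x\|$) carries over verbatim to general absolute sums. You have correctly isolated the only point where the absolute-norm hypothesis enters, namely the duality $[X_1\oplus_E X_2]^\ast=X_1^\ast\oplus_{E^\ast}X_2^\ast$ together with the H\"older-type bound, and normalization for the easy inequality.
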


The associativity of $\ell_p$-sums allows us to get the following result from the above one.

\begin{corollary}
	Let $\{X_\lambda\colon \lambda\in
	\Lambda\}$, $\{Y_\lambda\colon \lambda\in
	\Lambda\}$ be two families of Banach spaces, let $G_\lambda\in \mathcal{L}(X_\lambda,Y_\lambda)$ be a norm-one operator for every $\lambda\in\Lambda$, let $1<p<\infty$, and let $X=\left[\bigoplus_{\lambda\in\Lambda} X_\lambda\right]_{\ell_p}$ and $Y=\left[\bigoplus_{\lambda\in\Lambda} Y_\lambda\right]_{\ell_p}$. We define the operator $G\colon X\longrightarrow Y$ by
	$$ G\left[(x_\lambda)_{\lambda\in\Lambda}\right]=(G_\lambda x_\lambda)_{\lambda\in\Lambda} $$
	for every $(x_\lambda)_{\lambda\in\Lambda}\in \left[\oplus_{\lambda\in\Lambda} X_\lambda\right]_{\ell_p}$. Then
	$$n_G(X,Y)\leq \inf_{\lambda} \, n_{G_\lambda}(X_\lambda,Y_\lambda).$$
\end{corollary}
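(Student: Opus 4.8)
The plan is to reduce this corollary to the two-summand Proposition stated just above by peeling off a single coordinate and exploiting the associativity of $\ell_p$-sums.

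First I would fix an arbitrary $\kappa\in\Lambda$ and split the index set as $\{\kappa\}\cup(\Lambda\setminus\{\kappa\})$. Writing $W=\bigl[\bigoplus_{\lambda\neq\kappa}X_\lambda\bigr]_{\ell_p}$ and $Z=\bigl[\bigoplus_{\lambda\neq\kappa}Y_\lambda\bigr]_{\ell_p}$, the associativity of the $\ell_p$-sum furnishes isometric identifications $X=X_\kappa\oplus_E W$ and $Y=Y_\kappa\oplus_E Z$, where $E$ is $\R^2$ equipped with the (absolute) $\ell_p$-norm. Under these identifications the diagonal operator $G$ becomes $G(x_\kappa,w)=(G_\kappa x_\kappa,\,G'w)$, where $G'=\bigl[\bigoplus_{\lambda\neq\kappa}G_\lambda\bigr]\in\mathcal{L}(W,Z)$ is again a norm-one diagonal operator. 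In other words, $G$ is exactly the operator to which the previous Proposition applies, with the roles of $G_1$ and $G_2$ played by $G_\kappa$ and $G'$.

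Next I would invoke that Proposition directly: since the $\ell_p$-norm on $\R^2$ is absolute, it yields
$$
n_G(X,Y)\leq\min\bigl\{n_{G_\kappa}(X_\kappa,Y_\kappa),\,n_{G'}(W,Z)\bigr\}\leq n_{G_\kappa}(X_\kappa,Y_\kappa).
$$
As $\kappa\in\Lambda$ was arbitrary, taking the infimum over $\kappa$ gives $n_G(X,Y)\leq\inf_\lambda n_{G_\lambda}(X_\lambda,Y_\lambda)$, which is the desired inequality.

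The only point requiring care — and it is routine rather than genuinely hard — is verifying that the associativity identifications $X\cong X_\kappa\oplus_E W$ and $Y\cong Y_\kappa\oplus_E Z$ are honest isometries and that under them $G$ decomposes as the direct sum $G_\kappa\oplus G'$; this is immediate from the definition of the norm, since $\|(x_\lambda)\|_{\ell_p}=\bigl(\|x_\kappa\|^p+\|(x_\lambda)_{\lambda\neq\kappa}\|_{\ell_p}^p\bigr)^{1/p}$ separates the $\kappa$-th coordinate from the rest. Unlike the full two-sided statement of Proposition~\ref{sumas}, no reverse inequality is available here, since for $1<p<\infty$ the $\ell_p$-norm is neither an $\ell_1$- nor an $\ell_\infty$-norm; this is precisely why the conclusion is only an upper bound.
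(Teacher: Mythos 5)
Your proposal is correct and is exactly the argument the paper intends: the corollary is stated as a consequence of the two-summand proposition via associativity of $\ell_p$-sums, and you carry out precisely that reduction (peel off the $\kappa$-th coordinate, apply the absolute-sum proposition with $E=\ell_p^2$, take the infimum over $\kappa$). Your closing remarks on why only the upper bound survives for $1<p<\infty$ also match the paper's discussion.
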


The main application of Proposition \ref{sumas} is the following important example.

\begin{theorem}\label{theorem-allvaluesinX}
In both the real and the complex case, there exist Banach spaces $X$ such that
$$
\mathcal{N}\bigl(\mathcal{L}(X)\bigr)=[0,1].
$$
\end{theorem}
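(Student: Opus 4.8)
The plan is to exhibit a single space on which the rank-one formula of Proposition~\ref{prop:num-index-rankone-operators} already produces operators of every prescribed index, rather than to glue together separate examples. Concretely, I would take
$$
X=\K\oplus_\infty \Bigl[\bigoplus\nolimits_{a\in[0,1]}Z_a\Bigr]_{\ell_1}=:\K\oplus_\infty B,
$$
where each $Z_a$ is the two-dimensional space of Example~\ref{ex-dim2-two-points} with $\mathcal{N}(Z_a)=\{0,a\}$, available in both the real and the complex case. Since $\dim X\geq 2$, the inclusion $\mathcal{N}(\mathcal{L}(X))\subseteq[0,1]$ is automatic, so the whole content is to show that each $\gamma\in[0,1]$ is attained.

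For a fixed $\gamma$ I would realize the value $\gamma$ by a rank-one operator $x^\ast\otimes y_\gamma$ and split the computation into its two factors via Proposition~\ref{prop:num-index-rankone-operators}, namely $n_{x^\ast\otimes y_\gamma}(X)=n(X^\ast,x^\ast)\,n(X,y_\gamma)$. For the dual factor, note that $X^\ast=\K\oplus_1 B^\ast$; taking $x^\ast=(1,0)$ and using that the norm of an $\ell_1$-sum is additive, one checks directly that $\max_{\theta\in\T}\|x^\ast+\theta z^\ast\|=1+\|z^\ast\|$ for every $z^\ast\in X^\ast$, so $x^\ast$ is a spear vector of $X^\ast$ and $n(X^\ast,x^\ast)=1$ by Proposition~\ref{Prop:CharacterizationsNumericalIndexVector}. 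For the primal factor, pick in the $Z_\gamma$-summand of $B$ a unit vector $e_\gamma$ with $n(Z_\gamma,e_\gamma)=\gamma$ (an element of the form $(\theta,0)$ as in Example~\ref{ex-dim2-two-points}), and let $z_\gamma\in B$ be $e_\gamma$ placed in that single coordinate; Lemma~\ref{ell_1(Gamma{Xgamma}} gives $n(B,z_\gamma)=n(Z_\gamma,e_\gamma)=\gamma$. I then set $y_\gamma=(1,z_\gamma)\in\K\oplus_\infty B$, which is a norm-one vector.

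The key step is the computation of $n(X,y_\gamma)$, which is exactly the vector counterpart of Proposition~\ref{sumas} for $\ell_\infty$-sums and which I would record as a short lemma: if $A,B$ are Banach spaces, $a\in S_A$ and $b\in S_B$, then $n(A\oplus_\infty B,(a,b))=\min\{n(A,a),n(B,b)\}$. The proof is a face computation: since $(A\oplus_\infty B)^\ast=A^\ast\oplus_1 B^\ast$, a functional $(\phi,\psi)$ lies in $\Face(B_{(A\oplus_\infty B)^\ast},(a,b))$ precisely when it is a convex combination of elements of $\Face(B_{A^\ast},a)\times\{0\}$ and $\{0\}\times\Face(B_{B^\ast},b)$, whence $v(A\oplus_\infty B,(a,b),(c,d))=\max\{v(A,a,c),v(B,b,d)\}$, and the infimum of this over $\max\{\|c\|,\|d\|\}=1$ equals $\min\{n(A,a),n(B,b)\}$. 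Applying this with $A=\K$, $a=1$ (so that $n(\K,1)=1$) and $b=z_\gamma$ yields $n(X,y_\gamma)=\min\{1,\gamma\}=\gamma$. Combining the two factors gives $n_{x^\ast\otimes y_\gamma}(X)=1\cdot\gamma=\gamma$, so $\gamma\in\mathcal{N}(\mathcal{L}(X))$ for every $\gamma\in[0,1]$, and hence $\mathcal{N}(\mathcal{L}(X))=[0,1]$.

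The main obstacle, and the reason neither a single $Z_\gamma$ nor a plain $\ell_1$- or $\ell_\infty$-sum works, is a genuine tension between the two factors: having primal unit vectors of each index $\gamma$ pushes towards an $\ell_1$-type (union) structure as in Lemma~\ref{ell_1(Gamma{Xgamma}}, whereas having a spear vector in the dual pushes towards placing that $\ell_1$-structure in $X^\ast$, i.e.\ an $\ell_\infty$-summand in $X$. The mixed sum $\K\oplus_\infty(\,\cdot\,)_{\ell_1}$ is engineered to meet both demands at once, and the only ingredient needed beyond the quoted results is the elementary $\ell_\infty$-sum identity for the numerical index at a fully supported unit vector, which is the promised vector-level analogue of Proposition~\ref{sumas}.
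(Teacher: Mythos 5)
Your proposal is correct, and it reaches the theorem by a genuinely different route than the paper. The paper first builds, for each $\gamma$, a space $Y_\gamma$ carrying both an operator of index $\gamma$ and one of index $1$ (Example \ref{example:foreverygamma-nG=gamma}, which itself requires a nontrivial computation with the auxiliary spaces $X_\gamma$), and then glues these via a $c_0$-sum, realizing each value $\xi$ by a diagonal operator and invoking Proposition \ref{sumas}. You instead work with the single space $X=\K\oplus_\infty\bigl[\bigoplus_{a\in[0,1]}Z_a\bigr]_{\ell_1}$ and realize every value by a rank-one operator, reducing everything to Proposition \ref{prop:num-index-rankone-operators}, Lemma \ref{ell_1(Gamma{Xgamma}}, and one new ingredient: the identity $n(A\oplus_\infty B,(a,b))=\min\{n(A,a),n(B,b)\}$ for fully supported unit vectors. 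Your face computation for that identity is sound: in $(A\oplus_\infty B)^\ast=A^\ast\oplus_1 B^\ast$ the state condition $\phi(a)+\psi(b)=1=\|\phi\|+\|\psi\|$ forces $\phi(a)=\|\phi\|$ and $\psi(b)=\|\psi\|$, whence $\Face\bigl(B_{(A\oplus_\infty B)^\ast},(a,b)\bigr)=\conv\bigl(\Face(B_{A^\ast},a)\times\{0\}\cup\{0\}\times\Face(B_{B^\ast},b)\bigr)$ and the claimed formula for the numerical radius and index follows. The identification of $(1,0)$ as a spear vector of $\K\oplus_1 B^\ast$ and the evaluation $n(X,y_\gamma)=\min\{1,\gamma\}=\gamma$ are also correct. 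What each approach buys: the paper's construction produces surjectively supported diagonal operators and exercises the operator-level machinery of Proposition \ref{sumas}, which is reused elsewhere; yours avoids Proposition \ref{sumas} and the computations of Example \ref{example:foreverygamma-nG=gamma} entirely, realizes all values with rank-one operators, and is arguably shorter --- at the modest cost of proving the vector-level $\ell_\infty$-sum lemma, which is not stated in the paper but is elementary and worth recording.
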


The proof will follow immediately from Proposition \ref{sumas} and the next example.

\begin{example}\label{example:foreverygamma-nG=gamma}
{\slshape For every $\gamma\in [0,1]$ there is a real or complex Banach space $Y_\gamma$ such that there exist norm-one operators $G_{\gamma,1},G_{\gamma,2}\in \mathcal{L}(Y_\gamma)$ satisfying $n_{G_{\gamma,1}}(Y_\gamma,Y_\gamma)=\gamma$ and $n_{G_{\gamma,2}}(Y_\gamma,Y_\gamma)=1$.}
\end{example}

\begin{proof}
We start by showing the existence of a real or complex space $Z_\gamma$ such that there exists a norm-one operator $G\in \mathcal{L}(Z_\gamma)$ satisfying $n_G(Z_\gamma,Z_\gamma)=\gamma$. For $\gamma\in[\frac{1}{2},1]$, it is enough to use the fact that the set $\{n(W) \colon W \text{ two-dimensional space}\}$ covers the interval $[0,1]$ in the real case and $[\frac{1}{\e},1]$ in the complex case \cite[Theorems 3.5 and 3.6]{D-Mc-P-W}. So, for $\gamma\in[\frac{1}{2},1]$ there is a two-dimensional (real or complex) space $Z_\gamma$ satisfying $n(Z_\gamma)=\gamma$ and it suffices to take $G=\Id_{Z_\gamma}$.
	
For $\gamma\in[0,\frac{1}{2}]$, let $X_\gamma=\K^2$ endowed with the norm
$$
\|(x_1,x_2)\|_\gamma=\max\{|x_2|, |x_1|+(1-\gamma)|x_2|\} \qquad \big((x_1,x_2)\in \K^2\big),
$$
let $Z=\ell_\infty^2$, and let $Z_\gamma=X_\gamma\oplus_\infty Z$. Take $x_0^\ast =(0,1)\in S_{X_\gamma^\ast }$, $z_0=(1,1)\in S_Z$, $x_0=(1,0)\in S_{X\gamma}$, $z_0^\ast=(1,0)\in S_{Z^\ast}$ and define $J_1=x_0^\ast \otimes z_0$, $J_2=z_0^\ast\otimes x_0$, and $G=(J_1,J_2)$. Let us show that $n_G(Z_\gamma,Z_\gamma)=\gamma$.

Observe first that $X_\gamma^\ast $ is $\K^2$ endowed with the norm	
$$
\|(x^\ast _1,x^\ast _2)\|=\max\{|x^\ast _1|, \gamma|x^\ast _1|+|x^\ast _2|\} \qquad \big((x_1,x_2)\in \K^2\big).
$$
Since $\|J_1\|=\|J_2\|=1$ and $z_0\in Z$, $z_0^\ast\in Z^\ast$ are spear vectors, using Proposition~\ref{sumas} and  Proposition~\ref{prop:num-index-rankone-operators} we have that
\begin{align*}
n_G(Z_\gamma,Z_\gamma)&=\min\{n_{J_1}(X_\gamma,Z), n_{J_2}(Z,X_\gamma)\}\\
&=\min\{n(X_\gamma^\ast,x_0^\ast)n(Z,z_0), n(Z^\ast,z_0^\ast)n(X_\gamma,x_0)\}=\min\{n(X_\gamma^\ast,x_0^\ast), n(X_\gamma,x_0)\}.
\end{align*}
So it suffices to show that $n(X_\gamma^\ast ,x_0^\ast )=\gamma$ and $n(X_\gamma,x_0)\geq 1-\gamma$. To do so, we fix $x^\ast =(x_1^\ast ,x_2^\ast )\in S_{X_\gamma^\ast }$ and we compute $v(X_\gamma^\ast ,x_0^\ast ,x^\ast )$. The points $x\in S_{X_\gamma}$ satisfying $x_0^\ast (x)=1$ are of the form $x=(t\theta, 1)$ with $t\in[0,\gamma]$ and $\theta \in \T$. Thus we have that
\begin{align*}
v(X_\gamma^\ast ,x_0^\ast ,x^\ast )=\sup\{|t\theta x_1^\ast +x_2^\ast |\colon t\in[0,\gamma], \theta \in \T\}=\gamma|x_1^\ast |+|x_2^\ast |\geq \gamma \|x^\ast \|
\end{align*}
which implies $n(X_\gamma^\ast ,x_0^\ast )\geq \gamma$. Finally, for $x^\ast =(1,0)\in S_{X_\gamma^\ast }$ it is clear that $v(X_\gamma^\ast ,x_0^\ast ,x^\ast )=\gamma$ and so $n(X_\gamma^\ast ,x_0^\ast )= \gamma$ as desired.

To prove $n(X_\gamma,x_0)\geq 1-\gamma$, we fix $x =(x_1 ,x_2)\in S_{X_\gamma}$ and we estimate $v(X_\gamma,x_0 ,x)$. The points $x^\ast\in S_{X_\gamma^\ast}$ satisfying $x^\ast (x_0)=1$ are of the form $x^\ast=(1,t\theta)$ with $t\in[0,1-\gamma]$ and $\theta \in \T$. Thus we have that
\begin{align*}
v(X_\gamma,x_0 ,x)=\sup\{|x_1 +t\theta x_2 |\colon t\in[0,1-\gamma],\, \theta \in \T\}=|x_1|+(1-\gamma)|x_2|\geq (1-\gamma) \|x \|
\end{align*}
which implies $n(X_\gamma ,x_0 )\geq 1-\gamma$.	This finishes the proof of the existence of $Z_\gamma$.

Now, for each $\gamma \in [0,1]$, we take $Y_\gamma =(Z_\gamma\oplus_\infty \K)\oplus_1 \K$. On the one hand, define $G_{\gamma,1}\in \mathcal{L}(Y_\gamma)$ by
$$
G_{\gamma,1}(z,\alpha,\beta)=(Gz,\alpha, \beta) \qquad (z\in Z_\gamma, \alpha,\beta\in \K)
$$
which satisfies $n_{G_{\gamma,1}}(Y_\gamma,Y_\gamma)=n_G(Z_\gamma,Z_\gamma)=\gamma$ by Proposition~\ref{sumas}. On the other hand, observe that $Y_\gamma^\ast=(Z_\gamma^\ast\oplus_1 \K)\oplus_\infty \K$, so the elements $y=(0,0,1)\in S_{Y_\gamma}$ and $y^\ast=(0,1,1)\in S_{Y^\ast_\gamma}$ are spear vectors in $Y_\gamma$ and $Y^\ast_\gamma$ respectively. Therefore, the norm-one operator $G_{\gamma,2}=y^\ast \otimes y \in \mathcal{L}(Y_\gamma)$ satisfies $n_{G_{\gamma,2}}(Y_\gamma,Y_\gamma) =1$ by Proposition~\ref{prop:num-index-rankone-operators}.
\end{proof}	

We are now able to provide with the pending proof.

\begin{proof}[Proof of Theorem \ref{theorem-allvaluesinX}]
For each $\gamma\in [0,1]$, consider the space $Y_\gamma$ given in Example \ref{example:foreverygamma-nG=gamma} and consider the norm-one operators $G_{\gamma,1},G_{\gamma,2}\in \mathcal{L}(Y_\gamma)$ satisfying $n_{G_{\gamma,1}}(Y_\gamma,Y_\gamma)=\gamma$ and $n_{G_{\gamma,2}}(Y_\gamma,Y_\gamma)=1$.
Now, let $X=\left[\bigoplus\nolimits_{\gamma\in [0,1]} Y_\gamma\right]_{c_0}$, and for every $\xi\in [0,1]$, consider the norm-one operator $G^\xi\in \mathcal{L}(X)$ to be the diagonal operator given by $[G^\xi]_\gamma=G_{\gamma,2}$ if $\gamma\neq \xi$ and $[G^\xi]_\xi=G_{\xi,1}$. By Proposition \ref{sumas}, it follows that
$n_{G^\xi}(X,X)=\xi$, finishing the proof.
\end{proof}

\subsection{Composition operators on vector-valued function spaces}
The first result here gives the numerical index with respect to composition operators between spaces of vector-valued continuous functions.

\begin{proposition}
	Let $X$, $Y$ be Banach spaces, let $K$ be a compact Hausdorff topological space and $G\in \mathcal{L}(X,Y)$ be a norm-one operator. Consider the norm-one composition operator $\widetilde{G}\colon C(K,X) \longrightarrow C(K,Y)$ given by $ \widetilde{G}(f)=G\circ f$ for every $f\in C(K,X)$. Then
	$$n_{\widetilde{G}}\big(C(K,X),C(K,Y)\big)=n_G(X,Y).$$
\end{proposition}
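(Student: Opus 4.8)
The plan is to prove the equality by establishing the two inequalities separately, basing everything on the spatial description of the numerical radius with respect to $\widetilde G$ provided by Lemma \ref{Lemm-radio-G-A-B}. As a preliminary I would fix convenient norming sets: for the domain take $A=B_{C(K,X)}$, and for the range take
$$
B=\bigl\{\delta_t\otimes y^\ast\colon t\in K,\ y^\ast\in B_{Y^\ast}\bigr\}\subseteq B_{C(K,Y)^\ast},
$$
where $(\delta_t\otimes y^\ast)(h)=y^\ast(h(t))$. Since $B$ is balanced and norming for $C(K,Y)$ (indeed $\|h\|=\sup_t\|h(t)\|=\sup_{t,y^\ast}|y^\ast(h(t))|$), the bipolar theorem gives $\overline{\conv}^{w^\ast}(B)=B_{C(K,Y)^\ast}$. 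Lemma \ref{Lemm-radio-G-A-B} then yields, for every $T\in\mathcal{L}(C(K,X),C(K,Y))$,
$$
v_{\widetilde G}(T)=\inf_{\delta>0}\sup\bigl\{|y^\ast((Tf)(t))|\colon t\in K,\ y^\ast\in S_{Y^\ast},\ f\in S_{C(K,X)},\ \re y^\ast(G(f(t)))>1-\delta\bigr\}.
$$

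For the inequality $n_{\widetilde G}(C(K,X),C(K,Y))\le n_G(X,Y)$ I would use that the composition map $\Phi\colon\mathcal{L}(X,Y)\longrightarrow\mathcal{L}(C(K,X),C(K,Y))$ given by $\Phi(S)=\widetilde S$ (that is, $\widetilde S(f)=S\circ f$) is an isometric embedding with $\Phi(G)=\widetilde G$; it is isometric because the supremum defining $\|\widetilde S\|$ is attained on constant functions, so $\|\widetilde S\|=\sup_{\|x\|\le1}\|Sx\|=\|S\|$. Lemma \ref{Lemm:numericalIndexThroughMaps}(b) then gives directly $n_{\widetilde G}(C(K,X),C(K,Y))=n(\mathcal{L}(C(K,X),C(K,Y)),\widetilde G)\le n(\mathcal{L}(X,Y),G)=n_G(X,Y)$. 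Equivalently, one recovers $v_{\widetilde G}(\widetilde S)=v_G(S)$ from the spatial formula, exactly as in the first half of the proof of Proposition \ref{sumas}.

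The reverse inequality $v_{\widetilde G}(T)\ge n_G(X,Y)\|T\|$ for every $T$ is the main content, and I would model it on the $\ell_\infty$-sum case of Proposition \ref{sumas}; note in fact that when $K$ is finite one has $C(K,X)=[X\oplus\cdots\oplus X]_{\ell_\infty}$ with $\widetilde G$ a diagonal operator, so the statement is then literally an instance of Proposition \ref{sumas}. For general $K$, fix $\eps,\delta>0$, assume $\|T\|=1$, and choose $g\in S_{C(K,X)}$ and $t_1\in K$ with $\|(Tg)(t_1)\|>1-\eps$. The goal is to produce spatial witnesses $f\in S_{C(K,X)}$, $t\in K$, $y^\ast\in S_{Y^\ast}$ with $\re y^\ast(G(f(t)))>1-\delta$ and $|y^\ast((Tf)(t))|\ge n_G(X,Y)-o(1)$. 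The idea is to work at the point $t_1$, to modify $g$ near $t_1$ by means of a peaking function $\varphi\in C(K)$ with $\varphi(t_1)=1$, $0\le\varphi\le1$ and small support, and to invoke the defining property of $n_G(X,Y)$ in the form of Proposition \ref{Prop:CharacterizationsNumericalIndexOperators}, namely $\max_{\theta\in\T}\|G+\theta R\|\ge 1+n_G(X,Y)\|R\|$, applied to the operator $R\in\mathcal{L}(X,Y)$ that records the local action of $T$ at $t_1$ once $\varphi$ has been inserted. This is meant to reconcile the state constraint for $\widetilde G$, which forces $\|G(f(t_1))\|\approx1$, with the requirement that $(Tf)(t_1)$ remain large.

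The hard part will be precisely this reconciliation. For a composition operator the value $(\widetilde S f)(t)$ depends only on $f(t)$, whereas a general perturbation $T$ is non-local, so the point where $\|Tf\|$ peaks need not be a point where $\|\widetilde G f\|\approx1$; worse, the continuity constraint in $C(K)$ ties the value $f(t_1)$ to the values of $f$ near $t_1$, so one cannot simply reset $f(t_1)$ to a near-norming vector of $G$ without disturbing $(Tf)(t_1)$. Controlling both simultaneously, by concentrating $\varphi$ and passing to the limit while tracking the resulting affine dependence of $(Tf)(t_1)$ on the inserted value, and using the $n_G$-inequality to absorb the residual mismatch, is the technical heart of the argument; the finite-dimensional computation in Proposition \ref{sumas} is the guide for the bookkeeping.
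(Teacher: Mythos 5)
Your treatment of the inequality $n_{\widetilde G}\bigl(C(K,X),C(K,Y)\bigr)\leq n_G(X,Y)$ is correct and in fact slightly cleaner than the paper's: the map $S\longmapsto \widetilde S$ is an isometric embedding of $\mathcal{L}(X,Y)$ into $\mathcal{L}\bigl(C(K,X),C(K,Y)\bigr)$ sending $G$ to $\widetilde G$, so Lemma \ref{Lemm:numericalIndexThroughMaps}.b applies directly; your remark that for finite $K$ the whole statement is an instance of Proposition \ref{sumas} is also right. The problem is the other inequality: there you only announce a strategy and explicitly defer what you yourself call ``the technical heart of the argument''. That heart is not a routine limiting procedure, and as written the proposal has a genuine gap there.

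Concretely, what is missing is the construction of the localized operator and the verification that it has norm close to one. Given $T$ with $\|T\|=1$ and $\eps>0$, choose $f_0\in S_{C(K,X)}$ and $t_0\in K$ with $\|[Tf_0](t_0)\|>1-\eps$, and $\varphi\in C(K)$ with $0\leq\varphi\leq 1$, $\varphi(t_0)=1$, vanishing where $\|f_0(\cdot)-f_0(t_0)\|\geq\eps$. The reconciliation you worry about is achieved not by making $\|\widetilde G f\|$ large at the peak point of $\|Tf\|$, but by defining $\Phi(x)=x_0^\ast(x)(1-\varphi)f_0+\varphi x$ and $Sx=[T(\Phi(x))](t_0)$ for suitable $x_0\in S_X$ and $x_0^\ast\in S_{X^\ast}$ with $x_0^\ast(x_0)=1$: since $\Phi(x)(t_0)=x$ and $\|\Phi(x)\|\leq\|x\|$, every pair $(x,y^\ast)$ with $\re y^\ast(Gx)>1-\delta$ converts verbatim into the pair $\bigl(\Phi(x),\,y^\ast\otimes\delta_{t_0}\bigr)$, which satisfies $\re\,(y^\ast\otimes\delta_{t_0})\bigl(\widetilde G\Phi(x)\bigr)=\re y^\ast(Gx)>1-\delta$; hence $v_{G,\delta}(S)\leq v_{\widetilde G,\delta}(T)$ with no residual mismatch to absorb, and no appeal to Proposition \ref{Prop:CharacterizationsNumericalIndexOperators} is needed, as $v_{\widetilde G}(T)\geq v_G(S)\geq n_G(X,Y)\|S\|$ already finishes. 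The genuinely nontrivial step, absent from your sketch, is that $\|S\|>1-2\eps$: one cannot take $x_0=f_0(t_0)$ because that vector need not lie on $S_X$, and the fix is to write $f_0(t_0)=(1-\lambda)x_1+\lambda x_2$ with $x_1,x_2\in S_X$, observe that $(1-\lambda)\bigl((1-\varphi)f_0+\varphi x_1\bigr)+\lambda\bigl((1-\varphi)f_0+\varphi x_2\bigr)$ is within $\eps$ of $f_0$, and select $x_0\in\{x_1,x_2\}$ with $\bigl\|[T((1-\varphi)f_0+\varphi x_0)](t_0)\bigr\|>1-2\eps$. Without this selection argument the localized operator may have small norm and the lower bound on $v_{\widetilde G}(T)$ collapses.
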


\begin{proof}
We follow the lines of \cite[Theorem~5]{M-P}. 	To show that $n_{\widetilde{G}}\big(C(K,X),C(K,Y)\big)\geq n_G(X,Y)$, we fix $T\in \mathcal{L}\big(C(K,X),C(K,Y)\big)$ with $\|T\|=1$ and prove that $v_{\widetilde{G}}(T)\geq n_G(X,Y)$. Given $\eps>0$, we may find $f_0\in C(K,X)$ with $\|f_0\|=1$ and $t_0\in K$ such that
	\begin{equation}\label{cdek:eq1}
	\|[Tf_0](t_0)\|>1-\eps.
	\end{equation}
	Define $z_0=f_0(t_0)$ and find a continuous function $\varphi:K\longrightarrow[0,1]$ such that $\varphi(t_0)=1$ and $\varphi(t)=0$ if
	$\|f_0(t)-z_0\|\geq \eps$.
	Now write $z_0=(1-\lambda)x_1+\lambda x_2$ with $0\leq\lambda\leq1$, $x_1,x_2\in S_X$, and consider the functions
	$$f_j=(1-\varphi)f_0 + \varphi x_j \in C(K,X) \qquad (j=1,2).$$
	Then, $\|\varphi f_0-\varphi z_0\|<\eps$ meaning that
	$$\big\|f_0-((1-\lambda)f_1+\lambda f_2)\big\|<\eps,$$
	and, using (\ref{cdek:eq1}), we must have
	$$\|[Tf_1](t_0)\|>1-2\eps \qquad \text{or} \qquad \|[T
	f_2](t_0)\|>1-2\eps.$$
	By making the right choice of $x_0=x_1$ or $x_0=x_2$, we get $x_0\in S_X$ such that
	\begin{equation}\label{cdek:eq2}
	\Bigl\|\Bigl[T\big((1-\varphi)f_0 + \varphi x_0\big)\Bigr](t_0)\Bigr\|
	>1-2\eps.
	\end{equation}
	Next, we fix $x_0^\ast \in S_{X^\ast }$ with
	$x_0^\ast (x_0)=1$, denote
	$$\Phi(x)=x_0^\ast (x)(1-\varphi)f_0 + \varphi x \in C(K,X) \qquad (x\in X),$$
	and consider the operator $S\in \mathcal{L}(X,Y)$ given by
	\begin{equation*}
	S x =[T(\Phi(x))](t_0)
	\qquad (x\in X)
	\end{equation*}
	which, by (\ref{cdek:eq2}), obviously satisfies $\|S\|\geq \|S x_0\|>1-2\eps$.
	
	Now, given $\delta>0$, and $x\in S_X$, $y^\ast \in S_{Y^\ast }$ such that $\re y^\ast (Gx)>1-\delta$, we define $f\in S_{C(K,X)}$ by
	$ f=\Phi(x)$, and consider the functional $g^\ast \in S_{C(K,Y)^\ast }$ given by
	$$g^\ast (h)=[y^\ast \otimes \delta_{t_0}](h)=y^\ast (h(t_0)) \qquad \big(h\in C(K,Y)\big).$$
	Since $f(t_0)=x$, we have that $\re g^\ast \big(\widetilde{G}f\big)>1-\delta$ and
	\begin{align*}
	|y^\ast (Sx)|=
	\Big|y^\ast \left(\left[T\big(\Phi(x)\big)\right](t_0)\right)\Big|=\left|g^\ast (T f)\right| \leq v_{\widetilde{G},\delta}(T),
	\end{align*}
	hence $v_{G,\delta}(S)\leq v_{\widetilde{G},\delta}(T)$. Therefore,
	$$v_{\widetilde{G}}(T)\geq v_G(S) \geq n_G(X,Y) \|S\| \geq (1-2\eps)n_G(X,Y),$$
	and the arbitrariness of $\eps>0$ gives that $v_{\widetilde{G}}(T)\geq n_G(X,Y)$, as desired.
	
	To prove the reverse inequality, take $S\in \mathcal{L}(X,Y)$ and define $T \in \mathcal{L}\big(C(K,X),C(K,Y)\big)$ by $$[T(f)](t)=S(f(t)) \qquad \big(t\in K,\ f\in
	C(K,X)\big).$$ It is clear that $\|T\|=\|S\|$. To estimate the value of $v_{\widetilde{G}}(T)$ we use Lemma \ref{Lemm-radio-G-A-B} considering $A=S_{C(K,X)}$ and  $B=\{y^\ast \otimes\delta_t \colon y^\ast \in S_{Y^\ast}, \, t\in K \}$, where $(y^\ast \otimes\delta_t)(g)=y^\ast (g(t))$ for every $g\in C(K,Y)$ (as these subsets satisfy $\overline{\conv}(A)=B_{C(K,X)}$ and $\overline{\conv}^{w^\ast }(B)=B_{C(K,Y)^\ast }$). Now, for every $\delta>0$, $f\in S_{C(K,X)}$, $t\in K$, and $y^\ast \in S_{Y^\ast }$ satisfying that $\re y^\ast \big(G\big(f(t)\big)\big)>\nolinebreak1-\delta$, we call $x=f(t)\in S_X$ and observe that
	$\re y^\ast (Gx)>1-\delta$
	and
	\begin{align*}
	\left|y^\ast \big([Tf](t)\big)\right|=\left|y^\ast \left(S\big(f(t)\big)\right)\right|=|y^\ast (Sx)|\leq v_{G,\delta}(S).
	\end{align*}
	Consequently, $v_{\widetilde{G},\delta}(T)\leq v_{G,\delta}(S)$. It clearly follows that
	$$v_G(S)\geq v_{\widetilde{G}}(T) \geq n_{\widetilde{G}}\big(C(K,X),C(K,Y)\big) \|T\| =
	n_{\widetilde{G}}\big(C(K,X),C(K,Y)\big)\|S\|,$$
	so $n_G(X,Y)\geq
	n_{\widetilde{G}}\big(C(K,X),C(K,Y)\big)$, as desired.
\end{proof}

We next deal with K\"{o}the-Bochner vector-valued function spaces, for which we need to introduce some terminology.

Let $(\Omega,\Sigma,\mu)$ be a complete $\sigma$-finite measure
space. We denote by $L_0(\mu)$ the vector space of all (equivalent
classes modulo equality a.e.\ of) $\Sigma$-measurable locally
integrable real-valued functions on $\Omega$. A \emph{K\"{o}the
function space} is a linear subspace $E$ of $L_0(\mu)$ endowed
with a complete norm $\|\cdot\|_E$ satisfying the following
conditions:
\begin{enumerate}
\item[(i)] If $|f|\leq |g|$ a.e.\ on $\Omega$, $g\in E$ and
    $f\in L_0(\mu)$, then $f\in E$ and $\|f\|_E\leq \|g\|_E$.
\item[(ii)] For every $A\in \Sigma$ with $0<\mu(A)<\infty$,
    the characteristic function $\eins_A$ belongs to $E$.
\end{enumerate}
We refer the reader to the classical book by J.~Lindenstrauss and
L.~Tzafriri \cite{LTII} for more information and background on
K\"{o}the function spaces. Let us recall some useful facts about these
spaces which we will use in the sequel. First, $E$ is a Banach
lattice in the pointwise order. The \emph{K\"{o}the dual} $E'$ of $E$
is the function space defined as
$$
E'=\left\{g\in L_0(\mu)\ : \ \|g\|_{E'}:=
\sup_{f\in B_E}\int_\Omega |fg|\,d\mu<\infty \right\},
$$
which is again a K\"{o}the space on $(\Omega,\Sigma,\mu)$. Every
element $g\in E'$ defines naturally a continuous linear functional
on $E$ by the formula
$$
f\longmapsto \int_\Omega fg\, d\mu \qquad (f\in E),
$$
so we have $E'\subseteq E^\ast$ and this inclusion is isometric.

Let $E$ be a K\"{o}the space on a complete $\sigma$-finite measure space $(\Omega,\Sigma,\mu)$ and let $X$ be a real or complex
Banach space. A function $f:\Omega\longrightarrow X$ is said to be \emph{simple} if $f=\sum_{i=1}^n x_i\,\eins_{A_i}$ for some $x_1,\ldots,x_n\in X$ and some $A_1,\ldots,A_n\in \Sigma$. The function $f$ is said to be \emph{strongly measurable} if there is a sequence of simple functions $\{f_n\}$ with $\lim \|f_n(t)-f(t)\|_{X}=0$ for almost all $t\in \Omega$. For a strongly measurable function $f:\Omega \longrightarrow X$ we use notation $|f|$ for the function $|f|(\cdot)=\|f(\cdot)\|_X$. We write $E(X)$ to denote the space of (classes of) those strongly measurable functions $f:\Omega \longrightarrow X$ such that $|f| \in E$ and we endow $E(X)$ with the norm
$$
\|f\|_{E(X)}=\bigl\| |f| \bigr\|_E.
$$
Then $E(X)$ is a real or complex (depending on $X$) Banach space and it is called a \emph{K\"{o}the-Bochner function space}. We refer the reader to the book \cite{LinKothe} for background. For an element $f\in E(X)$  we consider a strongly measurable function $\widetilde{f}:\Omega \longrightarrow S_X$ such that $f=|f|\,\widetilde{f}$ a.e.\ and we observe that $\|f\|_{E(X)}=\|\,|f|\,\|_E$.

Our result for composition operators between K\"{o}the-Bochner function spaces is the following inequality.

\begin{proposition}\label{Prop:Edex}
	Let $X$, $Y$ be Banach spaces, let $(\Omega,\Sigma,\mu)$ be a $\sigma$-finite measure space, let $E$ be a K\"{o}the space on $(\Omega,\Sigma,\mu)$ such that $E'$ is norming for $E$, and let $G\in \mathcal{L}(X,Y)$ be a norm-one operator. Consider the norm-one composition operator $\widetilde{G} \colon E(X)\longrightarrow E(Y)$ given by
	$ \widetilde{G}(f)=G\circ f$ for every $f\in E(X)$. Then
$$
n_{\widetilde{G}}\big(E(X),E(Y)\big)\leq n_G(X,Y).
$$
\end{proposition}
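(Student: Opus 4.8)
The plan is to imitate the reverse inequality of the preceding $C(K,X)$ proposition: starting from an arbitrary $S\in\mathcal{L}(X,Y)$, I build a companion operator $T$ on the large spaces with $\|T\|=\|S\|$ and $v_{\widetilde{G}}(T)\le v_G(S)$. Once this is done, $n_{\widetilde{G}}(E(X),E(Y))\,\|S\|=n_{\widetilde{G}}(E(X),E(Y))\,\|T\|\le v_{\widetilde{G}}(T)\le v_G(S)$, and taking the infimum over norm-one $S$ yields the claimed $n_{\widetilde{G}}(E(X),E(Y))\le n_G(X,Y)$. Concretely, given $S$ I set $T(f)=S\circ f$ for $f\in E(X)$. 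Since $\|S(f(t))\|\le\|S\|\,\|f(t)\|$ pointwise, the ideal property (i) of a K\"othe space gives $|Tf|\le\|S\|\,|f|\in E$, so $T$ is well defined with $\|T\|\le\|S\|$; testing on functions $x\,\eins_A/\|\eins_A\|_E$ (legitimate by property (ii)) gives the reverse bound, whence $\|T\|=\|S\|$, and $\widetilde G$ is the case $S=G$.

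The core is to compute $v_{\widetilde{G}}(T)$ spatially through Lemma \ref{Lemm-radio-G-A-B}. I take $A=S_{E(X)}$, so that $\overline{\conv}(A)=B_{E(X)}$, and, crucially, a norming set $B\subseteq B_{E(Y)^\ast}$ consisting of the integral functionals
$$
\phi_{g,y^\ast}(h)=\int_\Omega g(t)\, y^\ast(t)\bigl(h(t)\bigr)\, d\mu(t)\qquad (h\in E(Y)),
$$
where $g\in S_{E'}$ with $g\ge0$ and $y^\ast\colon\Omega\to S_{Y^\ast}$ is (weak-$\ast$) measurable. Each $\phi_{g,y^\ast}$ lies in $B_{E(Y)^\ast}$ because $|\phi_{g,y^\ast}(h)|\le\int g\,|h|\,d\mu\le\|g\|_{E'}\,\|h\|_{E(Y)}$. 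The set $B$ is balanced (absorb $\theta\in\T$ into $y^\ast$) and norming for $E(Y)$: writing $h=|h|\,\widetilde h$, selecting $y^\ast(t)$ supporting $\widetilde h(t)$, and choosing $g\in S_{E'}$ nearly norming $|h|$ in $E$ (possible since $E'$ is norming for $E$) makes $\phi_{g,y^\ast}(h)$ arbitrarily close to $\|h\|_{E(Y)}$. A balanced norming subset of the dual ball has weak-$\ast$ closed convex hull equal to the whole ball (Hahn--Banach separation), so $\overline{\conv}^{w^\ast}(B)=B_{E(Y)^\ast}$ and Lemma \ref{Lemm-radio-G-A-B} applies with these $A$ and $B$.

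With the sets fixed, I estimate $v_{\widetilde{G},\delta}(T)$. Take $\phi_{g,y^\ast}\in B$ and $f\in S_{E(X)}$ with $\re\phi_{g,y^\ast}(\widetilde{G}f)>1-\delta$, and introduce the positive measure $d\nu(t)=g(t)\,\|f(t)\|_X\,d\mu(t)$, whose total mass $m=\int g\,|f|\,d\mu\le1$. Putting $x_t=f(t)/\|f(t)\|_X\in S_X$ where $f(t)\neq0$, one has $\re\phi_{g,y^\ast}(\widetilde{G}f)=\int\re y^\ast(t)(Gx_t)\,d\nu$ and $\phi_{g,y^\ast}(Tf)=\int y^\ast(t)(Sx_t)\,d\nu$. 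Fix an auxiliary $\delta'>0$ and split at $L=\{t\colon f(t)\neq0,\ \re y^\ast(t)(Gx_t)\le1-\delta'\}$; the inequality $1-\delta<\int\re y^\ast(t)(Gx_t)\,d\nu\le m-\delta'\nu(L)$, together with $m\le1$, forces $\nu(L)<\delta/\delta'$. On $\Omega\setminus L$ one has $|y^\ast(t)(Sx_t)|\le v_{G,\delta'}(S)$ by the very definition of $v_{G,\delta'}(S)$, while on $L$ it is at most $\|S\|$. Hence $|\phi_{g,y^\ast}(Tf)|\le v_{G,\delta'}(S)+\|S\|\,\delta/\delta'$, so $v_{\widetilde{G},\delta}(T)\le v_{G,\delta'}(S)+\|S\|\,\delta/\delta'$; letting $\delta\to0^+$ and then $\delta'\to0^+$ gives $v_{\widetilde{G}}(T)\le v_G(S)$, finishing the argument.

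I expect the only delicate point to be the measure-theoretic bookkeeping behind $B$: one must ensure that $t\mapsto y^\ast(t)(h(t))$ is measurable and that a measurable selection $t\mapsto y^\ast(t)$ supporting $\widetilde h(t)$ exists. Both are available because strongly measurable functions have essentially separable range, which lets one reduce the selection to a separable subspace of $Y$. Everything else is the same splitting-of-the-mass estimate used in the $C(K,X)$ case, now carried out against the measure $\nu$ in place of a single evaluation point.
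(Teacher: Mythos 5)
Your argument is correct and follows essentially the same route as the paper: the same companion operator $T(f)=S\circ f$, the same spatial computation of $v_{\widetilde G}(T)$ via Lemma \ref{Lemm-radio-G-A-B} against integral functionals, and the same splitting of $\Omega$ according to where $\re\, y^\ast(t)(G\widetilde f(t))$ exceeds a threshold (the paper couples the parameters via $1-\alpha=\sqrt\delta$ where you keep $\delta$ and $\delta'$ independent, which is immaterial). The one delicate point you flag --- measurability of the selection $t\mapsto y^\ast(t)$ --- is resolved in the paper exactly along the lines you sketch, by restricting the norming set to countably-valued functions in $E'(Y^\ast)$ (Lemma \ref{lemma:propertiesE(X)}), which sidesteps any need for an exact measurable support-functional selection.
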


We need a preliminary lemma which is considered folklore in the theory of K\"{o}the-Bochner spaces. As we have not found direct references, we will include a short sketch of its proof. Let us introduce some notation. Let $E$ be a K\"{o}the space on a $\sigma$-finite measure space $(\Omega,\Sigma,\mu)$ and let $Y$ be a Banach space. If $\Phi\colon \Omega \longrightarrow Y^\ast$ belongs to $E'(Y^\ast)$, then the \emph{integral} functional on $E(Y)$ defined by $\Phi$ is given by
      \begin{equation}\label{eq:Kothe-def-integral-functional}
      \langle \Phi,f\rangle = \int_\Omega \langle \Phi(t),f(t)\rangle\,d\mu(t) \qquad \bigl(f\in E(Y)\bigr).
      \end{equation}
Observe that we have already presented the definition of the functions $|\Phi|=\|\Phi(\cdot)\|_{Y^\ast}\in E'$ and $\widetilde{\Phi} \colon\Omega \longrightarrow S_{Y^\ast}$ which satisfy that $\Phi=|\Phi|\widetilde{\Phi}$ a.e. Let us comment that it is possible to define integral functionals as in \eqref{eq:Kothe-def-integral-functional} for functions satisfying weaker requirements but, actually, here we are only interested in those integral functionals coming from functions $\Phi$ in $E'(Y^\ast)$ having countably many values.

\begin{lemma}\label{lemma:propertiesE(X)}
Let $E$ be a K\"{o}the space on a $\sigma$-finite measure space $(\Omega,\Sigma,\mu)$ and let $Y$ be a Banach space.
\begin{enumerate}
  \item[(a)] The set of measurable functions from $\Omega$ to $Y$ having countably many values is dense in $E(Y)$.
  \item[(b)] If $\Phi\in E'(Y^\ast)$ has countably many values, then the integral functional defined as in \eqref{eq:Kothe-def-integral-functional} belongs to $E(Y)^\ast$ and satisfies that $\|\Phi\|_{E(Y)^\ast}= \|\Phi\|_{E'(Y^\ast)} =\|\,|\Phi|\,\|_{E'}$.
  \item[(c)] If $E'$ is norming for $E$, then the set $\mathcal{B}$ of all  integral functionals defined by norm-one functions in $E'(Y^\ast)$ having countably many values, satisfies that $\overline{\conv}^{w^\ast}(\mathcal{B}) =B_{E(Y)^\ast}$.
\end{enumerate}
\end{lemma}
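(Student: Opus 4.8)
The plan is to establish the three items in turn, the recurring technical device being a \emph{double discretization} (of modulus and of direction) forced by the fact that over an infinite measure space one cannot approximate uniformly, since the constant function $\eins_\Omega$ need not belong to $E$.

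For (a), given $f\in E(X)$ I would write $f=|f|\,\widetilde f$ with $\widetilde f\colon \Omega\to S_X$. As $f$ is strongly measurable it is essentially separably valued, so $\widetilde f$ takes values a.e.\ in a separable subset of $S_X$; covering it by countably many balls of radius $\eps$ and using a first-index (smallest $n$) selection produces a countably-valued measurable $\sigma\colon\Omega\to S_X$ with $\|\widetilde f(t)-\sigma(t)\|_X<\eps$ a.e. Independently, I discretize the nonnegative scalar $|f|\in E$ on a geometric scale: set $\rho(t)=(1+\eps)^{k}$ when $(1+\eps)^{k}\leq|f|(t)<(1+\eps)^{k+1}$ (and $\rho=0$ where $f=0$), so that $\rho\leq|f|\leq(1+\eps)\rho$; the lattice property of $E$ then gives $\bigl\||f|-\rho\bigr\|_E\leq\eps\,\|f\|_{E(X)}$. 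The function $g=\rho\,\sigma$ is countably-valued, and a triangle-inequality estimate yields $|f-g|\leq 2\eps\,|f|$ pointwise, whence $\|f-g\|_{E(X)}\leq 2\eps\,\|f\|_{E(X)}$, proving density.

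For (b), writing $\Phi=\sum_n y_n^\ast\eins_{A_n}$ shows $t\mapsto\langle\Phi(t),f(t)\rangle$ is measurable and dominated by $|\Phi|\,|f|$, so the defining inequality of the Köthe-dual norm gives $|\langle\Phi,f\rangle|\leq\bigl\||\Phi|\bigr\|_{E'}\,\|f\|_{E(Y)}$; hence $\Phi\in E(Y)^\ast$ with $\|\Phi\|_{E(Y)^\ast}\leq\bigl\||\Phi|\bigr\|_{E'}=\|\Phi\|_{E'(Y^\ast)}$. For the reverse inequality I choose, for each $n$, a near-norming $u_n\in S_Y$ with $\re\langle y_n^\ast,u_n\rangle>(1-\eps)\|y_n^\ast\|_{Y^\ast}$, take a nonnegative $g\in B_E$ nearly attaining $\bigl\||\Phi|\bigr\|_{E'}$, and set $f=g\sum_n u_n\eins_{A_n}$. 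Then $|f|=g$, so $\|f\|_{E(Y)}\leq1$, while $\langle\Phi,f\rangle=\sum_n\langle y_n^\ast,u_n\rangle\int_{A_n}g\,d\mu$ lies within $(1-\eps)$-factors of $\bigl\||\Phi|\bigr\|_{E'}$; letting $\eps\to0$ gives equality of the norms.

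For (c) the plan is to prove that $\mathcal{B}$ is a \emph{norming} subset of $B_{E(Y)^\ast}$ and then invoke the bipolar theorem. Note first that $\mathcal{B}\subseteq B_{E(Y)^\ast}$ by (b) and that $\mathcal{B}$ is $\T$-balanced (if $\Phi$ is norm-one and countably-valued, so is $\theta\Phi$ for $\theta\in\T$), so it suffices to show $\sup_{\Phi\in\mathcal{B}}\re\langle\Phi,f\rangle=\|f\|_{E(Y)}$ for every $f\in E(Y)$. Given such an $f=|f|\,\widetilde f$, I discretize the direction $\widetilde f$ as in (a), now choosing near-norming functionals, to get a countably-valued $\psi\colon\Omega\to S_{Y^\ast}$ with $\re\langle\psi(t),\widetilde f(t)\rangle>1-\eps$ a.e.; using the hypothesis that $E'$ is norming for $E$, I pick $g\in B_{E'}$, $g\geq0$, with $\int_\Omega|f|\,g\,d\mu>(1-\eps)\|f\|_{E(Y)}$, and discretize $g$ geometrically into a countably-valued $\rho\leq g$, which still lies in $B_{E'}$ by lattice monotonicity of the $E'$-norm. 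Then $\Phi=\rho\,\psi$ is countably-valued with $|\Phi|=\rho$, so $\|\Phi\|_{E'(Y^\ast)}\leq1$, and $\re\langle\Phi,f\rangle\geq\frac{(1-\eps)^2}{1+\eps}\|f\|_{E(Y)}$. Letting $\eps\to0$ (and normalizing $\Phi$ to norm one) gives that $\mathcal{B}$ norms $E(Y)$, so a Hahn--Banach/bipolar argument yields $\overline{\conv}^{\,\weakstar}(\mathcal{B})=B_{E(Y)^\ast}$.

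The main obstacle is precisely this double discretization: to remain inside a \emph{countably-valued} function while controlling the $E$- (respectively $E'$-) norm, one cannot rely on uniform approximation and must discretize modulus and direction separately. The two facts that make everything fit together are the measurability of the first-index selection applied to the essentially separable range, and the lattice monotonicity $0\leq\rho\leq g\Rightarrow\|\rho\|_{E'}\leq\|g\|_{E'}$; once (a) and the norming hypothesis are available, part (c) reduces to a routine bipolar-theorem argument.
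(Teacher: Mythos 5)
Your proof is correct, and its overall architecture coincides with the paper's: approximate by countably-valued functions, compute the norm of a countably-valued integral functional, and then combine the norming hypothesis with a separation/bipolar argument. The differences are in the execution of (a) and in the level of detail of (c). For (a), the paper partitions $\Omega$ into countably many finite-measure pieces $\Omega_n$ on which $f$ is separably valued and invokes Diestel--Uhl to get countably-valued approximants with \emph{absolute} error $\eps/(2^n\|\eins_{\Omega_n}\|_E)$, summing the errors in $E(Y)$; you instead discretize the direction $\widetilde f$ by a first-index selection and the modulus $|f|$ on a geometric scale, obtaining the \emph{relative} pointwise bound $|f-g|\leq 2\eps|f|$ and concluding by lattice monotonicity. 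Your version is self-contained and avoids the weights $\|\eins_{\Omega_n}\|_E$ altogether, at the cost of essentially reproving the Pettis-type approximation the paper cites; both are perfectly valid. For (c), the paper only says the claim "follows routinely" from (a), the norming hypothesis, and density of countably-valued functions in $E'$; your explicit construction of $\Phi=\rho\,\psi$ (separate discretization of the near-norming direction field $\psi$ and of the near-norming scalar $g\in B_{E'}$) together with the observation that $\mathcal{B}$ is $\T$-balanced, so that norming plus Hahn--Banach separation yields $\overline{\conv}^{\weakstar}(\mathcal{B})=B_{E(Y)^\ast}$, is exactly the intended routine argument, correctly carried out. The estimates $\re\langle\Phi,f\rangle\geq\frac{(1-\eps)^2}{1+\eps}\|f\|_{E(Y)}$ in (c) and $\re\langle\Phi,f\rangle\geq(1-\eps)^2\|\,|\Phi|\,\|_{E'}$ in (b) both check out, so there is no gap.
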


\begin{proof}[Sketch of the proof]
(a). Fix $f\in E(Y)$ and $\eps>0$. We consider a partition of $\Omega$ into countably many pairwise disjoint measurable sets $\Omega=\bigcup_{n\in\N\cup\{0\}} \Omega_n$ with $\mu(\Omega_0)=0$, $0<\mu(\Omega_n)<\infty$ for all $n\in \N$, and such that $f(\Omega_n)$ is separable for all $n\in \N$. Now, for every $n\in \N$ we use the Bochner measurability of $f\eins_{\Omega_n}$ to find a measurable function $g_n\colon \Omega \longrightarrow Y$ with $g_n(\Omega\setminus \Omega_n)=\{0\}$, having countably many values and satisfying that
$$
\|f(t)-g_n(t)\|\leq \frac{\eps}{2^n\|\eins_{\Omega_n}\|} \qquad \bigl(t\in \Omega_n\bigr)
$$
(see \cite[Corollary 3 in p.~42]{DieUhl}, for instance). Observe that $$|f\eins_{\Omega_n} -g_n|\leq \frac{\eps\eins_{\Omega_n}}{2^n\|\eins_{\Omega_n}\|},$$ so $g_n\in E(Y)$ and
$\|f\eins_{\Omega_n} - g_n\|\leq \frac{\eps}{2^n}$.
It is now clear that the sum $g$ of the (formal) series $\sum_{n\geq 1} g_n$ belongs to $E(Y)$, has countably many values, and satisfies $\|f-g\|\leq \eps$.

(b). Observe that $\Phi$ has the form
$$
\Phi(t)=\sum_{n=1}^\infty y_n^\ast\eins_{A_n}(t) \qquad \bigl(t\in \Omega\bigr)
$$
for suitable sequences $\{y_n^\ast\}_{n\in \N}$ of elements of $Y^\ast$ and $\{A_n\}_{n\in \N}$ of pairwise disjoint elements of $\Sigma$ satisfying that the scalar function $$t\longmapsto \sum_{n=1}^\infty \|y_n^\ast\|\eins_{A_n}(t)$$ belongs to $E'$. With this in mind, $\Phi$ acts on $E(Y)$ as
$$
\langle \Phi,f\rangle =\int_\Omega \langle \Phi(t),f(t)\rangle\,d\mu(t)=\sum_{n=1}^\infty \int_{A_n} y_n^*(f(t))\,d\mu(t) \qquad \bigl(f\in E(Y)\bigr).
$$
It is now routine to show that $\Phi\in E(Y)^\ast$ and that $\|\Phi\|_{E(Y)^\ast}=\|\,|\Phi|\,\|_{E'}$.

Assertion (c) follows routinely from the density in $E(Y)$ of the set of countably-valued functions, from the fact that $E'$ is norming for $E$, and from the density in $E'$ of the set of countably-valued functions.
\end{proof}

\begin{proof}[Proof of Proposition \ref{Prop:Edex}]
We follow the lines of \cite[Theorem 4.1]{MarMerPopRan}. Take an operator $S\in \mathcal{L}(X,Y)$ with $\|S\|=1$, and define $T\in \mathcal{L}\big(E(X),E(Y)\big)$ by
	$$
[T(f)](t)=S(f(t)) =|f|(t) \, S(\widetilde{f}(t)) \qquad \big(t\in\Omega, \ f\in E(X)\big).
    $$
	We claim that $T$ is well defined and $\|T\|=1$. Indeed, for $f\in E(X)$, $T(f)$ is strongly measurable and
	$$\|[T(f)](t)\|_Y=|f|(t)\, \|S(\widetilde{f}(t))\|\leq|f|(t) \qquad (t\in\Omega),$$
	so $T(f)\in E(Y)$ with $\|T(f)\|_{E(Y)}\leq \| \,|f|\, \|_E=\|f\|_E(X)$. This gives $\|T\|\leq 1$. Conversely, we fix $A\in \Sigma$ such that $0<\mu(A)<\infty$ and for each $x\in S_X$ consider $f=\|\eins_A\|_E^{-1}x\,\eins_A\in S_{E(X)}$. Then, $\|f\|=1$ and
	$$
	\|[T(f)](t)\|_Y=\dfrac{\eins_A(t) \|S(x)\|_Y}{\|\eins_A\|_E},
	$$
	so
	$$
	\|T\|\geq\|T(f)\|_{E(Y)}=\left\|\dfrac{\eins_A \, \|S(x)\|_Y}{\|\eins_A\|_E} \right\|_E\geq\|S(x)\|_Y.
	$$
	By just taking supremum on $x\in S_X$, we get $\|T\|\geq\|S\|=1$ as desired.
	
	Next, we fix $0<\delta<1$,  $f=|f|\,\widetilde{f}\in S_{E(X)}$ and $\Phi=|\Phi|\,\widetilde{\Phi}\in\mathcal{B}$ satisfying $\re\langle\Phi,\widetilde{G}(f)\rangle>1-\delta$, where $\mathcal{B}\subset E(Y)^\ast$ is the set given in Lemma \ref{lemma:propertiesE(X)}.c. Let $0<\alpha<1$ be such that $1-\alpha =\sqrt{\delta}$ and write
	$$
	 \Omega_1=\left\{t\in\Omega \colon \re\langle\widetilde{\Phi}(t),G(\widetilde{f}(t))\rangle < \alpha \right\}\quad \text{and} \quad \Omega_2=\left\{t\in\Omega \colon \re\langle\widetilde{\Phi}(t),G(\widetilde{f}(t))\rangle > \alpha \right\}.
	$$
	Then,
	\begin{align*}
	1-\delta&< \re\langle\Phi,\widetilde{G}(f)\rangle= \re\int_\Omega |\Phi|(t) \, |f|(t)\,  \langle \widetilde{\Phi}(t),G(\widetilde{f}(t))\rangle \, d\mu(t) \\
	&=\re\int_{\Omega_1} |\Phi|(t) \, |f|(t)\,  \langle \widetilde{\Phi}(t),G(\widetilde{f}(t))\rangle \, d\mu(t)+\re\int_{\Omega_2} |\Phi|(t) \, |f|(t)\,  \langle \widetilde{\Phi}(t),G(\widetilde{f}(t))\rangle \, d\mu(t) \\
	&\leq \alpha \, \int_{\Omega_1} |\Phi|(t) \, |f|(t)\, d\mu(t) + \int_{\Omega_2} |\Phi|(t) \, |f|(t)\, d\mu(t) \\
	& \leq \alpha \, \int_{\Omega_1} |\Phi|(t) \, |f|(t)\, d\mu(t)+1-\int_{\Omega_1} |\Phi|(t) \, |f|(t)\, d\mu(t),
	\end{align*}
	hence $\displaystyle \int_{\Omega_1} |\Phi|(t) \, |f|(t)\, d\mu(t)<\frac{\delta}{1-\alpha}$. Moreover,
	\begin{align*}
	|\langle\Phi,Tf\rangle|&= \left|\int_\Omega |\Phi|(t) \, |f|(t) \, \langle\widetilde{\Phi}(t),S(\widetilde{f}(t))\rangle\, d\mu(t) \right| \\
	& \leq \int_{\Omega_2} |\Phi|(t) \, |f|(t)\, v_{G,1-\alpha}(S)\, d\mu(t)+\int_{\Omega_1}|\Phi|(t) \, |f|(t)\, \left|\langle\widetilde{\Phi}(t),S(\widetilde{f}(t))\rangle\right|\, d\mu(t)\\
	&\leq v_{G,1-\alpha}(S)+\dfrac{\delta}{1-\alpha}=v_{G,\sqrt{\delta}}(S)+\sqrt{\delta}.
	\end{align*}
	Thus, we get that $v_{\widetilde{G},\delta}(T)\leq v_{G,\sqrt{\delta}}(S)+\sqrt{\delta}$ by Lemma \ref{Lemm-radio-G-A-B} (using Lemma \ref{lemma:propertiesE(X)}.c). So, taking infimum with $0<\delta<1$,  we obtain that	$n_{\widetilde{G}}\big(E(X),E(Y)\big)\leq v_{\widetilde{G}}(T)\leq v_G(S)$ and the desired inequality follows.
\end{proof}

Let us comment that, in the case $X=Y$ and $G=\Id_X$, the above result improves \cite[Theorem~4.1]{MarMerPopRan}. We state the new result here.

\begin{corollary}[\textrm{Extension of \cite[Theorem~4.1]{MarMerPopRan}}]
Let $X$ be a Banach space, let $(\Omega,\Sigma,\mu)$ be a $\sigma$-finite measure space, and let $E$ be a K\"{o}the space on $(\Omega,\Sigma,\mu)$ such that $E'$ is norming for $E$. Then
$$
n\bigl(E(X)\bigr)\leq n(X).
$$
\end{corollary}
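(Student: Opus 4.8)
The plan is to obtain this statement as the special case $Y=X$ and $G=\Id_X$ of Proposition \ref{Prop:Edex}. The key observation is that when $G=\Id_X$, the associated composition operator $\widetilde{G}\colon E(X)\longrightarrow E(X)$ given by $\widetilde{G}(f)=\Id_X\circ f=f$ is nothing but the identity operator $\Id_{E(X)}$ on $E(X)$. In particular $\|\widetilde{G}\|=1$, so the hypotheses of Proposition \ref{Prop:Edex} are satisfied for this choice (the hypothesis that $E'$ be norming for $E$ is exactly the one assumed in the corollary, hence available).

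Next I would recall that the numerical index with respect to the identity operator recovers the classical Banach space numerical index. As observed in the introduction, for every Banach space $Z$ one has $n_{\Id_Z}(Z,Z)=n(\mathcal{L}(Z),\Id_Z)=n(Z)$, since the spatial numerical range $W(\cdot)$ and the numerical range with respect to $\Id_Z$ generate the same numerical radius and hence the same numerical index. Applying this with $Z=X$ gives $n_{\Id_X}(X,X)=n(X)$, while applying it with $Z=E(X)$ gives $n_{\Id_{E(X)}}\bigl(E(X),E(X)\bigr)=n\bigl(E(X)\bigr)$.

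Finally, feeding $Y=X$ and $G=\Id_X$ into the inequality $n_{\widetilde{G}}\bigl(E(X),E(Y)\bigr)\leq n_G(X,Y)$ provided by Proposition \ref{Prop:Edex}, and then rewriting both sides via the two identifications above (the left-hand side using $\widetilde{G}=\Id_{E(X)}$, the right-hand side using $G=\Id_X$), yields precisely
$$
n\bigl(E(X)\bigr)\leq n(X),
$$
as desired. I expect no genuine obstacle in this argument; the only point deserving a moment's care is the verification that in the case $G=\Id_X$ the composition operator is \emph{literally} the identity on $E(X)$ rather than merely isometrically identified with it, which is immediate from the pointwise definition $\widetilde{G}(f)(t)=\Id_X\bigl(f(t)\bigr)=f(t)$.
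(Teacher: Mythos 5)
Your argument is correct and is exactly how the paper obtains this corollary: the statement is the special case $Y=X$, $G=\Id_X$ of Proposition \ref{Prop:Edex}, using that $\widetilde{G}=\Id_{E(X)}$ and that the numerical index with respect to the identity is the classical numerical index. Nothing further is needed.
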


There are K\"{o}the spaces which do not satisfy the norming requirement of Proposition \ref{Prop:Edex}, see \cite[Remark~1 in page 30]{LTII} for instance. We next present some particular cases in which the previous proposition applies. First, we deal with order continuous spaces. We say that a K\"othe space $E$ is \emph{order continuous} if $0\leq x_\alpha\downarrow0$ and $x_\alpha\in E$ imply that $\lim\|x_\alpha\|=0$ (this is known to be equivalent to the fact that $E$ does not contain an isomorphic copy of $\ell_{\infty}$). If $E$ is order continuous, then $E'=E^\ast$  (see \cite[p.~169]{LinKothe} or \cite[p.~29]{LTII}).

\begin{corollary}\label{Cor:Edex-order-cont}
	Let $X, Y$ be Banach spaces, let $(\Omega,\Sigma,\mu)$ be a probability measure space, let $E$ be an order continuous K\"{o}the space on $(\Omega,\Sigma,\mu)$, and let $G\in \mathcal{L}(X,Y)$ be a norm-one operator.  Consider the norm-one composition operator $\widetilde{G} \colon E(X)\longrightarrow E(Y)$ given by
	$ \widetilde{G}(f)=G\circ f$ for every $f\in E(X)$. Then
	$$ n_{\widetilde{G}}\big(E(X),E(Y)\big)\leq n_G(X,Y). $$
\end{corollary}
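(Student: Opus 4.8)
The plan is to obtain this corollary as a direct application of Proposition \ref{Prop:Edex}; the whole task reduces to checking that the hypotheses of that proposition are satisfied in the order continuous setting. Two of the three hypotheses are immediate: a probability measure space is in particular $\sigma$-finite, and $E$ is a K\"{o}the space by assumption. The only point requiring an argument is that $E'$ is norming for $E$.

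To verify this, I would invoke the fact recalled just before the statement that $E'\subseteq E^\ast$ is an isometric inclusion, together with the cited equality $E'=E^\ast$ which holds precisely because $E$ is order continuous (see \cite[p.~169]{LinKothe} or \cite[p.~29]{LTII}). Now the full dual $E^\ast$ is trivially norming for $E$: given $f\in E$, the Hahn--Banach theorem yields $\varphi\in S_{E^\ast}$ with $\varphi(f)=\|f\|_E$. Since every element of $E^\ast$ is represented by a function in $E'$ under the identification $E'=E^\ast$, this shows that $E'$ is norming for $E$.

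With the norming hypothesis established, I would simply apply Proposition \ref{Prop:Edex} to the probability measure space $(\Omega,\Sigma,\mu)$, the order continuous K\"{o}the space $E$, and the norm-one operator $G$, obtaining at once the desired inequality $n_{\widetilde{G}}\big(E(X),E(Y)\big)\leq n_G(X,Y)$. I do not anticipate any genuine difficulty here: the entire content of the corollary is the observation that order continuity is a convenient sufficient condition for the norming requirement of Proposition \ref{Prop:Edex}, and this is immediate once the equality $E'=E^\ast$ is available. The only subtlety worth a word of care is to make sure the isometric character of the inclusion $E'\subseteq E^\ast$ is used, so that the representing functionals genuinely attain the norm and not merely approximate it; but this is exactly what the cited identification provides.
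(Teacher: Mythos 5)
Your proposal is correct and follows exactly the route the paper intends: the corollary is stated without proof precisely because, as the paper notes just before it, order continuity gives $E'=E^\ast$ (isometrically), so $E'$ is norming for $E$ and Proposition \ref{Prop:Edex} applies directly. The only cosmetic remark is that the norming condition requires only that the supremum over $B_{E'}$ equals the norm, not that it be attained, so your final caveat about norm attainment is unnecessary (though harmless).
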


For $1\leq p<\infty$, $L_p$-spaces over $\sigma$-finite measures are order continuous K\"othe spaces; for $p=\infty$, this is not longer true, but $L_\infty(\mu)'$ is norming for $L_\infty(\mu)$, see \cite[Remark 1 in page 30]{LTII} for instance. Therefore, we get the following consequence:

\begin{corollary}\label{corollary:composition_inequality_ellp}
Let $X$, $Y$ be Banach spaces, let $(\Omega,\Sigma,\mu)$ be a $\sigma$-finite measure space, let $1\leq p\leq\infty$, and let $G\in \mathcal{L}(X,Y)$ be a norm-one operator. Consider the norm-one composition operator $\widetilde{G} \colon L_p(\mu,X)\longrightarrow L_p(\mu,Y)$ given by
$\widetilde{G}(f)=G\circ f$ for every $f\in L_p(\mu,X)$. Then
$$
n_{\widetilde{G}}\big(L_p(\mu,X),L_p(\mu,Y)\big)\leq n_G(X,Y).
 $$
\end{corollary}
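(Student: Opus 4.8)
The plan is to realize $L_p(\mu,X)$ as a Köthe--Bochner space $E(X)$ with $E=L_p(\mu)$ and then invoke Proposition~\ref{Prop:Edex} directly. Since the composition operator $\widetilde{G}$ in the statement is exactly the one appearing in that proposition, the entire content of the corollary reduces to verifying the single nontrivial hypothesis of Proposition~\ref{Prop:Edex}, namely that the Köthe dual $E'$ is norming for $E=L_p(\mu)$. Thus I would split the argument according to the two regimes $1\leq p<\infty$ and $p=\infty$, treating the norming property in each.

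For $1\leq p<\infty$ I would recall that $L_p(\mu)$ is order continuous: if $0\leq f_\alpha\downarrow 0$ a.e.\ with $f_\alpha\in L_p(\mu)$, then $\|f_\alpha\|_p\to 0$ by dominated convergence. As noted in the discussion preceding the statement, order continuity of $E$ forces $E'=E^\ast$, and the topological dual $E^\ast$ is always norming for $E$ by the Hahn--Banach theorem; hence the hypothesis of Proposition~\ref{Prop:Edex} holds. (This is precisely the content of Corollary~\ref{Cor:Edex-order-cont} once one drops the harmless restriction to probability measures.) Applying Proposition~\ref{Prop:Edex} then yields $n_{\widetilde{G}}\bigl(L_p(\mu,X),L_p(\mu,Y)\bigr)\leq n_G(X,Y)$ for every finite $p$.

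For $p=\infty$ the space $L_\infty(\mu)$ fails to be order continuous, so the route through $E'=E^\ast$ is unavailable and the norming property must be checked by hand. Here $E'=L_1(\mu)$, and I would verify directly that $L_1(\mu)$ is norming for $L_\infty(\mu)$: given $f\in L_\infty(\mu)$ and $\eps>0$, by $\sigma$-finiteness one can choose $A\in\Sigma$ with $0<\mu(A)<\infty$ on which $|f|>\|f\|_\infty-\eps$, and set $g=\tfrac{1}{\mu(A)}\,\overline{\sign(f)}\,\eins_A\in B_{L_1(\mu)}$, so that $\int_\Omega fg\,d\mu=\tfrac{1}{\mu(A)}\int_A |f|\,d\mu>\|f\|_\infty-\eps$. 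This shows $\|f\|_\infty=\sup\bigl\{\bigl|\int_\Omega fg\,d\mu\bigr|\colon g\in B_{L_1(\mu)}\bigr\}$, i.e.\ that $E'$ is norming for $E$. Proposition~\ref{Prop:Edex} then applies again and gives the inequality for $p=\infty$, completing the proof.

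I do not anticipate a genuine obstacle, since the whole substance of the corollary is already encapsulated in Proposition~\ref{Prop:Edex}; what remains is only the elementary verification of the norming property. The one mild subtlety is the endpoint $p=\infty$, where order continuity breaks down and one must exhibit the norming functionals explicitly via the $\sigma$-finiteness of $\mu$, as in the construction above.
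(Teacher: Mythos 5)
Your proposal is correct and follows essentially the same route as the paper: the paper derives the corollary from Proposition \ref{Prop:Edex} by noting that $L_p(\mu)$ is order continuous for $1\leq p<\infty$ (so $E'=E^\ast$ is norming) and that $L_\infty(\mu)'$ is norming for $L_\infty(\mu)$, the latter cited from Lindenstrauss--Tzafriri rather than verified by hand as you do. Your explicit construction of the norming functionals for $p=\infty$ is a harmless elaboration of the same argument.
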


The equality does not hold in general, as for $p\neq 1,\infty$, we have that $n(\ell_p^2)<1$. On the other hand, we will show that the equality holds for the cases $p=1$ and $p=\infty$.

We start dealing with spaces of Bochner integrable functions.

\begin{proposition}\label{L1-general}
	Let $X$, $Y$ be Banach spaces, let $(\Omega,\Sigma,\mu)$ be a $\sigma$-finite measure space, and let $G\in \mathcal{L}(X,Y)$ be a norm-one operator. Consider the norm-one composition operator $\widetilde{G} \colon L_1(\mu,X)\longrightarrow L_1(\mu,Y)$ given by
	$ \widetilde{G}(f)=G\circ f$ for every $f\in L_1(\mu,X)$. Then
	$$ n_{\widetilde{G}}\big(L_1(\mu,X),L_1(\mu,Y)\big)=n_G(X,Y). $$
\end{proposition}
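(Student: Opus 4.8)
The inequality $n_{\widetilde G}(L_1(\mu,X),L_1(\mu,Y))\le n_G(X,Y)$ is already contained in Corollary \ref{corollary:composition_inequality_ellp}, so the whole content is the reverse inequality $n_{\widetilde G}(L_1(\mu,X),L_1(\mu,Y))\ge n_G(X,Y)$. The plan is to reduce it, by a conditional-expectation approximation, to a diagonal operator on an $\ell_1$-sum of finitely many copies of $X$ and $Y$, where Proposition \ref{sumas} applies directly. I would fix a norm-one $T\in\mathcal L(L_1(\mu,X),L_1(\mu,Y))$ and aim to prove $v_{\widetilde G}(T)\ge n_G(X,Y)$.

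Given a finite partition $\mathcal P=\{\Omega_1,\dots,\Omega_N\}$ of $\Omega$ into sets of finite positive measure, let $V^X_{\mathcal P}\subseteq L_1(\mu,X)$ be the subspace of functions constant on each $\Omega_i$, and let $E^X_{\mathcal P}f=\sum_i \frac{\eins_{\Omega_i}}{\mu(\Omega_i)}\int_{\Omega_i}f\,d\mu$ be the associated conditional expectation; this is a norm-one projection onto $V^X_{\mathcal P}$, and $(x_i)_i\mapsto \sum_i \frac{\eins_{\Omega_i}}{\mu(\Omega_i)}x_i$ is an isometric identification $[\bigoplus_{i=1}^N X]_{\ell_1}\cong V^X_{\mathcal P}$; likewise for $Y$. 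The crucial structural observation is that $\widetilde G$ maps $V^X_{\mathcal P}$ into $V^Y_{\mathcal P}$ and that, under these identifications, $\widetilde G|_{V^X_{\mathcal P}}$ is exactly the diagonal operator $G_{\mathcal P}:=\bigoplus_{i=1}^N G$; hence Proposition \ref{sumas} gives $n_{G_{\mathcal P}}\big([\bigoplus_{i=1}^N X]_{\ell_1},[\bigoplus_{i=1}^N Y]_{\ell_1}\big)=n_G(X,Y)$.

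Next I would set $T_{\mathcal P}:=E^Y_{\mathcal P}\circ T\circ J^X_{\mathcal P}$, where $J^X_{\mathcal P}$ denotes the inclusion of $V^X_{\mathcal P}$. Applying Lemma \ref{Lemm:numericalIndexThroughMaps}(a) to the norm-one operator $\Psi\colon R\mapsto E^Y_{\mathcal P}\circ R\circ J^X_{\mathcal P}$, which sends the norm-one operator $\widetilde G$ to the norm-one operator $G_{\mathcal P}=\Psi(\widetilde G)$, yields $v_{G_{\mathcal P}}(T_{\mathcal P})\le v_{\widetilde G}(T)$. Combining with the previous paragraph, $v_{\widetilde G}(T)\ge v_{G_{\mathcal P}}(T_{\mathcal P})\ge n_G(X,Y)\,\|T_{\mathcal P}\|$ for every partition $\mathcal P$.

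It then remains to produce partitions along which $\|T_{\mathcal P}\|\to 1=\|T\|$, and I expect this norm-recovery to be the main obstacle. Since the atoms $\frac{\eins_A}{\mu(A)}x$ (with $0<\mu(A)<\infty$ and $x\in S_X$) are norming, i.e. $B_{L_1(\mu,X)}=\overline{\aconv}\{\frac{\eins_A}{\mu(A)}x\}$, I can fix such an atom $f_0$ with $\|Tf_0\|>1-\eps$. Choosing a refining sequence of finite partitions that all refine $\{A,\Omega\setminus A\}$ and generate $\Sigma$, the contractivity of the conditional expectations together with the density of step functions gives $E^Y_{\mathcal P}(Tf_0)\to Tf_0$ while $f_0\in V^X_{\mathcal P}$; hence $\|T_{\mathcal P}f_0\|=\|E^Y_{\mathcal P}(Tf_0)\|>1-2\eps$ for fine enough $\mathcal P$, so $\|T_{\mathcal P}\|>1-2\eps$. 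Letting $\eps\to0$ gives $v_{\widetilde G}(T)\ge n_G(X,Y)$ for every norm-one $T$, that is $n_{\widetilde G}\ge n_G$, which together with Corollary \ref{corollary:composition_inequality_ellp} finishes the proof. The delicate point is precisely this simultaneous requirement that the approximating atom lie in the step-function subspace and that the codomain conditional expectations converge strongly to the identity; it is here that the special $L_1$-structure (contractive conditional expectations and density of step functions) is used, and this is exactly what fails for general Köthe spaces, explaining why only the inequality survives in Proposition \ref{Prop:Edex}.
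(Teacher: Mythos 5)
Your argument is correct, but it follows a genuinely different route from the paper's. The paper (following \cite[Theorem 8]{M-P}) works at the level of a single operator: given $T$ with $\|T\|=1$, it locates a norming atom $x_0\,\eins_A/\mu(A)$ and a step functional $\sum_B y_B^\ast\eins_B$, arranges $A\subseteq B_0$ by a convexity argument, and then builds by hand one operator $S\in\mathcal{L}(X,Y)$ with $\|S\|>1-\eps$ and $v_{G,\delta}(S)\leq v_{\widetilde G,\delta}(T)$, comparing the approximate spatial numerical radii directly. You instead compress $T$ to the step-function subspaces $V^X_{\mathcal P}$, $V^Y_{\mathcal P}$ via the contractive conditional expectations, observe that the compression of $\widetilde G$ is exactly the finite $\ell_1$-diagonal $\bigoplus_{i} G$, and then outsource the hard work to Proposition \ref{sumas} (which gives $n_{G_{\mathcal P}}=n_G(X,Y)$) and to the abstract Lemma \ref{Lemm:numericalIndexThroughMaps}.a applied to $R\mapsto E^Y_{\mathcal P}\circ R\circ J^X_{\mathcal P}$; there is no circularity, since Proposition \ref{sumas} is proved independently. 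Your approach is structurally cleaner and makes transparent why $L_1$ is special (contractive conditional expectations onto finite-partition subspaces on which $\widetilde G$ diagonalizes, plus density of step functions), at the cost of leaning on the nontrivial $\ell_1$-sum result; the paper's is more self-contained and hands-on. Two small points to tidy up: your phrase about partitions that ``generate $\Sigma$'' is neither needed nor available in general ($\Sigma$ need not be countably generated) --- what you actually use, and what suffices, is that for the single function $Tf_0$ there is \emph{some} finite partition refining $\{A,\Omega\setminus A\}$ on whose cells an $\eps$-approximating step function is constant, whence $\|E^Y_{\mathcal P}(Tf_0)-Tf_0\|<2\eps$ by contractivity; and you should either reduce first to a probability measure (as the paper does via \cite[Proposition 1.6.1]{CembranosMendoza}) or explain how cells of infinite measure are handled in the definition of $E_{\mathcal P}$.
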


\begin{proof} We follow the lines of \cite[Theorem 8]{M-P}.
	We can assume without loss of generality that $(\Omega,\Sigma,\mu)$ is a probability space, as vector-valued $L_1$-spaces associated to $\sigma$-finite measures are (up to an isometric isomorphism) vector-valued $L_1$-spaces associated to probability measures, see \cite[Proposition 1.6.1]{CembranosMendoza} for instance.
	
	In order to prove that $n_{\widetilde{G}}\big(L_1(\mu,X),L_1(\mu,Y)\big)\geq n_G(X,Y)$, we need to introduce some notation. If $(\Omega,\Sigma,\mu)$ is a probability space, we write $\Sigma^+:=\left\{B\in\Sigma \colon \mu(B)>0 \right\}$. Given $X$ and $Y$ Banach spaces, the set
	\begin{equation*}
	\mathcal{B}:=\left\{\sum_{B\in\pi}y_B^\ast \,\eins_B \colon \pi\subseteq\Sigma^+ \textnormal{ finite partition of } \Omega, \ y_B^\ast \in S_{Y^\ast } \right\}\subseteq S_{L_\infty(\mu,Y^\ast )}
	\end{equation*}
	satisfies that
	\begin{equation}\label{L1:eq1}
	B_{L_1(\mu,Y)^\ast }=\overline{\conv}^{w^\ast }(\mathcal{B}),
	\end{equation}
	since $\T\mathcal{B}=\mathcal{B}$ and it is clearly norming for the simple functions of $L_1(\mu,Y)$.
	On the other hand, we will write
	\begin{equation*}
	\mathcal{A}:=\left\{x\,\frac{\eins_A}{\mu(A)} \colon x\in S_X, \, A\in\Sigma^+ \right\},
	\end{equation*}
	which satisfies that
	\begin{equation} \label{L1:eq2}
	B_{L_1(\mu,X)}=\overline{\conv}(\mathcal{A}).
	\end{equation}
	Indeed, it is enough to notice that every simple function $f\in S_{L_1(\mu,X)}$ belongs to the convex hull of $\mathcal{A}$: such an $f$ can be written as
	$ f=\sum_{A\in\pi} x_A\eins_A$, where $\pi\subseteq\Sigma^+$ is a finite family of pairwise disjoint sets of $\Omega$ and $x_A\in X\backslash \{0\}$ for each $A\in\pi$. Then
	$$ \|f\|=\sum_{A\in\pi} \|x_A\|\mu(A)=1,$$
	and hence
	$$ f=\sum_{A\in\pi}\|x_A\|\mu(A) \dfrac{x_A}{\|x_A\|} \dfrac{\eins_A}{\mu(A)} \in \conv(\mathcal{A}).$$
	Now, fix $T\in \mathcal{L}\big(L_1(\mu,X),L_1(\mu,Y)\big)$ with $\|T\|=1$ and $\eps>0$, we may find by \eqref{L1:eq2} elements $x_0\in S_X$ and $A\in\Sigma^+$ such that
	$$ \left\|T\left(x_0\dfrac{\eins_A}{\mu(A)}\right) \right\|>1-\eps. $$
	Using \eqref{L1:eq1}, there exists $f^\ast =\sum_{B\in\pi} y_B^\ast \,\eins_B$, where $\pi$ is a finite partition of $\Omega$ into sets of $\Sigma^+$ and $y_B^\ast \in S_{Y^\ast }$ for each $B\in\pi$, satisfying that
	\begin{equation}\label{L1:eq3}
	\re f^\ast \left(T\left(x_0\dfrac{\eins_A}{\mu(A)}\right)\right)=\re \sum_{B\in\pi} y_B^\ast \left(\int_B T\left(x_0 \dfrac{\eins_A}{\mu(A)}\right)\, d\mu \,\right)>1-\eps.
	\end{equation}
	Then, we can write
	$$ T\left(x_0\dfrac{\eins_A}{\mu(A)}\right)=\sum_{\substack{B\in\pi\\\mu(A\cap B)\neq 0}}\dfrac{\mu(A\cap B)}{\mu(A)}T\left(x_0\dfrac{\eins_{A\cap B}}{\mu(A\cap B)}\right) $$
	so, by a standard convexity argument, we can assume that there is $B_0\in\pi$ such that, if we take the set $A\cap B_0$ in the role of new $A$, the inequality \eqref{L1:eq3} remains true. After this modification of $A$, we obtain additionally that $A\subseteq B_0$. By the density of norm-attaining functionals, we can and do assume that every $y_B^\ast $ is norm-attaining, so there is $y_{B_0}\in S_Y$ such that $y_{B_0}^\ast (y_{B_0})=1$. Define the operator $S\colon X \longrightarrow Y$ by
	$$ S(x)=\int_{B_0} T\left(x\,\dfrac{\eins_A}{\mu(A)}\right)d\mu +\left[\sum_{B\in\pi\backslash \{B_0\}} y_B^\ast \left(\int_B T\left(x\,\dfrac{\eins_A}{\mu(A)}\right)d\mu\right)\right]y_{B_0} \quad (x\in X).$$
	It is easy to check that $\|S\|\leq1$, and moreover $\|S\|>1-\eps$ since, as a consequence of \eqref{L1:eq3}, we obtain that
	$$ \|S(x_0)\|\geq \left|y_{B_0}^\ast (Sx_0)\right|=\left|f^\ast \left(T\left(x_0\,\dfrac{\eins_A}{\mu(A)}\right)\right)\right|> 1-\eps. $$
	Now, fixed $\delta>0$, we consider $x\in S_X$ and $y^\ast \in S_{Y^\ast }$ with $\re y^\ast (Gx)>1-\delta$. Take $f\in \mathcal{A}$ defined by
	$$f=x\,\dfrac{\eins_A}{\mu(A)}$$
	and $g^\ast \in\mathcal{B}$ by
	$$ g^\ast (h)=y^\ast \left(\int_{B_0}h\,d\mu\,\right)+\sum_{B\in\pi\backslash \{B_0\}} y_B^\ast \left(\int_B h\, d\mu\,\right)y^\ast (y_{B_0}) \qquad \big(h\in L_1(\mu,Y)\big). $$
	We have that
	$$\widetilde{G}f=\widetilde{G}\left(x\,\dfrac{\eins_A}{\mu(A)}\right)=G(x)\,\dfrac{\eins_A}{\mu(A)},$$
	and, recalling that $A\subseteq B_0$ and the partition is a family of pairwise disjoint sets, we deduce that
	\begin{align*}
	\re g^\ast (\widetilde{G}f) &= \re \left( y^\ast \left(\int_{B_0} G(x)\,\dfrac{\eins_A}{\mu(A)}\, d\mu\right)+\left[\sum_{B\in\pi\backslash \{B_0\}} y_B^\ast \left(\int_B G(x)\,\dfrac{\eins_A}{\mu(A)}\,d\mu\right)\right]y^\ast (y_{B_0})\right)\\&=\re y^\ast (Gx)>1-\delta.
	\end{align*}
	Moreover,
	\begin{align*}
	|y^\ast (Sx)| &=\left|y^\ast \left(\int_{B_0} T\left(x\,\dfrac{\eins_A}{\mu(A)}\right) d\mu\right) +\left[\sum_{B\in\pi\backslash \{B_0\}} y_B^\ast \left(\int_B T\left(x\,\dfrac{\eins_A}{\mu(A)}\right) d\mu\right)\right]y^\ast (y_{B_0})\,\right|\\ &=|g^\ast (Tf)|\leq v_{\widetilde{G},\delta}(T).
	\end{align*}
	It follows that $v_{G,\delta}(S)\leq v_{\widetilde{G},\delta}(T)$, hence
	$$
	v_{\widetilde{G}}(T)\geq v_G(S) \geq n_G(X,Y) \|S\| \geq (1-\eps)n_G(X,Y).
	$$
	Taking $\eps \downarrow 0$, we get that $v_{\widetilde{G}}(T)\geq n_G(X,Y)$ and, the arbitrariness of $T$ gives the desired inequality.
	
	The reverse inequality $ n_{\widetilde{G}}\big(L_1(\mu,X),L_1(\mu,Y)\big)\leq n_G(X,Y)$ follows from Corollary \ref{corollary:composition_inequality_ellp}.
\end{proof}

The last result on composition operators on vector-valued function spaces deals with spaces of essentially bounded vector-valued functions.

\begin{proposition}\label{Prop:Linf-general}
	Let $X,Y$ be Banach spaces, let $(\Omega,\Sigma,\mu)$ be a $\sigma$-finite measure space, and let $G\in \mathcal{L}(X,Y)$ be a norm-one operator. Consider the norm-one composition operator $\widetilde{G} \colon L_\infty(\mu,X)\longrightarrow L_\infty(\mu,Y)$ given by
	$ \widetilde{G}(f)=G\circ f$ for every $f\in L_\infty(\mu,X)$. Then
	$$ n_{\widetilde{G}}\big(L_\infty(\mu,X),L_\infty(\mu,Y)\big)=n_G(X,Y). $$
\end{proposition}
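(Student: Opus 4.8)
The inequality $n_{\widetilde{G}}(L_\infty(\mu,X),L_\infty(\mu,Y))\leq n_G(X,Y)$ is already contained in Corollary \ref{corollary:composition_inequality_ellp} (recall that $L_\infty(\mu)'$ is norming for $L_\infty(\mu)$), so the whole content of the proof lies in the reverse inequality. The plan is to fix a norm-one operator $T\in\mathcal{L}(L_\infty(\mu,X),L_\infty(\mu,Y))$ and show that $v_{\widetilde{G}}(T)\geq n_G(X,Y)$; the arbitrariness of $T$ then gives the statement. The guiding template is the proof of the analogous result for $C(K,X)$ spaces given earlier in this subsection, but since $L_\infty$ carries no continuous point evaluations, the role played there by a Dirac functional $\delta_{t_0}$ must be taken over by averages $\tfrac{1}{\mu(A)}\int_A\,\cdot\,d\mu$ over measurable sets $A$ of finite positive measure.

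Fix $\eps>0$. First I would pick $f_0\in S_{L_\infty(\mu,X)}$ with $\|Tf_0\|_\infty>1-\eps$ and, by $\sigma$-finiteness, a set of finite positive measure on which $\|(Tf_0)(t)\|>1-\eps$. Since $f_0$ is strongly measurable, hence essentially separably valued, I can shrink this set to a finite positive measure set $A_1$ on which $\|f_0(t)-z_0\|<\eps$ for some $z_0$ in the essential range of $f_0$, so $z_0\in B_X$. Writing $z_0=(1-\lambda)x_1+\lambda x_2$ with $x_1,x_2\in S_X$, $\lambda\in[0,1]$, and setting $f_j=f_0\eins_{\Omega\setminus A_1}+x_j\eins_{A_1}$, a convexity argument at the level of $T$ produces an index $j$ and a positive measure subset $A_2\subseteq A_1$ such that $f_*:=f_j$ is constant equal to $x_0:=x_j\in S_X$ on $A_1$ and $\|(Tf_*)(t)\|>1-2\eps$ for a.e.\ $t\in A_2$. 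This convexity step is essential: it replaces the (possibly non-unit) value $z_0$ by a genuine unit vector $x_0$ admitting a norming functional, while keeping $f_*$ exactly constant on the relevant set.

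Applying strong measurability once more, now to $Tf_*$, I would pass to a further finite positive measure subset $A\subseteq A_2$ on which $\|(Tf_*)(t)-w_0\|<\eps$ for some $w_0\in B_Y$ with $\|w_0\|>1-3\eps$, and choose $y_0^\ast\in S_{Y^\ast}$ with $\re y_0^\ast(w_0)>1-4\eps$, so that $\re y_0^\ast\bigl((Tf_*)(t)\bigr)>1-5\eps$ for a.e.\ $t\in A$. This second localisation has no counterpart in the $C(K)$ argument, and it is exactly where I expect the only genuine difficulty to appear: an average of unit vectors may have arbitrarily small norm, so in order to guarantee that the operator $S$ constructed below nearly attains its norm, one must first align the values of $Tf_*$ around a single direction $y_0^\ast$ before averaging.

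Finally, fix $x_0^\ast\in S_{X^\ast}$ with $x_0^\ast(x_0)=1$ and define $\Phi\colon X\longrightarrow L_\infty(\mu,X)$ by $\Phi(x)=x_0^\ast(x)\,f_*\eins_{\Omega\setminus A}+x\,\eins_A$, which is linear, maps $B_X$ into $B_{L_\infty(\mu,X)}$, equals the constant $x$ on $A$, and satisfies $\Phi(x_0)=f_*$ (since $f_*\equiv x_0$ on $A$). Put $Sx=\tfrac{1}{\mu(A)}\int_A (T\Phi(x))\,d\mu\in Y$, a well-defined linear operator with $\|S\|\geq\re y_0^\ast(Sx_0)=\tfrac{1}{\mu(A)}\int_A\re y_0^\ast\bigl((Tf_*)(t)\bigr)\,d\mu>1-5\eps$. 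For the numerical radius comparison, given $x\in S_X$ and $y^\ast\in S_{Y^\ast}$ with $\re y^\ast(Gx)>1-\delta$, I would test $T$ against $f=\Phi(x)\in S_{L_\infty(\mu,X)}$ and the norm-one functional $g^\ast(h)=\tfrac{1}{\mu(A)}\int_A y^\ast(h(t))\,d\mu$ on $L_\infty(\mu,Y)$: because $f\equiv x$ on $A$, one has $\re g^\ast(\widetilde{G}f)=\re y^\ast(Gx)>1-\delta$, whereas $|g^\ast(Tf)|=|y^\ast(Sx)|$ by linearity of the Bochner integral. Thus $|y^\ast(Sx)|\leq v_{\widetilde{G},\delta}(T)$, and taking the supremum gives $v_{G,\delta}(S)\leq v_{\widetilde{G},\delta}(T)$ for every $\delta>0$. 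Passing to infima in $\delta$ and using $v_G(S)\geq n_G(X,Y)\|S\|$ yields $v_{\widetilde{G}}(T)\geq v_G(S)>(1-5\eps)\,n_G(X,Y)$, and letting $\eps\downarrow0$ completes the argument.
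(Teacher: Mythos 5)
Your proof is correct and follows essentially the same route as the paper's (which adapts \cite[Theorem~2.3]{M-V}): localise $f_0$ near a value via the essential separability of its range, use the convexity trick to replace that value by a unit vector $x_0$, build $S$ by averaging $T\circ\Phi$ over a small set, and compare numerical radii via the averaged functional $g^\ast$. The only difference is that where the paper invokes \cite[Lemma~2.1]{M-V} (Lemma \ref{lemma:Linf:lema1}) to produce the averaging set, you reprove that lemma inline through your ``second localisation'' of $Tf_*$ around $w_0$ and the norming functional $y_0^\ast$, at the cost of a slightly worse constant ($1-5\eps$ instead of $1-2\eps$), which is immaterial.
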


The proof of this result borrows the ideas in \cite[Theorem~2.3]{M-V}. We also borrow from \cite{M-V} two preliminary lemmas that we state for convenience of the reader.

\begin{lemma}[\mbox{\cite[Lemma~2.1]{M-V}}]\label{lemma:Linf:lema1}
	Let $f\in L_\infty(\mu,X)$ with $\|f(t)\|>\lambda$ a.e. Then there exists $B\in\Sigma$ with $0<\mu(B)<\infty$ such that
	$$ \left\|\dfrac{1}{\mu(B)}\int_B f(t) \, d\mu(t) \, \right\|>\lambda. $$
\end{lemma}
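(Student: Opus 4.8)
The plan is to pass to a finite-measure subset on which $\|f\|$ is bounded away from $\lambda$ by a \emph{fixed} margin, and then to use strong measurability to isolate a piece on which $f$ is essentially constant, so that averaging $f$ over that piece moves its norm by less than the margin. The point is that a direct average over a large set can cancel completely (opposing directions integrating to zero), so one must first localize.

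First I would create room in the inequality. Since $\|f(t)\| > \lambda$ almost everywhere, the full-measure set $\{t\colon \|f(t)\| > \lambda\}$ is the increasing union over $k\in\N$ of the sets $\{t\colon \|f(t)\| > \lambda + 1/k\}$; hence for some $k_0$, writing $\lambda' := \lambda + 1/k_0 > \lambda$, the set $\{t\colon \|f(t)\| > \lambda'\}$ has positive measure. Using $\sigma$-finiteness to write $\Omega = \bigcup_n \Omega_n$ with $\mu(\Omega_n) < \infty$ and intersecting with a suitable $\Omega_n$, I obtain a set $\Omega_0 \in \Sigma$ with $0 < \mu(\Omega_0) < \infty$ and $\|f(t)\| > \lambda'$ for a.e.\ $t \in \Omega_0$.

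Next I would approximate $f$ on $\Omega_0$ by a countably-valued function. By strong measurability $f$ has essentially separable range, which is bounded; covering this separable set by countably many balls of radius $\eps$ and disjointifying the preimages yields a countable measurable partition $\Omega_0 = \bigsqcup_i A_i$ together with points $x_i \in X$ such that $\|f(t) - x_i\| < \eps$ for a.e.\ $t \in A_i$. I fix $\eps < (\lambda' - \lambda)/2$ and choose an index $i$ with $\mu(A_i) > 0$, which exists because $\sum_i \mu(A_i) = \mu(\Omega_0) > 0$; then I set $B := A_i$, so that $0 < \mu(B) < \infty$.

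Finally, the estimate closes the argument. On $B$ we have $\|f(t) - x_i\| < \eps$ a.e., whence $\|x_i\| > \lambda' - \eps$, while the triangle inequality for the Bochner integral gives $\bigl\|\tfrac{1}{\mu(B)}\int_B f\,d\mu - x_i\bigr\| \leq \tfrac{1}{\mu(B)}\int_B \|f - x_i\|\,d\mu < \eps$. Combining the two, $\bigl\|\tfrac{1}{\mu(B)}\int_B f\,d\mu\bigr\| > \|x_i\| - \eps > \lambda' - 2\eps > \lambda$, as required. I expect the only genuinely delicate step to be the first reduction: the strict inequality $\|f(t)\|>\lambda$ is indispensable and must be upgraded to a uniform gap $\lambda' > \lambda$ \emph{before} approximating, since otherwise the final chain degrades to $\lambda - 2\eps$ and proves nothing.
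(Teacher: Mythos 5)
Your proof is correct; note that the paper does not prove this lemma itself but imports it from \cite[Lemma~2.1]{M-V}, and your argument (upgrade the strict pointwise bound to a uniform gap $\lambda'>\lambda$ on a set of positive finite measure, then use essential separability of the range to localize to a measurable piece on which $f$ stays within $\varepsilon<(\lambda'-\lambda)/2$ of a single vector $x_i$, and average there) is exactly the standard proof of that cited result. The only hypothesis you use implicitly is that $\mu$ is not the zero measure, so that some set $\{t\colon\|f(t)\|>\lambda+1/k\}$ has positive measure; this is automatic in every application.
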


\begin{lemma}[\mbox{\cite[Lemma~2.2]{M-V}}]\label{lemma:Linf:lema2}
	Let $f\in L_\infty(\mu,X)$, $C\in\Sigma$ with positive measure, and $\eps>0$. Then there exist $x\in X$ and $A\subseteq C$ with $0<\mu(A)<\infty$ such that $\|x\|=\|f\,\eins_C\|$ and $\|(f-x)\,\eins_A\|<\eps$. Accordingly, the set
	$$ \left\{x\,\eins_A+f\,\eins_{\Omega\backslash A} \colon x\in S_X, \, f\in B_{L_\infty(\mu,X)}, \, A\in\Sigma \text{ con } 0<\mu(A)<\infty \right\} $$
	is dense in $S_{L_\infty(\mu,X)}$.
\end{lemma}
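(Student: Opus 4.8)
The plan is to prove the first assertion — the existence of $x$ and $A$ — and then read off the density statement as an immediate specialization to $C=\Omega$. Write $\lambda=\|f\eins_C\|$, the essential supremum of $t\mapsto\|f(t)\|$ over $C$. If $\lambda=0$, then $f=0$ a.e.\ on $C$, so $x=0$ together with any $A\subseteq C$ having $0<\mu(A)<\infty$ (which exists by $\sigma$-finiteness of $\mu$) already works; hence I may assume $\lambda>0$, and since establishing the claim for a smaller value of $\eps$ only strengthens it, I may also assume $\eps<\lambda$. The first step is to consider the superlevel set $D=\{t\in C\colon \|f(t)\|>\lambda-\eps/4\}$, which has positive measure by the very definition of the essential supremum; using $\sigma$-finiteness, I would intersect $D$ with a set of finite measure to arrange $0<\mu(D)<\infty$ while keeping positivity.

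The core step is to locate inside $D$ a subset of positive measure on which $f$ is almost constant. Here I would invoke the strong measurability of $f$: by the Pettis measurability theorem its essential range is separable, hence covered by countably many balls $B(y_k,\eps/4)$ of radius $\eps/4$. Then $D$ agrees, up to a null set, with $\bigcup_k\bigl(f^{-1}(B(y_k,\eps/4))\cap D\bigr)$, and since $\mu(D)>0$, at least one piece $A:=f^{-1}(B(y_k,\eps/4))\cap D$ has positive (and finite) measure. Fixing any $t_0\in A$ at which the defining inequalities hold (a.e.\ point of $A$ qualifies) and putting $x_0=f(t_0)$, I get for a.e.\ $t\in A$ the estimate $\|f(t)-x_0\|\leq\|f(t)-y_k\|+\|y_k-x_0\|<\eps/2$, while $\lambda\geq\|x_0\|>\lambda-\eps/4>0$. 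I would then rescale, setting $x=(\lambda/\|x_0\|)x_0$, so that $\|x\|=\lambda=\|f\eins_C\|$ and $\|x-x_0\|=\lambda-\|x_0\|<\eps/4$. Combining, for a.e.\ $t\in A$ one has $\|f(t)-x\|<\eps/2+\eps/4<\eps$, i.e.\ $\|(f-x)\eins_A\|<\eps$, which is the first assertion.

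For the density statement, given $g\in S_{L_\infty(\mu,X)}$ and $\eps>0$, I would apply the first part with $f=g$ and $C=\Omega$, so that $\lambda=\|g\|=1$. This yields $x\in X$ with $\|x\|=1$, that is $x\in S_X$, and $A\in\Sigma$ with $0<\mu(A)<\infty$ such that $\|(g-x)\eins_A\|<\eps$. Setting $h=x\,\eins_A+g\,\eins_{\Omega\setminus A}$ produces an element of the displayed set (with $g\in B_{L_\infty(\mu,X)}$ playing the role of the admissible function), and since $g-h$ vanishes off $A$ and equals $g-x$ on $A$, we get $\|g-h\|=\|(g-x)\eins_A\|<\eps$. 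As $g$ and $\eps$ are arbitrary, the set is dense in $S_{L_\infty(\mu,X)}$.

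The only genuinely delicate point is the extraction of the almost-constant piece $A$: it rests on the essential separability of the range of a strongly measurable function combined with the covering-and-pigeonhole argument, and on $\sigma$-finiteness (used to keep $D$, and hence $A$, of finite positive measure). The subsequent rescaling that forces the exact identity $\|x\|=\|f\eins_C\|$, and the observation that $g-h$ is supported on $A$, are routine.
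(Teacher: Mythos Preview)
Your proof is correct. Note, however, that the paper does not actually prove this lemma: it is merely quoted from \cite{M-V} (``We also borrow from \cite{M-V} two preliminary lemmas that we state for convenience of the reader''), so there is no argument in the paper to compare against. The approach you give --- pass to a superlevel set where $\|f(t)\|$ is close to the essential supremum, use essential separability of the range of a strongly measurable function together with a countable $\eps/4$-cover to extract a subset on which $f$ oscillates little, and then rescale to hit the exact norm --- is the standard one and is essentially what the original reference does.
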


\begin{proof}[Proof of Proposition~\ref{Prop:Linf-general}]
	In order to show that $n_{\widetilde{G}}\big(L_\infty(\mu,X),L_\infty(\mu,Y)\big)\geq n_G(X,Y),$ we fix an operator $T\in \mathcal{L}\big(L_\infty(\mu,X),L_\infty(\mu,Y) \big)$ with $\|T\|=1$. Given $\eps>0$, we may find $f_0\in S_{L_\infty(\mu,X)}$ and $C\subseteq \Omega$ with $\mu(C)>0$ such that
	\begin{equation}\label{Linf:eq1}
	\big\|[Tf_0](t)\big\|>1-\eps \qquad (t\in C).
	\end{equation}
	On account of Lemma \ref{lemma:Linf:lema2}, there exist $y_0\in B_X$ and $A\subseteq C$ with $0<\mu(A)<\infty$ such that $\|(f_0-y_0) \, \eins_A\|<\eps$. Now, write
	$y_0=(1-\lambda)x_1+\lambda x_2$ with $0\leq\lambda\leq 1$, $x_1,x_2\in S_X$, and consider the functions
	$$f_j=x_j \, \eins_A+f_0\, \eins_{\Omega\backslash A} \in L_\infty(\mu,X) \qquad (j=1,2).$$
	which clearly satisfy $\|f_0-((1-\lambda)f_1+\lambda f_2)\|<\eps$. Since $A\subseteq C$, by using \eqref{Linf:eq1}, we have
	$$\big\|[T
	f_1](t)\big\|>1-2\eps \qquad \text{or} \qquad \big\|[T
	f_2](t)\big\|>1-2\eps$$
	for every $t\in A$. Now, we choose $i\in\{1,2\}$ such that
	$$A_i=\big\{t\in A \colon \big\|[Tf_i](t)\big\|>1-2\eps\big\}$$
	has positive measure, we write $x_0=x_i$, and we finally use Lemma~\ref{lemma:Linf:lema1} to get $B\subseteq A_i\subseteq A$ with $0<\mu(B)<\infty$ such that
	\begin{equation}\label{Linf:eq2}
	\left\|\dfrac{1}{\mu(B)}\int_B T\left(x_0\,\eins_A+f_0\,\eins_{\Omega\backslash A}\right)\, d\mu \, \right\|>1-2\eps.
	\end{equation}
	Next, we fix $x_0^\ast \in S_{X^\ast }$ with
	$x_0^\ast (x_0)=1$, we write
	$$\Phi(x)=x\,\eins_A+x_0^\ast (x)\,f_0 \,\eins_{\Omega\backslash A} \in L_\infty(\mu,X) \qquad (x\in X).$$
	and we define the operator $S\in \mathcal{L}(X,Y)$ given by
	\begin{equation*}
	S x =\dfrac{1}{\mu(B)}\int_B T\big(\Phi(x)\big) \, d\mu
	\qquad (x\in X)
	\end{equation*}
	which, by \eqref{Linf:eq2}, satisfies $ \|S\|\geq\|Sx_0\|>1-2\eps$.
	
	Given $\delta>0$, we fix $x\in S_X$ and $y^\ast \in S_{Y^\ast }$ with $\re y^\ast (Gx)>1-\delta$. Define $f\in S_{L_\infty(\mu,X)}$ by
	$$ f=\Phi(x)=x\,\eins_A+x_0^\ast (x)\,f_0\,\eins_{\Omega\backslash A}$$
	and $g^\ast \in S_{L_\infty(\mu,Y)^\ast }$ by $$g^\ast (h)=y^\ast \left(\dfrac{1}{\mu(B)}\int_Bh \,d\mu \,\right) \qquad \big(h\in L_\infty(\mu,Y)\big).$$
	Since $B\subseteq A$, we have
	\begin{align*}
	\re g^\ast (\widetilde{G}f)&=\re y^\ast \left(\dfrac{1}{\mu(B)}\int_B G\big(f(t)\big) \,d\mu(t) \,\right) \\
	&= \re y^\ast \left(\dfrac{1}{\mu(B)}\int_B G\left(x\,\eins_A(t)+x_0^\ast (x)\,f_0(t)\,\eins_{\Omega\backslash A}(t)\right) \,d\mu(t) \,\right) \\
	&= \re y^\ast \left(\dfrac{1}{\mu(B)}\int_B G(x) \eins_B(t) \,d\mu(t) \,\right)=\re y^\ast (G x)>1-\delta.
	\end{align*}
	Moreover,
	\begin{align*}
	|y^\ast (Sx)|= \left|y^\ast \left(\dfrac{1}{\mu(B)}\int_B T\big(\Phi(x)\big) \, d\mu\,\right)\right|=|g^\ast (T f)|\leq v_{\widetilde{G},\delta}(T)
	\end{align*}
	and it follows that $v_{G,\delta}(S)\leq v_{\widetilde{G},\delta}(T)$, hence
	$$v_{\widetilde{G}}(T)\geq v_G(S) \geq n_G(X,Y) \|S\| \geq (1-2\eps)n_G(X,Y).$$
	Taking $\eps \downarrow 0$, we get that $v_{\widetilde{G}}(T)\geq n_G(X,Y)$ and, the arbitrariness of $T$ gives the desired inequality.
	
	The reverse inequality is consequence of Corollary \ref{corollary:composition_inequality_ellp}.
\end{proof}

\subsection{Adjoint operators}
As shown in Lemma \ref{lemma:adjoint-inequality}, the numerical index with respect to an operator always dominates the numerical index with respect to its adjoint. Our aim here is to give some particular cases in which both indices coincide. First, we have to recall that such an equality does not always hold, as there are Banach spaces $X$ for which $n(X^\ast)<n(X)$, see \cite[\S2]{KaMaPa} for instance. We also may provide with an easier example which does not use the identity operator.

\begin{example}\label{exam:adjoint}
{\slshape The inclusion $G\colon c_0 \longrightarrow c$ satisfies that $n_G(c_0,c)=1$, whereas its adjoint $G^\ast \colon \ell_1\oplus_1 \K\longrightarrow\ell_1$, given by $(x,\lambda)\longmapsto x$, is not even a vertex of $\mathcal{L}(c^\ast,c_0^\ast)$ and so, it satisfies that $n_{G^\ast }(c^\ast ,c_0^\ast )=0$.}

Indeed, $G$ is a spear operator by, for instance, \cite[Proposition 4.2]{SpearsBook}, so $n_G(c_0,c)=1$. To prove that $G^\ast$ is not a vertex,  	
consider the operator $T\colon \ell_1\oplus_1\K\longrightarrow \ell_1$ given by $T(x,\lambda)=\lambda e_1^\ast $ for $x\in \ell_1$ and $\lambda\in \K$. Then, we have
\begin{align*} \|G^\ast (x,\lambda)+\theta T(x,\lambda)\|&=\left\|\big(x(1)+\theta\lambda\big)e_1^\ast +\sum_{k=2}^{\infty}x(k)e_k^\ast \right\|
\\&=|x(1)|+|\lambda|+\sum_{k=2}^{\infty}|x(k)|=\|x\|+|\lambda|=\|(x,\lambda)\|
\end{align*}
for every $\theta\in \T$, every $x\in \ell_1$, and every $\lambda\in\K$. This shows that $\|G^\ast +\theta T\|\leq 1$ and so $G^\ast $ is not an extreme operator. Therefore, $G^\ast$ is not a vertex by Lemma \ref{Lemm:unotsmoothnotextreme}.
\end{example}

If $X$ and $Y$ are both reflexive spaces, it is clear that the numerical index with respect to every norm-one operator $G\in\mathcal{L}(X,Y)$ coincides with the numerical index with respect to $G^\ast$. Indeed, the inequality
$$
n_{G^{\ast\ast}}(X^{\ast\ast},Y^{\ast\ast})\leq n_{G^{\ast}}(Y^{\ast},X^{\ast}) \leq n_{G}(X,Y)
$$
gives the result. Actually, it is enough that $Y$ is reflexive, or even a much weaker hypothesis: we show that the numerical index with respect to an operator coincides with the one with respect to its adjoint when the range space is $L$-embedded. Recall that a Banach space $Y$ is \emph{L-embedded} if $Y^{\ast \ast }=J_Y(Y)\oplus_1 Y_s$ for suitable closed subspace $Y_s$ of $Y^{\ast \ast }$. We refer to the monograph \cite{HWW} for background. Examples of $L$-embedded spaces are reflexive spaces (trivial), predual of von Neumann algebras so, in particular, $L_1(\mu)$ spaces, the Lorentz spaces $d(w,1)$ and $L^{p,1}$, the Hardy space $H_0^1$, the dual of the disk algebra $A(\mathbb{D})$, among others (see \cite[Examples IV.1.1 and III.1.4]{HWW}).

\begin{proposition}
	Let $X$ be a Banach space, let $Y$ be an $L$-embedded space, and let $G\in \mathcal{L}(X,Y)$ be a norm-one operator. Then $n_G(X,Y)=n_{G^\ast }(Y^\ast ,X^\ast )$.
\end{proposition}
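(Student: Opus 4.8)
The plan is to prove only the nontrivial inequality $n_G(X,Y)\le n_{G^\ast}(Y^\ast,X^\ast)$, since Lemma \ref{lemma:adjoint-inequality} already gives the reverse one. The first step is a reduction valid for \emph{every} Banach space $Y$: the assignment $\Theta\colon R\longmapsto R^\ast\circ J_X$ is a surjective linear isometry from $\mathcal{L}(Y^\ast,X^\ast)$ onto $\mathcal{L}(X,Y^{\ast\ast})$ carrying $G^\ast$ to $J_Y\circ G$. Surjectivity holds because any $\rho\in\mathcal{L}(X,Y^{\ast\ast})$ equals $R^\ast J_X$ for the operator $R$ given by $(Ry^\ast)(x):=\langle\rho x,y^\ast\rangle$, while the isometric character of $\Theta$ follows from Goldstine's theorem together with the weak$^\ast$-lower semicontinuity of the norm and the weak$^\ast$-to-weak$^\ast$ continuity of adjoints. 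Consequently $n_{G^\ast}(Y^\ast,X^\ast)=n_{J_YG}(X,Y^{\ast\ast})$, and since the isometric embedding $T\mapsto J_Y\circ T$ gives $n_{J_YG}(X,Y^{\ast\ast})\le n_G(X,Y)$ through Lemma \ref{Lemm:numericalIndexThroughMaps}, everything reduces to the opposite estimate $n_{J_YG}(X,Y^{\ast\ast})\ge n_G(X,Y)$. This is exactly the point where $L$-embeddedness must enter.

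To use the hypothesis, recall that the $L$-projection $P\colon Y^{\ast\ast}\to J_Y(Y)$ has an $M$-projection as its adjoint, so that $Y^{\ast\ast\ast}=Y^\ast\oplus_\infty N$ with $N=(J_YY)^\perp\cong Y_s^\ast$ (see \cite{HWW}). I would then compute $v_{J_YG}(\rho)$ spatially by applying Lemma \ref{Lemm-radio-G-A-B} to $J_YG\in\mathcal{L}(X,Y^{\ast\ast})$ with $A=B_X$ and the full norming set $B=B_{Y^{\ast\ast\ast}}$. Writing $\rho x=J_Y(Tx)+\sigma x$ with $T:=P\rho\in\mathcal{L}(X,Y)$ and $\sigma x\in Y_s$, the $\oplus_\infty$ splitting of $Y^{\ast\ast\ast}$ decouples the action on the two summands: the constraint $\re\langle\Psi,J_YGx\rangle>1-\delta$ restricts only the $Y^\ast$-component $\psi$ of $\Psi$, whereas its $N$-component may be chosen freely to align with and norm $\sigma x$. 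This yields the key formula
\[
v_{J_YG}(\rho)=\inf_{\delta>0}\ \sup\bigl\{\,|\psi(Tx)|+\|\sigma x\|\ \colon\ x\in B_X,\ \psi\in B_{Y^\ast},\ \re\psi(Gx)>1-\delta\,\bigr\}.
\]

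The estimate is then obtained by manufacturing, for each $x_0\in B_X$, a genuine operator in $\mathcal{L}(X,Y)$ reproducing the singular contribution. Fix $x_0$, pick $\nu_0\in B_{Y_s^\ast}$ with $\langle\nu_0,\sigma x_0\rangle=\|\sigma x_0\|$, set $\phi_0:=\langle\nu_0,\sigma(\cdot)\rangle\in B_{X^\ast}$, choose a unit vector $y_0$ in the direction of $Tx_0$ (any unit vector if $Tx_0=0$), and let $S:=T+\phi_0\otimes y_0$. Since $|\psi(Sx)|\le|\psi(Tx)|+|\phi_0(x)|\le|\psi(Tx)|+\|\sigma x\|$, the spatial formula for $v_G(S)$ (Lemma \ref{Lemm-radio-G-A-B} with $A=B_X$, $B=B_{Y^\ast}$) compared with the displayed formula gives $v_G(S)\le v_{J_YG}(\rho)$. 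On the other hand $v_G(S)\ge n_G(X,Y)\|S\|\ge n_G(X,Y)\|Sx_0\|=n_G(X,Y)\bigl(\|Tx_0\|+\|\sigma x_0\|\bigr)=n_G(X,Y)\|\rho x_0\|$, because $y_0$ is aligned with $Tx_0$ and $\phi_0(x_0)=\|\sigma x_0\|\ge0$. Taking the supremum over $x_0\in B_X$ turns $\|\rho x_0\|$ into $\|\rho\|$, whence $v_{J_YG}(\rho)\ge n_G(X,Y)\|\rho\|$ for every $\rho$, that is $n_{J_YG}(X,Y^{\ast\ast})\ge n_G(X,Y)$.

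The hard part is this last step. The naive projection $\rho\mapsto P\rho$ discards exactly the norm carried by $\sigma$, and no pointwise bound can recover it, since the numerical index of $G$ is a global (supremum over the ball) quantity rather than a pointwise one. What rescues the argument is the $M$-decomposition of $Y^{\ast\ast\ast}$ provided by $L$-embeddedness: it frees the functional acting on $Y_s$ from the constraint that pins down the functional on $J_Y(Y)$, and this decoupling is precisely what lets the finite modification $S=T+\phi_0\otimes y_0$ of $T$ detect $\|\sigma x\|$ and inherit the numerical-index lower bound from $G$. I expect the only places demanding care to be the verification of the displayed spatial formula (the interchange of the two suprema coming from the $\oplus_\infty$ splitting) and the surjectivity and isometry of $\Theta$; both are routine given Lemma \ref{Lemm-radio-G-A-B}, Goldstine's theorem, and the standard duality between $L$- and $M$-projections.
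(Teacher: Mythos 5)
Your proposal is correct and is essentially the paper's own argument: the operator $S=T+\phi_0\otimes y_0$ built from the regular part $P\rho$, a functional norming the singular part at $x_0$, and a unit vector aligned with $Tx_0$ is exactly the paper's $Sx=Ax+y_s^\ast(Bx)y_0$, and the comparison of numerical radii via the $\oplus_\infty$ splitting of $Y^{\ast\ast\ast}$ is the same computation. The only differences are presentational (you package the Goldstine step into the explicit isometry $\Theta$ and state the decoupled spatial formula before comparing, where the paper exhibits the test functionals $(J_{Y^\ast}(y^\ast),y^\ast(y_0)y_s^\ast)$ directly), so the proof goes through as written.
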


\begin{proof}
	We follow the lines of \cite[Proposition 5.21]{SpearsBook}. Write $Y^{\ast \ast }=J_Y(Y)\oplus_1 Y_s$ and let $P_Y\colon Y^{\ast \ast }\longrightarrow J_Y(Y)$ be the natural projection. For a fixed $T\in \mathcal{L}(Y^\ast ,X^\ast )$ consider operators
	$$ A:=P_Y\circ T^\ast  \circ J_X\colon X\longrightarrow J_Y(Y), \qquad B:=[\Id-P_Y]\circ T^\ast  \circ J_X\colon X\longrightarrow Y_s$$
	and observe that $T^\ast \circ J_X =A\oplus B$. Given $\eps>0$, since $J_X(B_X)$ is dense in $B_{X^{\ast \ast }}$ by Goldstine's Theorem and $T^\ast $ is weak-star to weak-star continuous, we may find $x_0\in S_X$ such that
	$$ \|T^\ast J_X(x_0)\|=\|Ax_0\|+\|Bx_0\|>\|T^\ast \|-\eps. $$
	Now, we may find $y_0\in S_Y$ and $y_s^\ast \in S_{Y_s^\ast }$ such that
	$$ \|Ax_0\|y_0=Ax_0 \qquad \textnormal{and} \qquad y_s^\ast (Bx_0)=\|Bx_0\|.$$
	We define $S\colon X\longrightarrow Y$ by
	$$ S(x)=Ax+y_s^\ast (Bx)y_0 \qquad (x\in X),$$
	and observe that
	$$\|S\|\geq\|Sx_0\|=\big\|A x_0+y_s^\ast (Bx_0)y_0\big\|=\|Ax_0\|+\|Bx_0\|>\|T^\ast \|-\eps.$$
	Given $\delta>0$, we take $x\in S_X$ and $y^\ast \in S_{Y^\ast }$ with $\re y^\ast (Gx)>1-\delta$, and consider $$z=J_X(x)\in S_{X^{\ast \ast }} \qquad \text{and} \qquad z^\ast =(J_{Y^\ast }(y^\ast ),y^\ast (y_0)y_s^\ast )\in S_{Y^{\ast \ast \ast }}$$ as $Y^{\ast \ast \ast }=J_{Y^\ast }(Y^\ast )\oplus_\infty Y_s^\ast $. Since $G^{\ast \ast }\circ J_X=J_Y\circ G$, it is clear that $\re z^\ast (G^{\ast \ast }z)=\re y^\ast (Gx)>1-\delta$.
	
	Moreover,
	$$|z^\ast (T^\ast z)|=\big|J_{Y^\ast }(y^\ast )(Ax+y^\ast (y_0)y_s^\ast (Bx))\big|=|y^\ast (Sx)|,$$
	hence $|y^\ast (Sx)|=|z^\ast (T^\ast z)|\leq v_{G^{\ast \ast },\delta}(T^\ast )$ and, taking supremum, $v_{G,\delta}(S)\leq v_{G^{\ast \ast },\delta}(T^\ast )$. Therefore,
	$$ v_{G^\ast }(T)=v_{G^{\ast \ast }}(T^\ast )\geq v_G(S)\geq n_G(X,Y)\|S\|>n_G(X,Y)\big[\|T\|-\eps\big].$$
	The arbitrariness of $\eps>0$ and of $T\in \mathcal{L}(Y^\ast ,X^\ast )$ give that $n_G(X,Y)\leq n_{G^\ast }(Y^\ast ,X^\ast )$, and the other inequality is always true.
\end{proof}

Particular cases of the above result are the following.

\begin{corollary}
Let $X$ be a Banach space and let $Y$ be a reflexive space. Then, $n_G(X,Y)=n_{G^\ast }(Y^\ast ,X^\ast )$ for every norm-one $G\in \mathcal{L}(X,Y)$.
\end{corollary}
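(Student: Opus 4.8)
The plan is to deduce this statement as an immediate special case of the preceding Proposition, whose only hypothesis on the range space is that it be $L$-embedded. Consequently, the sole point to verify is that every reflexive Banach space is $L$-embedded, after which the conclusion is automatic.

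Recall that $Y$ is $L$-embedded precisely when $Y^{\ast\ast}=J_Y(Y)\oplus_1 Y_s$ for some closed subspace $Y_s\subseteq Y^{\ast\ast}$. If $Y$ is reflexive, then $J_Y$ is onto, so $Y^{\ast\ast}=J_Y(Y)$, and one may simply take $Y_s=\{0\}$; the decomposition $Y^{\ast\ast}=J_Y(Y)\oplus_1\{0\}$ then holds trivially (the $\ell_1$-sum of a space with the zero space is an isometric copy of the space itself). Hence reflexive spaces are $L$-embedded, exactly as noted parenthetically in the discussion preceding the Proposition.

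With this observation in hand, I would apply the preceding Proposition to the given reflexive $Y$ and any norm-one $G\in\mathcal{L}(X,Y)$, obtaining $n_G(X,Y)=n_{G^\ast}(Y^\ast,X^\ast)$ directly. I do not expect any genuine obstacle here: all the work is carried by the Proposition for $L$-embedded ranges, and reflexivity is merely its simplest instance. (One could alternatively argue from scratch using the chain $n_{G^{\ast\ast}}(X^{\ast\ast},Y^{\ast\ast})\leq n_{G^\ast}(Y^\ast,X^\ast)\leq n_G(X,Y)$ from Lemma~\ref{lemma:adjoint-inequality} together with Lemma~\ref{Lemm:bidual-abstract} applied to the isometric embedding $T\mapsto T^\ast$; but invoking the $L$-embedded Proposition is the cleanest route and keeps the corollary a one-line consequence.)
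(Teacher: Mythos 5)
Your proposal matches the paper's own route exactly: the corollary is stated there as a particular case of the preceding proposition on $L$-embedded range spaces, and the paper has already noted that reflexive spaces are trivially $L$-embedded (take $Y_s=\{0\}$ in $Y^{\ast\ast}=J_Y(Y)\oplus_1 Y_s$). Your verification of that fact and the subsequent application of the proposition are correct, so the argument is complete as written.
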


\begin{corollary}
Let $X$ be a Banach space and let $\mu$ be a positive measure. Then, $n_G(X,L_1(\mu))=n_{G^\ast }(L_1(\mu)^\ast ,X^\ast )$ for every norm-one $G\in \mathcal{L}(X,L_1(\mu))$.
\end{corollary}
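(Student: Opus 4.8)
The plan is to deduce this statement directly from the preceding Proposition, so that the only real work is to verify that $L_1(\mu)$ falls within its scope, i.e.\ that $L_1(\mu)$ is an $L$-embedded Banach space. Once this is established, applying the Proposition with $Y=L_1(\mu)$ gives $n_G(X,L_1(\mu))=n_{G^\ast}(L_1(\mu)^\ast,X^\ast)$ for every norm-one $G\in \mathcal{L}(X,L_1(\mu))$, which is exactly the assertion.

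To see that $L_1(\mu)$ is $L$-embedded, I would recall that for a positive measure $\mu$ the space $L_1(\mu)$ is the isometric predual of the commutative von Neumann algebra $L_\infty(\mu)$. Since the predual of any von Neumann algebra is $L$-embedded --- its bidual decomposes as an $\ell_1$-direct sum of the canonical copy of the predual (the normal part) and a singular part --- one obtains a decomposition $L_1(\mu)^{\ast\ast}=J_{L_1(\mu)}(L_1(\mu))\oplus_1 Y_s$ for a suitable closed subspace $Y_s$ of $L_1(\mu)^{\ast\ast}$. This is precisely the notion of $L$-embeddedness recalled just before the Proposition, and it is recorded explicitly in \cite[Example IV.1.1]{HWW}.

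With this verified, the corollary is immediate. I do not anticipate any genuine obstacle: the substantive content is entirely contained in the preceding Proposition, and the fact that $L_1(\mu)$ is $L$-embedded is standard. The only point deserving a sentence of care is whether the quoted $L$-embedded structure applies for arbitrary positive $\mu$ rather than only $\sigma$-finite ones; since $L_1$ of any positive measure remains a predual of a commutative von Neumann algebra, the argument goes through in full generality and no restriction on $\mu$ beyond positivity is needed.
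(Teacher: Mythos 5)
Your proof is correct and follows exactly the route the paper intends: the corollary is stated there as a particular case of the preceding proposition on $L$-embedded range spaces, using the fact (already recorded in the paper with the citation to \cite[Examples IV.1.1 and III.1.4]{HWW}) that $L_1(\mu)$, being a predual of a von Neumann algebra, is $L$-embedded. Your remark about arbitrary positive measures is the right point of care and is handled by the same references, so nothing further is needed.
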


Finally, we show that, for rank-one operators, the numerical index is preserved by passing to the adjoint.

\begin{proposition}
	Let $X, Y$ be Banach spaces, and let $G\in \mathcal{L}(X,Y)$ be a rank-one operator of norm 1. Then $n_G(X,Y)=n_{G^\ast }(Y^\ast ,X^\ast )$ and so, the same happens to all the successive adjoints of $G$.
\end{proposition}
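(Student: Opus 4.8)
The plan is to reduce everything to the explicit formula for the numerical index of a rank-one operator obtained in Proposition \ref{prop:num-index-rankone-operators}, combined with the bidual invariance from Lemma \ref{Lemm:bidual-abstract}. First I would write $G$ in the canonical form $G=x_0^\ast\otimes y_0$ for suitable $x_0^\ast\in S_{X^\ast}$ and $y_0\in S_Y$, which is possible precisely because $G$ is rank-one of norm one (so that $\|x_0^\ast\|\,\|y_0\|=\|G\|=1$ and we may normalize each factor). By Proposition \ref{prop:num-index-rankone-operators} this already gives
$$
n_G(X,Y)=n(X^\ast,x_0^\ast)\,n(Y,y_0).
$$

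The second step is to compute the adjoint explicitly. For $y^\ast\in Y^\ast$ and $x\in X$ one has
$$
[G^\ast y^\ast](x)=y^\ast(Gx)=x_0^\ast(x)\,y^\ast(y_0)=\bigl[J_Y(y_0)\bigr](y^\ast)\,x_0^\ast(x),
$$
so that $G^\ast=J_Y(y_0)\otimes x_0^\ast\in\mathcal{L}(Y^\ast,X^\ast)$ is again a rank-one operator, now with functional part $J_Y(y_0)\in S_{(Y^\ast)^\ast}=S_{Y^{\ast\ast}}$ and vector part $x_0^\ast\in S_{X^\ast}$. Applying Proposition \ref{prop:num-index-rankone-operators} once more, this time to $G^\ast$ with domain $Y^\ast$ and range $X^\ast$, yields
$$
n_{G^\ast}(Y^\ast,X^\ast)=n\bigl((Y^\ast)^\ast,J_Y(y_0)\bigr)\,n(X^\ast,x_0^\ast)=n\bigl(Y^{\ast\ast},J_Y(y_0)\bigr)\,n(X^\ast,x_0^\ast).
$$
Now Lemma \ref{Lemm:bidual-abstract} gives $n(Y^{\ast\ast},J_Y(y_0))=n(Y,y_0)$, and comparing the two displayed expressions produces exactly $n_{G^\ast}(Y^\ast,X^\ast)=n(X^\ast,x_0^\ast)\,n(Y,y_0)=n_G(X,Y)$, as desired. (As a consistency check, this equality refines the general inequality $n_{G^\ast}(Y^\ast,X^\ast)\leq n_G(X,Y)$ of Lemma \ref{lemma:adjoint-inequality} to an equality in the rank-one case.)

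For the final clause about successive adjoints, I would simply iterate: since $G^\ast$ is itself a norm-one rank-one operator (with the form just computed), the same argument applies verbatim to $G^\ast$, giving $n_{G^\ast}(Y^\ast,X^\ast)=n_{G^{\ast\ast}}(X^{\ast\ast},Y^{\ast\ast})$, and an obvious induction on the order of the adjoint then shows that all the indices $n_{G^{(k)}}$ coincide. I do not anticipate a genuine obstacle here; the only point requiring care is the bookkeeping in identifying $G^\ast$ as a rank-one operator and matching its functional and vector parts to the hypotheses of Proposition \ref{prop:num-index-rankone-operators} so that the bidual Lemma \ref{Lemm:bidual-abstract} can be invoked on the correct factor.
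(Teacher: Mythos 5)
Your proposal is correct and follows exactly the paper's own argument: write $G=x_0^\ast\otimes y_0$, apply Proposition \ref{prop:num-index-rankone-operators} to both $G$ and $G^\ast=J_Y(y_0)\otimes x_0^\ast$, and cancel via Lemma \ref{Lemm:bidual-abstract}. The iteration for the successive adjoints is handled the same way (the paper leaves it implicit), so there is nothing to add.
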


\begin{proof}
	We can write $G=x_0^\ast \otimes y_0$ for some $x_0^\ast \in S_{X^\ast }$ and $y_0\in S_Y$, so $n_G(X,Y)=n(X^\ast ,x_0^\ast )n(Y,y_0)$ by Proposition~\ref{prop:num-index-rankone-operators}. Besides, as $G^\ast =J_Y(y_0)\otimes x_0^\ast $, we have $n_{G^\ast }(Y^\ast ,X^\ast )=n(Y^{\ast \ast },J_Y(y_0))n(X^\ast ,x_0^\ast )$ again by Proposition~\ref{prop:num-index-rankone-operators}. But $n(Y^{\ast \ast },J_Y(y_0))=n(Y,y_0)$ by Lemma \ref{Lemm:bidual-abstract} and we are done.
\end{proof}

\subsection{Composition of operators}

The next result allows us to control the numerical index with respect to the composition of two operators, in two particular cases.

\begin{lemma}\label{lemma:num-index-composition}
Let $X, Y, Z$ be Banach spaces and let $G_1\in \mathcal{L}(X,Y)$ and $G_2\in \mathcal{L}(Y,Z)$ be norm-one operators.
\begin{enumerate}
\item[(a)] If $G_2$ is an isometric embedding, then $n_{G_2\circ G_1}(X,Z)\leq n_{G_1}(X,Y)$.
\item[(b)] If $\overline{G_1(B_X)}=B_Y$, then $n_{G_2\circ G_1}(X,Z)\leq n_{G_2}(Y,Z)$.	
\end{enumerate}		
\end{lemma}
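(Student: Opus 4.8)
The plan is to deduce both items directly from Lemma~\ref{Lemm:numericalIndexThroughMaps}.b by realizing $G_2\circ G_1$ as the image of $G_1$ (resp.\ $G_2$) under a suitable \emph{isometric} composition operator between spaces of operators. In each case the whole argument reduces to checking that the relevant composition map is an isometric embedding, after which the abstract machinery does the rest.

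For (a), I would consider the linear map $\psi_a\colon \mathcal{L}(X,Y)\longrightarrow \mathcal{L}(X,Z)$ defined by $\psi_a(T)=G_2\circ T$. Since $G_2$ is an isometric embedding, $\|G_2(Tx)\|=\|Tx\|$ for every $x\in X$, so $\|\psi_a(T)\|=\sup_{x\in B_X}\|G_2(Tx)\|=\sup_{x\in B_X}\|Tx\|=\|T\|$; hence $\psi_a$ is an isometric embedding. In particular $\psi_a(G_1)=G_2\circ G_1$ has norm one, and Lemma~\ref{Lemm:numericalIndexThroughMaps}.b (applied with $Z_1=\mathcal{L}(X,Y)$, $Z_2=\mathcal{L}(X,Z)$, $u=G_1$) yields $n(\mathcal{L}(X,Z),G_2\circ G_1)\leq n(\mathcal{L}(X,Y),G_1)$, which is precisely the desired inequality $n_{G_2\circ G_1}(X,Z)\leq n_{G_1}(X,Y)$.

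For (b), I would instead use $\psi_b\colon \mathcal{L}(Y,Z)\longrightarrow \mathcal{L}(X,Z)$ given by $\psi_b(S)=S\circ G_1$. The hypothesis $\overline{G_1(B_X)}=B_Y$ forces
$$
\|\psi_b(S)\|=\sup_{x\in B_X}\|S(G_1 x)\|=\sup_{y\in \overline{G_1(B_X)}}\|Sy\|=\sup_{y\in B_Y}\|Sy\|=\|S\|,
$$
so $\psi_b$ is again an isometric embedding; then $\psi_b(G_2)=G_2\circ G_1$ has norm one, and Lemma~\ref{Lemm:numericalIndexThroughMaps}.b (now with $u=G_2$) gives $n(\mathcal{L}(X,Z),G_2\circ G_1)\leq n(\mathcal{L}(Y,Z),G_2)$, that is $n_{G_2\circ G_1}(X,Z)\leq n_{G_2}(Y,Z)$.

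There is no genuine obstacle here: once the composition maps are seen to be isometric embeddings, Lemma~\ref{Lemm:numericalIndexThroughMaps} supplies both the equality of numerical radii and the resulting inequality of numerical indices. The one mildly delicate point, and the closest thing to a difficulty, is the passage to the closure in (b): one must note that the supremum of the continuous function $y\mapsto\|Sy\|$ over $G_1(B_X)$ agrees with its supremum over $\overline{G_1(B_X)}=B_Y$, which is exactly where the hypothesis $\overline{G_1(B_X)}=B_Y$ enters.
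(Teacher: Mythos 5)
Your proposal is correct and follows exactly the paper's own argument: both parts are deduced from Lemma~\ref{Lemm:numericalIndexThroughMaps}.b after checking that $T\longmapsto G_2\circ T$ (resp.\ $S\longmapsto S\circ G_1$) is an isometric embedding. You merely spell out the isometry verifications that the paper leaves implicit.
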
	
	
\begin{proof}
Both (a) and (b) follow from Lemma \ref{Lemm:numericalIndexThroughMaps}. In the first case, it is enough to see that the map $T\longmapsto G_2\circ T$ from $\mathcal{L}(X,Y)$ to $\mathcal{L}(X,Z)$ is an isometric embedding by the hypothesis on $G_2$. For (b), we have that $S\longmapsto S\circ G_1$ from $\mathcal{L}(Y,Z)$ to $\mathcal{L}(X,Z)$ is an isometric embedding by the hypothesis on $G_1$.
\end{proof}	

We now would like to collect some consequences of this result.

The first immediate consequence is that the restriction of the codomain of an operator enlarges the numerical index.

\begin{proposition}
	Let $X, Y$ be Banach spaces, let $G\in \mathcal{L}(X,Y)$ be a norm-one operator, and let $Z$ be a closed subspace of $Y$ such that $G(X)\subseteq Z$. Consider the operator $\overline{G}\colon X\longrightarrow Z$ given by $\overline{G}x=Gx$ for every $x\in X$. Then $ n_G(X,Y)\leq n_{\overline{G}}\big(X,Z).$
\end{proposition}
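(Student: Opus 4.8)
The plan is to exhibit $G$ as the composition of $\overline{G}$ with an isometric embedding and then invoke Lemma~\ref{lemma:num-index-composition}.a directly. Let $\iota\colon Z \longrightarrow Y$ denote the canonical inclusion. Since $Z$ is a closed subspace of $Y$ endowed with the inherited norm, $\iota$ is an isometric embedding and, in particular, $\|\iota\|=1$. Moreover, because $\|\overline{G}x\|_Z = \|Gx\|_Y$ for every $x\in X$, we have $\|\overline{G}\|=\|G\|=1$, so $\overline{G}$ is a norm-one operator. By construction, $G = \iota\circ\overline{G}$.

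First I would apply Lemma~\ref{lemma:num-index-composition}.a with $G_1 := \overline{G}\in\mathcal{L}(X,Z)$ and $G_2 := \iota\in\mathcal{L}(Z,Y)$. As $G_2$ is an isometric embedding, the lemma yields $n_{G_2\circ G_1}(X,Y)\leq n_{G_1}(X,Z)$, which is precisely $n_G(X,Y)\leq n_{\overline{G}}(X,Z)$, as desired.

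Alternatively, one could bypass Lemma~\ref{lemma:num-index-composition} and argue straight from Lemma~\ref{Lemm:numericalIndexThroughMaps}.b: the map $\Psi\colon \mathcal{L}(X,Z)\longrightarrow \mathcal{L}(X,Y)$ given by $S\longmapsto \iota\circ S$ is an isometric embedding satisfying $\Psi(\overline{G})=G$ and $\|\Psi(\overline{G})\|=1$, so part (b) of that lemma gives $n(\mathcal{L}(X,Y),G)\leq n(\mathcal{L}(X,Z),\overline{G})$. There is no genuine obstacle here: the only point to verify is that the inclusion of a closed subspace is an isometric embedding, which is immediate from the definition of the subspace norm, and that $\overline{G}$ remains of norm one. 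The entire content of the statement is therefore carried by the already-established monotonicity of the numerical index under isometric embeddings of the ambient operator space.
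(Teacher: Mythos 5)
Your proposal is correct and coincides with the paper's own proof, which likewise writes $G=I\circ\overline{G}$ for the inclusion $I\colon Z\longrightarrow Y$ and applies Lemma~\ref{lemma:num-index-composition}.a. Your alternative route through Lemma~\ref{Lemm:numericalIndexThroughMaps}.b is just the underlying mechanism of that lemma, so it is the same argument unpacked.
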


\begin{proof}
	It follows from Lemma \ref{lemma:num-index-composition}.a as $G=I\circ \overline{G}$ where $I:Z\longrightarrow Y$ denotes the inclusion.  	
\end{proof}	

The inequality in the above result can be strict:

\begin{example}\label{exam:restrict-codomain}
{\slshape The operator $G:\K \longrightarrow \K\oplus_\infty \K$ given by $G(x)=(x,0)$ satisfies $n_G(\K,\K\oplus_\infty\K)=0$, whereas $\overline{G}: \K\longrightarrow \K$ satisfies $n_{\overline{G}}(\K,\K)=1$.}
\end{example}

Another consequence of Lemma \ref{lemma:num-index-composition} is that the numerical index with respect to the injectivization of an operator  is an upper bound for the numerical index with respect to the original operator.

\begin{proposition}\label{prop:niwithrespecttoquotient}
	Let $X$, $Y$ be Banach spaces, let $G\in \mathcal{L}(X,Y)$ be a norm-one operator, and let $q \colon X\longrightarrow X/\ker G$ be the quotient map. Consider the injectivization $\widehat{G}\in \mathcal{L}\big(X/\ker G,Y\big)$ satisfying $\widehat{G}\circ q = G$. Then,
	$$ n_G(X,Y)\leq n_{\widehat{G}}\big(X/\ker G,Y\big). $$
\end{proposition}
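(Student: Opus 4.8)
The plan is to recognize the factorization $G=\widehat{G}\circ q$ and to read off the conclusion directly from part (b) of Lemma \ref{lemma:num-index-composition}. In the notation of that lemma I would set $G_1:=q\in\mathcal{L}(X,X/\ker G)$ and $G_2:=\widehat{G}\in\mathcal{L}(X/\ker G,Y)$, so that $G_2\circ G_1=\widehat{G}\circ q=G$. Part (b) then yields exactly
$$
n_G(X,Y)=n_{G_2\circ G_1}(X,Y)\leq n_{G_2}(X/\ker G,Y)=n_{\widehat{G}}(X/\ker G,Y),
$$
which is the desired inequality.

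Before invoking the lemma I would verify its two standing hypotheses, namely that $G_1$ and $G_2$ are norm-one operators, together with the specific assumption of part (b). Since $G$ is norm-one we have $\ker G\neq X$, so the quotient map $q$ is a nonzero operator of norm one; and the injectivization satisfies $\|\widehat{G}\|=\|G\|=1$, because $\|\widehat{G}(q(x))\|=\|Gx\|$ while the quotient norm of $q(x)$ equals $\inf_{k\in\ker G}\|x-k\|\leq\|x\|$. Thus both $G_1$ and $G_2$ have norm one, as required.

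The only point needing a genuine argument is the hypothesis of part (b), which here reads $\overline{q(B_X)}=B_{X/\ker G}$. This is the classical description of the unit ball of a quotient space: for any closed subspace $M\subseteq X$ the quotient norm is $\|q(x)\|=\inf_{k\in M}\|x-k\|$, so given a class $\xi\in X/M$ with $\|\xi\|<1$ one can select a representative $x\in X$ with $\|x\|<1$; this shows that $q$ carries the open unit ball of $X$ onto the open unit ball of $X/M$, and taking closures gives $\overline{q(B_X)}=B_{X/M}$. Applying this with $M=\ker G$ supplies the missing hypothesis. I do not anticipate any real obstacle: once the factorization $G=\widehat{G}\circ q$ and the quotient-ball identity are in place, the statement is an immediate application of the already established composition lemma.
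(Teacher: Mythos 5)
Your proposal is correct and follows exactly the paper's argument: the paper also deduces the statement from Lemma \ref{lemma:num-index-composition}.b using the factorization $G=\widehat{G}\circ q$ together with the identity $\overline{q(B_X)}=B_{X/\ker G}$. The only difference is that you spell out the routine verifications (that $q$ and $\widehat{G}$ have norm one, and the quotient-ball identity) which the paper leaves implicit.
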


\begin{proof}
	It follows from Lemma \ref{lemma:num-index-composition}.b as $\widehat{G}\circ q = G$ and $\overline{q(B_X)}=B_{X/\ker G}$.
\end{proof}

In the particular case when $n_G(X,Y)=1$, we obtain the following result which gives a partial answer to Problem 9.14 of \cite{SpearsBook}.

\begin{corollary}\label{coro:SPEARquotient}
  Let $X$, $Y$ be Banach spaces, let $G\in \mathcal{L}(X,Y)$ be a norm-one operator. Then, under the notation of Proposition \ref{prop:niwithrespecttoquotient}, if $G$ is a spear operator, then so is its injectivization $\widehat{G}$.
\end{corollary}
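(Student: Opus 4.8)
The plan is to combine the monotonicity inequality supplied by Proposition \ref{prop:niwithrespecttoquotient} with the characterization of spear operators as those of numerical index one, and then to squeeze. First I would recall that, by the paragraph immediately following Proposition \ref{Prop:CharacterizationsNumericalIndexOperators}, a norm-one operator is a spear operator precisely when its numerical index equals $1$. Thus the hypothesis that $G$ is a spear operator is nothing but the statement that $n_G(X,Y)=1$.

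Next I would feed this into Proposition \ref{prop:niwithrespecttoquotient}, which (under the notation there, with $q\colon X\longrightarrow X/\ker G$ the quotient map and $\widehat{G}\circ q=G$) gives $n_G(X,Y)\leq n_{\widehat{G}}\big(X/\ker G,Y\big)$. Combining this with the universal upper bound $n(Z,u)\leq 1$ recorded at the beginning of the introduction, I obtain the chain
$$
1 = n_G(X,Y) \leq n_{\widehat{G}}\big(X/\ker G,Y\big) \leq 1.
$$
Squeezing forces $n_{\widehat{G}}\big(X/\ker G,Y\big)=1$, and applying once more the equivalence between ``numerical index equal to $1$'' and ``spear operator'' yields that $\widehat{G}$ is a spear operator, as claimed.

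There is essentially no obstacle to overcome here: the result is an immediate corollary once Proposition \ref{prop:niwithrespecttoquotient} is available, the only ingredients being the inequality it provides and the fact that numerical indices never exceed $1$. The only point worth stating explicitly is that both the hypothesis on $G$ and the conclusion on $\widehat{G}$ are being rephrased through the same equivalence $n_{(\cdot)}=1\Longleftrightarrow\text{spear}$, so that the argument reduces entirely to a two-sided comparison of real numbers.
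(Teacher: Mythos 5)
Your argument is correct and is exactly how the paper obtains this corollary: it is stated as the particular case $n_G(X,Y)=1$ of Proposition \ref{prop:niwithrespecttoquotient}, combined with the identification of spear operators as those with numerical index one given after Proposition \ref{Prop:CharacterizationsNumericalIndexOperators}. Nothing further is needed.
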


Again, the inequality in Proposition \ref{prop:niwithrespecttoquotient} may be strict, as the following example shows. It also proves that Corollary \ref{coro:SPEARquotient} is not an equivalence.

\begin{example}
	{\slshape The operator $G\colon \ell_1\oplus_1\K\longrightarrow\ell_1$ given by $G(x,\lambda)=x$ satisfies $n_G(\ell_1\oplus_1\K,\ell_1)=0$ (as it has been proved in example \ref{exam:adjoint}), whereas the injectivization $\widehat{G}$ is the identity operator in $\ell_1$ and so satisfies that $n_{\widehat{G}}(\ell_1,\ell_1)=n(\ell_1)=1$.}
\end{example}

With the help of all of these examples and some others form previous sections, we may prove the following assertion.

\begin{remark}
There is no general function $\Upsilon:[0,1]\times [0,1]\longrightarrow [0,1]$ such that the equality
$$
n_{G_2\circ G_1}(X,Z)=\Upsilon\bigl(n_{G_2}(Y,Z),n_{G_1}(X,Y) \bigr)
$$
holds for every Banach spaces $X, Y, Z$ and every norm-one operators $G_1\in \mathcal{L}(X,Y)$ and $G_2\in \mathcal{L}(Y,Z)$.
\end{remark}

Indeed, suppose that such a function $\Upsilon$ exists. In Remark \ref{remark:smoothnessrotundityneeded_E-sum} it is given an example of a real Banach space $Z$ with $n(Z)=0$ and a norm-one operator $G\in \mathcal{L}(Z,\R)$ such that $n_{G}(Z,\R)=1$. As $G=G\circ \Id_Z$, it follows that $1=\Upsilon(1,0)$. Besides, there is a similar example in the same Remark showing that $1=\Upsilon(0,1)$. On the other hand, if $X$, $Y$ are two-dimensional Banach spaces, we may always find $G\in \mathcal{L}(X,Y)$ with $n_{G}(X,Y)=0$ by Proposition \ref{prop:ifzeronotinsidedimensiononeOperators}. As $G=G\circ \Id_X=\Id_Y\circ G$, it follows that $0=\Upsilon(0,n(X))=\Upsilon(n(Y),0)$. It is enough to consider $X=Y=\ell_\infty^2$ to get a contradiction.

Now, we may wonder whether a further relationship with the composition is valid in general. We answer this question in the negative giving some counterexamples.	

Example \ref{exam:n-zero-nG-one} shows that, in general, it is not true that $n_{G_2\circ G_1}(X,Z)\leq \max\left\{n_{G_1}(X,Y),n_{G_2}(Y,Z)\right\}$, with $G$ playing the role of $G_1$ and the identity operator playing the one of $G_2$.

Example \ref{exam:restrict-codomain} also shows that, in general, it is not true that $n_{G_2\circ G_1}(X,Z)\geq \max\left\{n_{G_1}(X,Y),n_{G_2}(Y,Z)\right\}$.
Actually, it is possible that the inequality $n_{G_2\circ G_1}(X,Z)\geq \min\left\{n_{G_1}(X,Y),n_{G_2}(Y,Z)\right\}$ fails, as the following example shows, since $n(\ell_p)>0$ for $p\neq 2$ by \cite{MarMerPop}.

\begin{example}
	{\slshape Let $1\leq p<q<\infty$. The inclusion $G\colon \ell_p\rightarrow\ell_q$ satisfies that $n_G(\ell_p,\ell_q)=0$.}
\end{example}

\begin{proof}
	Consider the norm-one operator $T\in {\mathcal{L}(\ell_p,\ell_q)}$ given by $T=e_2^\ast \otimes e_1$. Fixed $0<\eps<1/4$, our goal is to prove that
	$v_{G}(T)\leq \max\left\{\eps^{1/p}, \left(1-(1-2\eps)^q\right)^{1/q}\right\}$. To do so we need the following claim.

\noindent\emph{Claim:} Let $0<\delta<1/2$ be such that $(1-\delta)^{\frac{p}{q-p}}>1-\eps$. Given $x\in S_{\ell_p}$ such that $\|x\|_q>(1-2\delta^2)^{1/q}$, there exists a unique $k_0\in \N$ satisfying that $|x(k_0)|^p>1-\eps$.

Indeed, the uniqueness of $k_0$ is clear from the facts $|x(k_0)|^p>1-\eps$, $\eps<1/4$, and $\|x\|_p=1$. Let us show the existence of $k_0$. Since $1-2\delta^2<\|x\|_q^q$, there is $n\in \N$ satisfying that $1-\delta^2<\sum_{k=1}^n|x(k)|^q$, and thus
	$$
	\sum_{k=1}^n|x(k)|^p-\delta^2\leq1-\delta^2<\sum_{k=1}^n|x(k)|^q=\sum_{k=1}^n|x(k)|^p|x(k)|^{q-p}.
	$$
	Let $I=\{k\in \{1,\ldots,n\} \colon |x(k)|^{q-p}>1-\delta\}$. Using \cite[Lemma~8.14]{SpearsBook} with $\lambda_k=|x(k)|^p$, $\beta_k=1$ and $\alpha_k=|x(k)|^{q-p}$, we get that $\sum_{k\notin I} |x(k)|^p<\delta$. So we can write
	$$
	1-\delta^2<\sum^n_{k\notin I} |x(k)|^q+\sum^n_{k\in I} |x(k)|^q\leq \sum^n_{k\notin I} |x(k)|^p+\sum^n_{k\in I} |x(k)|^q<\delta+\sum^n_{k\in I} |x(k)|^q
	$$
	which gives $\sum_{k\in I} |x(k)|^q>1-\delta^2-\delta>0$ and, therefore, $I\neq \emptyset$. For $k_0\in I$, we have $$|x(k_0)|^p>(1-\delta)^{\frac{p}{q-p}}>1-\eps$$ finishing the proof of the claim.
	
	To estimate the numerical radius of $T$, let $0<\widetilde{\delta}<\eps$ be such that $1-\widetilde{\delta}>(1-2\delta^2)^{1/q}$ and take $x\in S_{\ell_p}$ and $y^\ast \in S_{\ell^\ast_q}$ satisfying $\re y^\ast (x)>1-\widetilde{\delta}$, which implies that
	$$
	\|x\|_q>\re y^\ast (x)>1-\widetilde{\delta}>(1-2\delta^2)^{1/q}.
	$$
	The claim tells us that there is $k_0\in\N$ such that $|x(k_0)|^p>1-\eps$ and so $\sum_{k\neq k_0}^{\infty}|x(k)|^p<\eps$. Now, we can estimate $|y^\ast (Tx)|=|y^\ast (1)||x(2)|$ depending on the value of $k_0$. If $k_0\neq 2$ then $|x(2)|<\eps^{1/p}$ and $|y^\ast (Tx)|\leq |x(2)|<\eps^{1/p}$. Suppose, otherwise, that $k_0=2$. Then, as
	\begin{align*}
	1-\widetilde{\delta}<\re y^\ast (x)=|y^\ast (2)||x(2)|+\sum_{k\neq 2}^{\infty} |y^\ast (k)||x(k)|\leq |y^\ast (2)|+\|y^\ast \|_q \sum_{k\neq 2}^{\infty}|x(k)|^p\leq |y^\ast (2)|+\eps,
	\end{align*}
	we get $|y^\ast (2)|>1-\widetilde{\delta}-\eps>1-2\eps$. Therefore, we have
	$$
	|y^\ast (2)|^q>(1-2\eps)^q \qquad \text{and} \qquad |y^\ast (Tx)|\leq|y^\ast (1)|<\left(1-(1-2\eps)^q\right)^{1/q}.
	$$
	Hence, in any case, we get
	$$
	v_{G}(T)\leq v_{G,\widetilde{\delta}}(T)\leq \max\left\{\eps^{1/p}, \left(1-(1-2\eps)^q\right)^{1/q}\right\}
	$$
	and the arbitrariness of $\eps$ gives $v_G(T)=0$ and so, $n_G(\ell_p,\ell_q)=0$.
\end{proof}	

\subsection{Extending the domain and the codomain}
Our final aim in this section is to study ways of extending the domain and the codomain of an operator maintaining the same numerical index. For the domain, we have the following result.

\begin{proposition}
	Let $X, Y, Z$ be Banach spaces, let $G\in \mathcal{L}(X,Y)$ be a norm-one operator, and consider the norm-one operator $\widetilde{G}\colon X\oplus_\infty Z\longrightarrow Y$ given by $\widetilde{G}(x,z)=G(x)$ for every $(x,z)\in X\oplus_\infty Z$. Then
	$$
	n_{\widetilde{G}}\big(X\oplus_\infty Z,Y\big)=n_G(X,Y).
	$$
\end{proposition}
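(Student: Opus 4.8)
The plan is to prove the two inequalities separately, relying on the spatial description of the numerical radius provided by Lemma~\ref{Lemm-radio-G-A-B} together with an operator-absorption construction in the spirit of the $\ell_\infty$-part of the proof of Proposition~\ref{sumas}. First note that $\widetilde{G}$ is indeed norm one, since $\|\widetilde{G}(x,z)\|=\|Gx\|\leq\|x\|\leq\|(x,z)\|_\infty$ while $\sup_{\|x\|\leq 1}\|Gx\|=\|G\|=1$; and we may assume $Z\neq\{0\}$, since otherwise $X\oplus_\infty Z$ is just $X$ and $\widetilde{G}=G$. Throughout I would use that $B_{X\oplus_\infty Z}=B_X\times B_Z=\conv(S_X\times S_Z)$ (writing each of $x$ and $z$ as a finite convex combination of sphere elements and taking the product combination) and that $B_{Y^\ast}=\overline{\conv}^{w^\ast}(S_{Y^\ast})$ by Krein--Milman, so that Lemma~\ref{Lemm-radio-G-A-B} applies with $A=S_X\times S_Z$ and $B=S_{Y^\ast}$.

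For the inequality $n_{\widetilde{G}}(X\oplus_\infty Z,Y)\leq n_G(X,Y)$, I would take an arbitrary $S\in\mathcal{L}(X,Y)$ and lift it to $T\in\mathcal{L}(X\oplus_\infty Z,Y)$ by $T(x,z):=Sx$, which clearly has $\|T\|=\|S\|$. Applying Lemma~\ref{Lemm-radio-G-A-B} with the sets $A,B$ above, and using that both $T(x,z)=Sx$ and $\widetilde{G}(x,z)=Gx$ are independent of $z$ while the condition $(x,z)\in S_X\times S_Z$ forces $x\in S_X$ (and $S_Z\neq\emptyset$), the supremum defining $v_{\widetilde{G},\delta}(T)$ collapses to the one defining $v_{G,\delta}(S)$; hence $v_{\widetilde{G}}(T)=v_G(S)$. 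Then $n_{\widetilde{G}}(X\oplus_\infty Z,Y)\,\|S\|=n_{\widetilde{G}}(X\oplus_\infty Z,Y)\,\|T\|\leq v_{\widetilde{G}}(T)=v_G(S)$, and taking the infimum over $S\in S_{\mathcal{L}(X,Y)}$ yields the bound.

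The reverse inequality is where the real work lies. Fix $T\in\mathcal{L}(X\oplus_\infty Z,Y)$ with $\|T\|=1$ and $\eps>0$. Since $\|T\|$ is a supremum over $S_X\times S_Z$, choose $x_0\in S_X$, $z_0\in S_Z$ with $\|T(x_0,z_0)\|>1-\eps$, and fix $x_0^\ast\in S_{X^\ast}$ with $x_0^\ast(x_0)=1$. The key construction is the operator $S\in\mathcal{L}(X,Y)$ given by $Sx:=T(x,x_0^\ast(x)z_0)$. Because $\|(x,x_0^\ast(x)z_0)\|_\infty=\max\{\|x\|,|x_0^\ast(x)|\}=\|x\|$, one gets $\|S\|\leq 1$, while $\|Sx_0\|=\|T(x_0,z_0)\|>1-\eps$, so $\|S\|>1-\eps$. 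I would then verify that $v_{G,\delta}(S)\leq v_{\widetilde{G},\delta}(T)$ for every $\delta>0$: given $u\in S_X$ and $v^\ast\in S_{Y^\ast}$ with $\re v^\ast(Gu)>1-\delta$, the point $(u,x_0^\ast(u)z_0)$ lies in $S_{X\oplus_\infty Z}$ and satisfies $\re v^\ast\bigl(\widetilde{G}(u,x_0^\ast(u)z_0)\bigr)=\re v^\ast(Gu)>1-\delta$, whereas $|v^\ast(Su)|=|v^\ast\bigl(T(u,x_0^\ast(u)z_0)\bigr)|\leq v_{\widetilde{G},\delta}(T)$. Passing to the infimum in $\delta$ gives $v_G(S)\leq v_{\widetilde{G}}(T)$, whence $v_{\widetilde{G}}(T)\geq v_G(S)\geq n_G(X,Y)\|S\|>n_G(X,Y)(1-\eps)$; letting $\eps\to 0$ and then taking the infimum over norm-one $T$ yields $n_{\widetilde{G}}(X\oplus_\infty Z,Y)\geq n_G(X,Y)$.

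The main obstacle is precisely the construction of $S$ in the reverse inequality and the check that it simultaneously (i) nearly attains the norm of $T$ and (ii) transports the defining constraint of $v_{\widetilde{G},\delta}$ back to that of $v_{G,\delta}$. Both hinge on the two structural features of the $\ell_\infty$-sum exploited above: that the norm of $(x,x_0^\ast(x)z_0)$ equals $\|x\|$, so norms are preserved under the absorption $z\mapsto x_0^\ast(x)z_0$; and that $\widetilde{G}$ ignores the $Z$-coordinate, so the state condition $\re v^\ast(\widetilde{G}\,\cdot\,)>1-\delta$ depends only on the $X$-coordinate. The remaining bookkeeping---the norming of the chosen sets and the degenerate case $Z=\{0\}$---is routine.
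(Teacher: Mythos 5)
Your proof is correct and the substantive direction ($n_{\widetilde{G}}\geq n_G$) is essentially identical to the paper's: the same absorption operator $Sx=T(x,x_0^\ast(x)z_0)$, the same norm estimate, and the same transport of the state condition. For the easy direction the paper simply invokes its composition lemma (writing $\widetilde{G}=G\circ P$ with $P$ the projection, whose restriction to the unit ball is onto $B_X$), whereas you verify the same fact by hand via the spatial formula of Lemma~\ref{Lemm-radio-G-A-B}; both arguments reduce to the observation that $S\longmapsto S\circ P$ is an isometric embedding carrying $G$ to $\widetilde{G}$, so the difference is cosmetic.
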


\begin{proof}
	Fix $T\in \mathcal{L}\big(X\oplus_\infty Z, Y\big)$ with $\|T\|>0$ and $0<\eps<\|T\|$. We may find $x_0\in S_X$ and $z_0\in S_Z$ satisfying $\|T(x_0,z_0)\|>\|T\|-\eps$. Now take $x_0^\ast \in S_{X^\ast }$ with $x_0^\ast (x_0)=1$ and define the operator $S\in \mathcal{L}(X,Y)$ by
	$$
	S(x)=T\big(x,x_0^\ast (x)z_0\big) \qquad (x\in X)
	$$
	which satisfies $\|S\|\geq\|Sx_0\|=\|T(x_0,z_0)\|>\|T\|-\eps$.
	
	Now, given $\delta>0$, $x\in S_X$, and $y^\ast \in S_{Y^\ast }$ with $\re y^\ast (Gx)>1-\delta$, we consider $\big(x,x_0^\ast (x)z_0\big)\in S_{X\oplus_\infty Z}$. It is clear that $\re y^\ast \big(\widetilde{G}\big(x,x_0^\ast (x)z_0\big)\big)=\re y^\ast (Gx)>1-\delta$. Moreover,
	$$
	|y^\ast (Sx)|=|y^\ast (T(x,x_0^\ast (x)z_0))|\leq v_{\widetilde{G},\delta}(T),
	$$
	hence $v_{G,\delta}(S)\leq v_{\widetilde{G},\delta}(T)$. Therefore,
	$$
	v_{\widetilde{G}}(T)\geq v_G(S)\geq n_G(X,Y)\|S\|>n_G(X,Y)\big[\|T\|-\eps\big].
	$$
	The arbitrariness of $\eps>0$ and $T\in \mathcal{L}\big(X\oplus_\infty Z, Y\big)$ gives $n_{\widetilde{G}}(X\oplus_\infty Z, Y)\geq n_G(X,Y)$.
	
The reverse inequality follows immediately from Lemma \ref{lemma:num-index-composition}.b as $\widetilde{G}=G\circ P$ where $P:X\oplus_\infty Z\longrightarrow X$ denotes the natural projection.
\end{proof}

For the range space, the result is the following.

\begin{proposition}
	Let $X, Y, Z$ be Banach spaces, let $G\in \mathcal{L}(X,Y)$ be a norm-one operator, and consider the norm-one operator $\widetilde{G}\colon X\longrightarrow Y\oplus_1 Z$ given by $\widetilde{G}x=\big(Gx,0\big)$ for every $x\in X$. Then,
	$$
	n_{\widetilde{G}}\big(X, Y\oplus_1 Z\big)=n_G(X,Y).
	$$
\end{proposition}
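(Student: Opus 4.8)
The plan is to establish the two inequalities separately, with the nontrivial content lying in the lower bound. For the inequality $n_{\widetilde{G}}(X, Y\oplus_1 Z) \leq n_G(X,Y)$, I would simply observe that $\widetilde{G} = I \circ G$, where $I\colon Y \longrightarrow Y\oplus_1 Z$ is the canonical inclusion $y \longmapsto (y,0)$. Since $\|(y,0)\| = \|y\|$, this $I$ is an isometric embedding, so Lemma \ref{lemma:num-index-composition}.a applies verbatim and delivers this half at once. This parallels the way the reverse inequality was obtained in the domain-extension proposition via Lemma \ref{lemma:num-index-composition}.b.

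For the reverse inequality $n_{\widetilde{G}}(X, Y\oplus_1 Z) \geq n_G(X,Y)$, I would imitate the construction used throughout this subsection. Fix $T \in \mathcal{L}(X, Y\oplus_1 Z)$ with $\|T\| > 0$ and $0 < \eps < \|T\|$, and write $T = (T_1, T_2)$ with $T_1 = P_Y \circ T \in \mathcal{L}(X,Y)$ and $T_2 = P_Z \circ T \in \mathcal{L}(X,Z)$, so that $\|Tx\| = \|T_1 x\| + \|T_2 x\|$ for every $x$. I would choose $x_0 \in S_X$ with $\|Tx_0\| > \|T\| - \eps$, a norming functional $z_0^\ast \in S_{Z^\ast}$ with $z_0^\ast(T_2 x_0) = \|T_2 x_0\|$, and a unit vector $y_0 \in S_Y$ selected so that $\bigl\| T_1 x_0 + \|T_2 x_0\|\, y_0 \bigr\| = \|T_1 x_0\| + \|T_2 x_0\|$ (take $y_0 = T_1 x_0/\|T_1 x_0\|$ if $T_1 x_0 \neq 0$, and any unit vector otherwise). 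Then I would define $S \in \mathcal{L}(X,Y)$ by $Sx = T_1 x + z_0^\ast(T_2 x)\, y_0$. Subadditivity gives $\|Sx\| \leq \|T_1 x\| + \|T_2 x\| = \|Tx\|$, hence $\|S\| \leq \|T\|$, while evaluation at $x_0$ gives $\|Sx_0\| = \|T_1 x_0\| + \|T_2 x_0\| = \|Tx_0\| > \|T\| - \eps$; thus $\|T\| - \eps < \|S\| \leq \|T\|$.

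The decisive step is transferring the numerical radius, and here is where the $\oplus_1$/$\oplus_\infty$ duality does the work. Given $\delta > 0$, $x \in S_X$, and $y^\ast \in S_{Y^\ast}$ with $\re y^\ast(Gx) > 1 - \delta$, I would set $\phi := \bigl(y^\ast,\ y^\ast(y_0)\, z_0^\ast\bigr) \in (Y\oplus_1 Z)^\ast = Y^\ast \oplus_\infty Z^\ast$. Since $|y^\ast(y_0)| \leq 1$, one has $\|\phi\| = \max\{\|y^\ast\|,\, |y^\ast(y_0)|\,\|z_0^\ast\|\} = 1$; because $\widetilde{G}x = (Gx,0)$ has trivial second coordinate, $\re \phi(\widetilde{G}x) = \re y^\ast(Gx) > 1 - \delta$; and $\phi(Tx) = y^\ast(T_1 x) + y^\ast(y_0)\, z_0^\ast(T_2 x) = y^\ast(Sx)$. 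Hence $|y^\ast(Sx)| = |\phi(Tx)| \leq v_{\widetilde{G},\delta}(T)$, and taking the supremum over admissible $x, y^\ast$ yields $v_{G,\delta}(S) \leq v_{\widetilde{G},\delta}(T)$. Letting $\delta \downarrow 0$ gives $v_{\widetilde{G}}(T) \geq v_G(S) \geq n_G(X,Y)\|S\| > n_G(X,Y)(\|T\| - \eps)$; letting $\eps \downarrow 0$ and invoking the arbitrariness of $T$ completes the lower bound, and combining with the first paragraph gives equality.

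The main obstacle I anticipate is precisely the choice of $\phi$. On the codomain side, the state condition $\re \phi(\widetilde{G}x) > 1-\delta$ constrains only the $Y^\ast$-coordinate of $\phi$, since $\widetilde{G}$ maps into $Y \times \{0\}$; the $\ell_\infty$-structure of the dual norm then permits installing a $Z^\ast$-coordinate $y^\ast(y_0)\, z_0^\ast$ of norm at most $1$ \emph{for free}, and this is exactly the ingredient needed to reproduce $y^\ast(Sx)$ while keeping $\|\phi\| = 1$. Once this dual bookkeeping is correctly arranged, the remaining verifications (the alignment of $y_0$ with $T_1 x_0$ to recover the full norm $\|Tx_0\|$, and the norm estimates for $S$) are routine, so I do not expect any genuine difficulty beyond setting up these two dual choices.
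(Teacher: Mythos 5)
Your proposal is correct and follows essentially the same route as the paper: the upper bound via the composition lemma applied to the isometric inclusion $I\colon Y\longrightarrow Y\oplus_1 Z$, and the lower bound via the same operator $Sx=P_YTx+z_0^\ast(P_ZTx)\,y_0$ and the same dual functional $(y^\ast,\,y^\ast(y_0)z_0^\ast)\in Y^\ast\oplus_\infty Z^\ast$. The only cosmetic difference is that you additionally note $\|S\|\leq\|T\|$, which is true but not needed.
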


\begin{proof}
	Fix $T\in \mathcal{L}\big(X,Y\oplus_1 Z\big)$ with $\|T\|>0$, $\|T\|>\eps>0$, and $x_0\in S_X$ such that $\|Tx_0\|>\|T\|-\eps$. Denote by $P_Y$ and $P_Z$ the projections from $Y\oplus_1 Z$ to $Y$ and $Z$, respectively. Take $y_0\in S_Y$ so that $P_YTx_0=\|P_Y Tx_0\|y_0$ and $z_0^\ast \in S_{Z^\ast }$ satisfying $z_0^\ast (P_ZTx_0)=\|P_ZTx_0\|$. Now define $S\in \mathcal{L}(X,Y)$ by
	$$
	Sx=P_YTx+z_0^\ast (P_ZTx)y_0 \qquad(x\in X)
	$$
	which satisfies
	$$
	\|S\|\geq\|Sx_0\|=\big\|P_YTx_0+\|P_ZTx_0\|y_0\big\|=\|P_YTx_0\|+\|P_ZTx_0\|>\|T\|-\eps.
	$$
	Given $\delta>0$, $x\in S_X$, and $y^\ast \in S_{Y^\ast }$ with $\re y^\ast (Gx)>1-\delta$, we consider $\big(y^\ast ,y^\ast (y_0)z_0^\ast \big)\in S_{(Y\oplus_1 Z)^\ast }$ as $(Y\oplus_1 Z)^\ast =Y^\ast \oplus_\infty Z^\ast $. It is clear that $\re\big(y^\ast ,y^\ast (y_0)z_0^\ast \big)(\widetilde{G}x)=\re y^\ast (Gx)>1-\delta$. Moreover,
	$$
	|y^\ast (Sx)|=|y^\ast (P_YTx+z_0^\ast (P_ZTx)y_0)|=|(y^\ast ,y^\ast (y_0)z_0^\ast \big)(Tx)|\leq v_{\widetilde{G},\delta}(T),
	$$
	and then $v_{G,\delta}(S)\leq v_{\delta,\widetilde{G}}(T)$. Therefore,
	$$
	v_{\widetilde{G}}(T)\geq v_G(S)\geq n_G(X,Y)\|S\|>n_G(X,Y)\big[\|T\|-\eps\big].
	$$
	The arbitrariness of $\eps$ and $T$ finishes gives $n_{\widetilde{G}}(X,Y\oplus_1 Z)\geq n_G(X,Y)$.
	
	The reverse inequality is an immediate consequence of Lemma \ref{lemma:num-index-composition}.a as $\widetilde{G}=I\circ G$ where $I:Y\longrightarrow Y\oplus_1 Z$ denotes the natural inclusion.
\end{proof}

\vspace*{0.5cm}

\noindent \textbf{Acknowledgment.} The authors are grateful to Rafael Pay\'{a} and \'{A}ngel Rodr\'{\i}guez Palacios for kindly answering several inquiries.

\end{document}